\theoremstyle{definition}
\newtheorem*{defn*}{Definition}
\newtheorem{thm}{Theorem}[section]
\theoremstyle{definition}
\newtheorem{defn}[thm]{Definition}
\numberwithin{equation}{section}
\newtheorem{cor}[thm]{Corollary}
\newtheorem*{cor*}{Corollary}
\newtheorem{prop}[thm]{Proposition}
\newtheorem*{prop*}{Proposition}
\newtheorem{lem}[thm]{Lemma}
\newcommand{\R}{\mathbb{R}}
\newcommand{\Z}{\mathbb{Z}}
\newcommand{\ZZ}{\mathcal{Z}}
\newcommand{\ZZZ}{\mathsf{Z}}
\newcommand{\Q}{\mathcal{Q}}
\newcommand{\QQ}{\mathsf{Q}}
\newcommand{\N}{\mathbb{N}}
\newcommand{\NN}{\mathcal{N}}
\newcommand{\E}{\mathbb{E}}
\newcommand{\F}{\mathcal{F}}
\newcommand{\var}{\text{var}}
\newcommand{\Var}{\text{Var}}
\def\P{\mathbb {P}}
\newcommand{\id}{\mathbbm{1}}
\newcommand{\D}{\mathbf{D}}
\newcommand{\C}{\mathbf{C}}
\newcommand{\M}{\mathcal{M}}
\newcommand{\0}{\mathbf{0}}
\newcommand{\SP}{\mathcal{S}}
\newcommand{\K}{\mathcal{K}}
\begin{document}

\title[Article Title]{Heavy Traffic Scaling Limits for Shortest Remaining Processing Time Queues with Light Tailed Processing Time Distributions}


\author[1]{\fnm{Chunxu} \sur{Ji}}\email{ji003@csusm.edu}

\author[1]{\fnm{Amber L.} \sur{ Puha}}\email{apuha@csusm.edu}
\equalcont{Research supported in part by NSF Grant DMS-2054505.}


\affil[1]{\orgdiv{Department of Mathematics}, \orgname{California State University San Marcos}, \orgaddress{\street{333 S.\ Twin Oaks Valley Road}, \city{San Marcos}, \postcode{92096}, \state{CA}, \country{USA}}}




\abstract{
We prove a heavy traffic scaling limit for a shortest remaining processing time queue. We are interested in the case where the processing time distribution has a tail that decays rapidly, i.e., has light tails. In particular, we revisit the work in Puha (2015), which shows that the diffusion scaled queue length process multiplied by a processing time distribution dependent factor that tends to infinity converges to a nontrivial reflecting Brownian motion, under the condition that this distribution dependent factor is slowly varying and obeys a certain rate of convergence condition. Here, we prove that the rate of convergence condition is not needed and the result holds more generally. We further show convergence of a sequence of nonstandardly scaled measure valued state descriptors to a point mass at one such that the total mass fluctuates randomly in accordance with the diffusion limit for the workload process. This is a sharp concentration result which shows that, under this nonstandard scaling, there are a very small number of tasks in the system and the remaining work for each such task is large and of the same order of magnitude as that of other tasks. This is due to the prioritization of the task with the least remaining work, and is in contrast to the case of heavy tailed processing times studied in Banerjee, Budhiraja, and Puha (2022). There it is shown that, while there is some concentration, the remaining times of the very small number of tasks in the system spread out over the nonnegative real line according to a random profile under this nonstandard scaling. Thus, this work completes the description of the two fundamentally different behaviors of SPRT by characterizing it in the case of light tailed processing time distributions.
}

\keywords{Shortest Remaining Processing Time Queue, Rapidly Varying Tails, Diffusion Approximation, Nonstandard Scaling, Measure Valued State Process.}



\maketitle

\section{Introduction}\label{sec1}



We study the behavior of a single-server queue operating under the shortest remaining processing time (SRPT) scheduling policy. For this tasks arrive to the system one at a time. Each such task has associated with it an amount of work for the server. This work is measured in the time units it will take for the server to process the task and so is called the processing time. In an SRPT queue, at each instant in time, the server processes the work associated with the task in the system that has the shortest remaining processing time. This is done with preemption such that when a task arrives with a processing time that is shorter than that remaining for the task in service, the task in service is placed on hold while the server commences processing the work associated with the new arrival. SRPT enjoys an optimality property in that it minimizes the number of tasks in the system  (see \cite{Old Optimal,New Optimal}), and therefore it is naturally of interest (e.g. \cite{Chen}). However, due to its complex dynamics, a closed form or exact performance analysis is not possible. Therefore, tractable approximations are of value.

There is a body of work that concerns functional law of large numbers approximations or fluid limits.  For this, we make note that in order to track the system state as it evolves over time, it is necessary to keep track of the remaining processing times of all tasks in the system at each time. In \cite{Down, Down_Sig}, a measure value process that at each time has a unit atom at the remaining processing time of each task in the system is used for this. The authors of \cite{Down, Down_Sig} develop a fluid approximation for this measure valued process that gives rise to a fluid analog of the response times of tasks in the system at time zero as a function of their remaining processing time.  In the sequel  \cite{KrukSoko}, a fluid limit for a multiclass SRPT queue is developed that fully justifies the fluid response time approximation in \cite{Down, Down_Sig}.  More general measure valued fluid limits for a range of priority queues such as SRPT with time vary arrivals and service rates are developed in \cite{Atar}. 
Steady state mean response times are also of interest.  The survey paper \cite{Schreiber} contains a summary of the work in this direction through the early 1990's. More recently, the work in \cite{Lin} lets the system load approach one and finds a growth rate that is consistent with that of the fluid sojourn times for critically loaded fluid model in \cite{Down, Down_Sig}. Multiserver versions are analyzed in \cite{Grosof} and \cite{Dong}, with the later focusing on the overloaded setting with task abandonment.  

Here we consider a sequence of systems in heavy traffic with a common light tailed processing time distribution that is continuous and has unbounded support.   We apply distribution dependent scaling to the sequence of measure valued state descriptors and prove a heavy traffic limit theorem that gives rise to a measure valued diffusion approximation that is an atom at one with the total mass randomly fluctuating in accordance with the limiting workload process.  The concentration of mass at one in the limit reflects that all tasks in the system have remaining processing times are asymptotically large and of the same order of magnitude.  Our main results are Theorems \ref{W convergence} and \ref{mv convergence}.  These results are complementary to those in \cite{heavy tails}, which apply to heavy tailed processing time distributions, and complete the characterization of the dichotomy of behaviors in the two regimes.

To describe this more fully, let $F$ denote the cumulative distribution function of the processing times and $\overline{F}=1-F$.
In this work, we assume that for all $t>1$,
\begin{equation}\label{lt}
\lim_{x\to\infty}\frac{\overline{F}(tx)}{\overline{F}(x)}=0.
\end{equation}
In particular, $\overline{F}$ is said to be rapidly varying with index $-\infty$.   The Weibull distribution, which includes the exponential distribution as a special case, satisfies \eqref{lt}.  In general, distributions that satisfy \eqref{lt} have tail probabilities that decay faster than the tail probabilities of any power law distribution.

We consider a sequence of systems indexed by members of a sequence $R$ of positive real numbers that increase to infinity. We scale the measure valued state descriptors using standard diffusion scaling combine with a distribution dependent function determined by $F$ that boosts the mass and shrinks the size of the remaining processing time of each task in the system.  Specifically, let $v$ denote a random variable whose distribution is equal to that of the common light-tailed processing times, $\E$ denote the expectation operator and $\id_A$ denote the indicator function of the Borel measurable subset $A$ of $\R_+$.  For $x\in\R_+$, let 
\begin{equation}\label{S}
S(x)=\frac{1}{\E[v\id_{[v>x]}]}.
\end{equation}
Then $S:\R_+\to\R_+$ is nondecreasing and $\lim_{x\to\infty}S(x)=\infty$.
For $r\in\R_+$, let
\begin{equation}\label{Sinv}
S^{-1}(r)=\inf\{x\in\R_+:S(x)>r\}.
\end{equation}
Then $\lim_{r\to\infty}S^{-1}(r)=\infty$.
Given $r\in R$, $S^{-1}(r)$ is the supplemental distribution dependent scale factor for the $r$-th system.  In particular, we apply standard diffusion scaling, multiplying time by $r^2$ and dividing space by $r$.  We further multiply space by $S^{-1}(r)$ to boost the mass of each task.  Finally, we relocate the atom associated with each task in the system by dividing its remaining processing time by $S^{-1}(r)$ and moving the atom to this location. This boosting and relocation is necessary to prevent the total mass from tending to zero and shifting to infinity respectively.  Its specific form is motivated in part by the fluid model introduced in \cite{Down, Down_Sig}.  This connection is explained more fully in Section \ref{DD Scaling}.

The distribution dependent scaling described above is necessary in order to obtain a non-trivial limit since, as shown in \cite{Gromoll}, standard diffusion scaling results in a limiting queue length process that is identically equal to zero under typical asymptotic assumptions.  One might interpret this as saying that under diffusion scaling the number of tasks in the system is very small, so small that it vanishes in the limit.  Yet, with SRPT, the server works constantly unless there are no tasks
in the system, and so, the classical work \cite{MultipleChannel} says the diffusion scaled workload process converges to a non-degenerate reflecting Brownian motion $W^*$.  Thus, these very few tasks in the system must have excessively large remaining processing times.  These two results are restated here in Section \ref{FCLTs} in preparation for stating the main results in this work.

The work in \cite{Puha} explored the order of magnitude difference between the queue length and workload processes uncovered in \cite{Gromoll} in the case where \eqref{lt} holds and $S^{-1}$ is assumed to obey a certain certain rate of convergence condition.
Theorem 3.1 in \cite{Puha} states that when this and other natural asymptotic conditions hold, the diffusion scaled queue length processes multiplied by the distribution dependent boosting factors converge to the limiting workload process.  The boosting factor for the system indexed by $r\in R$ is $S^{-1}(r)$.  Thus, the main result in \cite{Puha} says that all of the tasks in the $r$-th system embody remaining processing times that are of order $S^{-1}(r)$ as $r\to\infty$.  This suggests that a measure valued limit theorem of the type established in this work should hold and raises the question as to whether the rate of converge condition on $S^{-1}$ is necessary.  Theorem \ref{mv convergence} proved here develops the measure valued limit and does so without the rate of convergence condition.

The techniques used to prove Theorem \ref{mv convergence} in part involve adapting those developed in \cite{heavy tails} for the analysis of the heavy tailed processing time distributions to that of light tailed processing time distributions. In \cite{heavy tails}, the authors study a sequence of SRPT queues in heavy traffic with a common continuous processing time distribution that has regularly varying tails.  Specifically, in \cite{heavy tails}, it is
assumed that there exists a $p>1$ such that for all $t>0$
\begin{equation}\label{ht}
\lim_{x\to\infty}\frac{\overline{F}(tx)}{\overline{F}(x)}=t^{-p-1}.
\end{equation}
All power law distributions with greater than two finite moments satisfy \eqref{ht}.
Under this assumption and using the same distribution dependent scaling as in this work, they prove a limit theorem that gives rise to a measure valued diffusion model approximation that does not concentrate at one.
But rather, it is an absolutely continuous measure with a randomly fluctuating profile governed by a random field that arises as the limit of the so-called cutoff distribution dependent scaled workload processes (see \cite[Theorems 1-3]{heavy tails}).  This shows that heavy tailed processing times lead to mass spreading out in the limit, which is in sharp contrast to the concentration results proved here for light tailed processing time distributions.

Theorem \ref{W convergence} here concerns the convergence of the cutoff distribution dependent scaled workload processes in the light tailed case where \eqref{lt} holds.  For this, we note that integrating the measure valued state descriptor against the identity function $\chi$ yields the traditional workload process (see \eqref{defQandW}). A cutoff version is obtained by integrating against $\chi\id_{[0,a]}$ to compute the total work of tasks in the system with remaining processing time less or equal to $a$ for $a\in\R_+$.  We apply the cutoff after applying the distribution dependent scaling and find that cutting off below one ($a<1$) leads to a trivial limit of zero, while cutting off above one ($a>1$) leads to the limit $W^*$, the same limit that is obtained without any cutoff ($a=\infty$).  In Theorem \ref{W convergence}, joint convergence over $a\in\R_+$ to this step function is shown, which is fundamentally different from the continuous random field that arises when \eqref{ht} holds (see \cite[Theorem 1]{heavy tails}).

Theorem \ref{W convergence} is a precursor to Theorem \ref{mv convergence}, but it does not imply it, as it does not imply that the number of tasks with arbitrarily small remaining processing times vanish in the limit.  This is a subtle behavior and the proof of this given in \cite{Puha} in route to proving their Theorem 3.1 has a gap (see the discussion following the statement of Lemma \ref{a<1 convergence}).  Here, we state this fact in Theorem \ref{In between 0 and 1} and note that its proof is quite intricate and involves adapting the techniques used in \cite{heavy tails} to handle light tails.  This is explained in detail in Section \ref{scn:ProofofQconvergence}.  Thus another contribution of this work is to prove Theorem \ref{In between 0 and 1} in this light tailed setting where \eqref{lt} holds, which is crucial step toward proving Theorem \ref{mv convergence}.

The paper is organized as follows.
We begin by introducing some basic notation in Section \ref{scn:notation}.
In Section \ref{scn:model}, we introduce our SRPT queue model and its associated measure valued state descriptor. We also specify a sequence of such queues in the heavy traffic regime and summarize the prior works concerning
limits that arise under functional central limit theorem scaling.
In Section \ref{scn:main}, we introduce the distribution dependent scaling and thereafter, state the two main results in the paper.
The first result concerns convergence in distribution of the random field of cutoff distribution dependent scaled workload processes
(see Theorem \ref{W convergence}). The second result concerns convergence in distribution of the sequence of distribution dependent scaled to
measure valued state descriptors to a measure value diffusion model approximation (see Theorem \ref{mv convergence}).
In Section \ref{scn:SandSInv}, we develop some consequences of assumption \eqref{lt} that will be leveraged throughout the analysis.
In Sections \ref{scn:ProofofWconvergence} and \ref{scn:ProofofQconvergence}, we prove the Theorem \ref{W convergence} and Theorem \ref{mv convergence} respectively. 

\subsection{Notation}\label{scn:notation}
We let $\N$ denote the set of positive integers, $\Z$ denote the set of integers, $\Z_+$ denote the set of nonnegative integers, $\R$ denote the set of real numbers and $\R_+$ denote the set of nonnegative real numbers. Given $a,b\in\R$, we let $a\vee b$ (resp.\ $a\wedge b$) denote the maximum (resp.\ minimum) of $a$ and $b$.  For Borel measurable functions $f:\R_+\to(0,\infty)$ and $g:\R_+\to(0,\infty)$, $f=o(g)$ means $\lim_{x\to\infty}f(x)/g(x)=0$, whereas $f\sim g$ means $\lim_{x\to\infty}f(x)/g(x)=1$. For $f:\R_+ \to \R$ and $T\in[0,\infty)$, we let $\|f\|_T := \sup_{0\leq t\leq T}\lvert f(t)\rvert$ and $\|f\|_\infty := \sup_{0\leq t< \infty}\lvert f(t)\rvert$.  We let $\C(\R_+)$ denote the set of real-valued functions on $\R_+$ that are continuous and $\C_b(\R_+)$ consist of those members of $\C(\R_+)$ that are bounded. We let $0(\cdot)$ denote the function in $\C(\R_+)$ that is identically zero, i.e., $0(t)=0$ for all $t\geq0$. 
For a probability space $(\Omega,\mathcal{F},\P)$, if $A\in\mathcal{F}$, the indicator function $\id_A$ of $A$ is given by $\id_A(\omega)=1$ if  $\omega\in A$ and $\id_A(\omega)=0$ otherwise.
When $A=\R_+$, we denote $\id_A$ by $\id$.  For random variables $X$ and $Y$ that have the same distribution, we write $X\overset{d}{=}Y$.

In this paragraph, we fix an arbitrary Polish space $\SP$, i.e.,
a separable completely metrizable topological space.
A function $f:[0,\infty)\to\SP$ is an r.c.l.l.\ function if it is right continuous and has left limits in $\SP$.
We let $\D([0,\infty),\SP)$ be the set of r.c.l.l.\ functions of time taking values in $\SP$. We endow $\D([0,\infty),\SP)$ with Skorokhod $J_1$-topology. All $\SP$-valued stochastic processes considered throughout are r.c.l.l.\ with probability one. For a sequence $\{Z^n(\cdot)\}_{n\in\N}$ of $\SP$-valued stochastic processes and an $\SP$-valued stochastic process $Z(\cdot)$,
we write $Z_n(\cdot)\Rightarrow Z(\cdot)$ as $n\to\infty$ to denote convergence in distribution in $\D([0,\infty),\SP)$. For $\SP=\R$ and $f\in\C(\R)$ we say that $\{Z_n(\cdot)\}_{n\in\N}$ converges in probability to $f$ and we write $Z_n(\cdot)\overset{p}{\to} f(\cdot)$ as $n\to\infty$ if and only if $\lim_{n\to\infty}\P\left(\|Z_n-f\|_T>\theta\right)=0$ for all $T>0$ and $\theta\in(0,1)$. In this case, $Z_n(\cdot)\Rightarrow f(\cdot)$ as $n\to\infty$ if and only if $Z_n(\cdot)\overset{p}{\to}f(\cdot)$ as $n\to\infty$ (see p.24-25 in \cite{Billingsley}).

The one-dimensional Skorokhod map $\Gamma$, which we define here, plays a significant role in our analysis. For this, we let $\D_0([0,\infty),\R)$ denote the space of all $f\in\D([0,\infty),\R)$ with $f(0)\geq0$. For $f\in \D_0([0,\infty),\R)$, $\inf_{0\leq s\leq t}f(s)$ is well defined for each $t\geq0$, and we define 
\begin{equation}\label{skorokhod}
\Gamma[f](t)=f(t)-\inf_{0\leq s\leq t}f(s)\wedge 0,\quad t\geq0.
\end{equation}
Then $\Gamma[f](\cdot)\in\D_0([0,\infty),\R_+)$.
In Appendix \ref{ap:SM} we specify three useful properties of $\Gamma$ that are used in the proofs that come later.  See \cite[Section 13.5]{Whitt} for further details.

We let $\M$ be the set of finite, nonnegative Borel measures on $\R_+$. Given $\zeta\in\M$ and a Borel measurable function $f:\R_+\to\R$ that is integrable with respect to $\zeta$, we let $
\langle f,\zeta\rangle :=\int_{\R_+} f(x)\,\zeta(dx)$.
The set $\M$ is endowed with the topology of weak convergence such that $\zeta_n\xrightarrow{w}\zeta$ as $n\to\infty$ if and only if $\lim_{n\to\infty}\langle f,\zeta_n\rangle=\langle f,\zeta\rangle$ for any $f\in\C_b(\R_+)$. With this topology, $\M$ is a Polish space \cite{Prokhorov}. We let $\0$ denote the zero measure in $\M$ and $\0(\cdot)$ denote the element of $\D([0,\infty),\M)$ that is identically equal to $\0$. Also, for any $x\in\R_+$, we let $\delta_x^+$ be the Dirac measure on $\R_+$ such that for any Borel measurable $A\subseteq \R_+$, 
\begin{equation}\label{atom+}
\delta_x^+(A)=
\begin{cases}
1,\qquad\text{if $x\in A\setminus\{0\}$,}\\
0,\qquad\text{otherwise}.
\end{cases}
\end{equation}

\section{Model, sequence of systems, and prior results}\label{scn:model}
In Section \ref{The Shortest Remaining Processing Time Queue Model}, we define the stochastic model for an SRPT queue. In Section \ref{measure valued State Descriptor}, we introduce the key processes that track the system state and describe its performance. In Section \ref{sequence}, we introduce the sequence of SRPT queues in heavy traffic. In Section \ref{FCLTs}, we summarize some of the functional central limit theorems established in prior work, which motivate the need for nonstandard scaling.

\subsection{The Shortest Remaining Processing Time Queue Model}\label{The Shortest Remaining Processing Time Queue Model}
Suppose that there is a single server processing the work associated with incoming tasks. For $t\geq0$, we let $E(t)$ denote the number of tasks that arrive to the system in $(0,t]$. We assume that $E(\cdot)$ follows a delayed renewal process with positive rate $\lambda$. Then the times between consecutive arrivals, which we refer to as the interarrival times, are positive i.i.d.\ random variables with finite mean $1/\lambda$ and finite standard deviation $\sigma_A$. The delay refers to the time of the first arrival, which is assumed to be positive and independent of future interarrival times, but may have a different distribution. The process $E(\cdot)$ is an instance of a counting process that has jumps of size one.\\
\indent 
Each task that enters the system after time 0 has associated with it a processing time, which is the amount of time that the server must spend working to complete the work embodied in that task. We let the processing times of the incoming tasks be positive i.i.d.\ random variables $\{v_i\}_{i\in\N}$, where $v_i$ denotes the processing time of the $i$-th task to arrive to the system. We assume that $E(\cdot)$ and $\{v_i\}_{i\in\N}$ are mutually independent. Also, we let $v$ be a generic random variable whose distribution is equal to that of the processing time of each task and independent of $E(\cdot)$. We assume that $\P(v=0)=0$ and
that the expected value of $v$ denoted $\E[v]$ and the standard deviation of $v$ denoted $\sigma_S$ are positive and finite.\\
\indent We assume that there are $\QQ_0$ tasks in the system at time zero. Here $\QQ_0$ is a $\Z_+$ valued random variable. We index the tasks in the system at time zero using the integers $-\QQ_0+1,-\QQ_0+2,\dots,-1,0$. Let $\{v_i\}_{i=-\infty}^0$ be a sequence of strictly positive random variables. Then for $-\QQ_0+1\leq i\leq 0$ the task with index $i$ has remaining processing time $v_i$ at time zero. We assume that 
$\sum_{i=-\QQ_0+1}^0v_i<\infty$
almost surely. Finally, we assume that $(\QQ_0,\{v_i\}_{i = -\infty}^0)$ is mutually independent of $(E(\cdot), \{v_i\}_{i=1}^\infty)$.
\\
\indent Next we must specify a scheduling policy to determine when the server will process the work associated with each task. The SRPT scheduling policy is such that at every moment when the system is not empty, the task with the shortest remaining time to completion is served first. In particular, upon the server completing the work associated with a given task, that task departs the system and the task in the system with smallest remaining processing time enters service, if such a task exists. Otherwise, there are no other tasks in the system and the server idles until a new task arrives and that task enters service. The scheduling policy is done with preemption such that when a task arrives with a total processing time that is smaller than the remaining processing time of any other task in the system including that of the task in service, the task in service is placed on hold and the arriving task enters service. Otherwise, the arriving task waits in the system until its processing time is the smallest among all those in the system, as which time it enters service. \\
\indent We let $\rho=\lambda\E[v]$ be the traffic intensity of this system. Then $\rho$ represents the nominal load on the server. Specifically, the system is stable if $\rho<1$, critically loaded if $\rho=1$, and unstable if $\rho>1$. It is common for such a system to operate close to the critically loaded regime. Therefore it is of interest to understand system performance for $\rho\approx1$. We describe our approach for this in Section \ref{sequence} below after introducing the state descriptor and performance processes in Section \ref{measure valued State Descriptor}.

\subsection{State Descriptor and Performance Processes}\label{measure valued State Descriptor}
In order to describe the state of the system, one must keep track of the remaining processing time of each task in the system. Encoding these in a measure turns out to be effective. For this for $i\in\Z$, let $u_i=\inf\{t\geq0:E(t)\geq i\}$, so that $u_i$ denotes the time of $i$-th arrival. Then $u_i=0$ for $i\leq 0$ and $u_1>0$. Let $T_1 = u_1$ and $T_{i+1} = u_{i+1}-u_i$ for $i\in\N$. Then, for $i\in\N$, $u_i$ denotes the time at which task $i$ arrives to the system, and, for $i\geq2$, $T_i$ denotes the time that elapses between the $(i-1)$-th and $i$-th arrival. Then $\{T_i\}_{i=2}^\infty$ is an i.i.d.\ sequence of positive random variables that is independent of $T_1$. We remind the reader that $T_1$ does not necessarily have the same distribution as $T_i$ for $i\geq2$, as it corresponds to the initial delay. For convenience, we let $T$ denote a random variable that is equal in distribution to $T_2$ and independent of $v$. Note that $\E[T]=1/\lambda$ and $\Var[T]=\sigma_S$.

For each $t\geq0$ and $-\QQ_0+1\leq i\leq E(t)$, we let $v_i(t)$ be the remaining processing time at time $t$ of the $i$-th task to arrive by time $t$. Then, for $i\in\N$, $v_i(u_i)=v_i$ and, for convenience, we define $v_i(t)=v_i$ for $t\in[0,u_i)$.  In addition, for each $t\geq0$ and $i\ge -\QQ_0+1$, $v_i(t)=0$ if and only if the server has completed processing the work associated with task $i$, in which case task $i$ has departed the system by time $t$.  Then, for each $i\ge -\QQ_0+1$, $v_i(\cdot)$ is continuous, decreases at rate one when task $i$ is being served and remains constant otherwise.  For convenience for $i\ge -\QQ_0+1$, we often refer to $v_i(t)$ as the {\it size} of task $i$ at time $t$ or, when it is clear from context which time $t$ is being referenced, we simply say the {\it size} of task $i$.

The associated measure valued state descriptor $\Q(\cdot)\in\D([0,\infty),\mathcal{M})$ is such that, for each $t\geq0$,
\[
\Q(t)=\sum_{i=-\QQ_0+1}^{E(t)}\delta_{v_i(t)}^+.
\]
For this, recall the definition of $\delta_x^+$ in \eqref{atom+}. The $+$ superscript captures that tasks depart the system once their size hits zero. Thus, for all Borel measurable $A\subseteq \R_+$ and $t\geq0$, $\langle\id_A,\Q(t)\rangle=\sum_{i=-\QQ_0+1}^{E(t)}\langle\id_A,\delta^+_{v_i(t)}\rangle=\sum_{i=-\QQ_0+1}^{E(t)}\id_{A\setminus\{0\}}(v_i(t))$, which is the number of tasks in the system with remaining processing time in $A$.\\

\indent Next we define the {\it queue length} and the {\it workload} processes, which are denoted by $\QQ$ and $W$ respectively. To define these, let $\chi$ be the identity function on $\R_+$, i.e., $\chi(x)=x$ for all $x\in\R_+$. Then, for each $t\geq0$,
\begin{equation}\label{defQandW}
 \QQ(t)=\langle \id,\Q(t)\rangle\qquad\text{and}\qquad W(t)=\langle\chi,\Q(t)\rangle.   
\end{equation}
For any $t\geq0$, $\QQ(t)$ is the number of tasks in the system. We note that $\QQ(0) = \QQ_0<\infty$ almost surely by assumption, and for all $t\geq 0$, $\QQ(t)\leq \QQ(0)+E(t)<\infty$. Also, for each $t\geq0$, $W(t)=\sum_{i=-\QQ_0+1}^{E(t)}v_i(t)$, which is the total time that the server must spend working to complete the processing of all tasks in the system. Observe that $W(0) = \sum_{i=-\QQ(0)+1}^0v_i<\infty$ almost surely by assumption. Moreover, for all $t\geq 0$, $W(t)\leq W(0) + \sum_{i=1}^{E(t)}v_i<\infty$ almost surely. The processes $\QQ$ and $W$ are fundamental performance processes as they provide natural measures of congestion.

Finally, we define the {\it load}, {\it netput} and {\it idle time} processes, which are denoted by $V$, $X$ and $I$ respectively. For $t\geq0$, set
\begin{equation}
V(t)=\sum_{i=1}^{E(t)}v_i,\quad X(t)=W(0)+V(t)-t\quad\text{and}\quad I(t):=\int_0^t\id_{\{\QQ(s)=0\}}ds.    \label{definition of V and X}
\end{equation}
For $t\geq0$, $I(t)$ denotes the cumulative time in $[0,t]$ during which their are no tasks in the system and the server idles. Then $W(t)=W(0)+V(t)-t+I(t)=X(t)+I(t)$ for each $t\geq0$.
Due to Lemma 13.4.1 in \cite{Whitt},
$I(t)=-\inf_{0\leq s\leq t}X(s)\wedge 0$
for all $t\ge 0$.
Hence, for each $t\ge 0$,
\begin{equation}\label{I(t)}
W(t)=\Gamma[X](t),
\end{equation}
where $\Gamma$ is the one-dimensional Skorokhod map as defined in \eqref{skorokhod}.

\subsection{Sequence of Systems Approaching Heavy Traffic}\label{sequence}
\indent We study the behavior of the system when $\rho\approx 1$. For this, we consider a sequence of SRPT queues approaching the critically loaded regime by allowing the arrival rates to approach $1/\E[v]$, so that the traffic intensity $\rho$ approaches 1. For this, we let $R=\{r_i\}_{i\in\N}$ be a real valued, increasing sequence such that $r_i\in(1/\E[v],\infty)$ for all $i\in\N$ and $\lim_{i\to\infty}r_i=\infty$. To avoid cluttering the notation, we suppress the subscript $i$ and just write $R=\{r\}$. We consider a sequence of SRPT queues indexed by $R$ such that the processing time distribution of each queue does not depend on its index.  We continue to denote the common cumulative distribution function of these $R$ indexed queues by $F$, the sequence of processing times by $\{v_i\}_{i\in\N}$ and the generic random variable with distribution function $F$ by $v$.

For each $r\in R$, we denote the initial condition, parameters, and primitive input, performance and other associated processes of the $r$-th system with a superscript $r$, i.e., $\QQ_0^r$, $\{v_i^r\}_{i=-\infty}^0$, $\lambda^r$, $\sigma_A^r$, $E^r(\cdot)$, $\{u_i^r\}_{i\in\N}$, $T_1^r$, $\{T_i^r\}_{i\geq2}$, $T^r$, $\rho^r=\lambda^r\E[v]$, $\Q^r$, $\QQ^r$, $W^r$, $V^r$, $X^r$ and $I^r$. We assume that for some $\kappa\in\R$, $\sigma_A\in(0,\infty)$, $C_1>0$, and $\ell>0$, 
\begin{align}
&\lim_{r\to\infty}r(\rho^r-1)=\kappa,     \label{r rho convergence}\\ &\lim_{r\to\infty}\sigma_A^r=\sigma_A,     \label{sigma convergence}\\
&\sup_{r\in R}{\mathbb E}[(T_1^r)^2]=C_1<\infty,\qquad\text{and}\qquad\sup_{r\in R}\E\left[(T^{r})^{2+\ell}\right]<\infty.\label{u convergence}
\end{align}
\noindent Since \eqref{r rho convergence} implies that $\lim_{r\to\infty}\rho^r=1$ the sequence of systems is approaching critical loading. In fact, \eqref{r rho convergence} gives that $\rho^r-1$ is of order $1/r$ as $r\to\infty$. With this, the sequence of systems is said to be in heavy traffic.  For convenience, we let $\lambda = 1/\E[v]$
and make note that due to \eqref{r rho convergence},
\begin{equation}\label{lambda}
\lim\limits_{r\to\infty}\lambda^r=\lambda.
\end{equation}
Together \eqref{r rho convergence}, \eqref{sigma convergence} and \eqref{u convergence} ensure that the arrival processes satisfy a functional central limit theorem (see \eqref{E hat convergence} below).

\subsection{Functional Central Limit Theorems}\label{FCLTs}
\noindent In this section, we review some prior results concerning functional central limit theorems for the workload and queue length processes.
This is of some importance as the limiting workload process appears in the statement of the the main results in this paper.  Henceforth, we fix a standard Brownian motion which we denote as $B(\cdot)$.

To begin, recall \eqref{definition of V and X} and, for $r\in R$ and $t\geq0$, define
\begin{alignat*}{3}
\overline{E}^r(t)&=\frac{E^r(r^2t)}{r^2},
&\widehat{E}^r(t)&=\frac{E^r(r^2t)-\lambda^rr^2t}{r},\\
\widehat{W}^r(t)&=\frac{W^r(r^2t)}{r},
&\widehat{V}^r(t)&=\frac{V^r(r^2t)-\rho^rr^2t}{r},\hbox{ and}\\
\widehat{X}^r(t)&=\frac{X^r(r^2t)}{r},\hbox{ so that }&\widehat{X}^r(t)&=\widehat{W}^r(0)+\widehat{V}^r(t)+(\rho^r-1)rt.
\end{alignat*}
Since time and space are scaled with the same factor, $\overline{E}^r(\cdot)$ is a functional law of large numbers scaling of $E^r(\cdot)$. Similarly processes denoted with a $\widehat{\quad}$ are under functional central limit or diffusion scaling. As a consequence of Corollary 7.3.1 in \cite{Whitt} and assumptions \eqref{r rho convergence} - \eqref{u convergence},
as $r\to\infty$, 
\begin{equation}
    \widehat{E}^r(\cdot)\Rightarrow E^*(\cdot):=\lambda^{3/2}\sigma_AB(\cdot)
    \qquad\hbox{and}\qquad
\overline{E}^r(\cdot)\Rightarrow\lambda(\cdot),
    \label{E hat convergence}
\end{equation}
where $\lambda$ is as in \eqref{lambda}
and $\lambda(t)=\lambda t$ for all $t\geq 0$. 

Recall that $\sigma_S^2=\Var[v]$. Define 
\begin{equation}
\sigma^2=\lambda(\sigma_S^2+\sigma_A^2).\label{sigma}
\end{equation}
The next two results are well known and will be leveraged later on in the analysis.  Proofs are given in Appendix \ref{ap:FCLTs} for the reader's convenience.

\begin{prop}\label{functional clt convergence}
As $r\to\infty$, we have $\widehat{V}^r(\cdot)\Rightarrow\sigma B(\cdot)$.
\end{prop}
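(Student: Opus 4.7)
The plan is to decompose the centered compound renewal process $V^r(r^2t)-\rho^rr^2t$ into a centered partial sum of service times evaluated at a random time change plus a multiple of the centered arrival process, and then to apply Donsker's theorem together with the FCLT for $\widehat{E}^r$ already stated in \eqref{E hat convergence}.

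Specifically, let $\mu=\E[v]=1/\lambda$. Since $\rho^r=\lambda^r\mu$, adding and subtracting $\mu E^r(r^2t)$ inside the definition of $\widehat{V}^r$ yields
\begin{equation*}
\widehat{V}^r(t)=\frac{1}{r}\sum_{i=1}^{E^r(r^2t)}(v_i-\mu)+\mu\,\widehat{E}^r(t).
\end{equation*}
Define $\widehat{S}^r(t):=\frac{1}{r}\sum_{i=1}^{\lfloor r^2 t\rfloor}(v_i-\mu)$. Since $\{v_i\}_{i\in\N}$ is i.i.d.\ with mean $\mu$ and variance $\sigma_S^2<\infty$, Donsker's theorem gives $\widehat{S}^r(\cdot)\Rightarrow\sigma_S B_1(\cdot)$ as $r\to\infty$, where $B_1$ is a standard Brownian motion. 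Moreover, by the independence of $(E^r(\cdot))_{r\in R}$ and $\{v_i\}_{i\in\N}$ together with the convergence of $\widehat E^r$ in \eqref{E hat convergence}, we have the joint convergence
\begin{equation*}
\bigl(\widehat{S}^r(\cdot),\ \overline{E}^r(\cdot),\ \widehat{E}^r(\cdot)\bigr)\Rightarrow\bigl(\sigma_S B_1(\cdot),\ \lambda(\cdot),\ \lambda^{3/2}\sigma_A B(\cdot)\bigr),
\end{equation*}
with $B_1$ independent of $B$.

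Next I would apply the random time change theorem (e.g., Theorem 3.14.1 in Whitt). Because $\overline{E}^r$ converges to the deterministic continuous function $\lambda(\cdot)$, composition is continuous at the limit point and we obtain
\begin{equation*}
\widehat{S}^r\!\left(\overline{E}^r(\cdot)\right)\Rightarrow\sigma_S B_1(\lambda\,\cdot).
\end{equation*}
Since $E^r(r^2t)=r^2\overline{E}^r(t)$, one checks that $\frac{1}{r}\sum_{i=1}^{E^r(r^2t)}(v_i-\mu)=\widehat{S}^r(\overline{E}^r(t))$, so the first term of $\widehat{V}^r$ converges jointly with $\widehat{E}^r$ to a pair of independent time-scaled Brownian motions. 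Adding the second term gives
\begin{equation*}
\widehat{V}^r(\cdot)\Rightarrow\sigma_S B_1(\lambda\,\cdot)+\mu\lambda^{3/2}\sigma_A B(\cdot)=\sqrt{\lambda}\,\sigma_S\,\widetilde B_1(\cdot)+\sqrt{\lambda}\,\sigma_A B(\cdot),
\end{equation*}
where $\widetilde B_1(t):=B_1(\lambda t)/\sqrt{\lambda}$ is a standard Brownian motion independent of $B$. This limit is a centered Gaussian process with stationary independent increments and variance $\lambda(\sigma_S^2+\sigma_A^2)t=\sigma^2 t$, hence equal in distribution to $\sigma B(\cdot)$.

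The main technical point, and the only subtle one, is the random time change step: it requires joint weak convergence of $(\widehat{S}^r,\overline{E}^r)$ and continuity of composition at the limit, both of which follow because the limit of $\overline{E}^r$ is deterministic and continuous and because of the independence of $\{v_i\}$ from $E^r(\cdot)$. Everything else is bookkeeping: Donsker for the service-time partial sums, the prior FCLT \eqref{E hat convergence} for the arrivals, and independence of $B_1$ and $B$ to sum the variances.
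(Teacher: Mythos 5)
Your proof is correct and follows essentially the same route as the paper's: decompose $\widehat{V}^r$ into the centered partial-sum process composed with $\overline{E}^r$ plus $\E[v]\,\widehat{E}^r$, invoke the FCLT for the i.i.d.\ service times and \eqref{E hat convergence}, use independence of $\{v_i\}$ and $E^r(\cdot)$ for joint convergence, apply the random time change theorem (valid since the limit $\lambda(\cdot)$ is deterministic and continuous), and add the independent Brownian terms to get variance $\sigma^2$. The only cosmetic difference is that the paper routes the partial-sum FCLT through the triangular-array statement Proposition \ref{prop:FCLT} (and explicitly tracks the $\lfloor r^2x\rfloor - r^2x$ correction term in defining $\widehat{K}^r$) because it reuses that proposition for $r$-dependent arrays in Lemma \ref{Joint convergence of V}, whereas you invoke Donsker directly, which suffices here.
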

-
\begin{prop}\label{Def of X*}
Suppose that there exists a nonnegative random variable $\widehat{W}^*(0)$ such that $\widehat{W}^r(0)\Rightarrow\widehat{W}^*(0)$ as $r\to\infty$. Then, as $r\to\infty$,
\begin{equation}\label{difusion of X}
\widehat{X}^r(\cdot)\Rightarrow X^*(\cdot):=W^*(0)+\sigma B(\cdot)+\kappa(\cdot),
\end{equation}
where $W^*(0)\overset{d}{=}\widehat{W}^*(0)$, $W^*(0)$ is independent of $B(\cdot)$ and $\kappa(t)=\kappa t$ for all $t\geq0$. In addition, as $r\to\infty$,
\begin{equation}\label{diffusion of W}
    \widehat{W}^r(\cdot)\Rightarrow W^*(\cdot):=\Gamma[X^*](\cdot).
\end{equation}
\end{prop}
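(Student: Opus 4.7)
The plan is to decompose $\widehat{X}^r$ using the identity $\widehat{X}^r(t) = \widehat{W}^r(0) + \widehat{V}^r(t) + (\rho^r-1)r\,t$ recorded just before the statement, and to obtain the convergence of $\widehat{X}^r$ by combining three ingredients: the hypothesized convergence $\widehat{W}^r(0) \Rightarrow \widehat{W}^*(0)$, the functional CLT $\widehat{V}^r(\cdot) \Rightarrow \sigma B(\cdot)$ supplied by Proposition \ref{functional clt convergence}, and the deterministic convergence $(\rho^r - 1)r \to \kappa$ from \eqref{r rho convergence}. Then the second display will follow from the first by the continuous mapping theorem applied to the one-dimensional Skorokhod map $\Gamma$.

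The key step is joint convergence, since for the sum we need $(\widehat{W}^r(0), \widehat{V}^r(\cdot)) \Rightarrow (W^*(0), \sigma B(\cdot))$ with $W^*(0)$ independent of $B(\cdot)$. I would obtain this from the model's independence assumption: $\widehat{W}^r(0) = r^{-1}\sum_{i=-\QQ_0^r+1}^{0} v_i^r$ is a function of the initial data $(\QQ_0^r, \{v_i^r\}_{i=-\QQ_0^r+1}^{0})$, whereas $\widehat{V}^r(\cdot)$ is built from $(E^r(\cdot), \{v_i^r\}_{i\in\N})$, and the two families are independent for every $r\in R$. Since independent sequences whose marginals converge jointly converge to the product of the limits, the product-measurability gives the desired joint limit with the required independence between $W^*(0)$ and $B(\cdot)$. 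Adding the deterministic drift $(\rho^r-1)r\,\chi$, which converges uniformly on compacts to $\kappa\chi$, and using that addition is continuous at points of $\D([0,\infty),\R)$ with continuous paths, the continuous mapping theorem delivers $\widehat{X}^r(\cdot) \Rightarrow X^*(\cdot)$ in $\D([0,\infty),\R)$, establishing \eqref{difusion of X}.

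For \eqref{diffusion of W}, I would invoke the scale invariance $\Gamma[f(r^2\cdot)/r] = \Gamma[f](r^2\cdot)/r$ for $f\in\D_0([0,\infty),\R)$, a direct consequence of the definition \eqref{skorokhod}, applied to $f = X^r$ together with \eqref{I(t)}. This yields $\widehat{W}^r(\cdot) = \Gamma[\widehat{X}^r](\cdot)$. Since $X^*(\cdot)$ has continuous sample paths almost surely (it is Brownian motion plus a deterministic line plus a random constant) and $\Gamma$ is continuous on $\D([0,\infty),\R)$ at every continuous path (see Appendix \ref{ap:SM} and \cite[Section 13.5]{Whitt}), the continuous mapping theorem gives $\widehat{W}^r(\cdot) \Rightarrow \Gamma[X^*](\cdot) = W^*(\cdot)$.

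The main obstacle, if any, is the joint convergence step: marginal convergence of $\widehat{W}^r(0)$ and $\widehat{V}^r(\cdot)$ is not by itself enough, and the argument hinges on the built-in independence of initial conditions from future primitives in the model specification in Section \ref{The Shortest Remaining Processing Time Queue Model}. Once that is in hand, the remainder is routine continuous-mapping and scale-invariance bookkeeping; no refined estimates on the Skorokhod map or on $\widehat{V}^r$ beyond Proposition \ref{functional clt convergence} are needed.
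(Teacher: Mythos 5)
Your proposal is correct and takes essentially the same route as the paper: decompose $\widehat{X}^r(t)=\widehat{W}^r(0)+\widehat{V}^r(t)+(\rho^r-1)rt$, upgrade the marginal convergences to joint convergence via the built-in independence of the initial condition from the primitives, pass the sum through the continuous mapping (the paper phrases this via the convergence-together theorem, but the content is the same), and then obtain \eqref{diffusion of W} from \eqref{difusion of X} using $\widehat W^r=\Gamma[\widehat X^r]$ together with continuity of $\Gamma$ at continuous paths. Your explicit remark on the diffusion-scale invariance $\Gamma[f(r^2\cdot)/r]=\Gamma[f](r^2\cdot)/r$ spells out a step the paper treats as immediate from \eqref{I(t)}, but is not a different argument.
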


The convergence stated as \eqref{diffusion of W} in Proposition  \ref{Def of X*} was originally proved in \cite{MultipleChannel} in a bit more generality as the processing time distributions can also vary with $r\in R$.  In addition, it holds for any single server queue obeying a scheduling policy that does not idle when there is work in the system, which includes SRPT.
To the contrary, establishing a functional central limit theorem for the measure valued state descriptor and, in particular, the queue length process requires leveraging the specifics of the SRPT scheduling policy. This was done in \cite{Gromoll}. We restate this result here as Proposition \ref{seminal result} below in the case where the processing time distributions do not vary with $r\in R$ and have unbounded support.  For $r\in R$ and $t\geq0$, let 
\[
\widehat{\Q}^r(t)=\frac{1}{r}\Q^r(r^2t)
\qquad\text{and}\qquad
\widehat{\QQ}^r(t) = \langle\id,\widehat{\Q}^r(t)\rangle.
\]

\begin{prop}\label{seminal result}
Suppose that $\P(v>x)>0$ for all $x\in\R_+$ and
$\widehat{\Q}^r(0)
\Rightarrow \0$ as $r\to\infty$. Then,  
$\widehat{\Q}^r(\cdot)\Rightarrow\0(\cdot)$ as $r\to\infty$.
Equivalently, 
$
\widehat{\QQ}^r(\cdot)\Rightarrow0(\cdot)$ as $r\to\infty$.
\end{prop}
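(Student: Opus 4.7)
The plan is to establish the equivalent statement $\widehat{\QQ}^r(\cdot)\Rightarrow 0(\cdot)$. Since $\0\in\M$ is the zero measure, weak convergence $\mu_n\xrightarrow{w}\0$ in $\M$ is equivalent to $\langle\id,\mu_n\rangle\to 0$ (any finite nonnegative Borel measure with zero total mass equals $\0$, and the total-mass functional is continuous), so the two formulations coincide. It therefore suffices to show that for each $T>0$ and $\epsilon>0$, $\P(\|\widehat{\QQ}^r\|_T>\epsilon)\to 0$ as $r\to\infty$.

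Fix $T>0$ and $M>0$, and decompose
\[
\widehat{\QQ}^r(t) \;=\; \widehat{\QQ}^r_{\le M}(t) \,+\, \widehat{\QQ}^r_{>M}(t),
\]
where $\widehat{\QQ}^r_{\le M}(t):=\langle\id_{(0,M]},\widehat{\Q}^r(t)\rangle$ and $\widehat{\QQ}^r_{>M}(t):=\langle\id_{(M,\infty)},\widehat{\Q}^r(t)\rangle$. The large-task count is controlled crudely by the workload: each task of remaining size $>M$ consumes at least $M$ units of work, so $M\QQ^r_{>M}(s)\le W^r(s)$ for every $s\ge 0$, which yields $\widehat{\QQ}^r_{>M}(t)\le \widehat{W}^r(t)/M$. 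The identity $\widehat{W}^r=\Gamma[\widehat{X}^r]$, Proposition \ref{functional clt convergence}, and continuity of $\Gamma$ together yield tightness of $\widehat{W}^r(\cdot)$ on $[0,T]$ under the standing assumptions (together with the usual stochastic boundedness of $\widehat{W}^r(0)$ implicit in this setting). Consequently $\|\widehat{\QQ}^r_{>M}\|_T$ is of order $1/M$ in probability, uniformly in $r$.

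The crux is the small-task count. To control it, I would introduce an auxiliary single-server SRPT system driven by the same arrival process but with processing times truncated at $M$, i.e., $\tilde v_i^r:=v_i^r\wedge M$. Because $\P(v>M)>0$ by hypothesis, this auxiliary system has limiting traffic intensity $\lambda\,\E[v\wedge M]<1$ and is therefore strictly subcritical, so its scaled queue length converges to zero by standard $GI/GI/1$ results. The key step is a pathwise coupling yielding $\QQ^r_{\le M}(s)\le \tilde\QQ^r(s)+o(r)$: since SRPT gives preemptive priority to smaller tasks, the small-task subsystem of the original queue receives its input from arrivals with $v_i\le M$ (matching the auxiliary input) plus occasional cross-overs of larger tasks being served down through the threshold $M$, and such cross-overs can only occur when no size-$\le M$ task is present, so they can be absorbed into the auxiliary dynamics. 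Letting $r\to\infty$ for fixed $M$ and then $M\to\infty$ yields the conclusion.

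The main obstacle is making the pathwise comparison with the truncated auxiliary system rigorous: tracking the threshold crossings of large tasks under SRPT, and translating them into a clean domination by the subcritical auxiliary workload, requires delicate bookkeeping along the SRPT sample path. Once this is in hand, the two bounds combine to give $\limsup_{r\to\infty}\P(\|\widehat{\QQ}^r\|_T>\epsilon)\le \limsup_{r\to\infty}\P(\|\widehat{W}^r\|_T>M\epsilon/2)$, which tends to $0$ as $M\to\infty$ by tightness of $\widehat{W}^r$.
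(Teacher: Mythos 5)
The paper itself does not prove this proposition; it is cited to \cite{Gromoll} and restated without argument, so there is no in-paper proof to compare against. Your overall strategy does match the one used there and the machinery imported into this paper (Propositions \ref{W bounded by Y} and \ref{Prop 10 in SRPT}): split the count at a threshold $M$, bound the number of tasks with remaining size $>M$ by $\widehat{W}^r/M$ together with tightness of $\widehat{W}^r$, show the count below $M$ vanishes by comparison with a strictly subcritical auxiliary system (subcritical precisely because $\P(v>M)>0$ for every $M$), and then send $M\to\infty$. The equivalence between the measure-valued and total-mass formulations of the conclusion is handled correctly.

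The gap is the one you yourself flag: the pathwise comparison for the small tasks. The auxiliary you propose truncates processing times to $v\wedge M$, and you assert $\QQ^r_{\le M}(s)\le\tilde{\QQ}^r(s)+o(r)$ without justification. There is no direct monotone coupling here: a single arrival of size $2M$ contributes to $\QQ^r_{\le M}$ on $[M,2M)$ but is already absent from the truncated auxiliary by time $M$, so the auxiliary does not instantaneously dominate the original's small-task count. The cleaner and standard choice of auxiliary is the one that \emph{drops} arrivals with $v_i>M$ altogether, i.e.\ the $a$-truncated SRPT queue of Section \ref{a-truncated SRPT Queues}. With that auxiliary the structural fact you correctly identified (a large task can only be served, and hence cross below $M$, while no task of remaining size $\le M$ is present, so at most one cross-over task is ever in the system) yields the exact deterministic bound $\QQ_M^r(t)\le\ZZZ_M^r(t)+1$ of Proposition \ref{Prop 10 in SRPT}, and $\ZZZ_M^r$ is the queue length of a strictly subcritical SRPT queue, so $\widehat{\ZZZ}_M^r\Rightarrow 0(\cdot)$. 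To salvage your truncated auxiliary you would additionally need $\ZZZ_M^r\le\tilde{\QQ}^r$, a monotonicity-of-SRPT-under-added-tasks statement that you do not establish; replacing the truncated auxiliary by the ignoring one and invoking the $+1$ bound directly is the simplest fix, after which the rest of your outline goes through.
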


In contrast to the convergence in \eqref{diffusion of W}, Proposition \ref{seminal result} implies that the queue length process of an SRPT queue
vanishes in the limit under diffusion scaling when the common processing time distribution has unbounded support. Thus, in this case, the diffusion scaled queue length process is orders of magnitude smaller than the diffusion scaled workload process. So, as $r\to\infty$, the work is being embodied in a relatively small number of tasks with very large processing times. The results in this paper characterize this order of magnitude difference more fully in the case where the processing time distribution satisfies \eqref{lt}.

\section{Main Results}\label{scn:main}
Henceforth, we assume that $F$ is continuous, $F(0)=0$ (so that $\P(v=0)=0$) and $\overline{F}(\cdot)=1-F(\cdot)$ is rapidly varying with index $-\infty$ as in \eqref{lt}.
Note that \eqref{lt} implies that $F(x)<1$ for all $x\in\R_+$ and that $\E[v]$ and $\sigma_S$ are positive and finite.  We continue to assume that the heavy traffic conditions \eqref{r rho convergence}, \eqref{sigma convergence} and \eqref{u convergence} hold throughout.  

\subsection{Distribution Dependent Scaling}\label{DD Scaling}
\indent To obtain a nontrivial limit for the queue length process and more fully describe the order of magnitude of difference between the workload and queue length processes exhibited in Propositions \ref{Def of X*} and \ref{seminal result}, a distribution dependent scaling is
used. The function $S^{-1}$ defined in \eqref{Sinv} plays a critical role in this.
For convenience, for each $r\in R$, let
\begin{equation}
c^r:=S^{-1}(r).\label{cr is s inverse r}    
\end{equation}
For all $r\in R$, $c^r>0$ since $r>\lambda$. Also, $\lim_{r\to\infty}c^r=\infty$.  To illustrate the rate of growth, we consider Weibull distributed processing times.  If $\overline{F}(x)=\exp(-\lambda x)$ for $x\in\R_+$, then $S(x)=\frac{\lambda e^{\lambda x}}{\lambda x+ 1}$ for $x\in\R_+$ and so $c^r\sim \lambda^{-1}\log{r}$ as $r\to\infty$. More generally, if $\overline{F}(x)=\exp(-(\mu x)^\alpha)$ for $x\in\R_+$, for some $\mu,\alpha>0$, then $c^r\sim\mu^{-1}(\log{r})^{1/\alpha}$ as $r\to\infty$.  In general, $S^{-1}$ is slowly varying, which is shown in Section \ref{scn:SandSInv} where properties of $S$ and $S^{-1}$ are developed.

For $r\in R$ and $t\geq0$, we define 
\begin{equation}
\widetilde{\Q}^r(t)=\frac{c^r}{r}\sum_{i=-\QQ_0^r+1}^{E^r(r^2t)}\delta^+_{\frac{v_i^r(r^2t)}{c^r}}.    \label{Tilde Q}
\end{equation}
Thus, the ``tilde" corresponds to a nonstandard, distribution dependent scaling. Like difussion scaling time is scaled by $r^2$, but space is scaled by $c^r/r$ instead of $1/r$, which boosts the weight of each task to prevent the total mass from vanishing in the limit as it does under diffusion scaling (see Proposition \ref{seminal result}). Furthermore, mass at $x$ in the measure value state descriptor $\Q^r(\cdot)$ is relocated to $x/c^r$ in $\widetilde{\Q}^r(\cdot)$. This prevents mass from sliding out to infinity and being lost in the limit. This scaling takes the same form as that used in \cite{heavy tails}, where \eqref{ht} is assumed to hold so that the processing times in that work are heavy rather than light tailed.

This scaling is motivated by the fluid model developed in \cite{Down}, which has measure valued solutions.  The function $s$ defined in (18) of \cite{Down} is equal to the function $S$ defined here in \eqref{S} times the nondecreasing function $w_0:\R_+\to\R_+$, where for $x\in\R_+$ $w_0(x)$ is
the amount of work in the fluid model at time zero associated with tasks of size less or equal $x$ at time zero.  Then,  for $x\in\R_+$, $s(x)$ is the first time at which the fluid model has no mass in $[0,x]$, and the right continuous inverse of $s$ denoted as $s_r^{-1}$ in \cite{Down} tracks the time evolution of left edge of the support of the measure valued fluid model solution. See Theorem 3.1 in \cite{Down}.  Thus, one can regard the relocation of mass here through division by $S^{-1}(r)=c^r$ in the $r$-th system as centering about the order of magnitude suggested by the left edge of the support in the fluid model.  The multiplication of space by $S^{-1}(r)=c^r$ in the $r$-th system has the boosting effect that gives the appropriate weight for computing each task's contribution to the current work in system.

For $r\in R$ and $t\geq0$, we define 
\begin{equation}
\widetilde{\QQ}^r(t)=\langle\id,\widetilde{\Q}^r(t)\rangle\qquad\text{and}\qquad\widetilde{W}^r(t)=\langle\chi,\widetilde{\Q}^r(t)\rangle.    \label{definition of QQ and W}
\end{equation}
Then
$\widetilde{\QQ}^r(t)=c^r\widehat{\QQ}^r(t)$ and $\widetilde{W}^r(t)=\widehat{W}^r(t)$
for all $r\in R$ and $t\geq0$.  For $r\in R$ and $a\in\R_+$, we define the $a$-{\it cutoff queue length} and {\it workload} processes
\begin{align}\label{cutoff}
    {\QQ}_a^r(\cdot)=\langle\id_{[0,a]},{\Q}^r(\cdot)\rangle\qquad&\text{and}\qquad{W}_a^r(\cdot)=\langle\chi_a,{\Q}^r(\cdot)\rangle,
\end{align}
and the
$a$-{\it cutoff scaled queue length} and {\it workload} processes
\begin{align*}
    \widehat{\QQ}_a^r(\cdot)=\langle\id_{[0,a]},\widehat{\Q}^r(\cdot)\rangle\qquad&\text{and}\qquad\widehat{W}_a^r(\cdot)=\langle\chi_a,\widehat{\Q}^r(\cdot)\rangle,\\
    \widetilde{\QQ}_a^r(\cdot)=\langle\id_{[0,a]},\widetilde{\Q}^r(\cdot)\rangle\qquad&\text{and}\qquad\widetilde{W}_a^r(\cdot)=\langle\chi_a,\widetilde{\Q}^r(\cdot)\rangle.
\end{align*}
Then $\widetilde{\QQ}_a^r(t)=c^r\widehat{\QQ}_{ac^r}^r(t)$ and $\widetilde{W}_a^r(t)=\widehat{W}_{ac^r}^r(t)$
for all $r\in R$, $a\in\R_+$ and $t\geq0$.

\subsection{Asymptotic conditions for the sequence of initial conditions}\label{initial}

As in Proposition \ref{Def of X*}, we assume that there exists a nonnegative random variable $\widehat{W}^*(0)$ such that $\widehat{W}^r(0)\Rightarrow\widehat{W}^*(0)$ as $r\to\infty$. Henceforth, $W^*$ as in Proposition \ref{Def of X*}. In particular, $W^*(0)$ is independent of $B(\cdot)$. Then, since $\widetilde{W}^r(\cdot) = \widehat{W}^r(\cdot)$ for all $r\in R$, it follows from \eqref{diffusion of W} that 
$\widetilde{W}^r(\cdot)\Rightarrow W^*(\cdot)$ as $r\to\infty$.
Let 
\[
W^*_a(0) = \begin{cases}
    0, & a<1,\\
    W^*(0), & a\geq1.
\end{cases}
\]
We assume that
\begin{align}
\left(\left(\widetilde{W}^r_a(0),a\in\R_+\right),\widetilde{W}^r(0)\right)\Rightarrow \left(\left(W_a^*(0), a\in\R_+\right),W^*(0)\right),\qquad\text{as } r\to\infty.   \label{W sub joint convergence}
\end{align}
Note that $\left(\left(W_a^*(0), a\in\R_+\right),W^*(0)\right)$ and $B(\cdot)$ are mutually independent since $W^*(0)$ and $B(\cdot)$ are mutually independent.
In addition, for all $a\in[0,1)$, $\widetilde{W}_a^r(0)\Rightarrow 0$ as $r\to\infty$, and so
\begin{equation} 
 \widetilde{W}^r_a(0)\overset{p}{\to}0,\qquad\text{as } r\to\infty.   \label{W tilde sub a r 0 convergence to 0 in prob}
\end{equation}
We further assume that there exists $\eta_0\in(0,1)$ and $M_0>1$ such that 
\begin{equation}
\limsup_{r\to\infty}\sup_{a\in[M_0/c^r,1]}a^{-(1+\eta_0)}\E[\widetilde{W}^r_a(0)]<\infty.\label{boundedness of tilde W by a}
\end{equation}
Finally, we assume that for all $a\in \R_+$, 
\begin{equation}
\widetilde{\Q}^r(0)\Rightarrow W^*(0)\delta_1^+,\qquad\text{as } r\to\infty.\label{q tilde initial 0}
\end{equation}

The condition \eqref{W sub joint convergence} is analogous to condition (2.14) in \cite{heavy tails}, although the limit in \cite{heavy tails} is a continuous process that increases from zero to $W^*(0)$ as $a$ tends to infinity, rather than a step function.
Condition \eqref{W sub joint convergence} is also in the same spirit as condition (3.9) in \cite{Puha}, but more straightforward to understand.
Condition \eqref{boundedness of tilde W by a} is equivalent to condition (2.16) in \cite{heavy tails}, but restated to avoid reference to $p$ since \eqref{lt} holds instead of \eqref{ht}.
Finally, condition \eqref{q tilde initial 0} takes the place of condition (2.19) in \cite{heavy tails} as it is necessary for our measure valued limit theorem and it implies that condition (2.19) in \cite{heavy tails} holds.  Conditions (2.15) and (2.17) in \cite{heavy tails} are not assumed to hold here as they are not needed due to the constant nature of the limit process $W_{\cdot}^*(0)$ above one.

\subsection{Limit Theorems}\label{Limit Theorems}
 The first theorem describes the limiting behavior of the random field of cutoff distribution dependent scaled workload processes. Recall $X^*$ and $W^*=\Gamma[X^*]$ from Proposition \ref{Def of X*}. For $t\geq0$, 
\[
X_a^*(t):=
\begin{cases}
-\infty,& \text{if $0\leq a<1$,}\\
X^*(t)-\lambda t,&\text{if }a=1,\\
X^*(t),&\text{if }a>1.
\end{cases}
\hbox{ and}\quad
W^*_a(t):=
\begin{cases}
0,&\text{if $0\leq a<1$,}\\
\Gamma[X^*_1](t),&\text{if $a=1$,}\\
\Gamma[X^*](t),&\text{if }a>1.
\end{cases}.
\] 
\begin{thm}\label{W convergence}
For any $k\in\N$ and $0\leq a_1<\cdots<a_k<\infty$, as $r\to\infty$, 
\[
(\widetilde{W}_{a_1}^r(\cdot),\dots,\widetilde{W}_{a_k}^r(\cdot),\widetilde{W}^r(\cdot))\Rightarrow(W^*_{a_1}(\cdot),\dots,W^*_{a_k}(\cdot),W^*(\cdot)).
\]
\end{thm}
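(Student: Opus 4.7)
My plan is to establish marginal convergence of each coordinate $\widetilde{W}^r_{a_j}(\cdot)$ to the claimed limit and then upgrade to joint convergence using tightness together with the observation that all the limit processes are deterministic functionals of the common driving triple $(W^*(0),B(\cdot),\kappa)$. The last coordinate is immediate: since $\widetilde{W}^r(\cdot)=\widehat{W}^r(\cdot)$, Proposition \ref{Def of X*} already gives $\widetilde{W}^r(\cdot)\Rightarrow W^*(\cdot)$.

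For any $a>1$, I would prove $\|\widetilde{W}^r-\widetilde{W}^r_a\|_T\overset{p}{\to}0$ for every $T>0$; combined with the preceding, this yields $\widetilde{W}^r_a\Rightarrow W^*=W^*_a$. The nonnegative difference $\widetilde{W}^r(t)-\widetilde{W}^r_a(t)$ is the scaled work in tasks with current remaining processing time greater than $ac^r$, and is bounded above by
\[
[\widetilde{W}^r(0)-\widetilde{W}^r_a(0)]+\frac{1}{r}\sum_{i=1}^{E^r(r^2T)}v_i^r\id_{v_i^r>ac^r}.
\]
The first bracket tends to zero in probability by \eqref{W sub joint convergence} since $W^*_a(0)=W^*(0)$ for $a>1$. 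For the second, $\E[v\id_{v>ac^r}]=1/S(ac^r)$; the rapid variation of $S$ (to be developed in Section \ref{scn:SandSInv}) gives $S(ac^r)/r\to\infty$ when $a>1$, so the expectation equals $\lambda^r rT/S(ac^r)\to 0$. A Doob-type maximal inequality applied to the compensated sum, combined with the FCLT for $E^r$ in \eqref{E hat convergence}, upgrades this to uniform-on-compacts convergence in probability.

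For $a\leq 1$, I would work from the cutoff workload dynamics. Writing $\beta=ac^r$, $V^r_\beta(t)=\sum_{i=1}^{E^r(t)}v_i^r\id_{v_i^r\leq\beta}$, $L^r_\beta(t)$ for the number of downcrossings of level $\beta$ by tasks in service during $[0,t]$, and $I^r_\beta(t)=\int_0^t\id_{\{W^r_\beta(s)=0\}}\,ds$, SRPT prioritization yields
\[
W^r_\beta(t)=W^r_\beta(0)+V^r_\beta(t)-(t-I^r_\beta(t))+\beta L^r_\beta(t),
\]
with both $I^r_\beta$ and $L^r_\beta$ nondecreasing and incrementing only on $\{W^r_\beta=0\}$. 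Hence Skorokhod reflection gives
\[
\widetilde{W}^r_a=\Gamma\bigl[\widetilde{W}^r_a(0)+\widetilde{M}^r_a(\cdot)+(\rho^r_a-1)r(\cdot)+\tfrac{ac^r}{r}L^r_{ac^r}(r^2(\cdot))\bigr],
\]
where $\rho^r_a:=\lambda^r\E[v\id_{v\leq ac^r}]=\rho^r-\lambda^r/S(ac^r)$ and $\widetilde{M}^r_a$ is the centered, diffusion-scaled random sum, which converges by a standard random-sum FCLT to $\sigma B(\cdot)$. The drift $(\rho^r_a-1)rt$ diverges to $-\infty$ for $a<1$ (by rapid variation, $S(ac^r)/r\to 0$), while for $a=1$ the identity $S(c^r)=r$ gives the drift $(\kappa-\lambda)t$, which matches $X^*_1$ exactly. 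What remains is to show the crossing term $\frac{ac^r}{r}L^r_{ac^r}(r^2(\cdot))$ is asymptotically negligible in $\D([0,\infty),\R)$; a crude bound by the number of arrivals with $v_i^r>ac^r$ yields expected contribution of order $r/S(ac^r)$, which for $a<1$ matches the drift order and is therefore not sharp enough on its own.

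The main obstacle is thus the crossing-term estimate for $a\leq 1$. The refinement exploits the crucial constraint that $L^r_{ac^r}$ increments only while $\widetilde{W}^r_a=0$: each upward jump of size $ac^r/r$ launches an excursion above zero whose duration is controlled by the strong negative drift (when $a<1$) or by Brownian fluctuation (when $a=1$), limiting the crossing rate. Adapting the excursion-theoretic and coupling arguments of \cite{heavy tails} from the regularly varying setting to the rapidly varying one, together with the sharp rapid variation identities for $S$ and $\overline{F}$ developed in Section \ref{scn:SandSInv}, one shows that $\frac{ac^r}{r}L^r_{ac^r}(r^2(\cdot))\overset{p}{\to}0$. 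Finally, joint convergence in $\D([0,\infty),\R^{k+1})$ follows from the marginal tightness established along the way together with the observation that each marginal limit is a continuous functional, via $\Gamma$ and the initial data \eqref{W sub joint convergence}, of the single driving triple $(W^*(0),B(\cdot),\kappa)$, so the joint limit is uniquely determined by its coordinates.
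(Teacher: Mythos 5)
Your handling of the last coordinate and of $a>1$ is essentially sound (the direct estimate on $\widetilde W^r-\widetilde W^r_a$ via Wald and $S(ac^r)/r=S(ac^r)/S(c^r)\to\infty$ is a perfectly good substitute for the paper's truncated-queue comparison), and the strategy for joint convergence is fine in spirit. The gap is in the $a\le 1$ case. You write the cutoff workload as a Skorokhod reflection of a netput that includes the downcrossing term $\tfrac{ac^r}{r}L^r_{ac^r}(r^2\cdot)$, and your entire argument hinges on the claim that this term vanishes in probability. That claim is not proved, and for $a=1$ it is almost certainly false. Indeed, writing the crossing term as the difference of two reflection (idle-time) quantities,
\[
\frac{c^r}{r}L^r_{c^r}(r^2 t)=\bigl(\widetilde W_1^r(t)-\widetilde Y_1^r(t)\bigr)+\frac{1}{r}I^r_{Y,c^r}(r^2 t)-\frac{1}{r}I^r_{c^r}(r^2 t),
\]
where $Y^r_{c^r}=\Gamma[X^r_{c^r}]$ is the workload of the $c^r$-truncated SRPT queue and $I^r_{c^r}$ is the occupation time of $\{W^r_{c^r}=0\}$, the first bracket is $O(c^r/r)\to 0$, while $\tfrac{1}{r}I^r_{Y,c^r}(r^2\cdot)\Rightarrow -\inf_{s\le\cdot}(X_1^*(s)\wedge 0)$. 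On the other hand $\tfrac{1}{r}I^r_{c^r}(r^2\cdot)\ge\tfrac{1}{r}I^r(r^2\cdot)\Rightarrow -\inf_{s\le\cdot}(X^*(s)\wedge 0)$, and since the total time the server spends working on remaining processing times exceeding $c^r$ is bounded by $\sum_i(v_i-c^r)^+$, whose expectation over $[0,r^2T]$ is of order $r^2\int_{c^r}^\infty\overline F = o(r)$ for rapidly varying $\overline F$, one in fact expects $\tfrac{1}{r}I^r_{c^r}$ and $\tfrac{1}{r}I^r$ to have the same limit. This leaves the crossing term converging to $-\inf_{s\le\cdot}(X_1^*(s)\wedge 0)+\inf_{s\le\cdot}(X^*(s)\wedge 0)$, a nondecreasing process that is not identically zero (as $X_1^*=X^*-\lambda(\cdot)$ has strictly lower running infimum). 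The fact that the theorem's conclusion for $a=1$ is nevertheless $W_1^*=\Gamma[X_1^*]$ is a consequence of $\Gamma$ being non-injective, not of the crossing term vanishing; your argument, as written, would not yield this.

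You also attribute the control of the crossing term to \cite{heavy tails}, but that is a misreading of the technique there. Proposition 10 of \cite{heavy tails} (restated here as Proposition \ref{W bounded by Y}) does not show the crossing term vanishes; it establishes the pathwise sandwich $Y^r_a(t)\le W^r_a(t)\le Y^r_a(t)+a$ between the cutoff workload and the workload $Y^r_a=\Gamma[X^r_a]$ of the $a$-truncated SRPT queue, whose netput has no crossing term at all. This is precisely what sidesteps the crossing term, and it is what the paper uses here for $a\ge 1$: since $\widetilde Y^r_a\le\widetilde W^r_a\le\widetilde Y^r_a+ac^r/r$, $\widetilde X^r_a\Rightarrow X^*_a$, and $ac^r/r\to 0$, continuous mapping with $\Gamma$ gives the result without ever estimating $L^r_{ac^r}$. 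For $a<1$ the paper again avoids the crossing term, this time by restarting the workload equation at $\tau^r(t,a)$, the last time $W^r_{ac^r}$ hit zero before $r^2t$, exploiting that $W^r_{ac^r}(\tau^r(\cdot,a))\le W^r_{ac^r}(0)+ac^r$ and that no crossings occur while $W^r_{ac^r}>0$. The resulting bound, combined with $S\in\textbf{R}_\infty$, forces $\widetilde W^r_a\Rightarrow 0(\cdot)$. I would suggest replacing your crossing-term strategy with one of these two devices rather than attempting to estimate $L^r_{ac^r}$ directly.
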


The result in Theorem \ref{W convergence} is the light tailed analog of Theorem 1 in \cite{heavy tails}. It states that for processing time distributions that satisfy \eqref{lt}, there is a sharp concentration around $1$ of the scaled remaining processing times that contribute to the workload. This is so much so that as $r\to\infty$, $\widetilde{W}_a^r(\cdot)\Rightarrow W^*(\cdot)$ for all $a>1$ and $\widetilde{W}_a^r(\cdot)\Rightarrow 0(\cdot)$ for all $0\leq a<1$. This is in contrast to the heavy tailed result \cite[Theorem 1]{heavy tails}, where the limit is a continuous random field driven by a common Brownian motion plus a spatially dependent drift.
Theorem \ref{W convergence} is proved in Section \ref{scn:ProofofWconvergence}.  The proof employs a continuous mapping approach to handle $a\ge 1$.  This technique breaks down for $a<1$, as certain drift terms diverge to negative infinity (see Lemma \ref{drift}).  To handle $a<1$, sharpened versions of the techniques used in \cite{Puha} are leveraged (see the discussion following Lemma \ref{a<1 convergence}).  

Theorem \ref{W convergence} shows a concentration around 1 of the task sizes that contribute to the workload under the distribution dependent scaling. However, tasks with small remaining processing times contribute very little to the work in the system. So it does not preclude a build up of tasks with remaining processing times near zero. Our second result implies that such a phenomena does not occur. It states that under the distribution dependent scaling the measure valued state descriptor converges to a point mass at 1, with the mass at 1 randomly varying in time according to $W^*(\cdot)$.
\begin{thm}\label{mv convergence}
As $r\to\infty$, $\widetilde{\Q}^r(\cdot)\Rightarrow W^*(\cdot)\delta_1^+$.
\end{thm}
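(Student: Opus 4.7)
The plan is to combine Theorem~\ref{W convergence} with Theorem~\ref{In between 0 and 1} to pin down the total mass and support of the limiting measure, and then lift the resulting pointwise control to functional convergence in $\D([0,\infty),\M)$ by proving tightness. For the identification of the limit, it suffices to show that for every $f\in\C_b(\R_+)$ the process $\langle f,\widetilde{\Q}^r(\cdot)\rangle$ converges in distribution to $W^*(\cdot)f(1)$ in $\D([0,\infty),\R)$. Fixing $\epsilon\in(0,1)$, write
\[
\langle f,\widetilde{\Q}^r(t)\rangle = \langle f\id_{[0,1-\epsilon]},\widetilde{\Q}^r(t)\rangle + \langle f\id_{(1-\epsilon,1+\epsilon]},\widetilde{\Q}^r(t)\rangle + \langle f\id_{(1+\epsilon,\infty)},\widetilde{\Q}^r(t)\rangle.
\]
The first piece is bounded in absolute value by $\|f\|_\infty\widetilde{\QQ}_{1-\epsilon}^r(t)$, which vanishes by Theorem~\ref{In between 0 and 1}. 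For the third piece, each task with scaled remaining time strictly greater than $1+\epsilon$ contributes more than $1+\epsilon$ to $\widetilde{W}^r-\widetilde{W}_{1+\epsilon}^r$, so
\[
|\langle f\id_{(1+\epsilon,\infty)},\widetilde{\Q}^r(t)\rangle| \le \|f\|_\infty\bigl(\widetilde{\QQ}^r(t)-\widetilde{\QQ}_{1+\epsilon}^r(t)\bigr) \le \frac{\|f\|_\infty}{1+\epsilon}\bigl(\widetilde{W}^r(t)-\widetilde{W}_{1+\epsilon}^r(t)\bigr),
\]
which vanishes by Theorem~\ref{W convergence}.

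For the middle piece, continuity of $f$ yields a modulus $\omega_f(\epsilon)\to 0$ as $\epsilon\to 0$ with $|f(x)-f(1)|\le\omega_f(\epsilon)$ for $x\in(1-\epsilon,1+\epsilon]$, so the middle piece differs from $f(1)\bigl(\widetilde{\QQ}_{1+\epsilon}^r(t)-\widetilde{\QQ}_{1-\epsilon}^r(t)\bigr)$ by at most $\omega_f(\epsilon)\bigl(\widetilde{\QQ}_{1+\epsilon}^r(t)-\widetilde{\QQ}_{1-\epsilon}^r(t)\bigr)$. Since every task in this window contributes between $1-\epsilon$ and $1+\epsilon$ to the corresponding cutoff workload, one has the sandwich
\[
(1-\epsilon)\bigl(\widetilde{\QQ}_{1+\epsilon}^r(t)-\widetilde{\QQ}_{1-\epsilon}^r(t)\bigr) \le \widetilde{W}_{1+\epsilon}^r(t)-\widetilde{W}_{1-\epsilon}^r(t) \le (1+\epsilon)\bigl(\widetilde{\QQ}_{1+\epsilon}^r(t)-\widetilde{\QQ}_{1-\epsilon}^r(t)\bigr).
\]
Theorem~\ref{W convergence} gives $\widetilde{W}_{1+\epsilon}^r(\cdot)-\widetilde{W}_{1-\epsilon}^r(\cdot)\Rightarrow W^*(\cdot)$, and sending first $r\to\infty$ and then $\epsilon\to 0$ recovers $f(1)W^*(t)$ for the middle piece, yielding the marginal convergence of $\langle f,\widetilde{\Q}^r(t)\rangle$ for each fixed $t$.

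To upgrade to convergence in $\D([0,\infty),\M)$, I plan to obtain joint functional convergence of finitely many test-function projections $\langle f_i,\widetilde{\Q}^r(\cdot)\rangle$ by applying the argument above jointly in $r$, exploiting that Theorem~\ref{W convergence} already provides joint process-level convergence across all cutoffs and that Theorem~\ref{In between 0 and 1} holds at the process level; combined with the initial condition \eqref{q tilde initial 0}, this identifies all finite-dimensional limits. Tightness of $\{\widetilde{\Q}^r(\cdot)\}_{r\in R}$ in $\D([0,\infty),\M)$ I would establish via Jakubowski's criterion, which requires tightness of each real-valued projection $\langle f,\widetilde{\Q}^r(\cdot)\rangle$ for $f$ in a countable convergence-determining subfamily of $\C_b(\R_+)$, together with a compact-containment condition that follows by bounding total mass using the sandwich above and the already-established tightness of $\{\widetilde{W}^r\}$. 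The main obstacle I anticipate is the oscillation estimate needed for tightness of the projections: each new arrival whose scaled remaining time lands near $1$ increments $\widetilde{\Q}^r$ by an atom of size $c^r/r$, so the pointwise jumps of the projections need not be small, and matching these jumps back to the oscillations of $\widetilde{W}^r$ via the sandwich requires delicate bookkeeping that adapts the heavy-tailed analysis of \cite{heavy tails} to the present light-tailed regime.
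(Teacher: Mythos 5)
Your plan follows the paper's proof essentially point for point: the same three-way decomposition of $\langle f,\widetilde{\Q}^r(\cdot)\rangle$ at levels $1\pm\epsilon$, the same sandwich $(1-\epsilon)(\widetilde{\QQ}_{1+\epsilon}^r-\widetilde{\QQ}_{1-\epsilon}^r)\le\widetilde{W}_{1+\epsilon}^r-\widetilde{W}_{1-\epsilon}^r\le(1+\epsilon)(\widetilde{\QQ}_{1+\epsilon}^r-\widetilde{\QQ}_{1-\epsilon}^r)$, Jakubowski's criterion with compact containment bounded through $\widetilde{W}^r$, and identification of subsequential limits via Theorems~\ref{W convergence} and \ref{In between 0 and 1}. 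The one place you over-worry is the final remark: the jump of $\langle f,\widetilde{\Q}^r(\cdot)\rangle$ at an arrival is at most $\|f\|_\infty\,c^r/r$, which tends to zero by \eqref{cr over r to 0}, and the oscillation estimate (T2) then follows directly because $\widetilde{\QQ}_{1-\epsilon}^r(\cdot)\Rightarrow 0(\cdot)$ and $\widetilde{\QQ}_{1+\epsilon}^r(\cdot)\Rightarrow W^*(\cdot)$ (Theorem~\ref{In between 0 and 1} and Lemma~\ref{Lemma E}) are convergences to continuous limits, hence give {\bf C}-tightness of the cutoff queue length processes without any further bookkeeping.
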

 The concentration result in Theorem \ref{mv convergence} corresponds to a concentration of mass around $c^r$ under diffusion scaling with a boosting factor of $c^r$. This generalizes the main result in \cite{Puha} by stating it in terms of the sequence of rescaled measure valued processes and also eliminates a restrictive rate of convergence condition on $S^{-1}$ (see \cite[Section 2.2]{Puha}). It also provides a light tailed analog of Theorem 3 in \cite{heavy tails}, which shows that in the heavy tailed case when \eqref{ht} holds the total mass in the limit at time $t$ is $W^*(t)$, but rather than concentrating as a point mass at one, it spreads out over $\R_+$ in a manner determined by the random field that arises there as the limit of the cutoff distribution dependent scaled workload processes.
 
Theorem \ref{mv convergence} is proved in Section \ref{scn:ProofofQconvergence}.  As mentioned above, the main issue is to show that the mass near the origin vanishes in the limit, which is quite delicate.  Specifically, we prove Theorem \ref{a<1 queue mass to zero} stated below in Section \ref{scn:ProofofQconvergence}. For this, techniques from \cite{heavy tails} are adapted to handle the light tailed setting.  In addition, we provide an exposition that some might find more accessible to that in \cite{heavy tails}. See the discussions following the statements of Theorem \ref{a<1 queue mass to zero} and Lemma \ref{Lemma 18 of paper}.

\section{Properties of $S$ and $S^{-1}$}\label{scn:SandSInv}

In this work, we consider SRPT queues with random processing times such that the tail distribution function decays sufficiently fast, i.e., satisfies \eqref{lt}.  Here we develop the properties of $S$ and $S^{-1}$ that result from \eqref{lt} and will be used throughout the analysis.  Lemmas \ref{T1 and r convergence}, \ref{S property a} and \ref{S property} are the principle results in this section.

As in \cite{Bingham}, a Borel measurable function $f:\R_+\to(0,\infty)$ is said to be {\it rapidly varying with index} $\infty$ (resp.\ $-\infty$) if for any $t>1$, we have
$\lim_{x\to\infty}f(tx)/f(x)=\infty\hbox{ (resp.\ 0)}$, and {\it slowly varying} if for any $t>0$, $\lim_{x\to\infty}f(tx)/f(x)=1$.
We let $\textbf{R}_\infty$, $\textbf{R}_{-\infty}$, and $\textbf{R}_0$ denote the set of strictly positive Borel measurable functions on $\R_+$ that are rapidly varying with index $\infty$, rapidly varying with index $-\infty$, and slowly varying respectively.  If $f\in\textbf{R}_\infty$ (resp.\ $\textbf{R}_{-\infty}$), then it is easy to show that
$\lim_{x\to\infty}f(tx)/f(x)=0$ (resp.\ $\infty$) for all $0<t<1$.  In addition, if $f\in\textbf{R}_0$, then Theorem 1.5.4 in \cite{Bingham} (which we restate next for the reader's convenience) states that $f$ has subpolynomial growth.
\begin{prop}\label{subpolynomial growth of slowly varying functions}
If $f\in\textbf{R}_0$, then $f(x) = o(x^\gamma)$ as $x\to\infty$ for all $\gamma>0$.
\end{prop}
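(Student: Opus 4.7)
The plan is to derive the polynomial envelope from a simple doubling argument, with the uniform convergence theorem for slowly varying functions as the key input. Specifically, I would invoke (from \cite{Bingham}) that, for $f\in\textbf{R}_0$, the convergence $f(tx)/f(x)\to 1$ is uniform for $t$ in compact subsets of $(0,\infty)$.

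First I would fix $\gamma>0$ and choose $\epsilon\in(0,1)$ small enough that $\log_2(1+\epsilon)<\gamma$. By uniform convergence on $t\in[1,2]$, there exists $X_0>0$ such that $f(tx)\le (1+\epsilon)f(x)$ for all $x\ge X_0$ and all $t\in[1,2]$; in particular $f(2x)\le(1+\epsilon)f(x)$ for $x\ge X_0$. Next, for any $y\ge 2X_0$, I would set $n=\lfloor\log_2(y/X_0)\rfloor$ and $x=y/2^n\in[X_0,2X_0)$, and iterate the doubling inequality $n$ times to obtain
\[
f(y)\le (1+\epsilon)^n f(x)\le M(1+\epsilon)^n,
\]
where $M:=\sup_{z\in[X_0,2X_0]}f(z)<\infty$ by local boundedness of $f$. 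Using $(1+\epsilon)^n\le(y/X_0)^{\log_2(1+\epsilon)}$ would then yield $f(y)\le C\,y^{\log_2(1+\epsilon)}$ for a constant $C>0$, whence $f(y)/y^\gamma\to 0$.

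The main obstacle is the uniform convergence theorem itself, whose proof requires Borel measurability of $f$ combined with a Baire-category or Egorov-type argument. A cleaner alternative is Karamata's representation theorem: every $f\in\textbf{R}_0$ admits a representation $f(x)=c(x)\exp\bigl(\int_a^x \epsilon(u)/u\,du\bigr)$ with $c(x)\to c\in(0,\infty)$ and $\epsilon(u)\to 0$ as $u\to\infty$. For $u$ large enough $\epsilon(u)<\gamma/2$, so $\int_a^x \epsilon(u)/u\,du\le C_1+(\gamma/2)\log(x/a)$ for $x$ large, and hence $f(x)=O(x^{\gamma/2})=o(x^\gamma)$. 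Since the result is stated as Theorem 1.5.4 in \cite{Bingham}, one could equally well simply cite that reference and bypass an independent derivation.
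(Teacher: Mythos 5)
The paper does not actually prove this proposition; it simply restates Theorem~1.5.4 of Bingham--Goldie--Teugels and cites that source. Your proposal, by contrast, sketches two self-contained derivations, and both are correct. The doubling argument is sound: with $n=\lfloor\log_2(y/X_0)\rfloor$ you get $(1+\epsilon)^n\le (y/X_0)^{\log_2(1+\epsilon)}$ and hence $f(y)\le C\,y^{\log_2(1+\epsilon)}$ with $\log_2(1+\epsilon)<\gamma$, which yields $f(y)/y^\gamma\to 0$. The one implicit debt worth flagging is the step ``$M:=\sup_{z\in[X_0,2X_0]}f(z)<\infty$ by local boundedness'': a slowly varying function need not be locally bounded everywhere, only eventually, and that eventual local boundedness is itself a corollary of the uniform convergence theorem (or of measurability plus a category argument). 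So this step piggybacks on the same hard input you already invoke; it is not an additional hypothesis, but it should be stated as ``taking $X_0$ large enough that $f$ is bounded on $[X_0,2X_0]$'' rather than treated as automatic. The Karamata-representation route is cleaner in exactly the way you describe: once $\epsilon(u)<\gamma/2$ for $u\ge u_0$, the integral $\int_a^x\epsilon(u)/u\,du$ is bounded by a constant plus $(\gamma/2)\log x$, the $c(x)$ factor is eventually bounded, and $f(x)=O(x^{\gamma/2})=o(x^\gamma)$ follows at once. Either derivation is acceptable; citing Theorem~1.5.4 directly, as the paper does, is simply the most economical choice.
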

The subpolynomial growth of slowly varying functions together with the assumption \eqref{u convergence} gives the following result concerning the arrival times of the first task.  
\begin{lem}\label{T1 and r convergence}
Suppose that $f\in\textbf{R}_0$ and $\gamma_1, \gamma_2>0$. As $r\to\infty$,
\[
\frac{\left(f(r)\right)^{\gamma_1}T_1^r}{r^{\gamma_2}}\overset{p}{\to}0.
\]
\begin{proof}
By Markov's inequality and \eqref{u convergence}, for all $\theta>0$ and $r\in R$, 
\[
\P\left(\frac{\left(f(r)\right)^{\gamma_1}T_1^r}{r^{\gamma_2}}>\theta\right)\leq\frac{1}{\theta^2}\E\left[\frac{\left(f(r)\right)^{2\gamma_1}(T_1^r)^2}{r^{2{\gamma_2}}}\right]\leq\frac{C_1}{\theta^2}\left(\frac{f(r)}{r^{\gamma_2/\gamma_1}}\right)^{2\gamma_1}.
\]
Letting $r\to\infty$ and using Proposition \ref{subpolynomial growth of slowly varying functions} completes the proof.
\end{proof}
\end{lem}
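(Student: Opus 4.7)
The plan is to apply Markov's inequality in its second-moment form, leveraging the uniform bound $\sup_{r\in R}\E[(T_1^r)^2]=C_1<\infty$ from assumption \eqref{u convergence}, and then invoke the subpolynomial growth of slowly varying functions (Proposition \ref{subpolynomial growth of slowly varying functions}) to conclude that the deterministic prefactor vanishes in the limit.

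More concretely, for any fixed $\theta>0$ and any $r\in R$, I would write
\[
\P\!\left(\frac{(f(r))^{\gamma_1} T_1^r}{r^{\gamma_2}}>\theta\right)
=\P\!\left((T_1^r)^2>\frac{\theta^2 r^{2\gamma_2}}{(f(r))^{2\gamma_1}}\right)
\leq\frac{(f(r))^{2\gamma_1}}{\theta^2 r^{2\gamma_2}}\,\E[(T_1^r)^2]
\leq\frac{C_1}{\theta^2}\cdot\frac{(f(r))^{2\gamma_1}}{r^{2\gamma_2}},
\]
where the last inequality uses \eqref{u convergence}. It then suffices to show that $(f(r))^{2\gamma_1}/r^{2\gamma_2}\to 0$ as $r\to\infty$, or equivalently $f(r)/r^{\gamma_2/\gamma_1}\to 0$.

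For this final step I would apply Proposition \ref{subpolynomial growth of slowly varying functions} with $\gamma:=\gamma_2/\gamma_1>0$, which immediately yields $f(r)=o(r^{\gamma_2/\gamma_1})$ as $r\to\infty$. Raising to the power $2\gamma_1$ and combining with the Markov bound gives $\limsup_{r\to\infty}\P((f(r))^{\gamma_1}T_1^r/r^{\gamma_2}>\theta)=0$ for each $\theta>0$, which is the desired convergence in probability.

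There is really no serious obstacle here; the statement is essentially a packaging result that converts the second-moment control on the (possibly differently distributed) initial delay $T_1^r$ into convergence to zero once it is rescaled by a factor that grows slower than any polynomial in $r$. The only choice to make is which moment inequality to use, and the second moment is the natural one since that is exactly what \eqref{u convergence} provides uniformly in $r$.
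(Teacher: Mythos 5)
Your proposal is correct and follows essentially the same route as the paper: Markov's inequality in second-moment form, the uniform bound $\sup_{r\in R}\E[(T_1^r)^2]=C_1$ from \eqref{u convergence}, and then Proposition \ref{subpolynomial growth of slowly varying functions} applied with exponent $\gamma_2/\gamma_1$ to show the deterministic prefactor vanishes. The only difference is cosmetic — you spell out the equivalent event $(T_1^r)^2>\theta^2 r^{2\gamma_2}/(f(r))^{2\gamma_1}$ before invoking Markov, which the paper leaves implicit.
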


Next recall that the tail distribution function of a Weibull distributed random variable is an element of $\textbf{R}_{-\infty}$.  Logarithmic functions provide a canonical examples of a slowly varying functions.  By explicit computation, it can be shown that the $S$ and $S^{-1}$ associated with the Weibull distribution are members of $\textbf{R}_\infty$ and  $\textbf{R}_0$ respectively.  This holds more generally and we develop this now so that we may use these facts in the analysis that follows.  For this, for an unbounded Borel measurable function $f:\R_+\to(0,\infty)$ that is nondecreasing and $y\in\R_+$, we define 
\begin{equation}\label{f inverse}
f^{-1}(y)=\inf\{x\in\R_+:f(x)>y\}.
\end{equation}
Observe that $f^{-1}\in\D([0,\infty),\R_+)$ is nondecreasing and unbounded. Moreover, $f(f^{-1}(y))\geq y$ for all $y\in\R_+$ and $f^{-1}(f(x))\geq x$ for all $x\in\R_+$. However, if $f$ is continuous, $f( f^{-1}(y))= y$ for all $y\in[f(0),\infty)$ and if $f$ is strictly increasing, then $f^{-1}(f(x))=x$ for all $x\in\R_+$.  These facts together with the fact that $F$ is continuous, $R\subset(\lambda,\infty)$ and \eqref{cr is s inverse r} show that the following lemma holds.
\begin{lem}\label{S property a} 
For all $r\in R$, $c^r>0$ and
\begin{equation}
S(c^r)=S(S^{-1}(r))=r.  \label{Scr is r}
\end{equation}
\end{lem}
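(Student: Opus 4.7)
The plan is to apply the generic facts about the right-continuous inverse $f^{-1}$ that are recalled in the paragraph immediately preceding the lemma, with $f=S$, and then verify that $S$ satisfies the hypotheses needed so that the composition $S\circ S^{-1}$ is the identity on the relevant range. The two items to check are (i) $c^r=S^{-1}(r)>0$, and (ii) $S$ is continuous and nondecreasing with $r$ lying in $[S(0),\infty)$.

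First I would compute $S(0)=1/\E[v\id_{[v>0]}]=1/\E[v]=\lambda$, using $\P(v=0)=0$. Since $R\subset(\lambda,\infty)$, every $r\in R$ satisfies $r>S(0)$, which is precisely the range on which the continuity argument outlined before the lemma yields $S(S^{-1}(r))=r$. Next I would argue that $S$ is continuous on $\R_+$: the map $x\mapsto \E[v\id_{[v>x]}]=\int_{(x,\infty)}y\,dF(y)$ is continuous by dominated convergence together with continuity of $F$ (so that the boundary $\{x\}$ has zero $F$-mass, making $\id_{[v>x]}$ continuous in $x$ almost surely under $v$), and, because $F(y)<1$ for all $y\in\R_+$ by \eqref{lt}, it is in fact strictly positive, whence $S=1/\E[v\id_{[v>\cdot]}]$ is continuous and finite on $\R_+$. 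Combined with the nondecreasing property of $S$ and the fact that $\lim_{x\to\infty}S(x)=\infty$ (both stated right after the definition \eqref{S}), the general fact recalled around \eqref{f inverse} then gives $S(S^{-1}(r))=r$ for every $r\in[S(0),\infty)$, and in particular for every $r\in R$. Together with the identification $c^r=S^{-1}(r)$ in \eqref{cr is s inverse r}, this yields \eqref{Scr is r}.

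Finally, to see that $c^r>0$, I would use continuity of $S$ at $0$: since $S(0)=\lambda<r$ for $r\in R$, there exists $\varepsilon>0$ with $S(x)<r$ for all $x\in[0,\varepsilon]$, hence no $x\in[0,\varepsilon]$ belongs to the set $\{x\in\R_+:S(x)>r\}$, and so $c^r=S^{-1}(r)\geq\varepsilon>0$.

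There is no real obstacle here; the only mildly subtle point is justifying the continuity of $x\mapsto \E[v\id_{[v>x]}]$, which rests squarely on the standing continuity assumption on $F$ (so atoms cannot produce jumps in $S$). Once that is in hand, the lemma follows verbatim from the general properties of right-continuous inverses recorded just above its statement.
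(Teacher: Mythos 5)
Your proof is correct and takes essentially the same approach as the paper, which dismisses the lemma as an immediate consequence of the general facts about the right-continuous inverse recalled just before the statement, together with the continuity of $F$ and the inclusion $R\subset(\lambda,\infty)$. You have simply filled in the details the paper leaves implicit: the identification $S(0)=\lambda$ (via $\P(v=0)=0$), the continuity of $S$ (via dominated convergence, using continuity of $F$ to rule out atoms and $F<1$ to keep $\E[v\id_{[v>x]}]$ strictly positive), the application of $S(S^{-1}(r))=r$ on $[S(0),\infty)$, and the positivity of $c^r$ from continuity of $S$ at $0$.
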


Theorem 2.4.7 from \cite{Bingham} is restated next for the reader's convenience.
\begin{prop}\label{f inverse slowly varying}
If $f\in\textbf{R}_\infty\cup\textbf{R}_{-\infty}$ is nondecreasing and unbounded, then $f^{-1}\in\textbf{R}_0$.
\end{prop}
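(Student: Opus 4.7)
Since $f$ is nondecreasing, any ratio $f(tx)/f(x)$ is bounded below by $1$ for $t>1$, so the hypothesis $f\in\textbf{R}_{-\infty}$ is vacuous under the monotonicity assumption and only $f\in\textbf{R}_\infty$ is substantive. I would reduce the claim of slow variation of $f^{-1}$ to establishing, for every $t>1$ and every $\epsilon\in(0,1)$, the upper bound $f^{-1}(tx)\leq(1+\epsilon)f^{-1}(x)$ for all sufficiently large $x$. The matching lower bound $f^{-1}(tx)\geq f^{-1}(x)$ is immediate from the monotonicity of $f^{-1}$ noted after \eqref{f inverse}, and the range $0<t<1$ follows from the range $t>1$ by the substitution $y=tx$, $s=1/t$.

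The main step is a proof by contradiction. Suppose that for some $t>1$ and some $\epsilon\in(0,1)$ there exists a sequence $x_n\to\infty$ with $f^{-1}(tx_n)\geq(1+\epsilon)f^{-1}(x_n)$. Set $y_n:=f^{-1}(x_n)$; the monotonicity and unboundedness of $f^{-1}$ recorded in the excerpt force $y_n\to\infty$. I would then unpack the infimum in \eqref{f inverse}: any $z<f^{-1}(tx_n)$ satisfies $f(z)\leq tx_n$, so choosing $z=(1+\epsilon/2)y_n<(1+\epsilon)y_n\leq f^{-1}(tx_n)$ yields $f\bigl((1+\epsilon/2)y_n\bigr)\leq tx_n$. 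Coupling this with the stated inequality $f(y_n)=f(f^{-1}(x_n))\geq x_n$ gives
\[
\frac{f\bigl((1+\epsilon/2)y_n\bigr)}{f(y_n)}\leq t
\]
for every $n$. But since $y_n\to\infty$ and $1+\epsilon/2>1$, the hypothesis $f\in\textbf{R}_\infty$ forces the left-hand side to tend to infinity, a contradiction.

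The main delicacy I anticipate is that the infimum in \eqref{f inverse} need not be attained and $f$ is not assumed right-continuous, so evaluating $f$ directly at the point $(1+\epsilon)y_n$ itself could give a value exceeding $tx_n$ and break the chain of inequalities. The $\epsilon/2$ slack introduced above sidesteps exactly this issue, relying only on the defining infimum property rather than on any continuity of $f$. Once that slack is in place, the remainder reduces to routine manipulation of monotone functions together with the definition of rapid variation.
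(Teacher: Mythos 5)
The paper does not prove this proposition at all; it simply cites it as a restatement of Theorem~2.4.7 in Bingham, Goldie and Teugels. Your proposal therefore departs from the paper by construction, but it supplies a correct, self-contained, elementary argument. The preliminary observation that $\textbf{R}_{-\infty}$ is vacuous for a nondecreasing positive $f$ (since $f(tx)/f(x)\geq 1$ for $t>1$) is accurate, and the reduction of slow variation of $f^{-1}$ to the one-sided estimate for $t>1$ is handled correctly, including the substitution for $0<t<1$. The core contradiction argument is sound: if $x_n\to\infty$ with $f^{-1}(tx_n)\geq(1+\epsilon)f^{-1}(x_n)=:(1+\epsilon)y_n$, then $y_n\to\infty$ by the unboundedness and monotonicity of $f^{-1}$ noted after \eqref{f inverse}, and your use of the defining infimum property (any $z<f^{-1}(tx_n)$ has $f(z)\leq tx_n$) combined with the stated inequality $f(f^{-1}(x_n))\geq x_n$ gives $f\bigl((1+\epsilon/2)y_n\bigr)/f(y_n)\leq t$, contradicting $f\in\textbf{R}_\infty$. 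The $\epsilon/2$ slack is exactly the right device to avoid needing right-continuity of $f$ or attainment of the infimum, and you flag that delicacy correctly. Compared with deferring to the standard reference, your argument has the virtue of being completely transparent and using nothing beyond the definitions already set up in the surrounding text.
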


In the proof of Lemma \ref{S property} below, we show that $S\in\textbf{R}_\infty$ in order to conclude that $S^{-1}\in\textbf{R}_0$.  We delay this because will show that a slightly stronger property holds so that we may leverage certain uniform convergence results later on.  We briefly summarize these here.  What follows is taken from Sections 2.0, 2.2, and 2.4 in \cite{Bingham}.
\begin{defn}\label{Karamata index}
Let $f:\R_+\to (0,\infty)$ be Borel measurable.  Then $c(f)$ (resp.\ $d(f)$) denotes the {\it upper (resp.\ lower) Karamata index} of $f$, where
\begin{eqnarray*}
c(f)&:=&\inf\left\{c\in\R:\forall T>1,\exists X>0\text{ s.t.\ }\frac{f(t x)}{f(x)}\leq (1+o(1))t^c\,\,\forall x\geq X\ \&\ t\in[1,T]\right\},\\
d(f)&:=&\sup\left\{d\in\R:\forall T>1,\exists X>0\text{ s.t.\ }\frac{f(t x)}{f(x)}\geq (1+o(1))t^d\,\,\forall x\geq X\ \&\ t\in[1,T]\right\}.
\end{eqnarray*}
\end{defn}
Above $o(1)\to0$ as $x\to\infty$.  The class of Borel measurable $f$ such that its Karamata indices $c(f)$ and $d(f)$ are both $+\infty$ (resp.\ $-\infty$) is denoted by $\textbf{KR}_\infty$ (resp $\textbf{KR}_{-\infty})$. By (2.4.3) in \cite{Bingham} we have the following proposition.
\begin{prop}\label{BG inclusion}
$\textbf{KR}_\infty\subseteq \textbf{R}_\infty$.
\end{prop}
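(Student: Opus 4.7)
The plan is a direct unpacking of the definitions of the lower Karamata index and of rapid variation. Suppose $f\in\textbf{KR}_\infty$, so that $c(f)=d(f)=+\infty$. To establish $f\in\textbf{R}_\infty$ it suffices, by the definition recalled just before Definition \ref{Karamata index}, to verify that $\lim_{x\to\infty} f(tx)/f(x)=\infty$ for every fixed $t>1$. Only the condition $d(f)=+\infty$ is needed; the upper index condition is not used.

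Since $d(f)$ is defined as a supremum over reals $d$ satisfying a certain asymptotic lower bound, $d(f)=+\infty$ means that for every $d\in\R$ the inequality in the definition holds. First I would fix an arbitrary $t>1$ and apply the definition with the choice $T=t$ (allowed since $T>1$): for any $d\in\R$ there exists $X=X(d,T)>0$ such that
\[
\frac{f(tx)}{f(x)}\geq(1+o(1))\,t^{d}\qquad\text{for all }x\geq X,
\]
where the $o(1)$ term tends to $0$ as $x\to\infty$. Passing to the liminf as $x\to\infty$ on both sides yields $\liminf_{x\to\infty} f(tx)/f(x)\geq t^{d}$.

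Next I would send $d\to+\infty$. Since $t>1$, one has $t^{d}\to+\infty$, so $\liminf_{x\to\infty} f(tx)/f(x)=+\infty$, which forces the limit itself to exist and equal $+\infty$. As $t>1$ was arbitrary, this establishes $f\in\textbf{R}_\infty$ and completes the proof.

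I expect no substantive obstacle here: the proposition is essentially a tautology, reflecting that the Karamata indices quantify polynomial growth while rapid variation asserts growth faster than every polynomial. The only mild subtlety is the correct handling of the $o(1)$ term when passing to the liminf, which is a routine manipulation. (Indeed, the authors import this inclusion directly from \cite{Bingham}, so no novel argument is required.)
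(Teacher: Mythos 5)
Your argument is correct, and it is worth noting up front that the paper supplies no proof at all for this proposition: it simply cites (2.4.3) of Bingham, Goldie, and Teugels. Your direct unpacking of Definition \ref{Karamata index} is therefore not merely ``essentially the same approach'' but actually provides what the paper omits. The mechanism is sound: for fixed $t>1$, specializing the definition of $d(f)$ with $T=t$ gives, for every $d\in\R$, that $f(tx)/f(x)\geq(1+o(1))t^d$ for all $x$ sufficiently large; hence $\liminf_{x\to\infty}f(tx)/f(x)\geq t^d$, and sending $d\to+\infty$ forces $\lim_{x\to\infty}f(tx)/f(x)=+\infty$. You are also right that only $d(f)=+\infty$ is used and the upper-index condition plays no role. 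The one small point you leave implicit, and might state for completeness, is that the set in the definition of $d(f)$ is a down-set in $d$ (since $t\ge 1$ implies $t^{d}\le t^{d'}$ for $d\le d'$), which is why $d(f)=+\infty$ is equivalent to the property holding for \emph{every} real $d$ rather than merely for an unbounded set of $d$'s; this is routine and does not affect the correctness of your proof.
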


Also, part (iv) of Proposition 2.4.4 in \cite{Bingham} implies the following.
\begin{prop}\label{Proposition 2.4.4. in Bingham}
If $f\in \textbf{R}_\infty$ and $f$ is nondecreasing, then $f\in\textbf{KR}_\infty$.
\end{prop}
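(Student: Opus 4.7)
The plan is to show that the two Karamata indices satisfy $c(f)=d(f)=+\infty$, which by the definition preceding Proposition \ref{BG inclusion} gives $f\in\textbf{KR}_\infty$.

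First, I would establish $d(f)=+\infty$: for every $d\in\R$ and $T>1$ there should exist $X>0$ with $f(tx)/f(x)\geq(1+o(1))t^d$ uniformly for $t\in[1,T]$ as $x\to\infty$. The case $d\leq 0$ is immediate from monotonicity, since then $t^d\leq 1\leq f(tx)/f(x)$ for all $t\in[1,T]$. For $d>0$ and any $\epsilon\in(0,1)$, I would split $[1,T]$ at $t_{*}:=(1-\epsilon)^{-1/d}>1$. On $[1,t_{*}]$ one has $(1-\epsilon)t^d\leq 1\leq f(tx)/f(x)$ again by monotonicity. On $[t_{*},T]$, monotonicity yields $f(tx)\geq f(t_{*}x)$, and since $t_{*}>1$ and $f\in\textbf{R}_\infty$, the hypothesis gives $f(t_{*}x)/f(x)\to\infty$ as $x\to\infty$; hence, for all sufficiently large $x$,
\[
\frac{f(tx)}{f(x)}\;\geq\;\frac{f(t_{*}x)}{f(x)}\;\geq\;(1-\epsilon)T^d\;\geq\;(1-\epsilon)t^d
\]
uniformly for $t\in[t_{*},T]$. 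Letting $\epsilon\downarrow 0$ produces the $(1+o(1))t^d$ lower bound uniformly in $t\in[1,T]$, and since $d\in\R$ is arbitrary, $d(f)=+\infty$.

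Next, I would deduce $c(f)=+\infty$ using the general inequality $c(f)\geq d(f)$. If some finite $c_0$ lay in the defining set for $c(f)$, then, fixing any $d_0>c_0$ (permissible since $d(f)=+\infty$), applying both bounds at $t=T>1$ for $x$ large would give $(1+o(1))T^{d_0}\leq f(Tx)/f(x)\leq(1+o(1))T^{c_0}$, hence $T^{d_0-c_0}\leq 1+o(1)$, which is impossible as $x\to\infty$ since $T^{d_0-c_0}>1$. Thus no finite $c_0$ qualifies, so $c(f)=+\infty$ and $f\in\textbf{KR}_\infty$.

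The main obstacle is upgrading the pointwise rapid-variation hypothesis into a lower bound uniform in $t$ on the compact interval $[1,T]$. The key device is the split at $t_{*}$: near $t=1$ the target $t^d$ is itself so close to $1$ that monotonicity alone dominates it, while away from $t=1$ a single pointwise rapid-variation inequality at the fixed point $t_{*}$ propagates by monotonicity across the rest of the interval. The upper-index step is a short contradiction once the lower index is in hand.
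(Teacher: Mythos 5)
The paper states this result only as a citation to Proposition 2.4.4(iv) of \cite{Bingham} and gives no argument of its own, so there is no proof in the paper to compare with; you have supplied a self-contained one, and it is correct. The real content is upgrading the pointwise hypothesis $f(tx)/f(x)\to\infty$ (each fixed $t>1$) to a bound uniform over $t\in[1,T]$, and your split at $t_*=(1-\epsilon)^{-1/d}$ handles this cleanly: on $[1,t_*]$ monotonicity alone gives $f(tx)/f(x)\ge 1\ge(1-\epsilon)t^d$ since $t^d\le(1-\epsilon)^{-1}$ there, and on $[t_*,T]$ a single pointwise rapid-variation statement at $t_*$ propagates across the interval via $f(tx)\ge f(t_*x)$. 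For each $\epsilon$ this gives $\inf_{t\in[1,T]}f(tx)/\bigl(f(x)t^d\bigr)\ge 1-\epsilon$ for $x$ large, so letting $\epsilon\downarrow 0$ yields the required $\liminf\ge 1$, i.e.\ every $d$ lies in the defining set for $d(f)$, hence $d(f)=+\infty$. Your passage to $c(f)=+\infty$ is also sound: the defining set for $d(f)$ is downward closed, so $d(f)=+\infty$ means every $d_0\in\R$ belongs to it, and comparing the two bounds at a single $t=T>1$ as $x\to\infty$ forces $T^{d_0}\le T^{c_0}$ for any admissible $c_0$, so no finite $c_0$ qualifies and the defining set for $c(f)$ is empty, giving $c(f)=+\infty$. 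This is a direct, elementary route for the monotone case, avoiding the general Matuszewska-index machinery developed in \cite{Bingham}.
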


The next result is a restatement of Proposition 2.2.1 in \cite{Bingham}.
\begin{prop}\label{BG prop 2.2.1}
If $f:\R_+\to(0,\infty)$ is Borel measurable and $-\infty<d(f)$, then for every $d\in(-\infty,d(f))$ and $C\in(0,1)$, there exists a positive constant $X=X(d,C)$ such that 
\begin{equation*}
    f(y)/f(x)\geq C(y/x)^d\qquad\text{for all }y\geq x\geq X.
\end{equation*}
\end{prop}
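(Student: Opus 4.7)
The plan is to establish the conclusion by an iterative chaining argument that boosts the local Karamata lower bound (which only covers ratios $t\in[1,T]$) to arbitrary ratios $y/x\geq 1$. First I would pick an intermediate exponent $d'\in(d,d(f))$; by the definition of $d(f)$, for every $T>1$ and every $\delta\in(0,1)$ there exists $X_0=X_0(T,\delta)$ such that
\[
\frac{f(tx)}{f(x)}\geq (1-\delta)\,t^{d'}\qquad\text{for all }x\geq X_0\text{ and }t\in[1,T],
\]
where the $o(1)\to 0$ in the definition of $d(f)$ has been absorbed into the $1-\delta$ factor by taking $X_0$ sufficiently large.

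Next, for $y\geq x\geq X_0$ I would decompose the ratio uniquely as $y/x=T^n s$ with $n\in\Z_+$ and $s\in[1,T)$, and iterate the local bound: apply it to the consecutive pairs $(T^{k-1}x,T^k x)$ for $k=1,\dots,n$ (all of whose left endpoints exceed $X_0$), and once more to $(T^n x,\,T^n s x)$. Multiplying these telescopically yields
\[
\frac{f(y)}{f(x)}\geq (1-\delta)^{n+1}(y/x)^{d'}.
\]

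The main technical step is absorbing the prefactor $(1-\delta)^{n+1}$ into the exponent. Since $n\leq \log(y/x)/\log T$ and $\log(1-\delta)<0$, taking logarithms and exponentiating gives $(1-\delta)^n\geq (y/x)^{\log(1-\delta)/\log T}$, so
\[
\frac{f(y)}{f(x)}\geq (1-\delta)\,(y/x)^{\,d'+\log(1-\delta)/\log T}.
\]
To close the argument I would first fix $\delta\in(0,1)$ with $1-\delta\geq C$, and then, exploiting $d'>d$ together with $\log(1-\delta)/\log T\to 0$ as $T\to\infty$, choose $T$ large enough that $d'+\log(1-\delta)/\log T\geq d$; setting $X:=X_0(T,\delta)$ then yields the claim for all $y\geq x\geq X$. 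The main obstacle is precisely this three-way balancing among $d'$, $T$, and $\delta$: one needs enough room between $d$ and $d(f)$ to afford both the multiplicative loss $1-\delta$ (at least $C$) and the exponent degradation $\log(1-\delta)/\log T$ produced by the chaining. Any approach that tries to handle $n\to\infty$ without trading some of the headroom $d(f)-d$ for chaining error fails; the elegance of the bookkeeping above is that both costs can be driven below any positive threshold simultaneously.
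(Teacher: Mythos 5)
The paper does not prove this proposition; it is imported verbatim as Proposition 2.2.1 from Bingham, Goldie, and Teugels, so there is no in-paper proof to compare against. Your argument is nonetheless a correct, self-contained derivation from the definition of the lower Karamata index. The chain decomposition $y/x = T^n s$ with $s\in[1,T)$, the telescoping product of $n+1$ local bounds (each with base point $T^{k-1}x\ge X_0$ and ratio in $[1,T]$), the absorption of $(1-\delta)^n\ge (y/x)^{\log(1-\delta)/\log T}$ via $n\le \log(y/x)/\log T$ and the sign of $\log(1-\delta)$, and the final three-way tuning (fix $d'\in(d,d(f))$, fix $\delta$ with $1-\delta\ge C$, then take $T$ large so that $d' + \log(1-\delta)/\log T\ge d$) are all sound. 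The one implicit assumption worth flagging explicitly is that the $o(1)$ term in the definition of $d(f)$ is a function of $x$ alone (uniform over $t\in[1,T]$), which is indeed the intended reading and is what lets you replace $(1+o(1))$ by $(1-\delta)$ uniformly in $t$ once $x\ge X_0(T,\delta)$. This is essentially the standard chaining route to Potter-type bounds; it is more elementary and direct than the general machinery developed in Sections 2.0--2.2 of Bingham--Goldie--Teugels, and it buys a short, self-contained proof at the cost of not recovering the sharper refinements proved there.
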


\begin{lem}\label{S property}
The function $S\in\textbf{KR}_\infty$ and, in particular, $d(S)=\infty$. Hence, $S^{-1}\in\textbf{R}_0$ and
\begin{equation}
    \lim_{r\to\infty}\frac{c^r}{r}=0.\label{cr over r to 0}
\end{equation}
\begin{proof}
First, we shall prove that function $S\in\textbf{R}_\infty$, which follows from the fact that $\overline{F}\in\textbf{R}_{-\infty}$.
To see this, let $G(x) = \int_x^\infty\overline{F}(y)\,dy$ and $H(x) = \E[v\id_{[v>x]}]$ for all $x\in\R_+$. For any $t>1$, by L'Hopital rule,
\[
\lim_{x\to\infty}\frac{G(t x)}{G(x)} = \lim_{x\to\infty}\frac{-t\overline{F}(t x)}{-\overline{F}(x)}=0.
\]
Thus, $G\in\textbf{R}_{-\infty}$.
In addition, by integration by parts, $H(x) = x\overline{F}(x)+G(x)$ for all $x\in \R_+$. Then,
for all $x\in\R_+$ and $t>1$,
\[
0\leq \frac{H(t x)}{H(x)}=\frac{x\overline{F}(x)}{H(x)}\cdot\frac{t\overline{F}(t x)}{\overline{F}(x)}+\frac{G(x)}{H(x)}\cdot\frac{G(t x)}{G(x)}\leq \frac{t\overline{F}(t x)}{\overline{F}(x)}+\frac{G(t x)}{G(x)}.
\]
This together with $\overline{F}\in\textbf{R}_{-\infty}$ and $G\in\textbf{R}_{-\infty}$ implies that
$H\in\textbf{R}_{-\infty}$.
Then, since $S(x)=1/H(x)$ for all $x\in\R_+$, $S\in\textbf{R}_\infty$. Since $S$ is nondecreasing, Proposition \ref{Proposition 2.4.4. in Bingham} implies that $S\in\textbf{KR}_\infty$. Thus, $d(S)=\infty$ by the definition of $\textbf{KR}_\infty$. Also, $S^{-1}\in\textbf{R}_0$ due to Proposition \ref{f inverse slowly varying}.  Finally, since $S^{-1}\in\textbf{R}_0$, \eqref{cr over r to 0} holds due to Proposition \ref{subpolynomial growth of slowly varying functions}.
\end{proof}
\end{lem}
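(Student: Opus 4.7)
The goal is to establish four linked claims about $S$ and $S^{-1}$: (i) $S \in \textbf{KR}_\infty$, (ii) $d(S) = \infty$, (iii) $S^{-1} \in \textbf{R}_0$, and (iv) $c^r/r \to 0$. Given the propositions already available, the plan is to reduce everything to showing $S \in \textbf{R}_\infty$, and then invoke Propositions~\ref{Proposition 2.4.4. in Bingham}, \ref{f inverse slowly varying}, and \ref{subpolynomial growth of slowly varying functions} in sequence. So the real work is proving $S \in \textbf{R}_\infty$, starting from the hypothesis $\overline{F} \in \textbf{R}_{-\infty}$.

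My plan is to write $S(x) = 1/H(x)$ where $H(x) := \E[v\id_{[v>x]}]$, and establish that $H \in \textbf{R}_{-\infty}$. Integration by parts gives the decomposition $H(x) = x\overline{F}(x) + G(x)$, where $G(x) := \int_x^\infty \overline{F}(y)\,dy$. I would first handle $G$: since $\overline{F}$ is nonincreasing and $\overline{F} \in \textbf{R}_{-\infty}$, L'Hopital yields $G(tx)/G(x) = t\overline{F}(tx)/\overline{F}(x) \to 0$ for $t > 1$, so $G \in \textbf{R}_{-\infty}$. Next, for any fixed $t > 1$, I would bound
\[
\frac{H(tx)}{H(x)} = \frac{tx\overline{F}(tx)}{H(x)} + \frac{G(tx)}{H(x)} \leq \frac{t\overline{F}(tx)}{\overline{F}(x)} + \frac{G(tx)}{G(x)},
\]
using $H(x) \geq x\overline{F}(x)$ and $H(x) \geq G(x)$. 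Both terms tend to zero, so $H \in \textbf{R}_{-\infty}$, hence $S = 1/H \in \textbf{R}_\infty$.

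The remaining steps are formal. Since $S$ is nondecreasing and positive (finite on $\R_+$ because $v$ has finite mean and $\overline{F}(x) > 0$), Proposition~\ref{Proposition 2.4.4. in Bingham} upgrades $S \in \textbf{R}_\infty$ to $S \in \textbf{KR}_\infty$. The definition of $\textbf{KR}_\infty$ then immediately gives $d(S) = \infty$ (and $c(S) = \infty$). Applying Proposition~\ref{f inverse slowly varying} to the nondecreasing unbounded $S$ yields $S^{-1} \in \textbf{R}_0$, and then Proposition~\ref{subpolynomial growth of slowly varying functions} with $\gamma = 1$ gives $c^r/r = S^{-1}(r)/r \to 0$ as $r\to\infty$, completing (iv).

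The main obstacle is the bound on $H(tx)/H(x)$. The trick is recognizing that the split $H = x\overline{F} + G$ allows one to compare $H$ at $tx$ with the two \emph{different} functions $x\overline{F}(x)$ and $G(x)$ in the denominator, exploiting that each piece of the numerator is dominated by a rapidly decaying ratio against an appropriately chosen lower bound for $H(x)$. If one only had $\overline{F} \in \textbf{R}_{-\infty}$ without the integration-by-parts decomposition, proving rapid variation of $H$ directly from its definition as an expectation would be considerably less transparent.
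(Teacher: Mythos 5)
Your proposal is correct and follows essentially the same approach as the paper's proof: the same decomposition $H(x) = x\overline{F}(x) + G(x)$ via integration by parts, the same L'Hopital argument to show $G \in \textbf{R}_{-\infty}$, the same bound on $H(tx)/H(x)$ (your version $\frac{tx\overline{F}(tx)}{H(x)} + \frac{G(tx)}{H(x)} \leq \frac{t\overline{F}(tx)}{\overline{F}(x)} + \frac{G(tx)}{G(x)}$ is algebraically identical to the paper's product-of-ratios form), and the same chain of Propositions \ref{Proposition 2.4.4. in Bingham}, \ref{f inverse slowly varying}, and \ref{subpolynomial growth of slowly varying functions} to conclude.
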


\section{Proof of Theorem  \ref{W convergence}}\label{scn:ProofofWconvergence}
In order to prove Theorem \ref{W convergence}, we will study $a$-truncated versions of $\widehat{V}^r$ and $\widehat{X}^r$.  For this, we define the following.
For $r\in R$ and $a\in\R_+$, we let $\rho_a^r=\lambda^r\E[v\id_{[v\leq a]}]$, and so $\rho^r-\rho_a^r=\lambda^r\E[v\id_{[v>a]}] = \lambda^r/S(a)$.
\noindent Also, for $r\in R$, $a\in\R_+$, and $t\geq0$, let 
\begin{alignat}{3} 
V_a^r(t)&=\sum_{i=1}^{E^r(t)}v_i\id_{[v_i\leq a]}, 
&\widehat{V}_a^r(t)&=\frac{1}{r}\left[\sum_{i=1}^{E^r(r^2t)}v_i\id_{[v_i\leq a]}-\rho_a^rr^2t\right],\nonumber\\
X_a^r(t)&=W_a^r(0)+V_a^r(t)-t,\qquad
&\widehat{X}_a^r(t)&=\frac{X^r_a(r^2t)}{r},\nonumber\\
Y_a^r(t)& = \Gamma [X_a^r](t),
&\widehat{Y}_a^r(t)& = \Gamma[\widehat{X}^r_a](t),\label {YTrunc}\\
\widetilde{V}_a^r(t)&=\widehat{V}_{ac^r}^r(t),
&\widetilde{X}_a^r(t)&=\widehat{X}_{ac^r}^r(t),
\qquad\widetilde{Y}_a^r(t) = \Gamma[\widetilde{X}_a^r](t).\label{Gamma Trunc}
\end{alignat}
Then, for $r\in R$ and $a\in\R_+$, $V_a^r$, $X_a^r$ and $Y_a^r$ ignore tasks that are of size more than $a$ upon their arrival. This is different from the $a$-cutoff processes \eqref{cutoff} that include tasks that may have been larger than size $a$ upon their arrival, but are currently of size less or equal $a$.  Also, for $r\in R$ and $a\in\R_+$, $\widehat{V}_a^r, \widehat{X}_a^r$ and $\widehat{Y}_a^r$ are diffusion scaled process and $\widetilde{V}_a^r$, $\widetilde{X}_a^r$ and $\widetilde{Y}_a^r$ are related to $\widehat{V}_a^r,\widehat{X}_a^r$ and $\widehat{Y}_a^r$ in the same way that $\widetilde{W}_a^r$ is related to $\widehat{W}_a^r$, so that they correspond to distribution dependent scaled processes.
 
Next we restate (5.3) in Proposition 10 of \cite{heavy tails} in Proposition \ref{W bounded by Y} below. This provides a comparison between the $a$-truncated process $Y_a^r(\cdot)$ and $a$-cutoff process $W_a^r(\cdot)$ for any $a\in\R_+$ and $r\in R$, the proof of which is valid for any processing time distribution. Hence, we refer the reader to \cite{heavy tails} for the proof. For this, we make note that for $r\in R$ and $a\in \R_+$, $\widetilde{Y}_a^r$ and $\widetilde{W}_a^r$ here correspond to $Y_a^r$ and $W_a^r$ in \cite{heavy tails} respectively. We intentionally retain the $\sim$ here to reflect the scaling.
\begin{prop}\label{W bounded by Y} 
For all $r\in R$, $a\in\R_+$ and $t\geq0$,
$Y_a^r(t)\leq W_a^r(t)\leq
Y_a^r(t)+a$.
In particular, for all $r\in R$, $a\in \R_+$ and $t\geq0$, $\widehat{Y}_a^r(t) \leq \widehat{W}_a^r(t)\leq \widehat{Y}_a^r(t) + a/r$ and \[\widetilde{Y}_a^r(t)\leq \widetilde{W}_a^r(t)\leq \widetilde{Y}_a^r(t)+\frac{ac^r}{r}.
\]
\end{prop}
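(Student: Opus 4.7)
The plan is to view $W_a^r$ as a one-dimensional Skorokhod reflection driven by $X_a^r$ plus a nondecreasing perturbation, and then to compare with $Y_a^r=\Gamma[X_a^r]$ using the monotonicity of $\Gamma$ together with the SRPT priority structure. First I would identify the dynamics of $W_a^r$ explicitly. The process $W_a^r$ increases through two mechanisms: upon the arrival of a task with initial size $\leq a$ (captured by $V_a^r$), and whenever the task in service has its remaining size decrease through the level $a$, at which instant $W_a^r$ jumps upward by exactly $a$. Collecting the latter jumps into a nondecreasing pure-jump process $J^r$, and noting that $W_a^r$ decreases at rate one precisely when $W_a^r>0$ (since under SRPT the server is then serving a task of remaining size $\leq a$), I obtain
\[
W_a^r(t)=X_a^r(t)+J^r(t)+I^r_W(t),\qquad I^r_W(t)=\int_0^t\id_{\{W_a^r(s)=0\}}\,ds,
\]
so $W_a^r=\Gamma[X_a^r+J^r]$ by the Skorokhod characterization. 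Since $J^r\geq 0$ with $J^r(0)=0$, the monotonicity of $\Gamma$ (Appendix \ref{ap:SM}) yields $W_a^r=\Gamma[X_a^r+J^r]\geq\Gamma[X_a^r]=Y_a^r$, which is the lower bound.

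For the upper bound $W_a^r\leq Y_a^r+a$, the key observation is the SRPT rule that a task with remaining size $>a$ is in service only when no task of remaining size $\leq a$ is present. Thus at each jump time $\tau$ of $J^r$, $W_a^r(\tau^-)=0$ and $W_a^r(\tau)=a$, while $Y_a^r(\tau)\geq 0$, so $W_a^r(\tau)-Y_a^r(\tau)\leq a$. Between consecutive jump times of $J^r$ (and after the last one) $J^r$ is constant, so both $W_a^r$ and $Y_a^r$ are driven by the same input $X_a^r$ and differ only through their respective nondecreasing continuous reflectors $I^r_W$ and $L_Y$, where $Y_a^r=X_a^r+L_Y$. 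The lower bound gives $\{W_a^r=0\}\subseteq\{Y_a^r=0\}$; moreover $I^r_W$ and $L_Y$ are absolutely continuous with Lebesgue densities $\id_{\{W_a^r=0\}}$ and $\id_{\{Y_a^r=0\}}$ respectively, so $dI^r_W\leq dL_Y$ as measures. Hence $W_a^r-Y_a^r$ is nonincreasing on each such interval, and induction over the jump times of $J^r$ preserves the bound $W_a^r(t)-Y_a^r(t)\leq a$ for all $t\geq 0$.

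The diffusion-scaled and distribution-dependent-scaled inequalities then follow immediately: dividing by $r$ gives the $\widehat{\cdot}$-version, and replacing $a$ by $ac^r$ in the unscaled bound together with $\widetilde{W}_a^r=\widehat{W}_{ac^r}^r$ and $\widetilde{Y}_a^r=\widehat{Y}_{ac^r}^r$ gives the $\widetilde{\cdot}$-version. The main obstacle is the SRPT-specific assertion that every jump of $J^r$ occurs at a time $\tau$ with $W_a^r(\tau^-)=0$; without this structural fact, monotonicity would only produce $W_a^r-Y_a^r\leq J^r(t)$, which can be arbitrarily large over a long horizon. This step is precisely what forces the use of the preemptive shortest-remaining-processing-time discipline and is the reason the comparison with the constant $a$ is available.
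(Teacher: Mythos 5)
The paper does not prove this proposition; it defers to (5.3) of Proposition 10 in the Banerjee--Budhiraja--Puha reference, where the argument is a sample-path comparison between the original SRPT system and an $a$-truncated SRPT system. Your reconstruction via the Skorokhod decomposition $W_a^r=\Gamma[X_a^r+J^r]$ is correct and is an appealingly packaged route to the same two inequalities. The identification of the pushing term $I_W^r$ with $\int_0^\cdot\id_{\{W_a^r(s)=0\}}\,ds$ is justified by the SRPT fact that whenever $W_a^r>0$ the server is serving a task of size at most $a$; the lower bound then falls out of monotonicity of $\Gamma$ exactly as you say. For the upper bound, the crucial SRPT input, that $J^r$ can jump only when $W_a^r(\tau^-)=0$, is the right structural fact, and the rest of the argument (absolute continuity of $I_W^r$ and $L_Y$ with densities $\id_{\{W_a^r=0\}}$ and $\id_{\{Y_a^r=0\}}$, valid because the driving processes have only upward jumps and a unit negative drift, together with the inclusion $\{W_a^r=0\}\subseteq\{Y_a^r=0\}$ from the already-proved lower bound) is sound. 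Two small cleanups worth making: (i) at a jump time $\tau$ of $J^r$ there may be a simultaneous arrival, so one should not assert $W_a^r(\tau)=a$ exactly, but rather $W_a^r(\tau)=a+\Delta V_a^r(\tau)$ while $Y_a^r(\tau)=Y_a^r(\tau^-)+\Delta V_a^r(\tau)\ge\Delta V_a^r(\tau)$, which still yields $W_a^r(\tau)-Y_a^r(\tau)\le a$; and (ii) the statement ``$W_a^r-Y_a^r$ is nonincreasing on each such interval, and induction over the jump times preserves the bound'' should be phrased carefully as an induction on the interval partition, with base case $W_a^r(0)-Y_a^r(0)=0$. With these details spelled out the proof is complete. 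Compared with a direct coupling/sample-path argument, your formulation has the advantage that the two bounds emerge from a single reflection identity plus the elementary monotonicity and shift properties of $\Gamma$ already collected in Appendix A, but it does require the additional observation about absolute continuity of the regulator that a bare sample-path comparison would not.
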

\indent Upon combining the above with \eqref{cr over r to 0} and \eqref{Gamma Trunc}, ones sees that a natural first step is to analyze $(\widetilde{X}^r_a(\cdot),a\in\R_+)$ as $r\to\infty$. This is the same overall strategy taken to prove Theorem 1 in \cite{heavy tails}.  However, the drift term behaves differently here because $S\in\textbf{R}_\infty$ (see Lemma \ref{S property}) rather than $\textbf{R}_p$ as it is in \cite{heavy tails}, where $p$ is as in \eqref{ht}.  Due to this, different techniques must be used for $a\in[0,1)$.
To begin, note that by definition, for all $r\in R$, $a\in\R_+$ and $t\geq 0$,
\begin{equation}\label{Xtilde}
    \widetilde{X}_a^r(t) =  \widetilde{W}_a^r(0)+\widetilde{V}_a^r(t)+r(\rho^r_{ac^r}-1)t.
\end{equation}
The next lemma gives a representation for the drift terms of the truncated netput processes that is used crucially throughout the analysis.
\begin{lem}\label{drift} For all $r\in R$ and $a\in\R_+$,
\begin{equation}\label{eq:drift}
    r(\rho_{ac^r}^r-1)=-\frac{\lambda^rS(c^r)}{S(ac^r)}+r(\rho^r-1).
\end{equation}
\begin{proof} For $r\in R$ and $a\in\R_+$,
\[
    r(\rho_{ac^r}^r-1)=r(\rho_{ac^r}^r-\rho^r)+r(\rho^r-1)=-r\lambda^r\E\left[v\id_{[v>ac^r]}\right]+r(\rho^r-1).
\]
Combining these the above with \eqref{S} and  \eqref{Scr is r} completes the proof.
\end{proof}
\end{lem}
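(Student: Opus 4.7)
The plan is to prove this identity by purely definitional manipulation, with the only nontrivial input being Lemma~\ref{S property a}. Since the right-hand side already contains the term $r(\rho^r-1)$, the natural first step is to add and subtract $r\rho^r$ on the left, writing
\begin{equation*}
r(\rho_{ac^r}^r - 1) = r(\rho_{ac^r}^r - \rho^r) + r(\rho^r - 1),
\end{equation*}
which reduces the task to showing that $r(\rho_{ac^r}^r - \rho^r) = -\lambda^r S(c^r)/S(ac^r)$.

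For the remaining difference, I would unpack the definitions. Since $\rho^r = \lambda^r \E[v]$ and $\rho_{ac^r}^r = \lambda^r \E[v\id_{[v \leq ac^r]}]$, splitting the expectation on $\{v\leq ac^r\}$ versus $\{v>ac^r\}$ gives
\begin{equation*}
\rho_{ac^r}^r - \rho^r = -\lambda^r \E[v\id_{[v>ac^r]}].
\end{equation*}
By the definition of $S$ in \eqref{S}, $\E[v\id_{[v>ac^r]}] = 1/S(ac^r)$, and thus
\begin{equation*}
r(\rho_{ac^r}^r - \rho^r) = -\frac{r\lambda^r}{S(ac^r)}.
\end{equation*}

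The final step is to rewrite the explicit factor of $r$ in the numerator using Lemma~\ref{S property a}, which furnishes the identity $S(c^r) = r$. This substitution produces the factor $\lambda^r S(c^r)/S(ac^r)$ appearing in \eqref{eq:drift} and completes the proof. Since every step is a direct application of a definition or the already-established identity $S(c^r)=r$, I anticipate no technical obstacle; the only point of care is the substitution $r \mapsto S(c^r)$, which is what converts what would otherwise be a routine truncation identity into a form directly useful for the subsequent asymptotic analysis, where the ratio $S(c^r)/S(ac^r)$ is what controls whether the drift stays bounded (for $a>1$) or diverges (for $a<1$) as $r\to\infty$.
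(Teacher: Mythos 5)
Your proposal is correct and follows essentially the same argument as the paper: split off $r(\rho^r-1)$, rewrite the remaining difference as $-r\lambda^r\E[v\id_{[v>ac^r]}]$, and then apply \eqref{S} and the identity $S(c^r)=r$ from Lemma~\ref{S property a}. The only difference is that you spell out the intermediate steps more explicitly than the paper does.
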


Next, we establish a convergence result for the drift terms \eqref{eq:drift}.
\begin{lem}\label{rt rho minus 1}
\[
\lim_{r\to\infty}r(\rho_{ac^r}^r-1)=\begin{cases}
-\infty,&\qquad\text{if }0\leq a<1,\\
\kappa-\lambda,&\qquad\text{if }a=1,\\
\kappa ,&\qquad\text{if }a>1.
\end{cases}
\]
\begin{proof}
The result follows from \eqref{eq:drift}, \eqref{lambda}, $S\in\textbf{R}_\infty$ (see Lemma \ref{S property}),  and \eqref{r rho convergence}.
\end{proof}
\end{lem}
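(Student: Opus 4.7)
The plan is to insert the representation from Lemma \ref{drift} into the expression and analyze each term separately, splitting into cases according to the value of $a$. Starting from
\[
r(\rho_{ac^r}^r-1)=-\frac{\lambda^rS(c^r)}{S(ac^r)}+r(\rho^r-1),
\]
the heavy traffic assumption \eqref{r rho convergence} immediately yields $r(\rho^r-1)\to\kappa$, and \eqref{lambda} gives $\lambda^r\to\lambda$. So in each case the task reduces to determining the limiting behavior of the ratio $S(c^r)/S(ac^r)$ as $r\to\infty$, using that $c^r\to\infty$ and $S\in\textbf{R}_\infty$.

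For $a=1$, the ratio is identically $1$, so $-\lambda^r S(c^r)/S(c^r)\to -\lambda$, and the total limit is $\kappa-\lambda$. For $a>1$, the definition of $\textbf{R}_\infty$ (applied with $t=a>1$ and $x=c^r\to\infty$) gives $S(ac^r)/S(c^r)\to\infty$, hence $S(c^r)/S(ac^r)\to 0$, and the total limit is $\kappa$.

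The remaining range $0\le a<1$ splits into two subcases. For $0<a<1$, writing $S(c^r)/S(ac^r)=S(t\cdot ac^r)/S(ac^r)$ with $t=1/a>1$ and noting $ac^r\to\infty$ shows, via $S\in\textbf{R}_\infty$, that this ratio tends to $+\infty$. The term $-\lambda^r S(c^r)/S(ac^r)$ therefore tends to $-\infty$, swamping the bounded contribution of $r(\rho^r-1)$. For $a=0$ one has $ac^r=0$ so $S(ac^r)=S(0)=1/\E[v]=\lambda$ is a fixed positive constant, while $S(c^r)\to\infty$ because $S$ is nondecreasing and rapidly varying with index $\infty$; hence $-\lambda^r S(c^r)/S(0)\to -\infty$ as well. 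In both subcases the total limit is $-\infty$, completing the three cases.

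No step is genuinely difficult; the only thing to be careful about is the boundary case $a=0$, where one cannot invoke $\textbf{R}_\infty$ directly (since that property concerns the ratio at $x\to\infty$ for a fixed scaling factor), and must instead observe that $S(0)$ is a positive constant while $S(c^r)\to\infty$.
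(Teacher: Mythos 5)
Your proposal is correct and is essentially the paper's own (one-line) argument, spelled out in full: substitute \eqref{eq:drift}, use \eqref{r rho convergence} and \eqref{lambda}, and invoke $S\in\textbf{R}_\infty$ for the ratio $S(c^r)/S(ac^r)$. Your separate treatment of the boundary case $a=0$ (where the $\textbf{R}_\infty$ property does not directly apply and one instead uses $S(0)=\lambda$ together with $S(c^r)\to\infty$) is a careful detail that the paper leaves implicit.
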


The limiting behavior of the drift terms described above is fundamentally different here in this light tailed case where \eqref{lt} holds than that in \cite{heavy tails}.  In \cite{heavy tails} where \eqref{ht} holds, the limiting drift terms are finite for all $a>0$ and vary continuously with $a$.  

Having established the limiting behavior of the drift terms, we move toward analyzing the remaining terms in \eqref{Xtilde}.
\begin{lem}\label{Joint convergence of V}
For any $k\in\N$ and $0< a_1<\dots< a_k<\infty$, as $r\to\infty$,
\[
(\widetilde{V}_{a_1}^r(\cdot),\dots,\widetilde{V}_{a_k}^r(\cdot),\widetilde{V}^r(\cdot))\Rightarrow (\sigma B(\cdot),\dots,\sigma B(\cdot),\sigma B(\cdot)).
\]

\begin{proof}
By Proposition \ref{functional clt convergence}, $\widetilde{V}^r(\cdot)\Rightarrow\sigma B(\cdot)$ as $r\to\infty$. By the convergence together theorem (see \cite[Theorem 11.4.7]{Whitt}), it suffices to show that for any $a\in(0,\infty)$, we have $\widetilde{V}^r(\cdot)-\widetilde{V}_{a}^r(\cdot)\Rightarrow0(\cdot)$ as $r\to\infty$. 
Fix $a\in (0,\infty)$, by the definitions of $\widetilde{V}^r(\cdot)$ and $\widetilde{V}_a^r(\cdot)$, for each $r\in R$ and $t\geq0$,
\[
\widetilde{V}^r(t)-\widetilde{V}_{a}^r(t)=\frac{1}{r}\sum_{i=1}^{E^r(r^2t)}v_i\id_{[ac^r<v_i]}-r\lambda^rt\E[v\id_{[ac^r<v]}].
\]
For $r\in R$ and $i\in\N$, let $\xi^r_i=v_i\id_{\{ac^r< v_i\}}$. By \eqref{cr over r to 0}, $\lim_{r\to\infty}ac^r/r=0$. Due to this and $\E[v^2]<\infty$, we have
$\lim_{r\to\infty}\E[(\xi_1^r)^2:(\xi_1^r)^2>r]=\lim_{r\to\infty}\E[v^2:v^2>r]=0$.
Thus, \eqref{1*} holds. For each $r\in R$, let $\hat\zeta^r(\cdot)$ be as in Proposition \ref{prop:FCLT} for this choice of $\{\xi_i^r\}_{i=1}^\infty$. Then by Proposition \ref{prop:FCLT}, $\hat\zeta^r(\cdot)\Rightarrow B(\cdot)$ as $r\to\infty$. For each $r\in R$, as in the derivation of \eqref{2**}, we have
\[
\widetilde{V}^r(\cdot)-\widetilde{V}_{a}^r(\cdot) = \sqrt{\var(\xi_1^r)}\hat\zeta^r(\overline{E}^r(\cdot))+\E[\xi_1^r]\widehat{E}^r(\cdot).
\]
Thus, as in the proof of Proposition \ref{functional clt convergence}, $\left(\hat\zeta^r(\overline{E}^r(\cdot)),\widehat{E}^r(\cdot)\right)\Rightarrow\left(B(\lambda(\cdot)),E^*(\cdot)\right)$ as $r\to\infty$ with $B(\lambda(\cdot))$ and $E^*(\cdot)$ independent of one another. Since $\E[v^2]<\infty$ and $\lim_{r\to\infty}c^r=\infty$,
$\lim_{r\to\infty}\E[(\xi_1^r)^2]=\lim_{r\to\infty}\E[v^2:v>ac^r]=0$. Thus, $\lim_{r\to\infty}\E[\xi_1^r]=0$ and $\lim_{r\to\infty}\Var[\xi_1^r]=0$.
Therefore, we have 
$
\widetilde{V}^r(\cdot)-\widetilde{V}_{a}^r(\cdot)\Rightarrow0(\cdot)$ as $r\to\infty$, as desired.
\end{proof}
\end{lem}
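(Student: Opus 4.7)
The plan is to combine Proposition~\ref{functional clt convergence} with a convergence-together argument. Since Proposition~\ref{functional clt convergence} already gives $\widetilde{V}^r(\cdot)\Rightarrow \sigma B(\cdot)$, by Theorem 11.4.7 of \cite{Whitt} it suffices to establish that for each fixed $a>0$,
\[
\widetilde{V}^r(\cdot) - \widetilde{V}_a^r(\cdot) \Rightarrow 0(\cdot), \qquad r\to\infty,
\]
since joint convergence of the $k+1$ processes to the ``diagonal'' limit $(\sigma B,\dots,\sigma B,\sigma B)$ then follows by viewing each $\widetilde{V}_{a_j}^r$ as $\widetilde{V}^r$ minus a perturbation that vanishes in probability.

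The difference $\widetilde{V}^r-\widetilde{V}_a^r$ collects only the contributions of the ``large'' arrivals, namely those with $v_i>ac^r$. Setting $\xi_i^r=v_i\id_{[v_i>ac^r]}$ and using $\rho^r-\rho_{ac^r}^r=\lambda^r\E[\xi_1^r]$, a direct computation gives
\[
\widetilde{V}^r(t)-\widetilde{V}_a^r(t)=\frac{1}{r}\sum_{i=1}^{E^r(r^2 t)}\xi_i^r - r\lambda^r t\,\E[\xi_1^r].
\]
I would then add and subtract $\E[\xi_1^r]E^r(r^2 t)/r$ to split this into a compensated sum of i.i.d.\ centered terms and a term driven by the centered arrival process $\widehat{E}^r$, arriving at
\[
\widetilde{V}^r(t)-\widetilde{V}_a^r(t) = \sqrt{\Var(\xi_1^r)}\,\widehat{\zeta}^r(\overline{E}^r(t)) + \E[\xi_1^r]\,\widehat{E}^r(t),
\]
where $\widehat{\zeta}^r$ is the diffusion-scaled partial-sum process associated with the triangular array $\{(\xi_i^r-\E[\xi_1^r])/\sqrt{\Var(\xi_1^r)}\}$.

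To verify the hypotheses of the triangular-array FCLT (Proposition~\ref{prop:FCLT}), I would exploit that $\E[v^2]<\infty$ and $ac^r\to\infty$ via dominated convergence: $\E[(\xi_1^r)^2]=\E[v^2\id_{[v>ac^r]}]\to 0$, which immediately yields both $\E[\xi_1^r]\to 0$ and $\Var(\xi_1^r)\to 0$, and the Lindeberg-type condition $\E[(\xi_1^r)^2\id_{[(\xi_1^r)^2>r]}]\to 0$ follows from the tail condition $\E[v^2\id_{[v^2>r]}]\to 0$. Applying Proposition~\ref{prop:FCLT} gives $\widehat{\zeta}^r(\cdot)\Rightarrow B(\cdot)$, and combining with \eqref{E hat convergence} produces joint convergence $(\widehat{\zeta}^r(\overline{E}^r(\cdot)),\widehat{E}^r(\cdot))\Rightarrow (B(\lambda(\cdot)),E^*(\cdot))$.

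The conclusion follows by letting $r\to\infty$ in the displayed decomposition: the first summand is a tight process multiplied by the deterministic, vanishing prefactor $\sqrt{\Var(\xi_1^r)}$, and the second summand is a weakly convergent process multiplied by the vanishing prefactor $\E[\xi_1^r]$. Hence both summands converge to $0(\cdot)$ in probability uniformly on compacts, and the convergence-together theorem finishes the argument. The main delicacy of the proof is bookkeeping: the decomposition must be chosen so that the surviving randomness is scaled by a prefactor that vanishes, since for $a<1$ the raw drift $r(\rho^r_{ac^r}-1)$ diverges to $-\infty$ (by Lemma~\ref{rt rho minus 1}) and must be cancelled exactly against the expectation of the random sum, rather than analyzed directly.
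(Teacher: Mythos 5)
Your proposal is correct and follows essentially the same route as the paper: reduce to $\widetilde{V}^r-\widetilde{V}_a^r\Rightarrow 0(\cdot)$ via convergence-together, decompose the difference as $\sqrt{\Var(\xi_1^r)}\,\widehat{\zeta}^r(\overline{E}^r(\cdot))+\E[\xi_1^r]\,\widehat{E}^r(\cdot)$ for $\xi_i^r=v_i\id_{[v_i>ac^r]}$, verify the triangular-array FCLT hypothesis via \eqref{cr over r to 0} and $\E[v^2]<\infty$, and conclude using the vanishing prefactors $\sqrt{\Var(\xi_1^r)}$ and $\E[\xi_1^r]$. The paper's proof is the same argument, down to the same decomposition and the same invocation of Proposition~\ref{prop:FCLT}.
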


\begin{cor}\label{W tilde and V tilde joint convergence}
    For any $k\in\N$ and $0<a_1<a_2<\cdots<a_k<\infty$, as $r\to\infty$, 
    \begin{align*}
&\left(\widetilde{W}_{a_1}^r(0), \dots, \widetilde{W}_{a_k}^r(0),\widetilde{W}^r(0),\widetilde{V}_{a_1}^r(\cdot),\dots,\widetilde{V}_{a_k}^r(\cdot),\widetilde{V}^r(\cdot)\right)\\
&\qquad\Rightarrow\left(W_{a_1}^*(0),\dots,W_{a_k}^*(0),W^*(0),\sigma B(\cdot),\dots,\sigma B(\cdot),\sigma B(\cdot)\right)        .
    \end{align*}
\begin{proof}
    This follows from assumption \eqref{W sub joint convergence}, Lemma \ref{Joint convergence of V}, and the mutual independence of $((W_a^*(0),a\in\R_+),W^*(0))$ and $B(\cdot)$.
\end{proof}
\end{cor}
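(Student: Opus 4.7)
The plan is to reduce the corollary to the joint convergence of two already-established, mutually independent blocks. For each $r\in R$, the first block
\[
\left(\widetilde{W}^r_{a_1}(0),\dots,\widetilde{W}^r_{a_k}(0),\widetilde{W}^r(0)\right)
\]
is a Borel measurable function of the initial data $(\QQ_0^r,\{v_i^r\}_{i=-\infty}^0)$, while the path-valued second block
\[
\left(\widetilde{V}^r_{a_1}(\cdot),\dots,\widetilde{V}^r_{a_k}(\cdot),\widetilde{V}^r(\cdot)\right)
\]
is a Borel measurable function of $(E^r(\cdot),\{v_i\}_{i=1}^\infty)$. By the mutual independence between the initial data and the post-zero arrival and processing information imposed in Section \ref{The Shortest Remaining Processing Time Queue Model}, these two blocks are independent for every $r\in R$.

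The two marginal convergences are already in hand. Assumption \eqref{W sub joint convergence}, combined with projection onto the finite coordinate set $\{a_1,\dots,a_k\}$, yields the weak limit $\left(W^*_{a_1}(0),\dots,W^*_{a_k}(0),W^*(0)\right)$ for the first block; Lemma \ref{Joint convergence of V} delivers the weak limit $(\sigma B(\cdot),\dots,\sigma B(\cdot))$ for the second. To promote these to joint convergence, I would invoke the standard fact that if $X^r$ and $Y^r$ are independent random elements of Polish spaces with $X^r\Rightarrow X$ and $Y^r\Rightarrow Y$, then $(X^r,Y^r)\Rightarrow(X,Y)$ with $X$ and $Y$ independent (see, e.g., \cite[Theorem 11.4.4]{Whitt}). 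Applying this to the two blocks above yields the stated joint convergence, and the limit $((W^*_a(0),a\in\R_+),W^*(0))$ is independent of $B(\cdot)$, matching the independence already built into Proposition \ref{Def of X*} and the standing conditions of Section \ref{initial}.

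I do not expect any real obstacle: every ingredient has been either proved or assumed, and the only substantive step is recognizing the independence structure carried over from the model primitives. The one point requiring a little care, rather than being a genuine difficulty, is handling the $\R_+$-indexed nature of \eqref{W sub joint convergence} when extracting the finite subfamily $\{a_1,\dots,a_k\}$; but this is routine since the finite-coordinate projection on the relevant Polish product space is continuous, so the continuous mapping theorem delivers what is needed.
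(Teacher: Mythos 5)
Your argument is correct and follows the same route as the paper's proof: the two blocks are independent for each $r$ because of the model assumption in Section~\ref{The Shortest Remaining Processing Time Queue Model}, each converges marginally by \eqref{W sub joint convergence} and Lemma~\ref{Joint convergence of V}, and therefore the joint limit is the product, which is exactly the stated limit since $((W_a^*(0),a\in\R_+),W^*(0))$ is independent of $B(\cdot)$. You are simply a bit more explicit than the paper about where the independence of the pre-limit blocks comes from, which is a welcome clarification rather than a deviation.
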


Due to Corollary \ref{W tilde and V tilde joint convergence} and Lemma \ref{rt rho minus 1}, $\lim_{r\to\infty}\P(\|\widetilde{X}_a^r\|_T\leq -M)=1$ for all $M,T>0$ and $a\in[0,1)$, whereas $\widetilde{X}_a^r(\cdot)\Rightarrow X_a^*(\cdot)$ as $r\to\infty$ for $a\geq1$. Thus, the analysis naturally separates into two cases: $a_1\geq 1$ and $a_k<1$.  These two cases correspond to Lemmas \ref{Lemma A} and \ref{Lemma B} below respectively. 
These results are leveraged at the end to proof of Theorem \ref{W convergence}. We begin with the case $a_1\geq 1$ since that follows quickly.


\begin{lem}\label{joint X convergence}
For any $k\in\N$ and $1\leq a_1< a_2<\cdots< a_k<\infty$, as $r\to \infty$, we have 
$$
\big(\widetilde{X}_{a_1}^r(\cdot),\dots,\widetilde{X}_{a_k}^r(\cdot),\widetilde{X}^r(\cdot)\big)\Rightarrow\big(X^*_{a_1}(\cdot),\dots,X^*_{a_k}(\cdot),X^*(\cdot)\big).
$$
\begin{proof}
The result follows from \eqref{Xtilde}, Corollary \ref{W tilde and V tilde joint convergence}, Lemma \ref{rt rho minus 1}, and the convergence together theorem.
\end{proof}
\end{lem}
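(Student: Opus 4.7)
The plan is to unpack the decomposition \eqref{Xtilde}, which gives, for every $a \in \R_+$,
\[
\widetilde{X}_a^r(t) = \widetilde{W}_a^r(0) + \widetilde{V}_a^r(t) + r(\rho^r_{ac^r}-1)t,
\]
and the analogous identity $\widetilde{X}^r(t) = \widetilde{W}^r(0) + \widetilde{V}^r(t) + r(\rho^r-1)t$ for the un-truncated process. The strategy is to treat the first two summands as a single jointly converging random vector in path space, treat the drift as a deterministic continuous perturbation, and then invoke the convergence-together theorem to obtain joint convergence of the sums.

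First, I would collect the random contributions. Corollary \ref{W tilde and V tilde joint convergence} already delivers the joint convergence
\[
\bigl(\widetilde{W}_{a_1}^r(0),\dots,\widetilde{W}_{a_k}^r(0),\widetilde{W}^r(0),\widetilde{V}_{a_1}^r(\cdot),\dots,\widetilde{V}_{a_k}^r(\cdot),\widetilde{V}^r(\cdot)\bigr)\Rightarrow\bigl(W^*(0),\dots,W^*(0),W^*(0),\sigma B(\cdot),\dots,\sigma B(\cdot),\sigma B(\cdot)\bigr),
\]
where the collapse of the first $k+1$ initial-condition marginals to a single copy of $W^*(0)$ uses the hypothesis $a_1\geq 1$ so that $W^*_{a_j}(0) = W^*(0)$ for all $j$, and the collapse of the velocity marginals to a single Brownian motion is exactly what Lemma \ref{Joint convergence of V} provides. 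Importantly the randomness in every component of the prospective limit is driven by the same $(W^*(0),B(\cdot))$.

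Second, I would handle the deterministic drift coordinates. Lemma \ref{rt rho minus 1} identifies the limits of $r(\rho^r_{ac^r}-1)$ as $\kappa-\lambda$ when $a=1$ and $\kappa$ when $a>1$, while \eqref{r rho convergence} gives $r(\rho^r-1)\to \kappa$. Since each drift is a continuous linear function of $t$ and the map $c\mapsto c(\cdot)$ from $\R$ to $\D([0,\infty),\R)$ is continuous, the drift vector converges deterministically to the corresponding linear limits in the $J_1$-topology uniformly on compacts.

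Finally, adding the random and deterministic pieces is continuous in the relevant product space, so the convergence together theorem (\cite[Theorem 11.4.7]{Whitt}) gives joint convergence of the sum to the vector whose $j$-th truncated entry is $W^*(0) + \sigma B(t) + (\kappa - \lambda \id_{[a_j=1]}) t$ and whose last entry is $X^*(t) = W^*(0) + \sigma B(t) + \kappa t$. Comparing with the definitions of $X^*_{a_j}$ at $a_j=1$ and $a_j>1$ preceding Theorem \ref{W convergence}, this is exactly the claimed limit. There is no genuine obstacle here: the content of the lemma is essentially the sum of two known convergence statements, one purely probabilistic (Corollary \ref{W tilde and V tilde joint convergence}) and one purely analytic (Lemma \ref{rt rho minus 1}). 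The only point requiring a moment's care is to make sure that the hypothesis $a_1\geq 1$ is used at the right place, namely to identify the initial-condition limits, since for $a<1$ Lemma \ref{rt rho minus 1} produces a divergent drift and the continuous mapping approach breaks down, which is precisely why the case $a<1$ must be treated separately later in the paper.
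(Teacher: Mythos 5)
Your proposal is correct and follows exactly the same route as the paper's proof: decompose via \eqref{Xtilde}, invoke Corollary \ref{W tilde and V tilde joint convergence} for the joint convergence of the initial conditions and load processes, invoke Lemma \ref{rt rho minus 1} for the deterministic drift limits (using $a_1\geq1$ so that all limits are finite), and assemble with the convergence-together theorem. The paper's proof is just a terse citation of these same four ingredients; your write-up supplies the details faithfully.
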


\begin{lem}\label{Lemma A}
Suppose the conditions of Theorem \ref{W convergence} hold and $a_1\geq1$. Then the conclusion of Theorem \ref{W convergence} holds.
\begin{proof}
Let $T>0$ and $a\geq1$. By Proposition \ref{W bounded by Y}, for $r\in R$,
$0\leq\|\widetilde{W}_a^r-\widetilde{Y}_a^r\|_T\leq ac^r/r$.
This together with  \eqref{cr over r to 0}, implies that
\begin{equation}
    \widetilde{W}^r_a-\widetilde{Y}_a^r\overset{p}{\to}0,\quad\text{as }r\to\infty.\label{Wa converge in probability}
\end{equation} 
By the almost sure continuity of $X_a^*$, the fact that the continuity points of $\Gamma$ contain the continuous functions, the continuous mapping theorem and Lemma \ref{joint X convergence}, we have
\begin{equation}
\widetilde{Y}_a^r(\cdot)=\Gamma[\widetilde{X}_a^r](\cdot)\Rightarrow\Gamma[X_a^*](\cdot)=W_a^*(\cdot),\quad\text{as }r\to\infty. \label{Gamma X}
\end{equation}
Thus, the joint convergence in Theorem \ref{W convergence} for $a_1\geq 1$ follows from \eqref{Gamma X}, Lemma \ref{joint X convergence}, \eqref{Wa converge in probability}, and the convergence together theorem.
\end{proof}
\end{lem}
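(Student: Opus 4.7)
The plan is to lift the joint convergence of the $a$-truncated netput processes established in Lemma \ref{joint X convergence} to the $a$-cutoff workload processes through two operations. First I would push the convergence through the one-dimensional Skorokhod map via the continuous mapping theorem to obtain joint convergence of $\widetilde{Y}^r_{a_j}(\cdot) = \Gamma[\widetilde{X}^r_{a_j}](\cdot)$ to $W^*_{a_j}(\cdot) = \Gamma[X^*_{a_j}](\cdot)$ for each $j$, together with $\Gamma[\widetilde{X}^r](\cdot) \Rightarrow \Gamma[X^*](\cdot) = W^*(\cdot)$. This step is justified because each limit component $X^*_{a_j}(\cdot)$ is almost surely continuous (being built from $W^*(0)$, the Brownian motion $B(\cdot)$ and a deterministic drift), and $\Gamma$ is continuous on $\C(\R_+)$, so the product map on $\D([0,\infty),\R)^{k+1}$ is continuous at the limit point.

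Next I would invoke Proposition \ref{W bounded by Y} to sandwich the cutoff workload, giving
\[
0 \leq \widetilde{W}^r_{a_j}(t) - \widetilde{Y}^r_{a_j}(t) \leq \frac{a_j c^r}{r}
\]
uniformly in $t$. Combined with $c^r/r \to 0$ from \eqref{cr over r to 0} in Lemma \ref{S property}, this yields $\|\widetilde{W}^r_{a_j} - \widetilde{Y}^r_{a_j}\|_T \overset{p}{\to} 0$ for each fixed $a_j$, and since the collection $\{a_1,\dots,a_k\}$ is finite, the convergence of all $k$ differences to zero is joint and automatic. For the untruncated coordinate, \eqref{I(t)} together with scale invariance of $\Gamma$ shows $\widetilde{W}^r(\cdot) = \Gamma[\widetilde{X}^r](\cdot)$, so no correction is needed there.

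Finally, the convergence together theorem combines the joint weak convergence of $(\widetilde{Y}^r_{a_1}, \dots, \widetilde{Y}^r_{a_k}, \widetilde{W}^r)$ to $(W^*_{a_1}, \dots, W^*_{a_k}, W^*)$ with the joint convergence in probability of the differences $\widetilde{W}^r_{a_j} - \widetilde{Y}^r_{a_j}$ to zero, producing the claimed joint weak convergence.

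The main subtlety I anticipate is ensuring the continuous mapping step is genuinely \emph{joint} and not merely marginal; this is handled by observing that $\Gamma$ acts continuously componentwise and that almost sure continuity of each limiting $X^*_{a_j}(\cdot)$ puts the limit in the set of continuity points of the product Skorokhod map. The hypothesis $a_1 \geq 1$ is essential because, as Lemma \ref{rt rho minus 1} shows, the drift in $\widetilde{X}^r_a$ diverges to $-\infty$ when $a<1$, so the truncated netput has no limit in $\D([0,\infty),\R)$ and this continuous-mapping approach cannot be used there (which is why a separate argument, handled by the companion Lemma \ref{Lemma B}, is needed for the $a<1$ regime).
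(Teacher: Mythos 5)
Your proposal is correct and follows essentially the same approach as the paper: apply the continuous mapping theorem to Lemma \ref{joint X convergence} (using almost sure continuity of the limit and continuity of $\Gamma$ on $\C(\R_+)$), use Proposition \ref{W bounded by Y} together with \eqref{cr over r to 0} to show $\widetilde{W}_a^r-\widetilde{Y}_a^r\overset{p}{\to}0$, and finish with the convergence together theorem. The only difference is the order in which the two main steps are presented, which is immaterial.
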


\begin{cor}\label{cor:Tailtozero} Suppose $a>1$ and $T>0$. As $r\to\infty$,\\
$\| \langle \chi\id_{(a,\infty)},\widetilde{\Q}^r(\cdot)\rangle\|_T=\| \widetilde{W}^r-\widetilde{W}_a^r\|_T\overset{p}{\to} 0$ and $\|\langle \id_{(a,\infty)},\widetilde{\Q}^r(\cdot)\rangle\|_T=\|\widetilde{\QQ}^r-\widetilde{\QQ}_a^r\|_T\overset{p}{\to} 0$.
\begin{proof}
    Lemma \ref{Lemma A} and $W^*=W_a^*$ imply the first statement. For all $r\in R$ and $t\ge 0$, $\langle \id_{(a,\infty)},\widetilde{\Q}^r(t)\rangle\le\langle \chi\id_{(a,\infty)},\widetilde{\Q}^r(t)\rangle/a$.  Thus, the first statement implies the second.
\end{proof}
\end{cor}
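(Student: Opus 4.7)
The plan is to read off both statements directly from Lemma \ref{Lemma A} by observing that the limiting process $W^*_a(\cdot)$ coincides with the full limit $W^*(\cdot)$ whenever $a>1$. Indeed, the definition of $W^*_a$ in Section \ref{Limit Theorems} gives $W^*_a(\cdot)=\Gamma[X^*](\cdot)=W^*(\cdot)$ for all $a>1$. Applying Lemma \ref{Lemma A} with the two-point collection $\{a\}$ and the full workload yields the joint convergence
\[
(\widetilde{W}^r_a(\cdot),\widetilde{W}^r(\cdot))\Rightarrow (W^*(\cdot),W^*(\cdot)),\qquad r\to\infty,
\]
in $\D([0,\infty),\R^2)$. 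Since subtraction is continuous at pairs of continuous functions in the Skorokhod topology, and $W^*(\cdot)$ is almost surely continuous, the continuous mapping theorem gives $\widetilde{W}^r(\cdot)-\widetilde{W}^r_a(\cdot)\Rightarrow 0(\cdot)$ as $r\to\infty$; because the limit is deterministic, this upgrades to convergence in probability uniformly on compacts. The identity $\widetilde{W}^r(t)-\widetilde{W}^r_a(t)=\langle\chi\id_{(a,\infty)},\widetilde{\Q}^r(t)\rangle$ is immediate from \eqref{definition of QQ and W} and the definition of the cutoff workload, so the first claim follows.

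For the second claim, I would use the trivial pointwise inequality $\id_{(a,\infty)}(x)\le \chi(x)\id_{(a,\infty)}(x)/a$ valid for all $x\in\R_+$ and $a>0$; integrating against the nonnegative measure $\widetilde{\Q}^r(t)$ yields
\[
\langle\id_{(a,\infty)},\widetilde{\Q}^r(t)\rangle\le \frac{1}{a}\langle\chi\id_{(a,\infty)},\widetilde{\Q}^r(t)\rangle\qquad\text{for all }t\ge 0,\ r\in R.
\]
Taking the supremum over $[0,T]$ and invoking the first claim gives $\|\widetilde{\QQ}^r-\widetilde{\QQ}^r_a\|_T\overset{p}{\to}0$. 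There is no real obstacle here: the only subtlety is verifying that joint (not merely marginal) convergence is provided by Lemma \ref{Lemma A}, which it is by construction, and that the Skorokhod limit being continuous promotes the vector convergence to a uniform-on-compacts statement so that we may pass to the supremum norm.
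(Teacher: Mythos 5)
Your proposal is correct and follows the same route as the paper: invoke Lemma \ref{Lemma A} together with the identity $W^*_a=W^*$ for $a>1$ to get the first convergence, and then use the pointwise bound $\id_{(a,\infty)}\le \chi\id_{(a,\infty)}/a$ integrated against $\widetilde{\Q}^r(t)$ to deduce the second. The extra detail you supply (joint convergence, continuity of subtraction at continuous limits, upgrading to convergence in probability) is exactly what the paper's terse proof leaves implicit.
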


Next we consider $0\leq a_k<1$.  
For this, we adapt some ideas from Section 5.2 in \cite{Puha}. To begin, we make the following definitions. For $r\in R,a\in \R_+$, and $t\geq0$, let 
\begin{align*}
\tau^r(t,a)&:=\sup\{s\in [0,t]: \langle \chi_a,\Q^r(s)\rangle=0\} = \sup\{s\in[0,t]: W_a^r(s)=0\},\\
\theta^r(t,a) &:=t-\tau^r(t,a),
\end{align*}
where $\tau^r(t,a)=0$ by convention if $W_a^r(s)>0$ for all $s\in[0,t]$. With this convention, for each $a\in\R_+$ and $r\in R$, $\tau^r(t,a)$ is well defined and $0\le \tau^r(t,a)\leq t$ for each $t\geq0$, $a\in\R_+$ and $r\in R$. To see this, fix $a\in\R_+$ and $r\in R$. Observe that $W_a^r(\cdot)$ is a process that jumps upward when a task with a processing time of length $a$ or less arrives or the size of a task becomes $a$ (which can only happen if $W_a^r(\cdot)$ is zero immediately beforehand due to the SRPT prioritization). Otherwise, when $W_a^r(\cdot)$ is positive, it decreases continuously at rate 1. Thus, if $\tau^r(t,a)=0$, then $W_a^r(\tau^r(t,a))=W_a^r(0)$. If $\tau^r(t,a)=t$, then $W_a^r(\tau^r(t,a))=W_a^r(t)=0$ unless an arrival occurs at time $t$ in which case $W_a^r(\tau^r(t,a))=W_a^r(t)\le a$. When $0<\tau^r(t,a)<t$, we have $W_a^r\left(\tau^r(t,a)\right)\le a$ as $W_a^r(\cdot)$ must have jumped up at time $\tau^r(t,a)$.  Thus, for all $r\in R$, $a\in\R_+$ and $t\ge 0$,
\begin{equation}\label{WaBnd}
W_a^r(\tau^r(t,a))\le W_a^r(0)+a.
\end{equation}
We also define scaled versions of $\tau^r$ and $\theta^r$ as follows: 
for $r\in R$, $a\in\R_+$, and $t\geq0$, 
\begin{alignat*}{3}
    \widehat{\tau}^r(t,a)&=\frac{\tau^r(r^2t,a)}{r^2},\quad &\widehat{\theta}^r(t,a) &= t-\widehat{\tau}^r(t,a),\\    \widetilde{\tau}^r(t,a)&=\frac{\tau^r(r^2t,ac^r)}{r^2},\quad &\widetilde{\theta}^r(t,a) &= t-\widetilde{\tau}^r(t,a).
\end{alignat*}
Then for $r\in R,a\in\R_+$ and $t\geq0$, $\widetilde{\tau}^r(t,a)=\widehat{\tau}^r(t,ac^r)$ and $\widetilde{\theta}^r(t,a) = \widehat{\theta}^r(t,ac^r)$. 
 
\begin{lem}\label{a<1 convergence}
Suppose $a\in[0,1)$. As $r\to\infty$,
\begin{enumerate}
    \item\label{1 of a<1 convergence}
    $\widetilde{\theta}^r(\cdot,a)\Rightarrow0(\cdot)$,
    \item\label{2 of a<1 convergence}
    $(\rho^r-\rho_{ac^r}^r)r\widetilde{\theta}^r(\cdot,a)\Rightarrow 0(\cdot)$,
    \item\label{3 of a<1 convergence}
    $\widetilde{W}^r_a(\cdot)\Rightarrow 0(\cdot)$. 
\end{enumerate}
In particular, for all $T>0$, 
$\|\widetilde{W}_a^r\|_T\overset{p}{\to}0$ as  $r\to\infty$.
\end{lem}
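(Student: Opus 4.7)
The plan is to derive a master inequality bounding $\widetilde{W}^r_a(t) + M^r \widetilde{\theta}^r(t,a)$ from above, where $M^r := r(1-\rho^r_{ac^r}) \to \infty$ by Lemma \ref{rt rho minus 1}, and then push parts (1), (2), (3) through in order via a modulus-of-continuity bootstrap.

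Fix $a \in [0,1)$, $T>0$, $t \in [0,T]$, and set $\tau := \tau^r(r^2 t, ac^r)$. The structural observation is that on the excursion $(\tau, r^2 t]$, since $W^r_{ac^r}(\cdot) > 0$, SRPT forces the server to serve only tasks of size at most $ac^r$; consequently no task can cross the threshold $ac^r$ from above during this interval, and $W^r_{ac^r}(\cdot)$ evolves on $(\tau, r^2 t]$ only via downward drift at rate one and upward jumps from arrivals of processing time at most $ac^r$. This gives
\[
W^r_{ac^r}(r^2 t) \;=\; W^r_{ac^r}(\tau) + [V^r_{ac^r}(r^2 t) - V^r_{ac^r}(\tau)] - (r^2 t - \tau).
\]
Combining with $W^r_{ac^r}(r^2 t) \geq 0$, the bound $W^r_{ac^r}(\tau) \leq W^r_{ac^r}(0) + ac^r$ from \eqref{WaBnd}, dividing through by $r$, and using the definitions of $\widetilde{V}^r_a$ and $\widetilde{\theta}^r$, I arrive at the master estimate
\[
\widetilde{W}^r_a(t) + M^r \widetilde{\theta}^r(t,a) \;\leq\; \widetilde{W}^r_a(0) + \frac{ac^r}{r} + \widetilde{V}^r_a(t) - \widetilde{V}^r_a(\widetilde{\tau}^r(t,a)),
\]
valid for all $t \in [0,T]$ (trivially when $\widetilde{\theta}^r(t,a) = 0$).

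Part (1) follows by bounding the $\widetilde{V}^r_a$-increment by the oscillation $\sup_{s,t \in [0,T]} |\widetilde{V}^r_a(t) - \widetilde{V}^r_a(s)|$, which is tight in probability by Lemma \ref{Joint convergence of V}, together with $\widetilde{W}^r_a(0) \overset{p}{\to} 0$ from \eqref{W tilde sub a r 0 convergence to 0 in prob}, $ac^r/r \to 0$ from \eqref{cr over r to 0}, and $M^r \to \infty$: dividing yields $\|\widetilde{\theta}^r(\cdot,a)\|_T \overset{p}{\to} 0$. For (2), I bootstrap: since $\widetilde{V}^r_a \Rightarrow \sigma B(\cdot)$ with continuous limit, tightness of $\{\widetilde{V}^r_a\}$ gives $\omega_\delta(\widetilde{V}^r_a) \overset{p}{\to} 0$ as $\delta \to 0$ uniformly in $r$, where $\omega_\delta$ denotes the modulus of continuity on $[0,T]$; combined with part (1), this yields $\omega_{\|\widetilde{\theta}^r(\cdot,a)\|_T}(\widetilde{V}^r_a) \overset{p}{\to} 0$. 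Substituted into the master estimate, $M^r \|\widetilde{\theta}^r(\cdot,a)\|_T \overset{p}{\to} 0$; combined with the direct computation $(\rho^r-\rho^r_{ac^r})r/M^r = \lambda^r/[\lambda^r - (\rho^r-1)S(ac^r)] \to 1$ (using $S(ac^r) = o(r)$ for $a < 1$ by Lemma \ref{S property} and $r(\rho^r-1) \to \kappa$), this proves (2). Part (3) follows by dropping the nonnegative $M^r \widetilde{\theta}^r(t,a)$ term from the left of the master estimate, leaving $\widetilde{W}^r_a(t) \leq \widetilde{W}^r_a(0) + ac^r/r + \omega_{\|\widetilde{\theta}^r(\cdot,a)\|_T}(\widetilde{V}^r_a)$, which converges to zero in probability uniformly on $[0,T]$. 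The final ``in particular'' assertion is immediate since convergence to a continuous limit in the Skorokhod $J_1$-topology is equivalent to uniform convergence in probability on compacts.

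The main obstacle is the bootstrap in part (2). Since $(\rho^r - \rho^r_{ac^r})r/M^r \to 1$ rather than $0$, the naive use of $\widetilde{V}^r_a(t) - \widetilde{V}^r_a(\widetilde{\tau}^r(t,a)) \leq \sup_{s,t\in[0,T]}|\widetilde{V}^r_a(t)-\widetilde{V}^r_a(s)| = O_p(1)$ in the master estimate would yield only $(\rho^r-\rho^r_{ac^r})r\widetilde{\theta}^r(\cdot,a) = O_p(1)$, not $o_p(1)$. The essential improvement is that the $\widetilde{V}^r_a$-increment is actually taken over the interval of length $\widetilde{\theta}^r(t,a)$, which is itself vanishing by part (1), so the modulus-of-continuity bound replaces the $O_p(1)$ oscillation with a genuinely $o_p(1)$ quantity.
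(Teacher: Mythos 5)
Your proposal is correct and takes essentially the same approach as the paper's proof: both start from the work-conservation identity on the excursion $(\tau^r(r^2t,ac^r),r^2t]$, the bound \eqref{WaBnd} on $W^r_{ac^r}$ at the start of the excursion, and arrive at the same master inequality (your version with $M^r=r(1-\rho^r_{ac^r})$ is just \eqref{eq:6}/\eqref{eq:7} with the two drift terms $r(\rho^r-\rho^r_{ac^r})$ and $r(\rho^r-1)$ grouped together). The only genuine variation is that you control $\widetilde{V}^r_a(t)-\widetilde{V}^r_a(\widetilde{\tau}^r(t,a))$ via a modulus-of-continuity bound using C-tightness of $\{\widetilde{V}^r_a\}$, whereas the paper invokes the random-time-change theorem ($\widetilde{V}^r_a(\widetilde{\tau}^r(\cdot,a))\Rightarrow \sigma B(\cdot)$) together with the convergence-together theorem to show the increment $\Rightarrow 0(\cdot)$; the two devices are mathematically interchangeable here.
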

Before proving Lemma \ref{a<1 convergence}, we make some remarks.  Versions of the first two statements in Lemma \ref{a<1 convergence} are developed in Lemmas 5.3 and 5.4 of \cite{Puha}, but the scaling there is a bit different.  A direct translation of \cite[Lemma 5.3]{Puha} into the notation here states that $rc^r\widetilde\theta^r(\cdot, a/c^r)=rc^r\widehat\theta^r(\cdot, a)\Rightarrow 0(\cdot)$ as $r\to\infty$ for all $a\in\R_+$.
Previously in \cite{Gromoll}, it was shown that
$r\widehat\theta^r(\cdot, a)\Rightarrow 0(\cdot)$ as $r\to\infty$ for all $a\in\R_+$.
Unfortunately, there is an error term that wasn't accounted for in the proof of \cite[Lemma 5.3]{Puha} ($\Omega_1^r$ is the emptyset otherwise) and it isn't clear how to incorporate this error term and keep the factor of $c^r$ in convergence to zero above. This convergence, with the factor of $c^r$ included, is used in the proof of \cite[Lemma 5.5]{Puha} in an attempt to show that what is referred to here as the $a/c^r$-cutoff distribution dependent queue length process, i.e., $\widetilde{\QQ}_{a/c^r}^r=c^r\widehat{\QQ}_{a}^r$, converges in distribution to zero as $r\to\infty$ for $a\in\R_+$.  Thus, this proof has a gap that seems challenging to resolve using the approach in \cite{Puha}.

In part 1 of Lemma \ref{a<1 convergence}, we state the first version of such a convergence that we can prove, which is equivalent to $\widehat\theta^r(\cdot, ac^r)\Rightarrow 0(\cdot)$ as $r\to\infty$ for $a\in[0,1)$.  In part 2 of Lemma \ref{a<1 convergence}, we see the multiplicative factor 
$(\rho^r-\rho_{ac^r}^r)r=\lambda^rS(c^r)/S(ac^r)$ in front of $\widetilde\theta^r(\cdot, a)=\widehat\theta^r(\cdot, ac^r)$ for $a\in[0,1)$
and $r\in R$, which diverges to positive infinity as $r\to\infty$ since $S\in\mathbf{R}_\infty$ and seems to give the natural rate of convergence to zero.

A direct translation of \cite[Lemma 5.4]{Puha}  into the notation here states the following: for all $\varepsilon>0$,
$(a_\varepsilon^r)^{-1}\widetilde\theta^r\left(\cdot, S^{-1}\left( a_\varepsilon^rr\right)/S^{-1}(r)\right))=(a_\varepsilon^r)^{-1}\widehat\theta^r\left(\cdot, S^{-1}\left(a_\varepsilon^r r\right)\right)\Rightarrow 0(\cdot)$ as $r\to\infty$, where $a_{\varepsilon}^r=(c^r)^{2+\varepsilon}$ for all $\varepsilon>0$ and $r\in R$.  Note that $\lim_{r\to\infty}(a_\varepsilon^r)^{-1}= 0$.  The condition (23) in \cite{Puha} is used there so that $S^{-1}\left(a_\varepsilon^r r\right)/S^{-1}(r)-1$ converges to zero at a certain rate as $r\to\infty$ for each $\varepsilon>0$.  While the proof  given for \cite[Lemma 5.4]{Puha} is valid, the approach taken here does not require this rate of convergence condition and so $a_\varepsilon^r$ is replaced with fixed $a\in[0,1)$ in part 2 of Lemma \ref{a<1 convergence}.  Ultimately, part 2 of Lemma \ref{a<1 convergence} is used in the proof below to show that the $a$-cutoff distribution dependent scaled workload process $\widetilde{W}_{a}^r$ converges in distribution to zero as $r\to\infty$ for $a\in[0,1)$ (see part 3 of Lemma \ref{a<1 convergence}).  Entirely different techniques are used in Section \ref{scn:ProofofQconvergence} to show that the $a$-cutoff distribution dependent scaled queue length process $\widetilde{\QQ}_{a}^r$ converges in distribution to zero as $r\to\infty$ for $a\in[0,1)$.
We provide a complete proof of Lemma \ref{a<1 convergence} next to address all of these issues.

\begin{proof}[Proof of Lemma \ref{a<1 convergence}]
Fix $T>0$.  Due to the work conserving property of SRPT and the fact that the $r$-th system is not empty during the time interval $(\tau^r(t,a),t]$ for all $t\geq 0$ and $r\in R$, we have the following equation: for each $r\in R$ and $t\geq0$, 
\begin{equation}
    W_a^r(t) = \langle \chi_a, \Q^r(t)\rangle =
    W_a^r\big(\tau^r(t,a)\big)+V_a^r(t)-V_a^r\big(\tau^r(t,a)\big) - \theta^r(t,a).\label{W by tau}
\end{equation}
After applying the distribution dependent scaling in \eqref{W by tau} and using \eqref{WaBnd}, we have the following: for each $r\in R$, and $t\geq0$,
\begin{align}
\widetilde{W}^r_a(t) \leq & \widetilde{W}^r_a(0)+\widetilde{V}_{a}^r(t)-\widetilde{V}_{a}^r\big(\widetilde{\tau}^r(t,a)\big)+r(\rho^r_{ac^r}-\rho^r)\widetilde{\theta}^r(t,a)\nonumber\\
&+r(\rho^r-1)\widetilde{\theta}^r(t,a)+\frac{ac^r}{r}.\label{eq:6}
\end{align}
For $r\in R$ and $t\geq 0$, $\widetilde{W}_a^r(t)\geq0$, \eqref{eq:6}, $\rho^r-\rho^r_{ac^r}\geq 0$ and $\widetilde{\theta}^r(t,a)\geq 0$ imply that
\begin{align}
    0&\leq r(\rho^r-\rho^r_{ac^r})\widetilde{\theta}^r(t,a)\nonumber\\
    &\leq \widetilde{W}^r_a(0)+\widetilde{V}^r_a(t)-\widetilde{V}^r_a\big(\widetilde{\tau}^r(t,a)\big)+r(\rho^r-1)\widetilde{\theta}^r(t,a)+\frac{ac^r}{r}.\label{eq:7}
\end{align}
By Corollary \ref{W tilde and V tilde joint convergence}
and $\|\widetilde{\theta}^r(\cdot,a)\|_T\leq T$, $\|\widetilde{W}_a^r(0)+\widetilde{V}^r_a(\cdot)-\widetilde{V}^r_a\big(\widetilde{\tau}^r(\cdot,a)\big)\|_T$ is stochastically bounded as $r\to\infty$, i.e., for all $\theta\in(0,1)$ there exists $M>0$ such that $\liminf\limits_{r\to\infty}\P\left(\|\widetilde{W}_a^r(0)+\widetilde{V}_a^r(\cdot)-\widetilde{V}^r_a\big(\widetilde{\tau}^r(\cdot,a)\big)\|_T\leq M\right)\geq 1-\theta$. Then by assumption \eqref{r rho convergence}, $\|\widetilde{\theta}^r(\cdot,a)\|_T\leq T$ and \eqref{cr over r to 0}, the supremum over $t\in[0,T]$ of right hand side of \eqref{eq:7} is stochastically bounded on $[0,T]$. Note that for all $r\in R$, 
\[
r(\rho^r-\rho^r_{ac^r}) =
\lambda^rS(S^{-1}(r))\E[v\id_{[v>ac^r]}]= \frac{\lambda^r S(c^r)}{S(ac^r)}.
\]
Multiplying both sides of \eqref{eq:7} by $\frac{S(ac^r)}{S(c^r)}$, noting that $\lim\limits_{r\to\infty}\frac{S(ac^r)}{S(c^r)} = 0$ since $a\in[0,1)$ and $S\in\textbf{R}_\infty$ (see Lemma \ref{S property}), and using \eqref{lambda} and the stochastic boundedness of the supremum over $t\in[0,T]$ of the right side of \eqref{eq:7} gives 
\begin{equation}
\|\widetilde{\theta}^r(\cdot,a) \|_T\overset{p}{\to} 0\text{ as $r\to\infty$}.\label{sup of tilde theta goes to 0 in probability}
\end{equation}
Since $T>0$ is arbitrary, \ref{1 of a<1 convergence} holds. Let $\iota(t) =t$ for all $t\geq0$.  From \ref{1 of a<1 convergence}, we conclude that $\iota(\cdot)-\widetilde{\tau}^r(\cdot,a)\Rightarrow 0(\cdot)$ as $r\to\infty$.
Then, by Lemma \ref{Joint convergence of V}, \eqref{sup of tilde theta goes to 0 in probability}, the convergence together theorem, 
assumption \eqref{r rho convergence} and \ref{1 of a<1 convergence}, as $r\to\infty$, 
\begin{align}
    \widetilde{V}_a^r(\cdot)- \widetilde{V}^r_a\big(\widetilde{\tau}^r(\cdot,a)\big)&\Rightarrow
    \sigma B(\cdot)-\sigma B(\iota(\cdot))=0(\cdot)\label{eq:8},\\
    r(\rho^r-1)\widetilde{\theta}^r(\cdot,a)&\Rightarrow 0(\cdot).\label{eq:9}
\end{align}
Thus, due to \eqref{cr over r to 0} and $a\in[0,1)$, \eqref{eq:8}, \eqref{eq:9}, \eqref{W tilde sub a r 0 convergence to 0 in prob} together with \eqref{eq:7} and the convergence together theorem imply \ref{2 of a<1 convergence}. Then, \eqref{eq:8}, \eqref{eq:9}, \ref{1 of a<1 convergence}, \ref{2 of a<1 convergence}, \eqref{eq:6}, assumption \eqref{r rho convergence}, \eqref{cr over r to 0} and the convergence together theorem imply \ref{3 of a<1 convergence}. The in particular statement follows from \ref{3 of a<1 convergence} and the fact that $0(\cdot)$ is deterministic, continuous and identically equal to zero.
\end{proof}

\begin{lem}\label{Lemma B}
Suppose that the conditions of Theorem \ref{W convergence} hold and $a_k<1$. Then the conclusion of Theorem \ref{W convergence} holds.
\begin{proof}
For any $r\in R$ and $t\geq0$, 
$
0\leq \widetilde{W}^r_{a_1}(t)\leq \dots\leq\widetilde{W}^r_{a_k}(t)<\infty$.
By Lemma \ref{a<1 convergence}, $\widetilde{W}^r_{a_k}(\cdot)\Rightarrow0(\cdot)$ as $r\to\infty$, and the result follows.
\end{proof}
\end{lem}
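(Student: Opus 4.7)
The plan is to leverage Lemma \ref{a<1 convergence} directly for the cutoff workload processes and then fold in the already-established diffusion limit for $\widetilde{W}^r(\cdot)$. Since $W^*_{a_i}(\cdot) = 0(\cdot)$ for all $i$ whenever $a_k < 1$, the task reduces to establishing that each of the first $k$ scaled cutoff workloads tends to zero in probability and then pairing this with $\widetilde{W}^r(\cdot) \Rightarrow W^*(\cdot)$ to obtain the full joint limit.

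First I would record the pointwise monotonicity
\[
0 \leq \widetilde{W}^r_{a_1}(t) \leq \widetilde{W}^r_{a_2}(t) \leq \cdots \leq \widetilde{W}^r_{a_k}(t),
\]
which is immediate from $\chi\id_{[0,a_1]} \leq \cdots \leq \chi\id_{[0,a_k]}$ and the nonnegativity of $\widetilde{\Q}^r(t)$. Applying the ``in particular'' statement of Lemma \ref{a<1 convergence} with $a = a_k \in [0,1)$ yields $\|\widetilde{W}_{a_k}^r\|_T \overset{p}{\to} 0$ as $r \to \infty$ for every $T > 0$, and monotonicity then propagates this to $\|\widetilde{W}_{a_i}^r\|_T \overset{p}{\to} 0$ for every $i \in \{1, \ldots, k\}$. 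Because the common limit $0(\cdot)$ is deterministic, these marginal convergences in probability combine automatically to give joint convergence in probability of $(\widetilde{W}_{a_1}^r(\cdot), \ldots, \widetilde{W}_{a_k}^r(\cdot))$ to $(0(\cdot), \ldots, 0(\cdot))$.

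Finally, I would invoke the convergence together theorem to pair the joint convergence in probability above with the standalone weak convergence $\widetilde{W}^r(\cdot) = \widehat{W}^r(\cdot) \Rightarrow W^*(\cdot)$ from \eqref{diffusion of W}, yielding the claimed joint weak convergence to $(W^*_{a_1}(\cdot), \ldots, W^*_{a_k}(\cdot), W^*(\cdot))$. I do not anticipate any genuine obstacle in this step; all of the delicate work has already been absorbed into Lemma \ref{a<1 convergence}, in particular its part~2 where the diverging factor $(\rho^r - \rho^r_{ac^r})r = \lambda^r S(c^r)/S(ac^r)$ is used to extract enough decay from $\widetilde{\theta}^r(\cdot, a)$ to drive $\widetilde{W}^r_a(\cdot)$ to zero. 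Once that lemma is in hand, Lemma \ref{Lemma B} reduces to a short repackaging via monotonicity and convergence together.
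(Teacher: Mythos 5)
Your proof is correct and follows exactly the same approach as the paper's: monotonicity of the cutoff workload processes in $a$ together with Lemma \ref{a<1 convergence} applied at $a=a_k$ forces the first $k$ coordinates to $0(\cdot)$, and the deterministic limit allows the convergence together theorem to fold in $\widetilde{W}^r(\cdot)\Rightarrow W^*(\cdot)$. The only difference is that you spell out the final convergence-together step that the paper compresses into ``and the result follows.''
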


\begin{proof}[Proof of Theorem \ref{W convergence}]
If $a_k<1$ or $a_1\geq1$, the result follows from Lemma \ref{Lemma A} or Lemma \ref{Lemma B} respectively. Otherwise, there exists $1\leq m<k$, such that $0\leq a_m<1\leq a_{m+1}$. 
Then,
$\left\{\left(\widetilde W_{a_1}^r,\dots,\widetilde W_{a_m}^r\right)\right\}_{r\in R}$
and 
$\left\{\left(\widetilde W_{a_{m+1}}^r,\dots,\widetilde W_{a_k}^r,\widetilde W^r\right)\right\}_{r\in R}$
converge jointly as $r\to\infty$ due to Lemmas \ref{Lemma A} and \ref{Lemma B} and the deterministic nature of the limit in Lemma \ref{Lemma B}.
\end{proof}

\section{Proof of Theorem \ref{mv convergence}}\label{scn:ProofofQconvergence}
Thus far, we have shown that the queue mass associated with tasks of size greater than $a$ for any $a>1$ tends to zero under the distribution dependent scaling (see Corollary \ref{cor:Tailtozero}). In order to complete the proof, there are two main steps that remain.  First, in Section \ref{a<1 queue mass to zero}, we prove that the queue mass associated with tasks of size less or equal $a$ for any $a<1$ tends to zero under the distribution dependent scaling (see Theorem \ref{In between 0 and 1}). The verification of this is substantial and involves adapting the arguments in \cite{heavy tails} to the present light tailed setting. Second, in Section \ref{section 5.3}, we show that the total queue mass tends to $W^*$ under the distribution dependent scaling (see Lemma \ref{Lemma D}). These results are leveraged in the balance of Section \ref{section 5.3} to prove Theorem \ref{mv convergence}. 

\subsection{Convergence to Zero of the Queue Mass Below 1}\label{a<1 queue mass to zero}
In this section, we prove the following theorem.
\begin{thm}\label{In between 0 and 1}
Suppose $T>0$ and $a\in[0,1)$.  Then
$\|\widetilde{\QQ}^r_a\|_T\overset{p}{\to}0$ as $r\to\infty$.
\end{thm}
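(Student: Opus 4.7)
The plan is to adapt the busy-period analysis behind the proof of Theorem 3 of \cite{heavy tails}, in particular the bookkeeping estimate that corresponds to the ``Lemma \ref{Lemma 18 of paper}'' referenced above, to the light-tailed setting.

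First, for any $\epsilon\in(0,a)$, I would decompose
\begin{equation*}
\widetilde{\QQ}^r_a(t)\le\widetilde{\QQ}^r_\epsilon(t)+\frac{1}{\epsilon}\widetilde{W}^r_a(t),
\end{equation*}
by splitting tasks according to whether the scaled remaining processing time lies in $[0,\epsilon]$ or $(\epsilon,a]$; each task in the latter range contributes at least $\epsilon c^r$ to the $ac^r$-cutoff workload, so its $(c^r/r)$-weighted count is bounded by $\widetilde{W}^r_a/\epsilon$. By part \ref{3 of a<1 convergence} of Lemma \ref{a<1 convergence}, $\widetilde{W}^r_a(\cdot)\Rightarrow 0(\cdot)$, so it suffices to prove $\|\widetilde{\QQ}^r_\epsilon\|_T\overset{p}{\to}0$ for each fixed small $\epsilon\in(0,1)$.

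Second, I would control $\widetilde{\QQ}^r_\epsilon$ via the busy-period structure of the $\epsilon c^r$-cutoff workload, mirroring the setup of Lemma \ref{a<1 convergence}. On the interval $(\tau^r(r^2t,\epsilon c^r),r^2 t]$ the server is entirely occupied with tasks of size $\le\epsilon c^r$, and no task can enter the range $(0,\epsilon c^r]$ from above (such crossings happen only during service, yet the server is busy with strictly smaller tasks). Hence $\langle\id_{(0,\epsilon c^r]},\Q^r(r^2 t)\rangle$ is bounded by its value just after $\tau^r(r^2t,\epsilon c^r)$, which is at most $1+\langle\id_{(0,\epsilon c^r]},\Q^r(0)\rangle$, plus the net of arrivals of size $\le\epsilon c^r$ minus departures in the busy interval. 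After multiplication by $c^r/r$, the initial term vanishes by \eqref{cr over r to 0} and assumption \eqref{q tilde initial 0}, since for $\epsilon<1$ the limit measure $W^*(0)\delta_1^+$ places no mass on $[0,\epsilon]$ and $\{\epsilon\}$ is a continuity set.

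Third, and most delicately, the $(c^r/r)$-scaled net of small arrivals minus departures over the busy interval must be shown to tend to zero in probability. Following the structure of Lemma 18 of \cite{heavy tails}, this net flow is expressed through the $\epsilon c^r$-truncated netput process $V^r_{\epsilon c^r}$: it equals a deterministic drift of order $r(\rho^r_{\epsilon c^r}-1)\widetilde{\theta}^r(t,\epsilon)$, which vanishes by part \ref{2 of a<1 convergence} of Lemma \ref{a<1 convergence}, plus a centered piece equal to the increment of $\widetilde{V}^r_{\epsilon}$ over the busy period of length $r^2\widetilde{\theta}^r(t,\epsilon)$, handled by Lemma \ref{Joint convergence of V} together with the vanishing time-change from part \ref{1 of a<1 convergence} of Lemma \ref{a<1 convergence}. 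The Karamata-type growth estimates on $S$ from Lemma \ref{S property} are then used to absorb the remaining pre-factors of $c^r$. The main obstacle is precisely this third step: a bound that uses only the raw small-arrival count in the busy interval (arrivals occur at rate $\approx\lambda r^2$, so $(c^r/r)$-rescaling yields a term of order $\lambda c^r r\widetilde{\theta}^r(t,\epsilon)$) is not controlled by part \ref{2 of a<1 convergence} of Lemma \ref{a<1 convergence} alone for a general rapidly varying $F$, since $c^r$ may outstrip $S(c^r)/S(\epsilon c^r)$. The delicate cancellation between small-task arrivals and departures under SRPT must be captured precisely, which is exactly what the adaptation of Lemma 18 of \cite{heavy tails} is designed to do. Once this refined balance is in place, the claim follows by sending $r\to\infty$ for each fixed $\epsilon\in(0,1)$.
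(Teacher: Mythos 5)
Your first step, namely the bound
\[
\widetilde{\QQ}^r_a(t)\le\widetilde{\QQ}^r_\epsilon(t)+\frac{1}{\epsilon}\widetilde{W}^r_a(t),
\]
is correct and is exactly the outer reduction used in the paper's proof (see \eqref{upper bound of tilde Q a r by W and Q}).  The remainder of your outline, however, departs from the paper's argument and contains a genuine gap that you yourself half-identify but do not resolve.

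In your second and third steps you try to bound $\widetilde{\QQ}^r_\epsilon$ directly through the busy-period structure of the $\epsilon c^r$-cutoff workload, writing the queue count as (initial value) + (arrivals of size $\le\epsilon c^r$) $-$ (departures) over the busy interval.  This is essentially the route taken in \cite[Lemma 5.5]{Puha}, and it is precisely the argument the paper identifies as having a gap (see the discussion following Lemma \ref{a<1 convergence}).  The problem is that there is no effective handle on the departure count: you observe correctly that the raw arrival count over the busy interval is of order $\lambda c^r r\,\widetilde{\theta}^r(t,\epsilon)$ and that part \ref{2 of a<1 convergence} of Lemma \ref{a<1 convergence} controls only $\lambda^r\frac{S(c^r)}{S(\epsilon c^r)}\widetilde{\theta}^r(t,\epsilon)$, which need not dominate $c^r r\,\widetilde{\theta}^r(t,\epsilon)$ for a general rapidly varying $F$.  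When you then assert that ``the net flow is expressed through the $\epsilon c^r$-truncated netput process,'' you are conflating a \emph{workload}-scale quantity with a \emph{count}-scale quantity: $\widetilde{V}^r_\epsilon$ and the drift $r(\rho^r_{\epsilon c^r}-1)\widetilde{\theta}^r$ quantify work, not numbers of tasks, and there is no uniform lower bound on task sizes that lets you convert one into the other.  Finally the appeal to ``the adaptation of Lemma \ref{Lemma 18 of paper}'' is circular: Lemma \ref{Lemma 18 of paper} is the theorem you are being asked to prove (or a version of it), so invoking it as a black box does not close the argument.

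For comparison, the paper does not work with the cutoff queue count on busy intervals at all.  It passes to the $\delta c^r$-\emph{truncated} SRPT queue $\widetilde{\ZZZ}^r_\delta$ (Proposition \ref{Prop 10 in SRPT}), telescopes dyadically $\widetilde{\ZZZ}^r_\delta=\widetilde{\ZZZ}^r_{M/c^r}+\sum_k(\widetilde{\ZZZ}^r_{2^{-k}\delta}-\widetilde{\ZZZ}^r_{2^{-k-1}\delta})$, controls each increment by the truncated workload via Proposition \ref{tilde QQ} as $\widetilde{\ZZZ}^r_{2^{-k}\delta}-\widetilde{\ZZZ}^r_{2^{-k-1}\delta}\le c^r/r + (2^{-k-1}\delta)^{-1}\widetilde{Y}^r_{2^{-k}\delta}$, and then proves the sharper bound $\widetilde{Y}^r_\delta=O(\delta^{1+\eta}\log(1/\delta))$ with high probability (Lemma \ref{Lemma 18 of paper}) via the upcrossing / biased-random-walk argument built on Lemmas \ref{lemma 1 in original paper}--\ref{Lemma 3 in SRPT}.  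It is that extra power $\delta^\eta$ that makes the telescoping sum converge after dividing by $\delta$; a bound on $\widetilde{Y}^r_\delta$ of order $\delta$ alone would be useless.  This mechanism is absent from your outline, so the proposal as written does not establish the theorem.
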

\noindent The result in Theorem \ref{In between 0 and 1} holds trivially if $a=0$. Let $a\in(0,1)$. In order to prove Theorem \ref{In between 0 and 1} for such an $a$, we must show that for any $\alpha\in(0,1)$
\begin{equation}
    \lim_{r\to\infty}\P\left(\|\widetilde{\QQ}^r_a\|_T>\alpha\right)=0.\label{QQ sub in between 0 and 1 greater theta is 0}
\end{equation}
Let $\Theta$ be the set of all $\vartheta:R\to\R_+$ such that $\lim_{r\to\infty}\vartheta(r)=0$, and let $\Psi$ be the set of all $\varphi:\R_+\to\R_+$ such that $\lim_{\delta\to0+}\varphi(\delta)=0$. We will show that there exists $\theta\in\Theta$, $\beta\in(0,1)$, $\vartheta\in\Theta$ and $\varphi\in\Psi$ such that for all sufficiently large $r\in R$ and $\delta\in[\theta(r),\beta]$,
\begin{equation}
\P\left(\|\widetilde{\QQ}^r_\delta\|_T>\varphi(\delta)+\vartheta(r)+\frac{2c^r}{r}\right)<\varphi(\delta)+\vartheta(r). \label{step to show QQ sub in between 0 and 1 greater theta is 0}
\end{equation}
Indeed, if such objects exist, then given $\alpha\in(0,1)$ and $\epsilon\in(0,1)$, we can choose $r$ sufficiently large and $\beta$ sufficiently small that for all $\delta\in[\theta(r),\beta]$ we have $\varphi(\delta)+\vartheta(r)+2c^r/r\leq\alpha$
and $\varphi(\delta)\le \epsilon$
so that
\[
\P\left(\|\widetilde{\QQ}^r_\delta\|_T>\alpha\right)\leq \P\left(\|\widetilde{\QQ}_\delta^r\|_T>\varphi(\delta)+\vartheta(r)+\frac{2c^r}{r}\right)<\varphi(\delta)+\vartheta(r)\le\epsilon+\vartheta(r).
\]
Letting $r\to\infty$ above and using that $\epsilon\in(0,1)$ is arbitrary gives that \eqref{QQ sub in between 0 and 1 greater theta is 0} holds for $a\in(0,\beta]$. Once we have established \eqref{QQ sub in between 0 and 1 greater theta is 0} for positive, but sufficiently small $a$, we will use the result in Theorem \ref{W convergence}, which implies that for $a\in(0,1)$,
$\|\widetilde{W}^r_a\|_T\overset{p}{\to} 0$ as $r\to\infty$, to extend to all $a\in(0,1)$. See the proof of Theorem \ref{In between 0 and 1} at the end of Section \ref{Application of Lemma 5.2.6 to prove Theorem 5.2.1}.

The remainder of this section is organized as follows. In Section \ref{a-truncated SRPT Queues}, we recall the concept of an $a$-truncated SRPT queue from \cite{heavy tails} and state a comparison result (see Proposition \ref{Prop 10 in SRPT}). This comparison result allows one to transition the analysis to the queue length processes in $a$-truncated SRPT queues (see \eqref{Q tilde theta and Z tilde theta}), which reduces the proof of \eqref{step to show QQ sub in between 0 and 1 greater theta is 0} to showing that \eqref{Z tilde M/c^r} and \eqref{telescope of Z tilde on theta2} hold. The proof of \eqref{Z tilde M/c^r} follows as in \cite{heavy tails} and so the proof is omitted. Establishing \eqref{telescope of Z tilde on theta2} relies on a key lemma, Lemma \ref{J1}, which is stated and proved in Section \ref{compact containment}. While Lemma \ref{J1} is analogous to Lemma 18 in \cite{heavy tails}, one must handle the drift terms differently in this light tailed setting in order to provide a complete a proof. Moreover, we show the main steps needed to prove Lemma \ref{J1} as separate lemmas to highlight the major ideas therein.  In the process, we also address some details overlooked in \cite{heavy tails} pertaining to the asymptotic assumptions on the sequence of initial conditions and arrival times of the sequences of initial conditions and arrival times of the first exogenously arriving tasks. Finally, this is all collected in Section \ref{Application of Lemma 5.2.6 to prove Theorem 5.2.1} to prove \eqref{telescope of Z tilde on theta2} and in turn Theorem \ref{In between 0 and 1}.

\subsubsection{$a$-Truncated SRPT Queues}\label{a-truncated SRPT Queues}
\noindent The proof of \eqref{step to show QQ sub in between 0 and 1 greater theta is 0} makes use of comparisons between and with certain truncated SRPT queues.  For this, we recall from \cite{heavy tails} the definition of the $r$-th $a$-truncated SRPT queue, where $r\in R$ and $a\in\R_+$. In short, this is an SRPT queue that ignores, i.e., does not serve, tasks in the $r$-th SRPT queue with a processing time that is greater than $a$.  We give the details for this in the next paragraph.

Fix $r\in R$ and $a\in \R_+$.
To specify the initial state of the $r$-th $a$-truncated SRPT queue, define 
$I^r = \{-\QQ^r(0)+1, \dots,0\}$ and 
$I_a^r = \{i\in I^r: v_i\leq a\}$.
Then $|I_a^r| = Q_a^r(0)$ and there are $\QQ_a^r(0)$ initial tasks in the $r$-th $a$-truncated SRPT queue at time zero. If $Q_a^r(0)>0$, set $i_a^r(0) = \max_{i\in I_a^r}\,i$ and, for $-\QQ_a^r(0)+1\leq i < 0$, set
$$
i_a^r(i) = \max\{k\in I_a^r:k< i_a^r(i+1)\}.
$$
 For $-\QQ_a^r(0)+1\leq i\leq 0$, the remaining processing time of the $i$th initial task in the $r$-th $a$-truncated SRPT queue at time zero is given by $v^{r,a}_i := v_{i_a^r(i)}$.  The arrival process $E_a^r$ for the $r$-th $a$-truncated SRPT queue satisfies $E_a^r(t) = \sum_{i=1}^{E^r(t)}\id_{[v_i\leq a]}$ for all $t\ge 0$.
Then, for $i\in\N$, $\tau_a^r(i)=\inf\{t\geq0:E_a^r(t)\geq i\}$ denotes the time at which the $i$-th task arrives to the $r$-th $a$-truncated SRPT queue.  For $i\in\N$, the processing time of the $i$-th task to arrive to the $r$-th $a$-truncated SRPT queue is given by
$v_i^{r,a}:=v_{E^r(\tau_a^r(i))}$. In particular, $v_i^{r,a}$ is equal to the processing time of the $E^r(\tau_a^r(i))$-th task to arrive to the $r$-th SRPT queue. The process evolves as an SRPT queue with these initial tasks and primitive inputs.
For $t\ge 0$ and $-\QQ_a^r(0)+1\leq i\leq E_a^r(t)$, we define $v_i^{r,a}(t)$ to be the remaining processing time (size) at time $t$ of the $i$-th task in the $r$-th $a$-truncated SRPT queue. For $t\geq0$, define
\begin{alignat*}{4}
&\ZZ_a^r(t) &&= \sum_{i=-\QQ_a^r(0)+1}^{E_a^r(t)}\delta^+_{v_i^{r,a}(t)},\quad&&
\widehat{\ZZ}_a^r(t) = \frac{1}{r}\ZZ_a^r(r^2t),\quad&&
\widetilde{\ZZ}_a^r(t) = \frac{c^r}{r}\ZZ_{ac^r}^r(r^2t),\\
&\ZZZ_a^r(t) &&= \langle\id, \ZZ_a^r(t)\rangle,\quad&&
\widehat{\ZZZ}_a^r(t) = \langle \id,\widehat{\ZZ}_a^r(t)\rangle,\quad&&
\widetilde{\ZZZ}_a^r(t)=\langle\id,\widetilde{\ZZ}_a^r(t)\rangle.
\end{alignat*}
We use the notation ${\mathcal Z}_a^r(\cdot)$, $\widehat{\mathcal{Z}}_a^r(\cdot)$, $\widetilde{\mathcal{Z}}_a^r(\cdot)$, $\ZZZ_a^r(\cdot)$, $\widehat{\ZZZ}_a^r(\cdot)$ and $\widetilde{\ZZZ}_a^r(\cdot)$ for these $a$-truncated processes to distinguish them from the $a$-cutoff processes ${\mathcal Q}_a^r(\cdot)$, $\widehat{\mathcal{Q}}_a^r(\cdot)$, $\widetilde{\mathcal{Q}}_a^r(\cdot)$, $\QQ_a^r(\cdot)$, $\widehat{\QQ}_a^r(\cdot)$ and $\widetilde{\QQ}_a^r(\cdot)$ respectively. Note that, for $t\ge 0$, ${\mathcal Z}_a^r(t)$ only consists of point masses associated with tasks that either were in the $r$-th SRPT queue at time zero
and of size less or equal $a$ at time zero
or arrived to the $r$-th SRPT queue by time $t$ and had a processing time of length less or equal $a$ upon their arrival. To the contrary, ${\mathcal Q}^r_a(t)$ includes point masses associated with tasks of size less or equal $a$ in the $r$-th SRPT queue at time $t$ that may have had a processing time of length greater than $a$ either at time zero or when the task arrived. Note that the workload process for $r$-th a-truncated SRPT is given by $Y_a^r$ as defined in \eqref{YTrunc}.

The following comparison result is a restatement of (5.4) in Proposition 10 in \cite{heavy tails}, which holds for any processing time distribution. We refer the reader to \cite{heavy tails} for a proof and also mention that $\widetilde{\ZZZ}_a^r$ and $\widetilde{\QQ}_a^r$ here are denoted as $\QQ_a^r$ and $\ZZZ_a^r$ respectively in \cite{heavy tails}. The choice here is deliberate so that the notation is consistent.
\begin{prop}\label{Prop 10 in SRPT}
For any $r\in R$ and $a\in\R_+$, $\ZZZ_a^r(0)=\QQ_a^r(0)$ and for all $t\geq0$,
\[
\ZZZ_a^r(t)\leq\QQ_a^r(t)\leq\ZZZ_a^r(t)+1.
\]
In particular, $\widehat{\ZZZ}_a^r(t)\leq \widehat{\QQ}_a^r(t)\leq \widehat{\ZZZ}_a^r(t)+1/r$ and $\widetilde{\ZZZ}_a^r(t)\leq \widetilde{\QQ}_a^r(t)\leq \widetilde{\ZZZ}_a^r(t)+c^r/r$ for all $r\in R$, $a\in\R_+$ and $t\geq 0$.
\end{prop}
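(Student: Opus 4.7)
The scaled assertions follow immediately from the unscaled bound $\ZZZ_a^r(t)\le \QQ_a^r(t)\le \ZZZ_a^r(t)+1$ by dividing by $r$ or $c^r/r$ and replacing $a$ with $ac^r$, so my plan focuses on the unscaled bound for a fixed $r\in R$ and $a\in\R_+$. The initial equality is immediate: at $t=0$ each initial task's remaining processing time equals its original processing time $v_i^r$, and both $\QQ_a^r(0)$ and $\ZZZ_a^r(0)$ count initial tasks satisfying $v_i^r\le a$. For $t>0$, my approach is a pathwise coupling that synchronizes the small-task arrivals and initial small tasks in both queues, together with the decomposition $\QQ_a^r(t)=|S_t|+|B_t|$, where $S_t$ is the set of tasks with original size $\le a$ still in the original SRPT queue at time $t$ and $B_t$ is the set of tasks with original size $>a$ whose current remaining size lies in $(0,a]$.

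The lower bound $\ZZZ_a^r(t)\le\QQ_a^r(t)$ would follow from establishing that the set of tasks in the $a$-truncated queue at time $t$ is contained in $S_t$, i.e., that no small task completes earlier in the original queue than in the $a$-truncated queue. Intuitively, a small task $i$ is served in the original queue only when $i$ has the strictly smallest remaining size among \emph{all} tasks there---including any big-reduced-to-small tasks---while in the $a$-truncated queue it competes only with other small tasks. Hence $i$ accrues at least as much service in the $a$-truncated queue as in the original queue, so if $i$ has not yet completed in the $a$-truncated queue it cannot have completed in the original queue. A careful inductive argument on the event times (arrivals and completions in both queues) formalizes this and gives $\ZZZ_a^r(t)\le|S_t|\le \QQ_a^r(t)$.

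For the upper bound $\QQ_a^r(t)\le\ZZZ_a^r(t)+1$, the plan is to establish the invariant $\Delta(t):=\QQ_a^r(t)-\ZZZ_a^r(t)\in\{0,1\}$ for all $t\ge 0$. A standard SRPT exchange argument yields $|B_t|\le 1$: if two big tasks both had current size in $(0,a]$ at time $t$, the one whose size crossed $a$ first (at time $s_1$) would, throughout $(s_1,t]$, preempt any service to the second (whose size remains $>a$ pending its own crossing), contradicting the second ever crossing $a$ while the first is still in $B_t$. I would then track $\Delta$ across its four event types: small-task arrivals leave $\Delta$ unchanged (both $\QQ_a^r$ and $\ZZZ_a^r$ jump by $1$); a big-task crossing of $a$ can occur only when $\QQ_a^r(t^-)=0$ (the crossing task must be being served under SRPT, so all other tasks have size $>a$), whence by the lower bound $\ZZZ_a^r(t^-)=0$ and $\Delta$ goes from $0$ to $1$; a completion in the original queue decrements $\Delta$; and a completion in the $a$-truncated queue increments $\Delta$.

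The main obstacle is ruling out a $\Delta=1\to \Delta=2$ transition caused by a completion in the $a$-truncated queue. The heart of the argument is to show that whenever $\Delta(t^-)=1$, the imminent completion in the $a$-truncated queue is synchronized with a completion in the original queue. When $|B_{t^-}|=1$, the SRPT priority forces the big-reduced-to-small task to complete in the original queue no later than the next small-task completion in the $a$-truncated queue, restoring $\Delta$ to $0$ first. When $|B_{t^-}|=0$ and $|S_{t^-}|=\ZZZ_a^r(t^-)+1$, the single small task in $S_{t^-}$ not present in the $a$-truncated queue must be the one currently being served in the original queue, and its departure coincides with the next small-task completion in the $a$-truncated queue. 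Formalizing this coupling-based coincidence is the main technical step of the proof; once the invariant $\Delta\in\{0,1\}$ is in hand, both sides of the proposition follow at once, and the scaled bounds are immediate consequences.
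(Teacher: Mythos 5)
The paper does not give its own proof of this proposition; it explicitly cites (5.4) of Proposition~10 in \cite{heavy tails} and states that the argument there applies to any processing time distribution. So the only question is whether your proposal is a valid independent proof, and it is not: the key claim underlying both directions of your argument is false.

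The problematic step is the assertion that a small task $i$ ``accrues at least as much service in the $a$-truncated queue as in the original queue,'' which you use to conclude that the truncated queue's task set is contained in $S_t$. Competing against fewer tasks does not imply receiving more service: a different small task $j$ can receive \emph{more} service in the truncated queue (because it is not starved there by a big task whose residual has dropped to near zero), and $j$'s resulting smaller residual preempts $i$ in the truncated queue at exactly the moments when $i$ is the smallest in the original queue. Concretely, take $a=5$, task $B$ (size $6$) arriving at $t=0$, $j$ (size $3$) at $t=4.5$, and $i$ (size $2.5$) at $t=5.5$. In the original queue $B$ is served on $[0,6)$ (its residual is always strictly smallest), then $i$ on $[6,8.5)$, then $j$ on $[8.5,11.5)$; in the truncated queue $j$ is served on $[4.5,7.5)$ and $i$ on $[7.5,10)$, with no ties at any decision epoch. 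Thus $i$ departs the original queue at $8.5$ but remains in the truncated queue until $10$; at $t=9$ one has $S_9=\{j\}$ while the truncated queue holds $\{i\}$, so the claimed containment fails outright. (The proposition's inequality nevertheless holds there, $\ZZZ_a^r(9)=\QQ_a^r(9)=1$, so the statement is safe, but your argument does not establish it.)

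This defect carries over to the upper bound. Your case ``$|B_{t^-}|=0$, $|S_{t^-}|=\ZZZ_a^r(t^-)+1$'' identifies ``the single small task in $S_{t^-}$ not present in the $a$-truncated queue'' and asserts it is the one being served in the original queue, with a synchronized departure. Once the containment fails, the symmetric difference of the two queues' task sets need not be a singleton, and the task you single out need not be in service: at $t=8.5^-$ in the example, the truncated queue holds $\{i\}$, $S_{8.5^-}=\{i,j\}$, yet the imminent departure in the original queue is $i$, a task present in both queues. So the case analysis does not close, and the $\Delta=1\to2$ transition is not actually ruled out by your reasoning.

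Your auxiliary observations are sound: $|B_t|\le 1$ (which follows from the SRPT priority as you describe, using the continuity of $F$ to dismiss ties), the initial equality $\ZZZ_a^r(0)=\QQ_a^r(0)$, and the reduction of the scaled inequalities to the unscaled one. But the central monotonicity of per-task completion times between the two coupled SRPT systems is simply false, and without it neither the lower nor the upper bound is proved. A correct argument must work with a different coupling invariant; the cited proof in \cite{heavy tails} does not rely on any such per-task comparison.
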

As a consequence of Proposition \ref{Prop 10 in SRPT}, for all $\delta\in(0,1)$, $\varphi\in\Psi$, $\vartheta\in\Theta$ and $r\in R$, 
\begin{align}
\P\left(\|\widetilde{\QQ}^r_\delta\|_T>\varphi(\delta)+\vartheta(r)+\frac{2c^r}{r}\right)\leq\P\left(\|\widetilde{\ZZZ}_\delta^r\|_T>\varphi(\delta)+\vartheta(r)+\frac{c^r}{r}\right).\label{Q tilde theta and Z tilde theta}    
\end{align}
Therefore, in order to show \eqref{step to show QQ sub in between 0 and 1 greater theta is 0}, it suffices to show that there exists $\theta\in\Theta$, $\beta\in(0,1)$, $\vartheta\in\Theta$ and $\varphi\in\Psi$ such that for all sufficiently large $r\in R$ and $\delta\in[\theta(r),\beta]$,
\begin{equation}
    \P\left(\|\widetilde{\ZZZ}_\delta^r\|_T>\varphi(\delta)+\vartheta(r)+\frac{c^r}{r}\right)<\varphi(\delta)+\vartheta(r).\label{Z tilde greater theta smaller than theta}
\end{equation}
The remainder of Section \ref{a<1 queue mass to zero} is devoted to showing that \eqref{Z tilde greater theta smaller than theta} holds in the present light tailed setting. See Corollary \ref{in between 0 and 1} in Section \ref{Application of Lemma 5.2.6 to prove Theorem 5.2.1}.

\subsubsection{Overview of Remainder of the Proof}
\noindent Before beginning the details of proving \eqref{Z tilde greater theta smaller than theta}, we continue with a few additional remarks. For this, let $\delta\in(0,1)$, $M>0$ and $r\in R$ be such that $\delta c^r\geq M$. Then, for all $t\geq0$,
\[
\widetilde{\ZZZ}_\delta^r(t)=\widetilde{\ZZZ}_\delta ^r(t)-\widetilde{\ZZZ}_{M/c^r}^r(t)+\widetilde{\ZZZ}_{M/c^r}^r(t).
\]
There exists $K\in\Z_+$ such that $\delta c^r2^{-K-1}< M\leq \delta c^r2^{-K}$. Fix such a $K$. Then, for all $t\geq0$,
\begin{align}
    \widetilde{\ZZZ}^r_\delta(t)&=\widetilde{\ZZZ}_\delta^r(t)-\widetilde{\ZZZ}^r_{2^{-K-1}\delta}(t)+\left(\widetilde{\ZZZ}^r_{2^{-K-1}\delta}(t)-\widetilde{\ZZZ}^r_{M/c^r}(t)\right)+\widetilde{\ZZZ}^r_{M/c^r}(t)\label{Z delta r t}
\end{align}
The next proposition is a restatement of Lemma 11 in \cite{heavy tails}, which also holds for any processing time distribution. For this, recall that, for $r\in R$ and $a\in \R_+$, $\widetilde{\ZZZ}_a^r$ and $\widetilde{Y}_a^r$ here are respectively denoted as $\QQ_a^r$ and $Y_a^r$ in \cite{heavy tails}.

\begin{prop}\label{tilde QQ}
For all $r\in R$, $0< x\leq y\leq\infty$ and $t\geq0$,
\begin{equation}
0\leq \widetilde{\ZZZ}_y^r(t)-\widetilde{\ZZZ}_x^r(t)\leq\frac{c^r}{r}+x^{-1}\widetilde{Y}_y^r(t).\label{z tilde y minus z tilde x}
\end{equation}
\end{prop}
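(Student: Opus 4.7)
The plan is to first establish, for each $r\in R$ and any $0 < a \leq b \leq \infty$, the unscaled sample-path inequality
\[
0 \leq \ZZZ_b^r(s) - \ZZZ_a^r(s) \leq 1 + \frac{Y_b^r(s)}{a}, \qquad s \geq 0,
\]
and then recover \eqref{z tilde y minus z tilde x} by setting $a = xc^r$, $b = yc^r$, $s = r^2 t$, multiplying both sides by $c^r/r$, and using the identifications $\widetilde{\ZZZ}_y^r(t) = (c^r/r)\ZZZ_{yc^r}^r(r^2 t)$ and $\widetilde{Y}_y^r(t) = Y_{yc^r}^r(r^2 t)/r$. The entire argument is pathwise, for a fixed $r$, and does not use any specifics of the processing time distribution; the case $b = \infty$ is handled by the conventions $\ZZZ_\infty^r = \QQ^r$ and $Y_\infty^r = W^r$.

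For the lower bound I would invoke the standard monotonicity of SRPT under removal of arriving work: the $a$-truncated queue is obtained from the $b$-truncated queue by deleting every initial and every arriving task whose processing time lies in $(a, b]$. Since SRPT departure times are nondecreasing in the arriving workload, each $\leq a$-initial task departs the $a$-truncated queue no later than it departs the $b$-truncated queue. Hence the set of $\leq a$-initial tasks present at time $s$ in the $a$-queue is contained in the corresponding set for the $b$-queue, and the $b$-queue may additionally carry surviving $(a, b]$-initial tasks, yielding $\ZZZ_a^r(s) \leq \ZZZ_b^r(s)$.

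For the upper bound, the same coupling identifies $\ZZZ_b^r(s) - \ZZZ_a^r(s)$ with the number $N(s)$ of $(a, b]$-initial tasks present in the $b$-queue at time $s$, which I would split as $N(s) = N_1(s) + N_2(s)$ according to whether the remaining size exceeds $a$ or not. Tasks counted by $N_1(s)$ each contribute more than $a$ units to $Y_b^r(s)$, so $N_1(s) \leq Y_b^r(s)/a$. The heart of the argument is $N_2(s) \leq 1$, which I would prove by first-crossing: let $s^*$ be the infimum of times at which two distinct $(a, b]$-initial tasks are simultaneously present with remaining size $\leq a$. Arrivals cannot create such a configuration, since an arriving $(a, b]$-initial task enters with remaining size equal to its processing time, hence $> a$, and departures only decrease counts; thus $s^*$ must be a continuous downward crossing, meaning some $(a, b]$-initial task $T$ was in service at $s^*$ with remaining size dropping to $a$, while a distinct $(a, b]$-initial task $S$ was already present with remaining size $\leq a$. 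But just before $s^*$, $T$'s remaining size was strictly greater than $a$ and hence strictly greater than $S$'s remaining size, contradicting SRPT's smallest-remaining-first rule. I expect this first-crossing argument for $N_2(s)\leq 1$ to be the main obstacle; the lower bound rests on a standard SRPT monotonicity fact and the rescaling step is bookkeeping.
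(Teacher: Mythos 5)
Your first-crossing argument showing that at most one $(a,b]$-initial task can have remaining size at most $a$ at any moment is correct, and the rescaling bookkeeping is fine. The pivotal step of the upper bound, however --- that $\ZZZ_b^r(s)-\ZZZ_a^r(s)$ is \emph{identified} with the number $N(s)$ of $(a,b]$-initial tasks present in the $b$-queue --- is false, and its failure is forced by the very monotonicity you invoke for the lower bound. That monotonicity gives only a containment: the set of $\leq a$-initial tasks present in the $a$-queue at time $s$ is contained in the corresponding set in the $b$-queue, and this containment can be strict, since an $\leq a$-initial task can be delayed in the $b$-queue by a partially served $(a,b]$-task and so depart the $b$-queue strictly later than the $a$-queue. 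Concretely, take $a=5$, let $U$ be an $(a,b]$-task of size $5.1$ arriving at time $0$, and let $T$ be an $\leq a$-task of size $4.9$ arriving at time $5$. In the $a$-queue, $T$ is served alone on $[5,9.9]$. In the $b$-queue, at time $5$ the task $U$ has remaining size $0.1<4.9$, so $U$ holds the server until $5.1$ and $T$ is served on $[5.1,10]$. For $s\in(9.9,10)$ one has $\ZZZ_a^r(s)=0$ and $\ZZZ_b^r(s)=1$, while $N(s)=N_1(s)=N_2(s)=0$; so $\ZZZ_b^r(s)-\ZZZ_a^r(s)=1>0=N(s)$. The conclusion \eqref{z tilde y minus z tilde x} of course still holds, but the ``$+1$'' is absorbing the delayed $\leq a$-task $T$, which your decomposition cannot see.

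The paper does not prove this proposition; it restates Lemma~11 of \cite{heavy tails}. The clean route is to apply Proposition~\ref{Prop 10 in SRPT} with the $b$-truncated queue in the role of the base SRPT queue: the $a$-truncation of the $b$-truncated queue is the same process as the $a$-truncated queue itself, so that proposition yields $\ZZZ_a^r(t)\le \langle \id_{[0,a]},\ZZ_b^r(t)\rangle\le \ZZZ_a^r(t)+1$. Since each task in the $b$-truncated queue with remaining size exceeding $a$ contributes more than $a$ to $Y_b^r(t)$, the number of such tasks is at most $Y_b^r(t)/a$, and combining gives $\ZZZ_a^r(t)\le \ZZZ_b^r(t)\le \ZZZ_a^r(t)+1+Y_b^r(t)/a$. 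Your first-crossing argument controls only the bridge tasks, which is just one ingredient of the hard (right-hand) inequality in Proposition~\ref{Prop 10 in SRPT}; to make your proof self-contained you would need to reprove that inequality, which bundles the bridge tasks and the delayed $\leq a$-tasks into a single ``$+1$.''
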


Thus, for the choice of $\delta,M,r$ and $K$ above, due to the first inequality in \eqref{z tilde y minus z tilde x} and \eqref{Z delta r t}, for all $t\geq0$, we have , $\widetilde{\ZZZ}^r_{\delta2^{-K-1}}(t)-\widetilde{\ZZZ}^r_{M/c^r}(t)\leq0$ and 
\begin{equation}
    \widetilde{\ZZZ}_\delta^r(t)\leq \widetilde{\ZZZ}_\delta^r(t) - \widetilde{\ZZZ}^r_{\delta2^{-K-1}}(t)+ \widetilde{\ZZZ}^r_{M/c^r}(t).\label{telescope of Z tilde}
\end{equation}
By \eqref{telescope of Z tilde} and a union of events bound, for the choice of $\delta,M,r$ and $K$ above, $\vartheta\in\Theta$ and $\varphi\in\Psi$, we have
\begin{align*}    \P\left(\|\widetilde{\ZZZ}_\delta^r\|_T>\varphi(\delta)+\vartheta(r)+\frac{c^r}{r}\right)&\leq\P\left(\|\widetilde{\ZZZ}_{M/c^r}^r\|_T>\vartheta(r)\right)\\
    +\P&\left(\|\widetilde{\ZZZ}_\delta^r - \widetilde{\ZZZ}^r_{\delta2^{-K-1}}\|_T>\varphi(\delta)+\frac{c^r}{r}\right).
\end{align*}
Hence, in order to show \eqref{Z tilde greater theta smaller than theta}, it suffices to show that there exists $M>0$, $\beta\in(0,1)$, $\varphi\in\Psi$, and $\vartheta\in \Theta$ such that for sufficiently large $r\in R$, $\delta\in[2M/c^r,\beta]$ and
$K\in\Z_+$ such that $\delta c^r2^{-K-1}< M\leq \delta c^r2^{-K}$,
\begin{align}
\P\left(\|\widetilde{\ZZZ}_{M/c^r}^r\|_T>\vartheta(r)\right)&<\vartheta(r),\label{Z tilde M/c^r}\\
    \P\left(\|\widetilde{\ZZZ}_\delta^r - \widetilde{\ZZZ}^r_{\delta2^{-K-1}}\|_T>\varphi(\delta)+\frac{c^r}{r}\right)&<\varphi(\delta).\label{telescope of Z tilde on theta2} 
\end{align}
The proof \eqref{Z tilde greater theta smaller than theta} proceeds by showing that \eqref{Z tilde M/c^r} and \eqref{telescope of Z tilde on theta2} hold.\\
\indent It was shown in Lemma 16 in \cite{heavy tails} that \eqref{Z tilde M/c^r} holds for any $M>0$. The proof of Lemma 16 given in \cite{heavy tails} does not rely on the heavy tailed processing times condition that is assumed to hold in \cite{heavy tails}. In particular due to assumption \eqref{q tilde initial 0}, which implies that assumption (2.19) in \cite{heavy tails} holds, it is valid in the present light tailed setting. We restate this here in the next proposition and refer the reader to \cite{heavy tails} for a proof. Recall that for all $r\in R$ and $a\in \R_+$, $\widetilde{\ZZZ}_a^r$ here is denoted as $\QQ_a^r$ in \cite{heavy tails} .

\begin{prop}\label{Lemma 16 in SRPT}
For any $M>0$ and $T>0$, there exists $\vartheta\in\Theta$ such that for all $r\in R$ 
\[
\P\left(\|\widetilde{\ZZZ}^r_{M/c^r}\|_T>\vartheta(r)\right)\leq \vartheta(r).
\]
\end{prop}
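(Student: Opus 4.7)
The plan is to exploit the subcriticality of the $M$-truncated queue together with the slow variation of $c^r$: the unscaled queue length $\ZZZ_M^r$ on the interval $[0, r^2 T]$ should grow only logarithmically in $r$, while $c^r/r$ vanishes faster than any power of $r^{-1}$. To begin, since \eqref{lt} forces $\overline{F}(M)>0$, we have $\rho_M := \lambda\E[v\id_{[v\le M]}]<1$, and by \eqref{lambda} there exist $r_0\in R$ and $\rho_*\in(\rho_M,1)$ with $\rho_M^r\le \rho_*$ for all $r\ge r_0$. The $M$-truncated system is therefore uniformly subcritical once $r\ge r_0$.

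Next, I would compare the $M$-truncated SRPT queue to a reference FCFS $GI/GI/1$ queue $N_M^r$ driven by the identical $M$-truncated arrival process, the same (deterministically bounded by $M$) service times, and the same initial tasks of size at most $M$. The classical sample-path optimality of SRPT for minimizing the number in system (see \cite{Old Optimal,New Optimal}) gives $\ZZZ_M^r(t)\le N_M^r(t)$ on every sample path. For the FCFS reference, service times are bounded, interarrival moments are controlled via \eqref{u convergence}, and the traffic intensity is bounded away from $1$, so standard Cram\'er--Lundberg type arguments furnish constants $\alpha>0$ and $C>0$, independent of $r\ge r_0$, with $\P(N_M^{r,\infty}\ge k)\le Ce^{-\alpha k}$ for the stationary queue length $N_M^{r,\infty}$. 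Coupling a stationary version with $N_M^r$ and applying a union bound over the $O(r^2)$ jump epochs of $N_M^r$ in $[0, r^2 T]$ then yields, for some constant $C'$ absorbing $\lambda T$ and a transient correction,
\[
\P\Big(\sup_{0\le s\le r^2 T} N_M^r(s)\ge k\Big)\le C'\big(r^2+\QQ_M^r(0)\big)\,e^{-\alpha k}.
\]
Taking $k_r:=\lceil (3/\alpha)\log r\rceil$ makes the right side vanish like $O(1/r)$.

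Combining the above, $\sup_{t\in[0,T]}\widetilde{\ZZZ}_{M/c^r}^r(t)\le (c^r/r)\sup_{0\le s\le r^2 T}N_M^r(s)$, which is bounded by $c^r k_r/r$ with probability tending to $1$. By Lemma \ref{S property}, $c^r=S^{-1}(r)$ is slowly varying, and then Proposition \ref{subpolynomial growth of slowly varying functions} yields $c^r\log r/r\to 0$. Hence $\|\widetilde{\ZZZ}_{M/c^r}^r\|_T\overset{p}{\to}0$, and a standard diagonalization across a null sequence of probability levels constructs the required $\vartheta\in\Theta$. The hard part will be securing the uniform-in-$r$ exponential tail bound on the stationary distribution of the comparison FCFS queue (a delicate parameter-dependence check in the Cram\'er estimate once one has pinned down the limiting interarrival distribution and used that service times are bounded by $M$), together with controlling the transient from the initial condition; the latter is handled by assumption \eqref{q tilde initial 0}, which gives $(c^r/r)\QQ_M^r(0)=\widetilde{\QQ}_{M/c^r}^r(0)\overset{p}{\to} 0$ because $W^*(0)\delta_1^+([0,M/c^r])=0$ for all sufficiently large $r$.
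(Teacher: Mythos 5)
The paper defers the proof of Proposition~\ref{Lemma 16 in SRPT} entirely to Lemma~16 of \cite{heavy tails}, noting only that the argument there does not use the heavy-tail assumption, so your proposal is a from-scratch argument and necessarily a different route. Your overall strategy---exploit strict subcriticality of the $M$-truncated system to get roughly $O(\log r)$ growth of the unscaled queue length over $[0,r^2T]$, then use that $c^r/r$ vanishes faster than any negative power of $r$---is the right order of magnitude and the use of SRPT sample-path optimality to pass to a FCFS reference queue is a legitimate reduction. However, there is a genuine gap at the step asserting $\P(N_M^{r,\infty}\ge k)\le Ce^{-\alpha k}$ ``from standard Cram\'er--Lundberg arguments.'' Cram\'er--Lundberg controls the stationary \emph{waiting time/workload} tail, not the number in system. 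In the $M$-truncated system the service times are bounded above by $M$ but have no positive lower bound (the paper only assumes $F$ continuous with $F(0)=0$; $F$ may place arbitrary mass near $0$), so a large number in system is perfectly consistent with a small workload and the workload tail bound does not translate for free. One needs a further argument---e.g., write the FCFS number in system at time $t$ as the number of arrivals in $(\alpha(t),t]$ where $\alpha(t)$ is the arrival time of the task in service, bound $t-\alpha(t)\le W(\alpha(t))+M$, combine the workload exponential tail with a uniform exponential bound on arrival counts over a window of given length. The latter reduces to $\sup_{r}\P(T^r\le t_0)<1$ for some $t_0>0$, which can be extracted from \eqref{lambda} and \eqref{u convergence} but is not automatic and must be shown since there is no exponential moment assumption on $T^r$.

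The transient handling also needs to be made precise. ``Coupling a stationary version with $N_M^r$'' is too vague: queue length in a FCFS GI/GI/1 queue is not monotone in the initial number of tasks in the way workload is, and while the initial $\QQ_M^r(0)$ tasks are being cleared the number in system can exceed $\QQ_M^r(0)$ by a multiplicative factor of order $1+\lambda M$ as new arrivals accumulate behind the initial work. You would want something like $\sup_{s\le r^2T}N_M^r(s)\le C'\QQ_M^r(0)+\sup_{s\le r^2T}N_M^{r,\text{empty}}(s)$ and then apply the exponential estimate only to the empty-start system, killing the first term via $\widetilde{\QQ}^r_{M/c^r}(0)\overset{p}{\to}0$, which does follow from \eqref{q tilde initial 0} as you say (via the Portmanteau theorem, since $W^*(0)\delta_1^+$ places no mass at $M/c^r<1$). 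You acknowledge these points as ``the hard part,'' and rightly so; as written the proposal is a plausible plan but not a proof until the queue-length tail estimate, its uniformity in $r$, and the transient bound are actually supplied.
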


The main effort in what follows is to show that there exists $M>0$ such that a version of \eqref{telescope of Z tilde on theta2} holds. This is done in  Lemma \ref{Lemma New} in Section \ref{Application of Lemma 5.2.6 to prove Theorem 5.2.1} shortly before Theorem \ref{In between 0 and 1} is proved.

\subsubsection{A Key Technical Lemma Used To Prove \eqref{telescope of Z tilde on theta2}}
\noindent To begin, note that for $r\in R$, $\delta>0$, $K\in\Z_+$ and $t\geq0 $, $\widetilde{\ZZZ}_\delta^r(t) - \widetilde{\ZZZ}_{\delta2^{-K-1}}(t) = \sum_{i=0}^K\left(\widetilde{\ZZZ}^r_{\delta2^{-i}}(t)-\widetilde{\ZZZ}_{\delta2^{-i-1}}^r(t)\right)$. On inspection, one might note that each term in the summation above is of the form $\widetilde{\ZZZ}_\delta^r(t)-\widetilde{\ZZZ}_{\delta/2}^r(t)$, where $r\in R,\delta>0$ and $t\geq0$. Therefore, a main step in proving \eqref{telescope of Z tilde on theta2} is to establish the next lemma. For this, we recall \eqref{boundedness of tilde W by a} and define finite, positive constants $C$, $B$, $D_0$ and $D$ as follows:
\begin{alignat}{3}
C&=\left(\E[v^2]+\frac{128\sigma_A^2}{99}\right)\vee1,&\quad B&=282C ,\label{definition of C and B}\\ 
D_0&=\sup_{a\in[M_0/c^r,1]}a^{-(1+\eta_0)}\E[\widetilde{W}_a^r(0)],&\quad D&=\frac{D_0}{2B}+9.\label{definition of D and D0}
\end{alignat}

\begin{lem}\label{Lemma 18 of paper}
Suppose $T>0$ and $\eta\in(0,\eta_0)$. There exist $r(\eta)\in R$, $M(\eta)\geq M_0$ and $\delta(\eta)\in(0,1)$ such that for all $r\geq r(\eta)$ and $\delta\in[M(\eta)/c^r,\delta(\eta)]$,
\begin{equation}
    \P\left(\|\widetilde{\ZZZ}_\delta^r-\widetilde{\ZZZ}_{\delta/2}^r\|_T>96B\delta^\eta\log(1/\delta)+\frac{c^r}{r}\right)\leq D\delta^{\eta_0-\eta}.\label{4.4.16}
\end{equation}
\end{lem}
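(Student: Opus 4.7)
The plan is to reduce \eqref{4.4.16} to a tail bound on $\|\widetilde Y_\delta^r\|_T$ and then exploit the large negative drift of $\widetilde X_\delta^r$ (driven by $S\in\mathbf R_\infty$) together with the initial-condition estimate \eqref{boundedness of tilde W by a}. Applying Proposition \ref{tilde QQ} with $x=\delta/2$, $y=\delta$ gives
\[
0\le \widetilde{\ZZZ}_\delta^r(t)-\widetilde{\ZZZ}_{\delta/2}^r(t) \le \frac{c^r}{r}+\frac{2}{\delta}\widetilde{Y}_\delta^r(t),
\]
so \eqref{4.4.16} will follow once it is shown that $\P(\|\widetilde Y_\delta^r\|_T > 48B\delta^{1+\eta}\log(1/\delta))\le D\delta^{\eta_0-\eta}$ in the stated ranges of $r$ and $\delta$.

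By Lemma \ref{drift}, $\widetilde X_\delta^r(t)=\widetilde W_\delta^r(0)+\widetilde V_\delta^r(t)-\mu_\delta^r t$, where $\mu_\delta^r:=\lambda^r S(c^r)/S(\delta c^r)-r(\rho^r-1)$ is strictly positive for small $\delta$ and large $r$. Since $\widetilde W_\delta^r(0)\ge 0$, the Skorokhod map identity (Appendix \ref{ap:SM}) yields the pointwise inequality
\[
\|\widetilde Y_\delta^r\|_T \;\le\; \widetilde W_\delta^r(0)+\sup_{0\le s\le t\le T}\!\bigl[\widetilde V_\delta^r(t)-\widetilde V_\delta^r(s)-\mu_\delta^r(t-s)\bigr].
\]
Split the target probability into the events $\{\widetilde W_\delta^r(0)>24B\delta^{1+\eta}\log(1/\delta)\}$ and the analogous event for the compensated fluctuation. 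The first is handled directly by Markov's inequality and \eqref{boundedness of tilde W by a}: for $\delta\in[M_0/c^r,1]$,
\[
\P\bigl(\widetilde W_\delta^r(0)>24B\delta^{1+\eta}\log(1/\delta)\bigr) \;\le\; \frac{D_0\,\delta^{1+\eta_0}}{24B\delta^{1+\eta}\log(1/\delta)} \;\le\; \frac{D_0}{24B}\,\delta^{\eta_0-\eta},
\]
for $\delta$ small enough, which is comfortably absorbed into the $D_0/(2B)$ part of $D$.

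The compensated-fluctuation half is the technical core, and I would isolate it as Lemma \ref{J1}. The idea is to partition $[0,T]$ dyadically in time, control $\Var(\widetilde V_\delta^r(h))$ on blocks of length $h$ by renewal-reward variance estimates using $\E[v^2\id_{[v\le\delta c^r]}]\le\E[v^2]$ together with \eqref{u convergence} (yielding a bound of order $Ch$ with the constant $C$ of \eqref{definition of C and B}), and apply Chebyshev on each block. The drift term $\mu_\delta^r(t-s)$ then absorbs the typical fluctuation scale at every dyadic level, while the union bound across the $O(\log(1/\delta))$ dyadic scales produces exactly the $\log(1/\delta)$ factor in the threshold. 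To convert $\mu_\delta^r$ into a workable polynomial-in-$\delta^{-1}$ lower bound, invoke $d(S)=\infty$ from Lemma \ref{S property} and Proposition \ref{BG prop 2.2.1}: for any $d>0$ and $C'\in(0,1)$ there is $X(d,C')$ so that $S(c^r)/S(\delta c^r)\ge C'\delta^{-d}$ whenever $\delta c^r\ge X(d,C')$. Choosing $d=d(\eta)$ sufficiently large relative to $1+\eta$ and setting $M(\eta)\ge X(d,C')\vee M_0$ makes $\mu_\delta^r$ dominate the variance scale at every dyadic level, bounding the residual probability by a multiple of $\delta^{\eta_0-\eta}$ that fits in the remaining summand $9$ of $D$.

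The main obstacle is the fluctuation step. In \cite{heavy tails} the relevant tail function lies in $\mathbf R_p$ with finite $p$, so the corresponding drift is polynomial in $\delta^{-1}$ \emph{uniformly} in $r$, and the analogous Lemma 18 follows from essentially one fixed Chebyshev estimate. Here $\mu_\delta^r$ is rapidly varying in $r$ for fixed $\delta$, so one must simultaneously localize in $(r,\delta)$---via $\delta\ge M(\eta)/c^r$ and $r\ge r(\eta)$---so that the Karamata polynomial lower bound from Proposition \ref{BG prop 2.2.1} is available uniformly over the relevant window, and carefully trade the Karamata exponent $d$ against the target rate $\delta^{\eta_0-\eta}$. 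This is the ``different handling of the drift terms'' flagged in the overview, and is what requires the constants $M(\eta)$, $r(\eta)$, $\delta(\eta)$ in the statement of the lemma.
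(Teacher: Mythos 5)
Your overall reduction—bounding $\|\widetilde{\ZZZ}_\delta^r-\widetilde{\ZZZ}_{\delta/2}^r\|_T$ by $\frac{c^r}{r}+\frac{2}{\delta}\|\widetilde Y_\delta^r\|_T$ via Proposition~\ref{tilde QQ}, splitting $\widetilde Y_\delta^r$ into the initial condition and the compensated fluctuation, and treating the initial condition by Markov together with \eqref{boundedness of tilde W by a}—is exactly the paper's opening move, and the Markov step is fine. The problem is the ``compensated-fluctuation half.''

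The proposed dyadic Chebyshev chaining cannot produce the polynomial rate $\delta^{\eta_0-\eta}$. If one partitions $[0,T]$ into dyadic blocks of length $h$ and applies a second-moment bound on each block (variance $\approx Ch$), the union bound over all blocks at all scales gives $\sum_h \frac{T}{h}\cdot\frac{Ch}{(L+\mu_\delta^r h)^2}=\sum_h \frac{CT}{(L+\mu_\delta^r h)^2}$. At scales $h$ below the critical scale $h^*=L/\mu_\delta^r$ each term is $\approx CT/L^2$; with $L\sim B\delta^{1+\eta}\log(1/\delta)$ and $\mu_\delta^r\sim\lambda/\delta^{1+\eta}$, the critical scale is $h^*\sim\delta^{2(1+\eta)}\log(1/\delta)$, and the number of sub-critical scales down to the arrival spacing $1/(\lambda^rr^2)$ is of order $\log(r^2 h^*)\gtrsim\log r$. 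The resulting bound is therefore of order $(\log r)\cdot CT/L^2$, which is not even $O(1)$ in $r$, let alone $O(\delta^{\eta_0-\eta})$. In short, Chebyshev only yields quadratic decay in the threshold, whereas the polynomial-in-$\delta$ target requires \emph{exponential} concentration in the ratio (threshold)/(typical fluctuation). The paper obtains exactly this by a gambler's-ruin mechanism: Lemma~\ref{lemma 1 in original paper} turns the drift \eqref{liminf and inf swap} into a $2/3$-probability of dropping $2B\delta^{1+\eta}$ before rising $B\delta^{1+\eta}$, Lemma~\ref{new lemma for biased random walk} and Corollary~\ref{lemma 2 in original paper} compare with a biased random walk to get a per-attempt crossing probability of at most $\delta^{16}$, Lemma~\ref{SRPT 5.82} shows that $\lfloor\delta^{-8}\rfloor+1$ attempts suffice on $[0,T]$, and Lemmas~\ref{probability of the sum of chi}--\ref{Lemma 3 in SRPT} and Corollary~\ref{SRPT Lemma 18 5.92} (via Azuma--Hoeffding) show that the time per attempt is not too small. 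Each of these contributes a genuinely exponential estimate; none are reachable via a chaining/second-moment argument. (As a minor point, Lemma~\ref{J1} to which you point is the compact-containment lemma for measure-valued tightness, unrelated to this fluctuation bound.)

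Your instinct that the role of Proposition~\ref{BG prop 2.2.1} is to furnish a polynomial-in-$1/\delta$ lower bound on the drift, \emph{uniformly} over $(r,\delta)$ in the window $\delta\ge M(\eta)/c^r$, is correct and does identify the genuine departure from the heavy-tailed case. But after this localization the drift has to be converted into an \emph{exponential} bound on the hitting/crossing probability, not a variance bound. Replacing the Chebyshev step by the biased-random-walk and Azuma--Hoeffding machinery of Section~\ref{a-truncated SRPT Queues} is the missing core of the argument, and without it the claimed $D\delta^{\eta_0-\eta}$ rate does not follow.
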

The proof of Lemma \ref{Lemma 18 of paper} is delicate. It follows closely the structure of the proof of Lemma 18 in \cite{heavy tails}. The work here extends these techniques to the light tailed case. In addition, it highlights the major steps in the proof of Lemma \ref{Lemma 18 of paper} by presenting them as six lemmas and two corollaries (Lemmas \ref{a lot of assumptions}, \ref{lemma 1 in original paper}, \ref{new lemma for biased random walk}, \ref{SRPT 5.82} and \ref{Lemma 3 in SRPT} and Corollaries \ref{lemma 2 in original paper} and \ref{SRPT Lemma 18 5.92} below) specific to this light tailed setting before ultimately proving Lemma \ref{Lemma 18 of paper} at the end of this section.

We begin by noting that for $r\in R,\delta>0$ and $T>0$, the upper bound in Proposition \ref{tilde QQ} implies that 
\[
\|\widetilde{\ZZZ}_\delta^r-\widetilde{\ZZZ}^r_{\delta/2}\|_T\leq \frac{2}{\delta}\|\widetilde{Y}_\delta^r\|_T+\frac{c^r}{r}.
\]
Thus, in order to prove Lemma \ref{Lemma 18 of paper}, it suffices to show that for $r\in R$ and $\delta>0$ satisfying the conditions of Lemma \ref{Lemma 18 of paper},
\begin{equation}
    \P\left(\|\widetilde{Y}_\delta^r\|_T>48B\delta^{1+\eta}\log{(1/\delta)}\right)\leq D\delta^{\eta_0-\eta}.\label{probability of sup tilde Y greater 3DB bounded above by 34}
\end{equation}
The analysis that follows focuses on this. As mentioned above, we partition the major steps of this proof into several lemmas.

In order to prove Lemma \ref{Lemma 18 of paper}, we need to establish values of $M>0$, $r\in R$ and $\delta\in(0,1)$ such that a number of basic estimates hold. We do this in the next lemma. But first we define an additional constant $\epsilon_*\in(0,1]$ as follows: 
\begin{equation}
    \epsilon_*=\frac{4}{3\lambda(5+9(\E[v])^2+8\sigma_A^2)}\wedge1.\label{definition of epsilon sub *}
\end{equation}

\begin{lem}\label{a lot of assumptions}
Suppose $\eta\in(0,\eta_0)$. There exist $r_*(\eta)\in R$, $M_*(\eta)\geq M_0$, and $\delta_*(\eta)\in(0,1/e]$ such that for all $r\geq r_*(\eta)$ and $\delta\in[M_*(\eta)/c^r,\delta_*(\eta)]$ the following hold:
\begin{align}
    \frac{2^3-2}{2^{32\log(1/\delta)}-2}&\leq\delta^{16},\label{4.4.1}\\
    \frac{9}{2}&\leq48\log{(1/\delta)},\label{corollary of 4.4.1}\\
    \frac{2\lambda}{3}\leq\lambda^r&\leq\frac{8\lambda}{7},\label{4.4.2}\\
    \frac{\sigma_A^2}{2}\leq (\sigma_A^r)^2&\leq 2\sigma_A^2,\label{4.4.4}\\
    \E\left[\left(v\id_{[v\leq \delta c^r]}-\lambda^r\E\left[v\id_{[v\leq\delta c^r]}\right]T^r\right)^2\right]&\leq C,\label{4.4.6}\\
    c^r&<r^{\frac{1}{2(1+\eta)}},\label{4.4.7}\\
    \max\left\{\frac{3^22^{10}\lambda\sigma_A^2}{\epsilon_*},2^{14}\lambda^2\sigma_A^2,1\right\}&\leq r,\label{4.4.8}\\
    -\frac{\lambda^r(Sc^r)}{S(\delta c^r)}+r(\rho^r-1)&\leq -\frac{5\lambda }{12\delta^{1+\eta}},\label{liminf and inf swap}\\
    \E[(T_1^r)^2]&\leq C_1,\label{4.4.3}\\
\P\left(T_1^r>r^2B\lambda^{-1}\delta^{2(1+\eta)}\right)&<\frac{1}{282},\label{C B}\\
    \P\left(T_1^r>r^2\epsilon_*\delta^{2(1+\eta)}/3\right)&<\frac{1}{18},\label{condition involving epsilon *}\\
    \P\left(\delta^{-1-\eta}\widetilde{W}_\delta^r(0)>2B\right)&\leq\frac{D_0}{2B}\delta^{\eta_0-\eta},\label{tilde W bounded at initial}
\end{align}
where $C$, $B$, and $D_0$ are as in \eqref{definition of C and B} and \eqref{definition of D and D0} and $C_1$ is as in \eqref{u convergence}.  Moreover, for any $b_0>0$, there exists $\widetilde{r}(\eta,b_0)\geq r_*(\eta)$ such that $\widetilde{r}(\eta,b_0)$ is nondecreasing as $b_0$ increases and for any $b\geq b_0$, $r\geq \widetilde{r}(\eta,b_0)$ and $\delta\in[M_*(\eta)/c^r,\delta_*(\eta)]$,
\begin{equation}
    \P\left(E^r\left(\frac{3}{4}br^2\delta^{2(1+\eta)}\right)>\lfloor \lambda br^2\delta^{2(1+\eta)}\rfloor\right)\leq\frac{2^9\lambda\sigma_A^2}{br}.\label{4.4.9}
\end{equation}
\begin{proof} 
To show \eqref{4.4.1} and \eqref{corollary of 4.4.1}, we first show that there exists $\delta_1(\eta)\in (0,e^{-1/2})$ such that $\frac{6}{4^{-\log{\delta}}-2}\leq\delta$ for all $\delta\in(0,\delta_1(\eta)]$. For this, we let
\begin{align*}
    f(\delta):=\begin{cases}
    0,\quad&\text{if }\delta=0,\\
    \frac{6}{4^{-\log{\delta}}-2},\quad&\text{if }\delta\in(0,e^{-1/2}),
    \end{cases}\quad\text{and}\quad
    g(\delta):=\delta\text{ for }\delta\in[0,e^{-1/2}].
\end{align*}
Then $f$ and $g$ are continuously differentiable on $[0,e^{-1/2})$, $f(0) = g(0) = 0$, $f'(0)=0<1 = g'(0)$, and $\lim_{\delta\uparrow e^{-1/2}}g(\delta) = e^{-1/2}<\infty = \lim_{\delta\uparrow e^{-1/2}}f(\delta)$. Hence, there exists $\delta_1(\eta)\in(0,e^{-1/2})$ such that $f(\delta_1(\eta))=g(\delta_1(\eta))$ and when $\delta\in(0,\delta_1(\eta))$, we have $f(\delta)<g(\delta)$. 
Also, for $\delta\in[0,e^{-1/2}]$, we must have $0\leq\delta^{16}\leq\delta$ and so $\delta^{16}\in[0,e^{-1/2}]$. Therefore, by the above argument, when $\delta\in[0,\delta_1(\eta)]$, we must have $f(\delta^{16})\leq g(\delta^{16})$, i.e., the inequality \eqref{4.4.1} holds for all $\delta\in(0,\delta_1(\eta)]$. To see that \eqref{corollary of 4.4.1} holds for all $\delta\in (0,\delta_1(\eta)]$, fix $\delta\in(0,\delta_1(\eta)]$. By what was just proved, we have 
\begin{equation}
\frac{6}{4^{16\log{(1/\delta)}}-2}\leq\delta^{16}\leq1.\label{upper bound as delta D}
\end{equation}
Therefore, since $4^{16\log{(1/\delta)}}-2>0$, \eqref{upper bound as delta D} implies that $2^3\leq2^{32\log{(1/\delta)}}$ and so $\frac{9}{2}\leq48\log{(1/\delta)}$. Hence \eqref{corollary of 4.4.1} holds for all $\delta\in (0,\delta_1(\eta)]$. In what follows, we choose $\delta_*(\eta)\leq \delta_1(\eta)$ so that each of the remaining statements in Lemma \ref{a lot of assumptions} holds.

Note that by \eqref{lambda}, when $r$ is large enough, \eqref{4.4.2} holds. Similarly, due to  \eqref{sigma convergence}, \eqref{4.4.4} also holds when $r$ is large enough. 

To prove \eqref{4.4.6}, note that when $r\in R$ and $i\in\N\setminus\{1\}$, $T_i^r\overset{d}{=}T^r$, and by assumption $T^r$ is independent of $v$. Hence, for $r\in R$ and $\delta\in \R_+$,
\begin{align*}
&\E\left[\left(v\id_{[v\leq \delta c^r]}-\lambda^r\E\left[v\id_{[v\leq\delta c^r]}\right]T^r\right)^2\right]\\
&= \E\left[v^2\id_{[v\leq\delta c^r]}\right]+ \left(\lambda^r\E[v\id_{v\leq\delta c^r}]\right)^2\left(\E[(T^r)^2] - 2\E[T^r]^2\right)\leq \E[v^2]+\left(\frac{\lambda^r}{\lambda}\right)^2(\sigma_A^r)^2,
\end{align*}
where we used $(\lambda^r)^{-1} = \E[T^r]$ to obtain the equality and $(\lambda)^{-1}=\E[v]$ to obtain the inequality. This together with \eqref{4.4.2} and \eqref{4.4.4} imply that  \eqref{4.4.6} holds 
for all $r\in R$ sufficiently large and $\delta\in\R_+$.

To prove that \eqref{4.4.7} holds for $r$ sufficiently large, we use Lemma \ref{S property} (which says $S^{-1}\in\textbf{R}_0$), $c^r=S^{-1}(r)$ for all $r\in R$ and Proposition \ref{subpolynomial growth of slowly varying functions} with $\gamma=\frac{1}{2(1+\eta)}$.

Next \eqref{4.4.8} clearly holds for $r$ sufficiently large.

To prove \eqref{liminf and inf swap}, we claim the following statement holds: there exists $r_1(\eta)\in R$ and $M_*(\eta)\geq M_0$ such that for all $r\geq r_1(\eta)$ and $\delta\in\left[M_*(\eta)/c^r,1\right]$, we have
\begin{equation}
\frac{S(c^r)}{S(\delta c^r)}\geq \frac{3}{4}\left(\frac{1}{\delta}\right)^{1+\eta}.\label{BG proposition}
\end{equation}
To see this, observe that by Lemma \ref{S property} (which says that $S\in\textbf{R}_\infty$) and Proposition \ref{BG prop 2.2.1}, there exists $M_*(\eta)\geq M_0$ such that 
\begin{equation}
\frac{S(y)}{S(x)}\geq\frac{3}{4}\left(\frac{y}{x}\right)^{1+\eta}\qquad\text{ for all $y\geq x\geq M_*(\eta)$.} \label{Sy over Sx geq 3 over 4}
\end{equation}
Let $r_1(\eta)\in R$ be such that $c^r\geq M_*(\eta)$ for all $r\geq r_1(\eta)$. Then for $r\geq r_1(\eta)$ and $\delta\in[M^*(\eta)/c^r,1]$, we consider $y=c^r$ and $x=\delta c^r$ in \eqref{Sy over Sx geq 3 over 4} and we obtain \eqref{BG proposition}.
Due to \eqref{BG proposition} and \eqref{4.4.2}, there exists $r_2(\eta)\geq r_1(\eta)$ such that for any $r\geq r_2(\eta)$ and for any $\delta\in[M_*(\eta)/c^r,1]$,
\[
\frac{\lambda^rS(c^r)}{S(\delta c^r)}\geq\frac{2\lambda}{3}\cdot\frac{3}{4}\left(\frac{1}{\delta}\right)^{1+\eta} = \frac{\lambda}{2}\left(\frac{1}{\delta}\right)^{1+\eta}\qquad\hbox{and}\qquad
r(\rho^r-1)\leq |\kappa|+\eta.
\]
Thus, for any $r\geq r_2(\eta)$ and $\delta\in[M_*(\eta)/c^r,1]$,
\begin{equation}
-\lambda^r\frac{S(c^r)}{S(\delta c^r)}+r(\rho^r-1)\leq -\frac{\lambda }{2\delta^{1+\eta}}+|\kappa|+\eta=-\frac{5\lambda }{12\delta^{1+\eta}}-\frac{\lambda }{12\delta^{1+\eta}}+|\kappa|+\eta.\label{smaller equal than absolute kappa and eta}
\end{equation}
Choose $\delta_2(\eta)\in(0,1)$ such that
\[
\left(\delta_2(\eta)\right)^{1+\eta}\leq\frac{\lambda }{12(|\kappa|+\eta)}.
\]
Then, for all $r\geq r_2(\eta)$ and $\delta\in[M_*(\eta)/c^r,\delta_2(\eta)]$, we have
\begin{equation}
|\kappa|+\eta\leq\frac{\lambda }{12\left(\delta_2(\eta)\right)^{1+\eta}}\leq\frac{\lambda}{12\delta^{1+\eta}},\quad\text{and so}\quad -\frac{\lambda }{12\delta^{1+\eta}}+|\kappa|+\eta\leq0.\label{sum of absolute kappa and eta bounded by 0}
\end{equation}
Set $\delta_*(\eta) = \delta_1(\eta)\wedge \delta_2(\eta)\wedge 1/e$. Then, by \eqref{smaller equal than absolute kappa and eta} and \eqref{sum of absolute kappa and eta bounded by 0}, \eqref{liminf and inf swap} holds for all $r\geq r_2(\eta)$ and $\delta\in[M_*(\eta)/c^r,\delta_*(\eta)]$.

Condition \eqref{4.4.3} holds for all $r\in R$ due to assumption \eqref{u convergence}.

To verify \eqref{C B}, observe that by Markov's inequality and assumption \eqref{u convergence} (which implies that $\E[T_1^r]\leq 1+C_1$ for all $r\in R$), for $r\in R$ and $\delta\in [M_*(\eta)/c^r,\delta_*(\eta)]$,
\[\P\left(T_1^r\geq r^2 B\lambda^{-1}\delta^{2(1+\eta)}\right)\leq \frac{(c^r)^{2(1+\eta)}(1+C_1)}{r^2B\lambda^{-1}M_*(\eta)^{2(1+\eta)}}.
\]
Thus, condition \eqref{C B} holds for all $r\in R$ sufficiently larger than $r_2(\eta)$ and $\delta\in [M_*(\eta)/c^r,\delta_*(\eta)]$ as a consequence of the above, $S^{-1}\in\textbf{R}_0$ (see Lemma \ref{S property}), $c^r=S^{-1}(r)$, and Lemma \ref{T1 and r convergence}.

Condition \eqref{condition involving epsilon *} holds for all $r\in R$ sufficiently larger than $r_2(\eta)$ by a line of reasoning similar to that in the previous paragraph.

To see that $\eqref{tilde W bounded at initial}$ holds, note that by Markov's inequality, for all $r\in R$ and $\delta\in[M_*(\eta)/c^r,\delta_*(\eta)]$, 
\begin{align*}
\P\left(\widetilde{W}_\delta^r(0)>2B\delta^{1+\eta}\right)&\leq
\frac{\E[\widetilde{W}_\delta^r(0)]}{2B\delta^{1+\eta}}
=\frac{\delta^{-1-\eta_0}\E[\widetilde{W}_\delta^r(0)]\delta^{\eta_0-\eta}}{2B}\leq \frac{D_0}{2B}\delta^{\eta_0-\eta}.
\end{align*}

Let $r_*(\eta)\in R$ be such that \eqref{4.4.1}-\eqref{tilde W bounded at initial} hold for all $r\geq r_*(\eta)$ and $\delta\in[M_*(\eta)/c^r,\delta_*(\eta)]$.

To prove \eqref{4.4.9}, fix $b_0>0$ and $b\geq b_0$. For each $r\in R$ and $\delta>0$, by definition of $E^r$, 
\begin{equation}
    \P\left(E^r\left(\frac{3}{4}br^2\delta^{2(1+\eta)}\right)>\lfloor \lambda br^2\delta^{2(1+\eta)}\rfloor\right)\leq \P\left(\sum_{i=2}^{\lfloor \lambda br^2\delta^{2(1+\eta)}\rfloor}T_i^r<\frac{3}{4}br^2\delta^{2(1+\eta)}\right).\label{4.4.10}
\end{equation}
For each $r\in R$, $\E[T_i^r] = 1/\lambda^r$ for $i\ge 2$, and so, for each $r\in R$ and $\delta>0$, the right side of \eqref{4.4.10} becomes
\begin{equation}
    \P\left(\sum_{i=2}^{\lfloor \lambda br^2\delta^{2(1+\eta)}\rfloor}(T_i^r-\E[T_i^r])<\frac{3br^2\delta^{2(1+\eta)}}{4}-\frac{\lfloor \lambda br^2\delta^{2(1+\eta)}\rfloor-1}{\lambda^r}\right).\label{4.4.11}
\end{equation}
Let $\widetilde{r}(\eta,b_0)\in R$ be such that $\widetilde{r}(\eta, b_0)\geq r_*(\eta)$ and $\lambda b_0 r^2(M_*(\eta)/c^r)^{2(1+\eta)}>28$ for all $r\geq\widetilde{r}(\eta,b_0)$. Then, by \eqref{4.4.10}, \eqref{4.4.11} and \eqref{4.4.2} for $r\geq\widetilde{r}(\eta,b_0)$ and $\delta\in[M_*(\eta)/c^r,\delta_*(\eta)]$, we have 
\begin{align*}
    &\P\left(E^r\left(\frac{3}{4}br^2\delta^{2(1+\eta)}\right)>\lfloor \lambda br^2\delta^{2(1+\eta)}\rfloor\right)\\
   &\ \leq \P\left(\sum_{i=2}^{\lfloor \lambda br^2\delta^{2(1+\eta)}\rfloor}(T_i^r-\E[T_i^r])<\frac{3br^2\delta^{2(1+\eta)}}{4}-\frac{7}{8\lambda}\left(\lambda br^2\delta^{2(1+\eta)}-2\right)\right)\\
    &\ =\P\left(\sum_{i=2}^{\lfloor \lambda br^2\delta^{2(1+\eta)}\rfloor}(T_i^r-\E[T_i^r])<-\frac{br^2\delta^{2(1+\eta)}}{8}+\frac{14}{8\lambda}\right)\\
    &\ \leq \P\left(\sum_{i=2}^{\lfloor \lambda br^2\delta^{2(1+\eta)}\rfloor}(T_i^r-\E[T_i^r])<-\frac{br^2\delta^{2(1+\eta)}}{16}\right)\\
    &\ \leq\P\left(\left|\sum_{i=2}^{\lfloor \lambda br^2\delta^{2(1+\eta)}\rfloor}(T_i^r-\E[T_i^r])\right|>\frac{br^2\delta^{2(1+\eta)}}{16}\right).
\end{align*}
By Chebyshev's inequality and the mutual independence of the $T_i^r$'s for $i\geq2$, for $r\geq \widetilde{r}(\eta,b_0)$ and $\delta\in[M_*(\eta)/c^r,\delta_*(\eta)]$, we have 
\begin{align}
    \P\left(E^r\left(\frac{3}{4}br^2\delta^{2(1+\eta)}\right)>\lfloor \lambda br^2\delta^{2(1+\eta)}\rfloor\right)
    &\leq \frac{2^8\lambda br^2\delta^{2(1+\eta)}(\sigma_A^r)^2}{\left(br^2\delta^{2(1+\eta)}\right)^2}\nonumber= \frac{2^8\lambda(\sigma_A^r)^2}{br^2\delta^{2(1+\eta)}}.\nonumber
\end{align}
Then, by \eqref{4.4.4} and \eqref{4.4.7}, for $r\geq\widetilde{r}(\eta,b_0)$ and $\delta\in[M_*(\eta)/c^r,\delta_*(\eta)]$, we have $\delta\geq M_*(\eta)/c^r>1/c^r$ (since $M_*(\eta)\geq M_0>1$), and so
\[
\P\left(E^r\left(\frac{3}{4}br^2\delta^{2(1+\eta)}\right)>\lfloor \lambda br^2\delta^{2(1+\eta)}\rfloor\right)\leq \frac{2^9\lambda\sigma_A^2(c^r)^{2(1+\eta)}}{br^2}\leq \frac{2^9\lambda\sigma_A^2}{br}.
\]
\end{proof}
\end{lem}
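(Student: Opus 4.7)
The plan is to establish each of the thirteen listed estimates separately and then take $r_*(\eta)$, $M_*(\eta)$, $\delta_*(\eta)$ to be thresholds that are simultaneously adequate for every estimate. Many conditions are routine consequences of the asymptotic hypotheses: \eqref{4.4.2} and \eqref{4.4.4} follow from \eqref{lambda} and \eqref{sigma convergence} for all $r$ large, \eqref{4.4.3} is immediate from \eqref{u convergence}, and \eqref{4.4.8} only requires $r$ large. For \eqref{4.4.7} I would invoke $S^{-1}\in\textbf{R}_0$ (Lemma \ref{S property}) together with Proposition \ref{subpolynomial growth of slowly varying functions} at $\gamma=1/(2(1+\eta))$. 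For \eqref{4.4.6} I would expand the square, use the independence of $v$ and $T^r$, the identities $\E[T^r]=1/\lambda^r$ and $\E[v]=1/\lambda$, the finiteness of $\E[v^2]$, and combine with \eqref{4.4.2} and \eqref{4.4.4}.

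The two purely analytic conditions \eqref{4.4.1} and \eqref{corollary of 4.4.1} are elementary calculus: I would first show that $6/(4^{-\log\delta}-2)\le \delta$ for sufficiently small $\delta$ by comparing the derivatives of the two sides at $0$ and noting the reversal of the inequality as $\delta\uparrow e^{-1/2}$, then apply the conclusion with $\delta^{16}$ in place of $\delta$. The corollary then follows by straightforward algebraic rearrangement.

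The key new estimate is \eqref{liminf and inf swap}. Here I would leverage the fact that $S\in\textbf{KR}_\infty$ with $d(S)=\infty$ (Lemma \ref{S property}) and apply Proposition \ref{BG prop 2.2.1} with $d=1+\eta$ and $C=3/4$ to obtain $S(y)/S(x)\ge (3/4)(y/x)^{1+\eta}$ for all $y\ge x\ge M_*(\eta)$ for some threshold $M_*(\eta)\ge M_0$. Setting $y=c^r$ and $x=\delta c^r$ (which forces the constraint $\delta\ge M_*(\eta)/c^r$), combined with \eqref{4.4.2} and the bounded drift $r(\rho^r-1)\to\kappa$ from \eqref{r rho convergence}, yields $-\lambda^r S(c^r)/S(\delta c^r)+r(\rho^r-1)\le -\lambda/(2\delta^{1+\eta})+|\kappa|+\eta$; the claimed $-5\lambda/(12\delta^{1+\eta})$ bound then follows by choosing $\delta_*(\eta)$ small enough so that $\lambda/(12\delta^{1+\eta})$ dominates $|\kappa|+\eta$.

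The probability bounds \eqref{C B} and \eqref{condition involving epsilon *} on $T_1^r$ follow from Markov's inequality applied to $(T_1^r)^2$, using the uniform second-moment bound $\E[(T_1^r)^2]\le C_1$ from \eqref{u convergence}; the resulting right-hand sides are proportional to $(c^r)^{2(1+\eta)}/r^2$, which vanishes as $r\to\infty$ because $c^r\in\textbf{R}_0$, as justified by Lemma \ref{T1 and r convergence}. The bound \eqref{tilde W bounded at initial} is an immediate application of Markov's inequality together with the definition of $D_0$ in \eqref{definition of D and D0} and hypothesis \eqref{boundedness of tilde W by a}. The most delicate item is the renewal estimate \eqref{4.4.9}: I would rewrite the event $\{E^r(3br^2\delta^{2(1+\eta)}/4)>\lfloor\lambda br^2\delta^{2(1+\eta)}\rfloor\}$ as a deviation event for the partial sums of the centered interarrival times $T_i^r-1/\lambda^r$ for $i\ge 2$, absorb the mean gap using \eqref{4.4.2}, apply Chebyshev's inequality with the uniform variance bound coming from \eqref{sigma convergence}, and finally invoke \eqref{4.4.7} to convert the resulting $(c^r)^{2(1+\eta)}/r^2$ factor into $1/r$. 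I expect the coordination of constants in the liminf-swap estimate and the renewal concentration bound to be the main sources of bookkeeping care, though no individual step is conceptually difficult.
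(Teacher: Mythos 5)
Your proposal matches the paper's proof essentially item by item: the calculus argument comparing $f(\delta)=6/(4^{-\log\delta}-2)$ to $g(\delta)=\delta$ for \eqref{4.4.1}--\eqref{corollary of 4.4.1}, the slow-variation of $S^{-1}$ for \eqref{4.4.7}, the Karamata index bound (Proposition~\ref{BG prop 2.2.1} with $d=1+\eta$, $C=3/4$) giving $S(c^r)/S(\delta c^r)\geq\tfrac34\delta^{-(1+\eta)}$ and then balancing the residual $\lambda/(12\delta^{1+\eta})$ against $|\kappa|+\eta$ for \eqref{liminf and inf swap}, the Markov/Chebyshev estimates for \eqref{C B}, \eqref{condition involving epsilon *}, \eqref{tilde W bounded at initial}, and the renewal concentration argument for \eqref{4.4.9} are all exactly the paper's steps. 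The one small inconsistency is in your description of \eqref{C B}: applying Markov to $(T_1^r)^2$ would give a bound of order $(c^r)^{4(1+\eta)}/r^4$, whereas the bound $(c^r)^{2(1+\eta)}/r^2$ you quote is what the paper obtains from a first-moment Markov bound using $\E[T_1^r]\leq 1+C_1$; both vanish since $c^r$ is slowly varying, so the discrepancy is cosmetic and does not affect the argument.
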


Recall our objective given in \eqref{probability of sup tilde Y greater 3DB bounded above by 34} and also recall that $\widetilde{Y}_\delta^r(\cdot)=\Gamma[\widetilde{X}_\delta^r](\cdot)$ for all $r\in R$ and $\delta>0$. Thus, the analysis begins by developing properties of $\widetilde{X}_\delta^r(\cdot)$ for suitable $r\in R$ and $\delta>0$. The next lemma shows that when $r$ is sufficiently large, $\delta$ is in the range specified in Lemma \ref{a lot of assumptions}, and $\widetilde{X}_\delta^r(\cdot)$ starts from a positive value, then $\widetilde{X}_\delta^r(\cdot)$ is at least twice as likely to decrease by $2B\delta^{1+\eta}$ units as it is to increase by $B\delta^{1+\eta}$ units. This is due to the negative drift as quantified by \eqref{liminf and inf swap}.

\begin{lem}\label{lemma 1 in original paper}
Suppose $\eta\in(0,\eta_0)$.  Let $M_*(\eta)$, $ \delta_*(\eta)$ and $\widetilde{r}(\eta,\E[v])$ be as in Lemma \ref{a lot of assumptions}. For all $r \geq \widetilde{r}(\eta,\E[v])$, $\delta\in[M_*(\eta)/c^r,\delta_*(\eta)]$, $k\in\N$, $i\in\N\setminus\{1\}$, and $x\in\R_+$ such that $0\leq x\leq iB\delta^{1+\eta}$, the following inequality holds:
\begin{equation*}
    \P\left(\widetilde{X}_\delta^r(\cdot+u_{k}^r/r^2)\text{ crosses }(i+1)B\delta^{1+\eta}\text{ before }(i-2)B\delta^{1+\eta}\middle\vert \widetilde{X}_\delta^r(u_{k}^r/r^2)=x\right)\leq\frac{1}{3}.
\end{equation*}
\begin{proof}
Fix $r\geq\widetilde{r}(\eta,\E[v])$ and $\delta\in[M_*(\eta)/c^r,\delta_*(\eta)]$. The main step is to show that the following holds:
\begin{align}
\P\left(\widetilde{X}_\delta^r(\cdot+T_1^r/r^2)-\widetilde{X}_\delta^r(T_1^r/r^2)\text{ crosses $B\delta^{1+\eta}$ before $-2B\delta^{1+\eta}$}\right)\leq\frac{1}{3}\label{after 387}
\end{align}
The result in Lemma \ref{lemma 1 in original paper} follows from this. Indeed, by the strong Markov property and \eqref{after 387}, for $k\in\N,i\in\N\setminus\{1\}$ and $0\leq x\leq iB\delta^{1+\eta}$, we have that
\begin{align*}
&\P\left(\widetilde{X}_\delta^r(\cdot+u_k^r/r^2)\text{ crosses }(i+1)B\delta^{1+\eta}\text{ before }(i-2)B\delta^{1+\eta}\middle\vert \widetilde{X}_\delta^r(u_k^r/r^2)=x\right)\\
    \leq&\P\left(\widetilde{X}_\delta^r(\cdot+u_k^r/r^2)\text{ crosses }(i+1)B\delta^{1+\eta}\text{ before }(i-2)B\delta^{1+\eta}\middle\vert \widetilde{X}_\delta^r(u_k^r/r^2)=iB\delta^{1+\eta}\right)\\
    &\ =\P\left(\widetilde{X}_\delta^r(\cdot+u_k^r/r^2)-\widetilde{X}_\delta^r(u_k^r/r^2)\text{ crosses }B\delta^{1+\eta}\text{ before }-2B\delta^{1+\eta}\right)\\
    &\ = \P\left(\widetilde{X}_\delta^r(\cdot+T_1^r/r^2)-\widetilde{X}_\delta^r(T_1^r/r^2)\text{ crosses }B\delta^{1+\eta}\text{ before }-2B\delta^{1+\eta}\right).
\end{align*}
Thus, it suffices to show that \eqref{after 387} holds.

By \eqref{Xtilde}, \eqref{eq:drift}, and \eqref{liminf and inf swap}, for any $t\geq0$,
\[
\widetilde{X}_\delta^r(t+T_1^r/r^2)-\widetilde{X}_\delta^r(T_1^r/r^2)\leq \widetilde{V}_\delta^r(t+T_1^r/r^2)-\widetilde{V}_\delta^r(T_1^r/r^2)-\frac{5\lambda }{12\delta^{1+\eta}}t.
\]
If 
\begin{equation}
\sup_{t\in[0,6B\E[v]\delta^{2(1+\eta)}]}\widetilde{V}_\delta^r(t+T_1^r/r^2)-\widetilde{V}_\delta^r(T_1^r/r^2)\leq\frac{B\delta^{1+\eta}}{2},\label{6BEVdelta}
\end{equation}
then, for all $t\in[0,6B\E[v]\delta^{2(1+\eta)}]$,
\[
\widetilde{X}_\delta^r(t+T_1^r/r^2)-\widetilde{X}_\delta^r(T_1^r/r^2)\leq\frac{B\delta^{1+\eta}}{2}-\frac{5\lambda }{12\delta^{1+\eta}}t\leq \frac{B\delta^{1+\eta}}{2},
\]
and at time $t=6B\E[v]\delta^{2(1+\eta)}$, using $\E[v]=1/\lambda$,
\begin{align*}
\widetilde{X}_\delta^r(6B\E[v]\delta^{2(1+\eta)}+T_1^r/r^2)-\widetilde{X}_\delta^r(T_1^r/r^2)&
\leq\frac{B\delta^{1+\eta}}{2}-\frac{5B\delta^{1+\eta}}{2}
=-2B\delta^{1+\eta}.
\end{align*}
Therefore, on the event that \eqref{6BEVdelta} holds, $\widetilde{X}_\delta^r(\cdot+T_1^r/r^2)-\widetilde{X}_\delta^r(T_1^r/r^2)$ crosses $-2B\delta^{1+\eta}$ by time $6 B\E[v]\delta^{2(1+\eta)}$. Thus,
\begin{align}
    &\P\left(\widetilde{X}_\delta^r(\cdot+T_1^r/r^2)-\widetilde{X}_\delta^r(T_1^r/r^2)\text{ crosses $B\delta^{1+\eta}$ before $-2B\delta^{1+\eta}$}\right)\nonumber\\
    \leq&\P\left(\sup_{t\in[0,6B\E[v]\delta^{2(1+\eta)}]}\widetilde{V}_\delta^r(t+T_1^r/r^2)-\widetilde{V}_\delta^r(T_1^r/r^2)>B\delta^{1+\eta}/2 \right)\label{before ck}.
\end{align}
In what follows, we focus on upper bounding the right side of \eqref{before ck}.\\
\indent For $k\in\N$, let
\[
M_\delta^r(k)=\sum_{i=1}^k\left(v_i\id_{[v_i\leq \delta c^r]}-\rho_{\delta c^r}T_i^r\right)=r\widetilde{V}_\delta^r\left(\frac{u_k^r}{r^2}\right) =r\widetilde{V}_\delta^r\left(\sum_{i=1}^k \frac{T_i^r}{r^2}\right).
\]
In particular, for $k\in\N$, we have 
\[
M_\delta^r(k)- M_\delta^r(1) =r\widetilde{V}_\delta^r\left(\frac{u_k^r}{r^2}\right)- r\widetilde{V}_\delta^r\left(\frac{T_1^r}{r^2}\right).
\]
Since $\widetilde{V}_\delta^r(\cdot+T_1^r/r^2)-\widetilde{V}_\delta(T_1^r/r^2)$ can only cross $B\delta^{1+\eta}/2$ at jump times, we have 
\begin{align}
     &\P\left(\sup_{t\in[0,6B\E[v]\delta^{2(1+\eta)}]}\widetilde{V}_\delta^r(t+T_1^r/r^2)-\widetilde{V}_\delta^r(T_1^r/r^2)> B\delta^{1+\eta}/2 \right)\nonumber\\
     &\quad =\P\left(\sup_{2\leq k\leq E^r\left(6r^2B\E[v]\delta^{2(1+\eta)}+T_1^r\right)}M_\delta^r(k)-M_\delta^r(1)> Br\delta^{1+\eta}/2\right)\nonumber\\
    &\quad\leq\P\left(\sup_{2\leq k\leq \left\lfloor\frac{28}{3}Br^2\delta^{2(1+\eta)}\right\rfloor}M_\delta^r(k)-M_\delta^r(1)>Br\delta^{1+\eta}/2\right)\nonumber\\
    &\qquad+\P\left(T_1^r> r^2B\E[v]\delta^{2(1+\eta)}\right)\nonumber\\
&\qquad+\P\left(E^r(7r^2B\E[v]\delta^{2(1+\eta)})>\left\lfloor\frac{28}{3}Br^2\delta^{2(1+\eta)}\right\rfloor\right).\label{before doob}
\end{align}
Note that \eqref{C B} gives an upper bound of $1/282$ on the second term in \eqref{before doob}. Next we upper bound the other two terms in \eqref{before doob}.\\
\indent We begin with upper bounding the first term in \eqref{before doob}. Note that for $i\geq 2$, $\E[v_i\id_{[v_i\leq \delta c^r]}-\rho^r_{\delta c^r}T_i^r]=\E[v\id_{[v\leq \delta c^r]}](1-\lambda^r\E[T^r])=0$. This together with the independence assumptions implies that $\{M_\delta^r(k)-M_\delta^r(1)\}_{k\in\N}$ is a martingale \big(with respect to the filtration $\{\F_n^r\}_{n\in\N}$, where $\F_1^r$ is the trivial $\sigma$-algebra and $\F_n^r:=\sigma(v_i,T_i^r:2\leq i\leq n)$ for $n\geq2$\big). Also, by the independence assumptions and \eqref{4.4.6}, for each $ k\in\N$, we have 
\begin{equation}
\E[\left(M_\delta^r(k)-M_\delta^r(1)\right)^2]\leq C(k-1)\leq Ck.    \label{ck}
\end{equation}
Thus, by Doob's maximal inequality, \eqref{ck} and $B=282C$, we have 
\begin{align}
&\P\left(\sup_{2\leq k\leq \left\lfloor\frac{28}{3}Br^2\delta^{2(1+\eta)}\right\rfloor}M_\delta^r(k)-M_\delta^r(1)>Br\delta^{1+\eta}/2\right)\nonumber\\
\leq &\frac{4\E\left[\left(M_\delta^r(\lfloor\frac{28}{3}Br^2\delta^{2(1+\eta)}\rfloor)-M_\delta^r(1)\right)^2\right]}{\left(Br\delta^{1+\eta} \right)^2}
\leq \frac{112CBr^2\delta^{2(1+\eta)}}{3B^2r^2\delta^{2(1+\eta)}}\leq\frac{38}{282}.\label{208C 3B}
\end{align}

To upper bound the last term of \eqref{before doob}, we apply \eqref{4.4.9} with $b=\frac{28}{3}B\E[v]\geq\E[v]$, noting that $\frac{3b}{4}=7B\E[v]$ and $\lambda\E[v]=1$ so that $b\lambda=\frac{28}{3}B$ and using that $B=282 C$, \eqref{4.4.8} and $C\geq1$, we obtain
\begin{align}
    \P\left(E^r(7B\E[v]r^2\delta^{2(1+\eta)})>\left\lfloor\frac{28}{3}Br^2\delta^{2(1+\eta)}\right\rfloor\right)&\leq\frac{3\times 2^7\lambda^2\sigma_A^2}{7\times 282 r} \leq \frac{55}{282}.\label{176C 3B}
\end{align}
Combining \eqref{before ck}, \eqref{before doob}, \eqref{208C 3B}, \eqref{C B}, and \eqref{176C 3B} completes the proof that \eqref{after 387} holds and the desired result follows from this.
\end{proof}
\end{lem}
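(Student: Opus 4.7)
The plan is to exploit the strong Markov structure of the arrival/work primitives together with the strong negative drift of $\widetilde{X}_\delta^r$ in the regime $\delta c^r\ge M_*(\eta)$ and $\delta\le \delta_*(\eta)$. At the arrival epoch $u_k^r$, the tail $(T_{k+1}^r,v_{k+1},T_{k+2}^r,v_{k+2},\dots)$ is i.i.d.\ and independent of the past with $T_{k+1}^r\overset{d}{=}T_2^r$, and similarly the tail after $u_1^r=T_1^r$ is i.i.d.\ with the same marginal. Hence the post-$u_k^r$ increment $\widetilde{X}_\delta^r(\cdot+u_k^r/r^2)-\widetilde{X}_\delta^r(u_k^r/r^2)$ has the same law as $\widetilde{X}_\delta^r(\cdot+T_1^r/r^2)-\widetilde{X}_\delta^r(T_1^r/r^2)$. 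The event of crossing $(i+1)B\delta^{1+\eta}$ before $(i-2)B\delta^{1+\eta}$ is monotone non-decreasing in the starting value $x$, so it suffices to take $x=iB\delta^{1+\eta}$; after recentering this reduces the claim to
\[
\P\bigl(\widetilde{X}_\delta^r(\cdot+T_1^r/r^2)-\widetilde{X}_\delta^r(T_1^r/r^2)\text{ crosses }B\delta^{1+\eta}\text{ before }-2B\delta^{1+\eta}\bigr)\le\tfrac{1}{3}.
\]

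Second, \eqref{Xtilde} together with the drift bound \eqref{liminf and inf swap} gives, for all $t\ge 0$,
\[
\widetilde{X}_\delta^r(t+T_1^r/r^2)-\widetilde{X}_\delta^r(T_1^r/r^2)\le \widetilde{V}_\delta^r(t+T_1^r/r^2)-\widetilde{V}_\delta^r(T_1^r/r^2)-\frac{5\lambda}{12\delta^{1+\eta}}t.
\]
Setting $T^*:=6B\E[v]\delta^{2(1+\eta)}$ and using $\lambda\E[v]=1$, the deterministic drift over $[0,T^*]$ accumulates to $-\tfrac{5}{2}B\delta^{1+\eta}$. Hence on the event that $\sup_{t\in[0,T^*]}(\widetilde{V}_\delta^r(t+T_1^r/r^2)-\widetilde{V}_\delta^r(T_1^r/r^2))\le B\delta^{1+\eta}/2$, the trajectory must reach $-2B\delta^{1+\eta}$ before $B\delta^{1+\eta}$ within the window, so it suffices to bound the probability of the complementary fluctuation event by $1/3$.

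Third, I would control the fluctuation of $\widetilde{V}_\delta^r$ via the martingale $M_\delta^r(k):=\sum_{i=1}^k(v_i\id_{[v_i\le\delta c^r]}-\rho_{\delta c^r}^r T_i^r)$, for which $r\widetilde{V}_\delta^r(u_k^r/r^2)=M_\delta^r(k)$ and $\{M_\delta^r(k)-M_\delta^r(1)\}_{k\ge 1}$ is a zero-mean martingale with respect to the natural filtration (using $\E[v_i\id_{[v_i\le\delta c^r]}-\rho_{\delta c^r}^r T_i^r]=0$ for $i\ge 2$). Since $\widetilde{V}_\delta^r$ only jumps at arrival epochs, the supremum is attained at some $u_k^r$, and a union bound splits the fluctuation event into three parts: (i) the initial delay $T_1^r$ is atypically large, handled by \eqref{C B}; (ii) the arrival count $E^r(r^2T^*+T_1^r)$ exceeds the threshold $N:=\lfloor (28/3)Br^2\delta^{2(1+\eta)}\rfloor$, handled by \eqref{4.4.9} applied with $b=(28/3)B\E[v]$; (iii) $\sup_{2\le k\le N}|M_\delta^r(k)-M_\delta^r(1)|>Br\delta^{1+\eta}/2$, handled by Doob's $L^2$ maximal inequality combined with the per-step second-moment bound \eqref{4.4.6}.

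The main obstacle is the careful calibration of constants so that the three contributions sum to at most $1/3$; this is precisely what the choice $B=282C$ is designed to achieve. With it, the Doob term contributes at most $112C/(3B)=38/282$, the initial-delay term at most $1/282$, and the arrival-count term at most $55/282$, yielding $94/282=1/3$. A subsidiary subtlety is that $T_1^r$ is not distributed as the later interarrival times, so it must be peeled off by a direct Markov-type tail bound before the i.i.d.\ martingale machinery is invoked on $\{T_i^r\}_{i\ge 2}$; the fact that the range $\delta\in[M_*(\eta)/c^r,\delta_*(\eta)]$ forces $\delta\ge 1/c^r$, combined with the slow variation of $c^r=S^{-1}(r)$ from Lemma \ref{S property}, is what allows all the bounds in Lemma \ref{a lot of assumptions} to hold uniformly over the relevant range of $(r,\delta)$.
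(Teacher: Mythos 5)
Your proposal is correct and follows essentially the same approach as the paper: reduce by monotonicity to the starting value $x=iB\delta^{1+\eta}$, recenter and use the i.i.d.\ structure of $(T_i^r,v_i)_{i\ge 2}$ to shift to time $T_1^r/r^2$, exploit the drift bound \eqref{liminf and inf swap} over the window $[0,6B\E[v]\delta^{2(1+\eta)}]$, and control the martingale fluctuation of $\widetilde{V}_\delta^r$ via a three-way union bound using \eqref{C B}, \eqref{4.4.9}, and Doob's $L^2$ maximal inequality with \eqref{4.4.6}, arriving at the same $38/282+1/282+55/282=1/3$ tally. No gaps noted.
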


The result in Lemma \ref{lemma 1 in original paper} allows for a comparison with a biased random walk as stated in the next lemma.  For this for $\delta>0$, let $\{S_\delta(k)\}_{k\in\Z_+}$ be a random walk such that $S_\delta(0) = 9B\delta^{1+\eta}/2$ and for $k\in\Z_+$
\begin{equation}\label{defS}
S_\delta(k+1)-S_\delta(k)=
\begin{cases}
\frac{3B\delta^{1+\eta}}{2},&\text{with probability 1/3,}\\
-\frac{3B\delta^{1+\eta}}{2},&\text{with probability 2/3}.
\end{cases}
\end{equation}

\begin{lem}\label{new lemma for biased random walk}
Suppose $\eta\in(0,\eta_0)$. Let $\widetilde{r}(\eta,\E[v]),M_*(\eta)$ and $\delta_*(\eta)$ be as in Lemma \ref{a lot of assumptions}. 
Then for $r\geq\widetilde{r}(\eta,\E[v])$, $\delta\in[M_*(\eta)/c^r$, $\delta_*(\eta)]$, and $k\in\N$,
\begin{align}
    &\P\left(\widetilde{X}_\delta^r(\cdot+u_k^r/r^2)\text{ crosses }48B\delta^{1+\eta}\log{(1/\delta)}\text{ before }\frac{3\delta^{1+\eta}}{2}\middle\vert\widetilde{X}_\delta^r(u_k^r/r^2)=\frac{9\delta^{1+\eta}}{2}\right)\nonumber\\
    \leq&\P\left(S_\delta(\cdot)\text{ crosses }48B\delta^{1+\eta}\log{(1/\delta)}\text{ before }\frac{3\delta^{1+\eta}}{2}\middle\vert S_\delta(0)=\frac{9\delta^{1+\eta}}{2}\right)\leq\delta^{16}.\label{probability of widetilde X ukr over r squared bounded above by delta to the D}
\end{align}
\begin{proof}
Fix $r\geq \widetilde{r}(\eta,\E[v])$ and $\delta\in[M_*(\eta)/c^r,\delta_*(\eta)]$. We claim that $\delta c^r/r<B\delta^{(1+\eta)}/2$. By assumption, $M_*(\eta)/c^r\leq\delta$ and $M_*(\eta)>1$. Thus,
\[
\frac{\delta c^r}{r}=\frac{\delta^{1+\eta}c^r}{r\delta^n}\leq\frac{\delta^{1+\eta}}{r}\times\frac{c^r}{\left(M_*(\eta)/c^r\right)^\eta}\leq \frac{\delta^{1+\eta}}{r}\times\frac{c^r}{\left(1/c^r\right)^\eta}=\frac{\delta^{1+\eta}(c^r)^{1+\eta}}{r}.
\]
By \eqref{4.4.7} and \eqref{4.4.8} (which implies that $\sqrt{r}\geq1\geq2/B$) the above inequality becomes
\begin{equation}
\frac{\delta c^r}{r}\leq \frac{\delta^{1+\eta}(c^r)^{1+\eta}}{r}\leq \frac{\delta^{1+\eta}}{r^{1/2}}<\frac{B\delta^{1+\eta}}{2}.\label{delta c^r / r}
\end{equation}
Next it suffices to prove that the first inequality in \eqref{probability of widetilde X ukr over r squared bounded above by delta to the D} holds for $k=1$. For this, we define stopping times $\{\beta_k^r\}_{k\in\Z_+}$ with respect to the filtration $\{\mathcal{H}^r_t\}_{t\geq0}$, where $\mathcal{H}^r_t := \sigma\left(\widetilde{V}_\delta^r(s),E^r(r^2s):s\leq t\right)$ for $t\geq0$, such that $\beta_0^r=T_1^r/r^2$ and for $k\in\N$,
\begin{align*}
\beta_k^r:= 
&\inf\big\{t\geq\beta_{k-1}^r:\widetilde{X}_\delta^r(t)-\widetilde{X}_\delta^r(\beta_{k-1}^r)\geq B\delta^{1+\eta}\\
&\qquad\text{ or }E^r(r^2t)-E^r(r^2t-)>0\text{ and }\widetilde{X}_\delta^r(t-)-\widetilde{X}_\delta^r(\beta_{k-1}^r)\leq-2B\delta^{1+\eta}\big\}.
\end{align*}
Thus, for any $k\in\N$, observing that the maximum jump up of $\widetilde{X}_\delta^r(\cdot)$ is $\delta c^r/r$ and combining this with \eqref{delta c^r / r}, we obtain two possibilities:
\begin{itemize}
    \item If $\widetilde{X}_\delta^r(\beta_{k+1}^r)-\widetilde{X}_\delta^r(\beta_{k}^r)\geq B\delta^{1+\eta}$, then 
    \[
    \widetilde{X}_\delta^r(\beta_{k+1}^r)-\widetilde{X}_\delta^r(\beta_{k}^r)\leq B\delta^{1+\eta}+\frac{\delta c^r}{r}\leq\frac{3B\delta^{1+\eta}}{2}.
    \]
    \item If $\widetilde{X}_\delta^r(\beta_{k+1}^r-)-\widetilde{X}_\delta^r(\beta_{k}^r)\leq -2B\delta^{1+\eta}$, then 
    \[
    \widetilde{X}_\delta^r(\beta_{k+1}^r)-\widetilde{X}_\delta^r(\beta_{k}^r)\leq -2B\delta^{1+\eta}+\frac{\delta c^r}{r}\leq -\frac{3B\delta^{1+\eta}}{2}.
    \]
\end{itemize}
This together with the result in Lemma \ref{lemma 1 in original paper} implies that the first inequality in \eqref{probability of widetilde X ukr over r squared bounded above by delta to the D} holds.
To see that the second inequality in \eqref{probability of widetilde X ukr over r squared bounded above by delta to the D} holds, note that
\begin{align}
    &\P\left(S_\delta(\cdot)\text{ crosses }48B\delta^{1+\eta}\log{(1/\delta)}\text{ before }\frac{3B\delta^{1+\eta}}{2}\middle\vert S_\delta(0)=\frac{9B\delta^{1+\eta}}{2}\right)\nonumber\\
    &\qquad\leq\frac{2^3-2}{2^{32\log(1/\delta)}-2}\leq\delta^{16},\label{S delta crosses}
\end{align}
where the first inequality follows by a standard calculation for the biased random walk $\{S_\delta(k)\}_{k\in\Z_+}$ (see Appendix \ref{Appendix B}) and the second inequality follows from \eqref{4.4.1}.
\end{proof}
\end{lem}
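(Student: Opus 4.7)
The plan is to couple the embedded jump chain of $\widetilde{X}_\delta^r(\cdot + u_k^r/r^2)$ with the biased random walk $S_\delta$ via a sequence of stopping times, then handle the second inequality with a standard gambler's ruin calculation. By the strong Markov property applied at $u_k^r/r^2$, it suffices to consider $k=1$. I would introduce stopping times $\beta_0^r = T_1^r/r^2$ and, for $j \geq 1$, let $\beta_j^r$ be the first time after $\beta_{j-1}^r$ at which either $\widetilde{X}_\delta^r$ has increased by $B\delta^{1+\eta}$ above its value at $\beta_{j-1}^r$, or a jump occurs that drives $\widetilde{X}_\delta^r$ to at least $2B\delta^{1+\eta}$ below that value. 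Lemma \ref{lemma 1 in original paper} implies that conditional on the history at $\beta_{j-1}^r$, the ``up'' alternative occurs with probability at most $1/3$, regardless of the current level (as long as it lies in the appropriate range).

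The second step is to control the magnitude of the increments $\widetilde{X}_\delta^r(\beta_j^r) - \widetilde{X}_\delta^r(\beta_{j-1}^r)$. Every jump of $\widetilde{X}_\delta^r$ is upward and bounded by $\delta c^r/r$. Using \eqref{4.4.7} (which gives $c^r < r^{1/(2(1+\eta))}$) together with the fact that $\delta \geq M_\ast(\eta)/c^r > 1/c^r$, I would derive $\delta c^r/r \leq \delta^{1+\eta}(c^r)^{1+\eta}/r \leq \delta^{1+\eta}/r^{1/2}$, and then invoke \eqref{4.4.8} to conclude $\delta c^r/r \leq B\delta^{1+\eta}/2$. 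Consequently every upward increment at $\beta_j^r$ is at most $B\delta^{1+\eta} + \delta c^r/r \leq 3B\delta^{1+\eta}/2$, while every downward increment has magnitude at least $2B\delta^{1+\eta} - \delta c^r/r \geq 3B\delta^{1+\eta}/2$. Since both the starting value $9B\delta^{1+\eta}/2$ and the two barriers $3B\delta^{1+\eta}/2$ and $48B\delta^{1+\eta}\log(1/\delta)$ sit on the lattice generated by $3B\delta^{1+\eta}/2$ (the latter via $32\log(1/\delta)\cdot 3B\delta^{1+\eta}/2$), a straightforward coupling to a walk with up-probability exactly $1/3$ and step size exactly $3B\delta^{1+\eta}/2$ dominates the event that $\widetilde{X}_\delta^r$ crosses the upper barrier before the lower one, yielding the first inequality in \eqref{probability of widetilde X ukr over r squared bounded above by delta to the D}.

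For the second inequality, I would apply the classical gambler's ruin formula to $S_\delta$. In units of $3B\delta^{1+\eta}/2$, the walk starts at position $3$ and the question is the hitting probability of position $N := 32\log(1/\delta)$ before position $1$. With ratio $q/p = 2$, this probability equals $(2^3 - 2)/(2^N - 2) = 6/\bigl(2^{32\log(1/\delta)} - 2\bigr)$, and assumption \eqref{4.4.1} in Lemma \ref{a lot of assumptions} immediately bounds this by $\delta^{16}$.

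The main obstacle is the coupling step: the true process has a nontrivial continuous negative drift interrupted by bounded but random upward jumps, so forcing its skeleton onto a fixed lattice requires the sharp inequality $\delta c^r/r \leq B\delta^{1+\eta}/2$. This is the essential technical lever, and it rests on the slow variation of $c^r = S^{-1}(r)$ (Lemma \ref{S property}) combined with the standing hypothesis $\delta c^r \geq M_\ast(\eta) > 1$. Once the embedded-chain increments are confined to $\{+3B\delta^{1+\eta}/2, -\text{(at least)}\,3B\delta^{1+\eta}/2\}$, the stochastic-domination comparison with $S_\delta$ and the subsequent gambler's ruin estimate are routine.
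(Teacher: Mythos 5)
Your proposal matches the paper's argument essentially step for step: the same key estimate $\delta c^r/r \le B\delta^{1+\eta}/2$ derived from \eqref{4.4.7}, \eqref{4.4.8}, and $\delta c^r \ge M_*(\eta)>1$; the same stopping-time skeleton $\{\beta_j^r\}$; the same invocation of Lemma \ref{lemma 1 in original paper} to get the $1/3$ crossing bound; the same increment confinement to $\pm 3B\delta^{1+\eta}/2$; and the same gambler's-ruin calculation yielding $(2^3-2)/(2^{32\log(1/\delta)}-2)\le\delta^{16}$ via \eqref{4.4.1}. The proof is correct and follows the paper's route.
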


Recall that for $r\in R$ and $\delta>0$, $\widetilde{Y}_\delta^r(\cdot)= \Gamma[\widetilde{X}_\delta^r](\cdot)$. Thus, we obtain the following corollary.
\begin{cor}\label{lemma 2 in original paper}
Suppose $\eta\in(0,\eta_0)$. Let $\widetilde{r}(\eta,\E[v]),M_*(\eta)$, and $\delta_*(\eta)$ be as in Lemma \ref{a lot of assumptions}. Then for any $r\geq\widetilde{r}(\eta,\E[v])$, $\delta\in[M_*(\eta)/c^r,\delta_*(\eta)]$, $t\geq0$, $k\in\N$, and $x_0\in[4B\delta^{1+\eta},\frac{9B\delta^{1+\eta}}{2}]$,
\[
\P\left(\widetilde{Y}_\delta^r(\cdot+u_k^r/r^2)\text{ crosses } 48B\delta^{1+\eta}\log(1/\delta)\text{ before }\frac{3B\delta^{1+\eta}}{2}\middle\vert\widetilde{Y}_\delta^r(u_k^r/r^2)=x_0\right)\leq\delta^{16}.
\]
\begin{proof}
Fix $r\geq\widetilde{r}(\eta,\E[v])$, $\delta\in[M_*(\eta)/c^r,\delta^*(\eta)]$, $k\in\N$ and $x_0\in[4B\delta^{1+\eta},\frac{9B\delta^{1+\eta}}{2}]$.  If $\widetilde{Y}_\delta^r(u_k^r/r^2)=\widetilde{X}_\delta^r(u_k^r/r^2)=x_0$, then $\widetilde{Y}_\delta^r(t+u_k^r/r^2)=\widetilde{X}_\delta^r(t+u_k^r/r^2)$ for all $t\in[0,t_*]$, where $t_*=\inf\{t\geq0:\widetilde{X}_\delta^r(t+u_k^r/r^2)=0\}$, and $\widetilde{X}_\delta^r(\cdot+u_k^r/r^2)$ must cross $\frac{3B\delta^{1+\eta}}{2}$ before time $t_*$.
Thus,
\begin{align}
    &\P\left(\widetilde{Y}_\delta^r(\cdot+u_k^r/r^2)\text{ crosses }3DB\delta^{1+\eta}\log(1/\delta)\text{ before }\frac{3B\delta^{1+\eta}}{2}\middle\vert\widetilde{Y}_\delta^r(u_k^r/r^2)=x_0\right)\nonumber\\
    =&\P\left(\widetilde{X}_\delta^r(\cdot+u_k^r/r^2)\text{ crosses }3DB\delta^{1+\eta}\log(1/\delta)\text{ before }\frac{3B\delta^{1+\eta}}{2}\middle\vert\widetilde{X}_\delta^r(u_k^r/r^2)=x_0\right)\nonumber\\
    \leq&\P\left(\widetilde{X}_\delta^r(\cdot+u_k^r/r^2)\text{ crosses }3DB\delta^{1+\eta}\log(1/\delta)\text{ before }\frac{3B\delta^{1+\eta}}{2}\middle\vert\widetilde{X}_\delta^r(u_k^r/r^2)=\frac{9B\delta^{1+\eta}}{2}\right).\nonumber
\end{align}
The result follows from this and Lemma \ref{new lemma for biased random walk}.
\end{proof}
\end{cor}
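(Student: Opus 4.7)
My plan is to reduce the statement to Lemma \ref{new lemma for biased random walk} by exploiting the Skorokhod map identity $\widetilde{Y}_\delta^r = \Gamma[\widetilde{X}_\delta^r]$. The key structural fact is that the regulator $R^r(t) := \widetilde{Y}_\delta^r(t) - \widetilde{X}_\delta^r(t) = -\inf_{0 \le s \le t}\widetilde{X}_\delta^r(s)\wedge 0$ is nondecreasing in $t$ and can only increase at times at which $\widetilde{Y}_\delta^r$ is zero. In particular, over any subinterval where $\widetilde{Y}_\delta^r$ stays strictly positive, $R^r$ is constant, so the increments of $\widetilde{Y}_\delta^r$ coincide pathwise with those of $\widetilde{X}_\delta^r$.

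First I would invoke the strong Markov property at the stopping time $u_k^r/r^2$ and work conditionally on $\widetilde{Y}_\delta^r(u_k^r/r^2) = x_0 \in [4B\delta^{1+\eta},\tfrac{9B\delta^{1+\eta}}{2}]$. On the event whose probability we wish to bound, the process $\widetilde{Y}_\delta^r(\cdot + u_k^r/r^2)$ remains above $3B\delta^{1+\eta}/2 > 0$ until the first time it exceeds $48B\delta^{1+\eta}\log(1/\delta)$, and therefore $R^r$ is frozen on this window. Consequently, the event that $\widetilde{Y}_\delta^r$ reaches the upper level before the lower one coincides with the analogous crossing event for $\widetilde{X}_\delta^r$, when the latter is shifted to start at $x_0$ at time $u_k^r/r^2$.

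Next I would use a straightforward monotone coupling: feeding two copies of $\widetilde{X}_\delta^r$ the same arrival epochs and processing-time draws but with different initial values, the copy started higher dominates pathwise, and so the upper crossing before the lower crossing is more likely from a higher initial value. Since $x_0 \le 9B\delta^{1+\eta}/2$, this bounds the conditional probability by the corresponding quantity starting from $9B\delta^{1+\eta}/2$, which is precisely the probability estimated by Lemma \ref{new lemma for biased random walk}. That lemma supplies the bound $\delta^{16}$, finishing the proof. The one point requiring care is that the conditioning is on $\widetilde{Y}_\delta^r(u_k^r/r^2) = x_0$ rather than on $\widetilde{X}_\delta^r(u_k^r/r^2) = x_0$ (the two need not agree if the regulator has already fired); the resolution is that only post-$u_k^r/r^2$ increments enter the crossing event, and on the good event these increments are identical for both processes thanks to the frozen-regulator observation above.
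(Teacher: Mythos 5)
Your proposal is correct and follows essentially the same route as the paper: reduce to Lemma \ref{new lemma for biased random walk} by noting that $\widetilde{Y}_\delta^r(\cdot+u_k^r/r^2)$ agrees with the restarted netput $x_0+\widetilde{X}_\delta^r(\cdot+u_k^r/r^2)-\widetilde{X}_\delta^r(u_k^r/r^2)$ until the latter reaches zero (which cannot happen before the crossing event is decided), and then use pathwise monotonicity in the starting point to pass to $9B\delta^{1+\eta}/2$. Your explicit remark that the conditioning is on $\widetilde{Y}_\delta^r$ rather than $\widetilde{X}_\delta^r$, resolved via the shift property and the frozen regulator, fills in a point the paper's proof glosses over by writing ``$\widetilde{Y}_\delta^r(u_k^r/r^2)=\widetilde{X}_\delta^r(u_k^r/r^2)=x_0$'' even though the two need not coincide at that time.
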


In the next lemma, $r$ sufficiently large and $\delta$ in the range specified by Lemma \ref{a lot of assumptions} are considered. Time intervals are constructed where $\widetilde{Y}_\delta^r(\cdot)$ starts above $4B\delta^{1+\eta}$ and eventually drops to $2B\delta^{1+\eta}$ during which $\widetilde{Y}_\delta^r(\cdot)$ attempts to reach level $48B\delta^{1+\eta}\log{(1/\delta)}$. The result states that if $\widetilde{Y}_\delta^r(\cdot)$ is to reach level $48B\delta^{1+\eta}\log{(1/\delta)}$, it will take a very large number of attempts with very high probability.

\begin{lem}\label{SRPT 5.82}\label{definition of tau}
Suppose $\eta\in(0,\eta_0)$. Let $\widetilde{r}(\eta,\E[v])$, $M_*(\eta)$ and $\delta_*(\eta)$ be as in Lemma \ref{a lot of assumptions}. For $r\geq \widetilde{r}(\eta,\E[v])$ and $\delta\in[M_*(\eta)/c^r,\delta_*(\eta)]$, let the filtration $\{\mathcal{H}_t^r\}_{t\geq0}$ be as in the proof of Lemma \ref{new lemma for biased random walk} and define stopping times $\{\tau_{k}^r\}_{k=-1}^{\infty}$ with respect to the filtration $\{\mathcal{H}_t^r\}_{t\geq0}$ such that $\tau_{-1}^r=0$ and for $k\in\Z_+$,
\begin{align}
    \tau_{2k}^r &:= \inf\{t\geq \tau_{2k-1}^r:E^r(r^2t)-E^r(r^2t-)>0\text{ and }\widetilde{Y}_\delta^r(t-)\leq 2B\delta^{1+\eta}\},\label{tau2k}\\
    \tau_{2k+1}^r &:=\inf\{t\geq \tau_{2k}^r:\widetilde{Y}_\delta^r(t)\geq 4B\delta^{1+\eta}\},\qquad\text{and let}\label{tau2k+1}\\
    \mathcal{N}^r&:=\inf\left\{k\in\Z_+:\sup_{t\in[\tau_{2k-1}^r,\tau_{2k}^r)}\widetilde{Y}_\delta^r(t)> 48B\delta^{1+\eta}\log(1/\delta)\right\}.\label{definition of mathcal N}
\end{align}
Then, for $r\geq\widetilde{r}(\eta,\E[v])$ and $\delta\in[M_*(\eta)/c^r,\delta_*(\eta)]$, 
\begin{alignat*}{3}
\sup\limits_{t\in[\tau_0^r,\tau^r_{2\mathcal{N}^r-1})}\widetilde{Y}_\delta^r(t)&\leq 48B\delta^{1+\eta}\log{(1/\delta)},&\qquad &\text{and }\\
\P\left(\NN^r\leq \lfloor\delta^{-8}\rfloor+1\right)&\leq D_1\delta^{\eta_0-\eta},&\qquad&\text{where }D_1=\frac{D_0}{2B}+1.
\end{alignat*}
\begin{proof}
Fix $r\geq\widetilde{r}(\eta,\E[v])$ and $\delta\in[M_*(\eta)/c^r,\delta_*(\eta)]$.
To begin, we consider a time interval
$[\tau_{2k-1}^r,\tau_{2k}^r)$ with $k\in\N$.
For this, recall that $\widetilde{Y}_\delta^r$ has upward jumps of size at most $c^r\delta/r$. In addition, applying the inequalities \eqref{delta c^r / r} and \eqref{corollary of 4.4.1}, for each $k\in\N$, we have $\widetilde{Y}_\delta^r(\tau_{2k-1}^r)\in[4B\delta^{1+\eta},9B\delta^{1+\eta}/2)$, which implies that $\widetilde{Y}^r_\delta(\tau_{2k-1}^r)<48B\delta^{1+\eta}\log{(1/\delta)}$.
These facts together with definitions \eqref{tau2k} - \eqref{definition of mathcal N} imply that the first statement of the lemma holds. 
For the second statement, by a union bound,
\[
\P\left(\NN^r\leq\lfloor\delta^{-8}\rfloor+1\right)\leq\sum_{k=0}^{\lfloor\delta^{-8}\rfloor+1}\P\left(\sup_{t\in[\tau_{2k-1}^r,\tau_{2k}^r)}\widetilde{Y}_\delta^r(t)>48B\delta^{1+\eta}\log(1/\delta)\right).
\]
If $\widetilde{Y}_\delta^r(0)\le 2B\delta^{1+\eta}$, then $\tau_{0}^r=T_1^r/r^2$ and $\sup_{t\in[0, \tau_0^r)}\widetilde{Y}_\delta^r(t)\le 2B\delta^{1+\eta}$.  This together with Markov's inequality, $\widetilde{Y}_\delta^r(0)=\widetilde{W}_\delta^r(0)$ and \eqref{tilde W bounded at initial} gives that
\[
\P\left(\sup_{t\in[\tau_{-1}^r,\tau_{0}^r)}\widetilde{Y}_\delta^r(t)>48B\delta^{1+\eta}\log(1/\delta)\right)\leq 
\P\left(\widetilde{Y}_\delta^r(0)>2B\delta^{1+\eta}\right)
\le \frac{D_0}{2B}\delta^{\eta_0-\eta}.
\]
Combining the above with Corollary \ref{lemma 2 in original paper} and $\lfloor\delta^{-8}\rfloor\delta^{16}\le \delta^8 $ completes the proof.
\end{proof}
\end{lem}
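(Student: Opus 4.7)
The claim splits naturally. For the first statement, the plan is to partition $[\tau_0^r, \tau_{2\mathcal{N}^r-1}^r)$ into the low blocks $[\tau_{2k}^r, \tau_{2k+1}^r)$ for $0 \leq k \leq \mathcal{N}^r - 1$ and the high blocks $[\tau_{2k-1}^r, \tau_{2k}^r)$ for $1 \leq k \leq \mathcal{N}^r - 1$. On each low block $\widetilde{Y}_\delta^r(t) < 4B\delta^{1+\eta}$ by the defining property of the right endpoint $\tau_{2k+1}^r$, and on each high block the defining property of $\mathcal{N}^r$ directly gives $\widetilde{Y}_\delta^r(t) \leq 48B\delta^{1+\eta}\log(1/\delta)$. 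Since $4 < 9/2 \leq 48\log(1/\delta)$ by \eqref{corollary of 4.4.1}, both bounds fall below the threshold.

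For the second statement, I would apply a union bound over the candidate excursion indices:
\[
\P\left(\mathcal{N}^r \leq \lfloor\delta^{-8}\rfloor + 1\right) \leq \sum_{k=0}^{\lfloor\delta^{-8}\rfloor + 1} \P\left(\sup_{t\in[\tau_{2k-1}^r,\tau_{2k}^r)}\widetilde{Y}_\delta^r(t) > 48B\delta^{1+\eta}\log(1/\delta)\right).
\]
The $k=0$ term requires separate treatment, since $\tau_{-1}^r = 0$ and the random-walk estimate does not control the initial value $\widetilde{Y}_\delta^r(0) = \widetilde{W}_\delta^r(0)$. On the event $\{\widetilde{W}_\delta^r(0) \leq 2B\delta^{1+\eta}\}$ we have $\tau_0^r = T_1^r/r^2$ and $\widetilde{Y}_\delta^r$ is nonincreasing on $[0, T_1^r/r^2)$, so the supremum there is bounded by $2B\delta^{1+\eta}$, which lies below the threshold by \eqref{corollary of 4.4.1}. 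Hence this term is at most $\P(\widetilde{W}_\delta^r(0) > 2B\delta^{1+\eta}) \leq (D_0/(2B))\delta^{\eta_0-\eta}$ via Markov's inequality, exactly as recorded in \eqref{tilde W bounded at initial}.

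For each $k \geq 1$, the key observation is that $\tau_{2k-1}^r$ must be an arrival time of a task of size at most $\delta c^r$ (since $\widetilde{Y}_\delta^r$ only jumps up at such times), and by \eqref{delta c^r / r} the starting value $x_0 := \widetilde{Y}_\delta^r(\tau_{2k-1}^r)$ lies in $[4B\delta^{1+\eta}, 9B\delta^{1+\eta}/2)$. The strong Markov property at $\tau_{2k-1}^r$ combined with Corollary \ref{lemma 2 in original paper} then bounds by $\delta^{16}$ the probability that $\widetilde{Y}_\delta^r$ crosses $48B\delta^{1+\eta}\log(1/\delta)$ before dropping to $3B\delta^{1+\eta}/2$. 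The main obstacle is transferring this into a bound on the supremum over $[\tau_{2k-1}^r, \tau_{2k}^r)$ exceeding the threshold: the crucial geometric observation is that $3B\delta^{1+\eta}/2 < 2B\delta^{1+\eta}$, so once $\widetilde{Y}_\delta^r$ dips to $3B\delta^{1+\eta}/2$ it continues decreasing between arrivals and hence stays at or below that level until the next arrival, which by \eqref{tau2k} must then be $\tau_{2k}^r$; on the complementary good event the supremum therefore never exceeds the threshold. Summing the $\lfloor\delta^{-8}\rfloor + 1$ contributions and using $\lfloor\delta^{-8}\rfloor\delta^{16} \leq \delta^8$ yields $\delta^8 + \delta^{16}$, which is at most $\delta^{\eta_0-\eta}$ for $\delta \leq \delta_*(\eta)$ sufficiently small (since $\eta_0 - \eta < 1 < 8$); combining with the $k=0$ contribution gives $D_1 \delta^{\eta_0-\eta}$ with $D_1 = D_0/(2B) + 1$.
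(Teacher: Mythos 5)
Your proposal is correct and follows essentially the same path as the paper's: the same partition into low blocks $[\tau_{2k}^r,\tau_{2k+1}^r)$ and high blocks $[\tau_{2k-1}^r,\tau_{2k}^r)$ for the first claim, and for the second the same union bound with the $k=0$ term controlled by Markov's inequality via \eqref{tilde W bounded at initial} and the $k\geq1$ terms controlled by Corollary \ref{lemma 2 in original paper} together with $\lfloor\delta^{-8}\rfloor\delta^{16}\leq\delta^8$. You in fact supply slightly more detail than the paper (the explicit argument that crossing down to $3B\delta^{1+\eta}/2<2B\delta^{1+\eta}$ forces the excursion to end by the next arrival, so the crossing-probability bound transfers to a bound on the excursion supremum), and that argument is sound even in the edge case where $\tau_{2k}^r$ occurs before the downcrossing, since then the excursion is shorter still.
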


Recall our objective \eqref{probability of sup tilde Y greater 3DB bounded above by 34}.
For $T>0$, $\eta\in(0,1)$, $r\in R$ and $\delta>0$,
\begin{eqnarray}
    &\P\left(\|\widetilde{Y}_\delta^r\|_T>48B\delta^{1+\eta}\log{(1/\delta)}\right)\leq
    \P\left(\NN^r\leq \lfloor\delta^{-8}\rfloor+1\right)\nonumber\\ &\qquad\qquad+
    \P\left(\NN^r>\lfloor\delta^{-8}\rfloor+1\text{ and }\|\widetilde{Y}_\delta^r\|_T>48B\delta^{1+\eta}\log{(1/\delta)}\right)\nonumber\\
    &\qquad\leq
    \P\left(\NN^r\leq \lfloor\delta^{-8}\rfloor+1\right)+\P\left(\sum_{k=0}^{\lfloor\delta^{-8}\rfloor+1}\left(\tau_{2k+1}^r-\tau_{2k}^r\right)\le T\right),\label{ineq:Nr}
\end{eqnarray}
where the last line follows because $\mathcal{N}^r>\lfloor\delta^{-8}\rfloor+1$ and $\|\widetilde{Y}_\delta^r\|_T>48B\delta^{1+\eta}\log{(1/\delta)}$ together imply that 
$
\sum_{k=0}^{\lfloor\delta^{-8}\rfloor+1}\left(\tau_{2k+1}^r-\tau_{2k}^r\right)\leq\tau_{2\lfloor\delta^{-8}\rfloor+3}^r\leq\tau_{2\NN^r-1}^r\leq T$.
The quantity $
\sum_{k=0}^{\lfloor\delta^{-8}\rfloor+1}\left(\tau_{2k+1}^r-\tau_{2k}^r\right)$ equals the total time spent crossing from below level $2B\delta^{1+\eta}$ to above level $4B\delta^{1+\eta}$ over the first $\lfloor\delta^{-8}\rfloor+1$ such upcrossings, which should be large with high probability when $\delta$ is small.

The next two lemmas establish preliminary results that are used to show that for $\eta\in(0,\eta_0)$, $r$ sufficiently large, suitable $\delta>0$ and $T>0$, $\P\left(\sum_{k=0}^d\left(\tau_{2k+1}^r-\tau_{2k}^r\right)\leq T\right)$ decays exponentially as $d$ increases. The reader may wish to preview the statement of Corollary \ref{SRPT Lemma 18 5.92} before continuing with Lemmas \ref{probability of the sum of chi} and \ref{Lemma 3 in SRPT}. A key idea toward this is to study $\Gamma$ applied to the driftless process $3B\delta^{\eta_0-\eta}+\widetilde{V}_\delta^r(\cdot+\tau_{2k}^r)-\widetilde{V}_\delta^r(\tau_{2k}^r)$ and to show that it crosses level $4B\delta^{1+\eta}$ before $\widetilde{Y}_\delta^r(\cdot+\tau_{2k}^r)$ for any $k\in\Z_+$. See \eqref{tilde Y the sum of t and tau2j} below.  This leads one to define random variables $\{\chi_k^r\}_{k=0}^\infty$ as in \eqref{probability of chij smaller equal s} below
and gives rise to the upper bound \eqref{sum of difference tau2j+1 and tau2j}. Once \eqref{sum of difference tau2j+1 and tau2j} is established, it is shown that the probability that $\chi_0^r$ is less or equal $\epsilon_*\delta^{1+\eta}$ is at most $1/2$,
for $\eta\in(0,\eta_0)$, $r$ sufficiently large, and suitable $\delta>0$. See Lemma \ref{Lemma 3 in SRPT} below. The proof of Lemma \ref{Lemma 3 in SRPT} uses martingale arguments similar to those used in the proof Lemma \ref{lemma 1 in original paper}. The upper bound of $1/2$ is used to prove the exponential decay bound in \eqref{probability of sum of tau 2j+1 and tau 2j bounded above by e-d/32} for $T=d\epsilon_*\delta^{1+\eta}/4$, where $d\in\N$ is the number of attempts to cross level $48B\delta^{1+\eta}\log{(1/\delta)}$.
\begin{lem}\label{probability of the sum of chi}
Suppose $\eta\in(0,\eta_0)$. Let $\widetilde{r}(\eta,\E[v])$, $M_*(\eta)$ and $\delta_*(\eta)$ be as in Lemma \ref{a lot of assumptions}. For $r\geq\widetilde{r}(\eta,\E[v])$ and $\delta\in[M_*(\eta)/c^r,\delta_*(\eta)]$, let $\{\tau_k^r\}_{k=0}^\infty$ be as in Lemma \ref{SRPT 5.82}, and define $\{\chi_k^r\}_{k\in\Z_+}$ to be a sequence of $\R_+$ valued random variables such that for any $k\in\Z_+$ and $t\geq0$,
\begin{equation}
    \left\{\chi_k^r\leq t\right\}:=\left\{\sup_{s\in[0,t]}\Gamma[3B\delta^{1+\eta}+\widetilde{V}_\delta^r(\cdot+\tau_{2k}^r)-\widetilde{V}_\delta^r(\tau_{2k}^r)](s)\geq4B\delta^{1+\eta}\right\}.\label{probability of chij smaller equal s}
\end{equation}
Then for each $r\geq \widetilde{r}(\eta,\E[v])$ and $\delta\in[M_*(\eta)/c^r,\delta_*(\eta)]$, $\{\chi_k^r\}_{k\in\Z_+}$ is i.i.d.\ and for each $r\geq \widetilde{r}(\eta,\E[v])$, $\delta\in[M_*(\eta)/c^r,\delta_*(\eta)]$, $d\in\N$ and $t\geq0$,
\begin{equation}
\P\left(\sum_{k=0}^d(\tau_{2k+1}^r-\tau_{2k}^r)\leq t\right)\leq \P\left(\sum_{k=0}^d\chi_k^r\leq t\right).\label{sum of difference tau2j+1 and tau2j}
\end{equation}
\begin{proof}
Fix $r\geq\widetilde{r}(\eta,\E[v])$ and $\delta\in[M_*(\eta)/c^r,\delta_*(\eta)]$. By the strong Markov property, $\{\chi_k^r\}_{k\in\Z_+}$ is i.i.d. In what follows we will show that for each $k\in\Z_+$ and $t\geq0$,
\begin{equation}
    \widetilde{Y}^r(t+\tau_{2k}^r)\leq\Gamma\left[3B\delta^{1+\eta}+\widetilde{V}_\delta^r(\cdot+\tau_{2k}^r)-\widetilde{V}_\delta^r(\tau_{2k}^r)\right](t).\label{tilde Y the sum of t and tau2j}
\end{equation}
This together with \eqref{tau2k}, \eqref{tau2k+1}, and \eqref{probability of chij smaller equal s} implies that $\tau_{2k+1}^r-\tau_{2k}^r\geq \chi_k^r$
for each $k\in\Z_+$ and \eqref{sum of difference tau2j+1 and tau2j} follows. So it remains to prove \eqref{tilde Y the sum of t and tau2j}. Fix $k\in\Z_+$ and define $f:\R_+\to\R$ as
\[
f(t):=(\widetilde{V}_\delta^r(t+\tau_{2k}^r)-\widetilde{V}_\delta^r(\tau_{2k}^r))-(\widetilde{X}_\delta^r(t+\tau_{2k}^r)-\widetilde{X}_\delta^r(\tau_{2k}^r)),\quad t\geq0.
\]
By \eqref{Xtilde} and \eqref{eq:drift}, $f(t) = \frac{\lambda^rS(c^r)}{S(\delta c^r)}t-r(\rho^r-1)t$ for all $t\ge 0$.  Then, by \eqref{liminf and inf swap},
\begin{align*}
f'(t) &=\frac{\lambda^rS(c^r)}{S(\delta c^r)}-r(\rho^r-1)\geq\frac{5\lambda}{12\delta^{1+\eta}}>0,\quad t\geq0.
\end{align*}
Thus, $f$ is nondecreasing and so $f(s)\leq f(t)$ for all $s\leq t$. Thus, $\widetilde{X}_\delta^r(t+\tau_{2k}^r)-\widetilde{X}_\delta^r(s+\tau_{2k}^r)\leq\widetilde{V}_\delta^r(t+\tau_{2k}^r)-\widetilde{V}_\delta^r(s+\tau_{2k}^r)$, for all $s\leq t$. By the shift and monotonicity properties of the Skorokhod Map from Propositions \ref{shift property of Gamma} and \ref{monotonicity of skorohod} respectively, for each $t\geq0$, 
\begin{align*}
    \widetilde{Y}_\delta^r(t+\tau_{2k}^r)&=\Gamma[\widetilde{Y}_\delta^r(\tau_{2k}^r)+(\widetilde{X}_\delta^r(\cdot+\tau_{2k}^r)-\widetilde{X}_\delta^r(\tau_{2k}^r))](t)\nonumber\\
    &\leq \Gamma[3B\delta^{1+\eta}+(\widetilde{V}_\delta^r(\cdot+\tau_{2k}^r)-\widetilde{V}_\delta^r(\tau_{2k}^r))](t),
\end{align*}
where we used the fact that $\widetilde{Y}_\delta^r(\tau_{2k}^r)\leq 3B\delta^{1+\eta}$ for all $k\in\Z_+$ which follows by the definition of $\widetilde{Y}_\delta^r$, and that the maximum jump up is $\delta c^r/r$ and bounded above by $B\delta^{1+\eta}/2$ (see \eqref{delta c^r / r}). Thus, \eqref{tilde Y the sum of t and tau2j} holds.
\end{proof}
\end{lem}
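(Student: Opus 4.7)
The plan is to establish the two assertions via a pathwise domination at each $\tau_{2k}^r$. For the i.i.d.\ claim, I would appeal to the strong Markov property: each $\tau_{2k}^r$ is by construction an arrival epoch of $E^r$, so after it the residual interarrival time is a fresh copy of $T^r$, independent of $\mathcal{H}_{\tau_{2k}^r}^r$. Combined with the mutual independence of $\{v_i\}_{i\in\N}$ and $E^r$ and the stopping time property, this ensures that the shifted trajectory $\widetilde{V}_\delta^r(\cdot+\tau_{2k}^r)-\widetilde{V}_\delta^r(\tau_{2k}^r)$ is independent of $\mathcal{H}_{\tau_{2k}^r}^r$ with a law that does not depend on $k$. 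Since $\chi_k^r$ is a fixed measurable functional of this trajectory, the $\{\chi_k^r\}_{k\in\Z_+}$ inherit the i.i.d.\ structure.

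For the stochastic inequality, my goal is to establish the pathwise bound
\begin{equation*}
\widetilde{Y}_\delta^r(t+\tau_{2k}^r)\le\Gamma\left[3B\delta^{1+\eta}+\widetilde{V}_\delta^r(\cdot+\tau_{2k}^r)-\widetilde{V}_\delta^r(\tau_{2k}^r)\right](t),\qquad t\ge 0,
\end{equation*}
for each $k\in\Z_+$. Granting this, the event $\{\chi_k^r\le s\}$ forces the right side to reach $4B\delta^{1+\eta}$ by time $s$, which in turn forces $\widetilde{Y}_\delta^r(\cdot+\tau_{2k}^r)$ to reach $4B\delta^{1+\eta}$ by time $s$, and hence $\tau_{2k+1}^r-\tau_{2k}^r\le s$. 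Thus $\tau_{2k+1}^r-\tau_{2k}^r\ge\chi_k^r$ almost surely for every $k$, and summing over $k=0,\dots,d$ yields $\sum_{k=0}^d(\tau_{2k+1}^r-\tau_{2k}^r)\ge\sum_{k=0}^d\chi_k^r$, from which the probabilistic inequality is immediate.

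To prove the pathwise bound, I would invoke the shift property of the one-dimensional Skorokhod map (Appendix \ref{ap:SM}) to rewrite $\widetilde{Y}_\delta^r(\cdot+\tau_{2k}^r)=\Gamma[\widetilde{Y}_\delta^r(\tau_{2k}^r)+(\widetilde{X}_\delta^r(\cdot+\tau_{2k}^r)-\widetilde{X}_\delta^r(\tau_{2k}^r))]$. Two ingredients then suffice. First, $\widetilde{Y}_\delta^r(\tau_{2k}^r)\le 3B\delta^{1+\eta}$, because by definition $\widetilde{Y}_\delta^r(\tau_{2k}^r-)\le 2B\delta^{1+\eta}$ and the upward jump at the arrival has size at most $\delta c^r/r\le B\delta^{1+\eta}/2$ by \eqref{delta c^r / r}. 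Second, by \eqref{Xtilde} and \eqref{eq:drift}, the increment $\widetilde{X}_\delta^r(s+\tau_{2k}^r)-\widetilde{X}_\delta^r(\tau_{2k}^r)$ differs from $\widetilde{V}_\delta^r(s+\tau_{2k}^r)-\widetilde{V}_\delta^r(\tau_{2k}^r)$ by the affine term $r(\rho^r_{\delta c^r}-1)s$, which is strictly nonpositive for all $s\ge 0$ thanks to \eqref{liminf and inf swap}. Hence the starting-plus-input process fed into $\Gamma$ on the left is dominated pointwise by $3B\delta^{1+\eta}+(\widetilde{V}_\delta^r(\cdot+\tau_{2k}^r)-\widetilde{V}_\delta^r(\tau_{2k}^r))$, and the monotonicity property of $\Gamma$ delivers the bound. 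The most delicate point is the bookkeeping around $\tau_0^r$, since when $\widetilde{W}_\delta^r(0)$ exceeds $2B\delta^{1+\eta}$ the time $\tau_0^r$ may be strictly later than $T_1^r/r^2$; however $\tau_0^r$ remains a genuine arrival epoch of $E^r$, so the strong Markov argument still applies and both the jump bound and the drift bound carry through uniformly in $k$.
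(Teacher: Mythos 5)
Your proposal is correct and follows the same route as the paper: the i.i.d.\ claim via the strong Markov property at the arrival-epoch stopping times $\tau_{2k}^r$, the pathwise bound \eqref{tilde Y the sum of t and tau2j} via the shift and monotonicity properties of $\Gamma$, the initial condition $\widetilde{Y}_\delta^r(\tau_{2k}^r)\le 3B\delta^{1+\eta}$ from the jump bound \eqref{delta c^r / r}, and the increment comparison from the sign of the linear drift $r(\rho^r_{\delta c^r}-1)s$, which by \eqref{eq:drift} and \eqref{liminf and inf swap} is exactly the negativity of $f'$ that the paper computes. The one stylistic caution is that your phrase ``dominated pointwise'' should be read as ``differs by a nonpositive, nonincreasing affine term,'' since Proposition \ref{monotonicity of skorohod} requires increment domination together with the initial-value inequality rather than mere pointwise domination; your affine-drift observation does supply the increment condition, so the argument is sound.
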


\begin{lem}\label{Lemma 3 in SRPT}
Suppose $\eta\in(0,\eta_0)$. Let $\widetilde{r}(\eta,\E[v]\wedge4\epsilon_*/3),M_*(\eta)$ and $\delta_*(\eta)$ be as in Lemma \ref{a lot of assumptions}. For $r\geq\widetilde{r}(\eta,\E[v]\wedge4\epsilon_*/3)$ and $\delta\in[M_*(\eta)/c^r,\delta_*(\eta)]$, let $\{\tau_k^r\}_{k\in\Z_+}$ and $\{\chi_k^r\}_{k\in\Z_+}$ be as in Lemmas \ref{SRPT 5.82} and \ref{probability of the sum of chi} respectively. 
Then for all $r\geq\widetilde{r}(\eta,\E[v]\wedge4\epsilon_*/3)$, $\delta\in[M_*(\eta)/c^r,\delta_*(\eta)]$, and $\epsilon\in[0,\epsilon_*]$,
\begin{equation}
\P\left(\chi_0^r\leq \epsilon\delta^{2(1+\eta)}\right)\leq \frac{1}{2}.\label{chi0 smaller equal to epsilon delta smaller equal to one half}
\end{equation}
\begin{proof}
Fix $r\geq\widetilde{r}\left(\eta,\E[v]\wedge4\epsilon_*/3\right)$ and $\delta\in[M_*(\eta)/c^r,\delta_*(\eta)]$. It suffices to prove \eqref{chi0 smaller equal to epsilon delta smaller equal to one half} for $\epsilon=\epsilon_*$ since $\P\left(\chi_0^r\leq \epsilon\delta^{2(1+\eta)}\right)\leq\P\left(\chi_0^r\leq \epsilon_*\delta^{2(1+\eta)}\right)$ for all $\epsilon\in[0,\epsilon_*]$. 
Observe that for any positive constant function $f$, $\Gamma[f](\cdot)=f(\cdot)$. This together with Proposition \ref{Lipschitz property of Skorokhod}, the Lipschitz property of $\Gamma$, gives us that 
\begin{align}
    &\P\left(\chi_0^r\leq\epsilon_*\delta^{2(1+\eta)}\right)\nonumber\\
    &\qquad= \P\left(\sup_{t\in[0,\epsilon_*\delta^{2(1+\eta)}]}\Gamma[3B\delta^{1+\eta}+\widetilde{V}_\delta^r(\cdot+\tau_0^r)-\widetilde{V}_\delta^r(\tau_0^r)](t)\geq4B\delta^{1+\eta}\right)\nonumber\\
   &\qquad =\P\left(\sup_{t\in[0,\epsilon_*\delta^{2(1+\eta)}]}\Gamma[3B\delta^{1+\eta}+\widetilde{V}_\delta^r(\cdot+\tau_0^r)-\widetilde{V}_\delta^r(\tau_0^r)](t)-3B\delta^{1+\eta}\geq B\delta^{1+\eta}\right)\nonumber\\
    &\qquad\leq\P\left(\sup_{t\in[0,\epsilon_*\delta^{2(1+\eta)}]}\left|\Gamma[3B\delta^{1+\eta}+\widetilde{V}_\delta^r(\cdot+\tau_0^r)-\widetilde{V}_\delta(\tau_0^r)](t)-\Gamma[3B\delta^{1+\eta}](t)\right|\geq B\delta^{1+\eta}\right)\nonumber\\
    &\qquad\leq \P\left(\sup_{t\in[0,\epsilon_*\delta^{2(1+\eta)}]}|\widetilde{V}_\delta^r(t+\tau_0^r)-\widetilde{V}_\delta^r(\tau_0^r)|\geq B\delta^{1+\eta}/2\right)\nonumber\\
    &\qquad\leq \P\left(\sup_{t\in(0,\epsilon_*\delta^{2(1+\eta)}]}\widetilde{V}_\delta^r(t+\tau_0^r)-\widetilde{V}_\delta^r(\tau_0^r)\geq B\delta^{1+\eta}/2\right)\nonumber\\
    &\qquad\qquad+\P\left(\inf_{t\in[0,\epsilon_*\delta^{2(1+\eta)}]}\widetilde{V}_\delta^r(t+\tau_0^r)-\widetilde{V}_\delta^r(\tau_0^r)\leq -B\delta^{1
    +\eta}/2\right).\label{chi0 bound}
\end{align}
Next we upper bound each of the terms in \eqref{chi0 bound}. For this, let $\{M^r_\delta(k)\}_{k\in\N}$ be as in the proof of Lemma \ref{lemma 1 in original paper} and $b=4\epsilon_*/3$ in \eqref{4.4.9}. Then by arguing similarly to the proof of Lemma \ref{lemma 1 in original paper} and noting that $\sup_{t\in[0,\epsilon_*\delta^{2(1+\eta)}]}\widetilde{V}_\delta^r(t+\tau_0^r)-\widetilde{V}_\delta^r(\tau_0^r)$ is equal in distribution to
$\sup_{t\in[0,\epsilon_*\delta^{2(1+\eta)}]}\widetilde{V}_\delta^r(t+T_1^r/r^2)-\widetilde{V}_\delta^r(T_1^r/r^2)
$, we have
\begin{align}
&\P\left(\sup_{t\in[0,\epsilon_*\delta^{2(1+\eta)}]}\widetilde{V}_\delta^r(t+\tau_0^r)-\widetilde{V}_\delta^r(\tau_0^r)\geq B\delta^{1+\eta}/4\right)\nonumber\\
&\qquad=\P\left(\max\left\{M_\delta^r(k)-M_\delta^r(1) :2\leq k\leq E^r\left(\epsilon_*r^2\delta^{2(1+\eta)}+T_1^r\right)\right\}\geq B\delta^{1+\eta}/4\right)\nonumber\\
&\qquad\leq\P\left(\max\left\{M_\delta^r(k)-M_\delta^r(1): 2\leq k\leq \left\lfloor\lambda 16\epsilon_* r^2\delta^{2(1+\eta)}/9\right\rfloor\right\}
\geq B\delta^{1+\eta}r/4\right)\nonumber\\
&\qquad\qquad+\P\left(E^r\left(4\epsilon_* r^2\delta^{2(1+\eta)}/3\right)>\left\lfloor 16\epsilon_*\lambda r^2\delta^{2(1+\eta)}/9\right\rfloor\right) + \P\left(T_1^r/r^2>\epsilon_*\delta^{2(1+\eta)}/3\right).\nonumber
\end{align}
Then by Doob's maximal inequality and \eqref{4.4.9} in Lemma \ref{a lot of assumptions} with $b=16\epsilon_*/9$, and by using $C/B=1/282$, $1/B\leq1$ and $256/(9\cdot282)\leq 1/8$, we have 
\[
\P\left(\max_{2\leq k\leq \left\lfloor\lambda 16\epsilon_*r^2\delta^{2(1+\eta)}/9\right\rfloor}M_\delta^r(k)-M_\delta^r(1)\geq B\delta^{1+\eta}r/4\right)\leq\frac{C\times 16\epsilon_*\lambda r^2\delta^{2(1+\eta)}}{9B^2\delta^{2(1+\eta)}r^2/16}\leq\frac{\lambda\epsilon_*}{8},
\]
and
\[
\P\left(E^r\left(4\epsilon_* r^2\delta^{2(1+\eta)}/3\right)>\left\lfloor\lambda 16 \epsilon_* r^2\delta^{2(1+\eta)}/9\right\rfloor\right)\leq \frac{9\cdot2^8\lambda\sigma_A^2}{16 \epsilon_* r}\leq \frac{2^9\lambda\sigma_A^2}{\epsilon_*r}.
\]
Thus, by the above and \eqref{condition involving epsilon *},
\begin{equation}
\P\left(\sup_{t\in[0,\epsilon_*\delta^{2(1+\eta)}]}\widetilde{V}_\delta^r(t+\tau_0^r)-\widetilde{V}_\delta^r(\tau_0^r)\geq B\delta^{1+\eta}/4\right)\leq\frac{\lambda\epsilon_*}{8}+\frac{2^9\lambda\sigma_A^2}{\epsilon_*r}+\frac{1}{18}.\label{widetilde V sup}    
\end{equation}
Next we bound the probability of the event in \eqref{chi0 bound} involving the infimum. Observe that on the time interval $[0,\epsilon_*\delta^{2(1+\eta)}]$, we have 
\begin{align}
    &\inf_{t\in[0,\epsilon_*\delta^{2(1+\eta)}]}\widetilde{V}_\delta^r(t+T_1^r/r^2)-\widetilde{V}_\delta^r(T_1^r/r^2)\nonumber\\
    &\qquad\ge \frac{1}{r}\min\left\{\sum_{i=2}^k v_i\id_{\{v_i\le \delta c^r\}}-\rho_{\delta c^r}^r\sum_{i=2}^{k+1}T_i^r : 1\le k\le E^r\left(r^2\epsilon_* \delta^{2(1+\eta)}+T_1^r\right)\right\}\nonumber\\
    &\qquad=\frac{1}{r}\min\left\{M_\delta^r(k)-M_\delta^r(1)-\rho_{\delta c^r}^rT_{k+1}^r : 1\leq k\leq E^r\left(r^2\epsilon_* \delta^{2(1+\eta)}+T_1^r\right)\right\}\nonumber\\
    &\qquad\geq-\frac{1}{r}\max\left\{M_\delta^r(1)-M_\delta^r(k) : 1\leq k\leq E^r\left(r^2\epsilon_* \delta^{2(1+\eta)}+T_1^r\right)\right\}\nonumber\\
    &\qquad\qquad-\frac{8}{7r}\max\left\{ T_k^r : 2\leq k\leq E^r\left(r^2\epsilon_* \delta^{2(1+\eta)}+T_1^r\right)+1\right\},\label{widetilde V inf}
\end{align}
where the first inequality is from the definition of $\widetilde{V}_\delta^r$ and the fact that the infimum is achieved immediately prior to one of the jump times, the equality is from
the definition of $\{M^r_\delta(k)\}_{k\in\N}$ and the last inequality is from $\rho_{\delta c^r}^r\le\lambda^r\E[v]$, \eqref{4.4.2} and $\lambda \E[v]=1$. Above it is understood that $\sum_{i=2}^1v_i\id_{[v_i\leq\delta c^r]}=0$. Following an argument similar to the one that leads to \eqref{widetilde V sup}, we obtain 
\begin{align}
    &\P\left(\frac{1}{r}\max\left\{M_\delta(1)-M_\delta^r(k) : 1\leq k\leq E^r\left(r^2\epsilon_*\delta^{2(1+\eta)}+T_1^r\right)\right\}\geq \frac{B\delta^{1+\eta}}{8}\right)\nonumber\\
    &\qquad\leq\frac{\lambda\epsilon_*}{2}+\frac{2^9\lambda\sigma_A^2}{\epsilon_*r}+\frac{1}{18}.\label{probability 1 over r sup Mdelta(1) minums Mdelta(k)}
\end{align}
Moreover, we have 
\begin{align}
    &\P\left(\frac{8}{7r}\max\left\{T_k^r : 2\leq k\leq E^r\left(r^2\epsilon_* \delta^{2(1+\eta)}+T_1^r\right)+1\right\}\geq \frac{B\delta^{1+\eta}}{8}\right) \nonumber\\
    &\qquad\leq \P\left(\max\left\{T_k^r : 2\leq k\leq \left\lfloor\lambda 16 r^2\epsilon_*\delta^{2(1+\eta)}/9\right\rfloor+1\right\}\geq\frac{7B\delta^{1+\eta}r}{64}\right)\label{before union bound}\\
    &\qquad\quad+\P\left(E^r\left(4\epsilon_*r^2 \delta^{2(1+\eta)}/3\right)>\left\lfloor\lambda 16 \epsilon_* r^2\delta^{2(1+\eta)}/9\right\rfloor\right)+\P\left(\frac{T_1^r}{r^2}>\epsilon_*r^2 \delta^{2(1+\eta)}/3\right).\nonumber
\end{align}
By a union bound, Markov's inequality, $\E[(T^r)^2]=(\lambda^r)^{-2}+(\sigma_A^r)^2$, \eqref{corollary of 4.4.1}, \eqref{4.4.2}, and $B^2\geq282^2$ $\left(\text{so that } \frac{2^{16}}{3^2 \cdot 7^2\cdot 282^2}\leq\frac{1}{2}\right)$, we have 
\begin{align}
    &\P\left(\max\left\{T_k^r : 2\leq k\leq \left\lfloor\frac{4}{3}\epsilon_*\lambda r^2\delta^{2(1+\eta)}\right\rfloor+1\right\}\geq\frac{7B\delta^{1+\eta}r}{64}\right)\nonumber\\
    &\qquad\leq \lambda\frac{16\epsilon_*}{9} r^2\delta^{2(1+\eta)}\P\left(T^r\geq\frac{7B\delta^{1+\eta}r}{64}\right)\leq\lambda\frac{16\epsilon_*}{9} r^2\delta^{2(1+\eta)}\left(\frac{64}{7B\delta^{1+\eta}r}\right)^2\E\left[(T^r)^2\right]\nonumber\\
    &\qquad = \frac{2^{16}\epsilon_*\lambda[(\lambda^r)^{-2}+(\sigma_A^r)^2]}{9\cdot49B^2}\leq \frac{\epsilon_*\lambda}{2}\left(\frac{9(\E[v])^2 }{4}+2\sigma_A^2\right).\label{after Markov}
\end{align}
Thus, by \eqref{before union bound}, \eqref{after Markov}, using the inequality from \eqref{4.4.9} with $b=16\epsilon_*/9$ and \eqref{condition involving epsilon *},
\begin{eqnarray}
    &\P\left(\frac{8}{7r}\max\left\{T_k^r : 2\leq i\leq E^r\left(r^2\epsilon_* \delta^{2(1+\eta)}+T_1^r\right)+1\right\}\geq \frac{B\delta^{1+\eta}}{8}\right)\nonumber\\
    &\qquad \leq\frac{\epsilon_*\lambda}{2}\left(\frac{9(\E[v])^2 }{4}+2\sigma_A^2\right)+\frac{2^9\lambda\sigma_A^2}{\epsilon_*r}+\frac{1}{18}.\label{second term of widetilde V inf} 
\end{eqnarray}
Combining \eqref{chi0 bound}, \eqref{widetilde V sup}, \eqref{widetilde V inf}, \eqref{probability 1 over r sup Mdelta(1) minums Mdelta(k)}, and \eqref{second term of widetilde V inf} yields that
\begin{align*}
\P\left(\chi_0^r\leq\epsilon_*\delta^{2(1+\eta)}\right)&\leq\P\left(\sup_{t\in[0,\epsilon_*\delta^{2(1+\eta)}]}|\widetilde{V}_\delta^r(t+\tau_0^r)-\widetilde{V}_\delta^r(\tau_0^r)|\geq B\delta^{2(1+\eta)}\right)\\
&\leq\frac{\epsilon_*\lambda}{8}\left(5+9(\E[v])^2 +8\sigma_A^2\right)+\frac{3\cdot2^9\lambda\sigma_A^2}{\epsilon_*r}+\frac{3}{18}.
\end{align*}
Using the definition of $\epsilon_*$ from \eqref{definition of epsilon sub *} in the first term above and then \eqref{4.4.8} in the second term above completes the proof.
\end{proof}
\end{lem}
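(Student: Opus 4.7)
The plan is to reduce $\{\chi_0^r \le \epsilon \delta^{2(1+\eta)}\}$ to a two-sided fluctuation bound on the truncated netput $\widetilde V_\delta^r$. Since monotonicity in $\epsilon$ lets me take $\epsilon = \epsilon_*$, my first step would invoke the Lipschitz property of the Skorokhod map together with the identity $\Gamma[c] \equiv c$ for positive constants $c$ to compare $\Gamma[3B\delta^{1+\eta} + \widetilde V_\delta^r(\cdot+\tau_0^r) - \widetilde V_\delta^r(\tau_0^r)]$ with the constant trajectory $3B\delta^{1+\eta}$. The crossing to $4B\delta^{1+\eta}$ then forces $\sup_{s\le \epsilon_*\delta^{2(1+\eta)}} |\widetilde V_\delta^r(s+\tau_0^r) - \widetilde V_\delta^r(\tau_0^r)| \ge B\delta^{1+\eta}/2$, which a union bound separates into an upward excursion and a downward excursion, each of magnitude $B\delta^{1+\eta}/2$.

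For the upward excursion my plan is to work at arrival epochs via the mean-zero martingale $M_\delta^r(k) = \sum_{i=1}^k (v_i \id_{[v_i\le \delta c^r]} - \rho_{\delta c^r}^r T_i^r)$ introduced in the proof of Lemma \ref{lemma 1 in original paper}. After discarding the initial-delay contribution using \eqref{condition involving epsilon *} and the bad event that $E^r$ produces too many arrivals in a window of length $\epsilon_*\delta^{2(1+\eta)}$ (controlled by \eqref{4.4.9} with $b = 16\epsilon_*/9$), I can restrict to the deterministic range $2 \le k \le \lfloor 16\lambda \epsilon_* r^2 \delta^{2(1+\eta)}/9 \rfloor$ and apply Doob's maximal inequality together with the per-step second-moment bound \eqref{4.4.6}. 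Using $B = 282C$ from \eqref{definition of C and B} this keeps the martingale contribution of order $\lambda\epsilon_*$, while \eqref{4.4.8} absorbs the arrival-count and delay terms at order $1/r$.

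For the downward excursion the key observation is that the infimum of $\widetilde V_\delta^r(\cdot+\tau_0^r) - \widetilde V_\delta^r(\tau_0^r)$ is attained immediately prior to an arrival epoch, so $r$ times that infimum equals $\min_k \bigl(M_\delta^r(k) - M_\delta^r(1) - \rho_{\delta c^r}^r T_{k+1}^r\bigr)$ over the same deterministic range. This splits into the martingale minimum, handled by a symmetric Doob argument, and a maximum inter-arrival time $\max_k T_k^r$. The latter is the main obstacle: unlike the upward case there is no drift available, so I would bound it by a union bound combined with Markov's inequality applied to $\E[(T^r)^2] = (\lambda^r)^{-2} + (\sigma_A^r)^2$, using \eqref{4.4.2} and \eqref{4.4.4} to control the moments, and \eqref{4.4.9} once more to truncate the range. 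Collecting all contributions yields a bound of the shape $\frac{\epsilon_*\lambda}{8}(5 + 9(\E[v])^2 + 8\sigma_A^2) + O(1/r) + \frac{3}{18}$, and the definition \eqref{definition of epsilon sub *} of $\epsilon_*$ is calibrated exactly so that the first term is at most $\frac{1}{6}$; combined with the second $\frac{1}{6}$ and the $O(1/r)$ remainder (negligible by \eqref{4.4.8}), this gives the desired $\frac{1}{2}$.
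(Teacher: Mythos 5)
Your proposal is correct and matches the paper's argument step for step: reduction to $\epsilon=\epsilon_*$ by monotonicity, the Lipschitz/constant-input identity for $\Gamma$ to convert the crossing event into a two-sided fluctuation bound on $\widetilde V_\delta^r$, Doob's maximal inequality on the mean-zero martingale $M_\delta^r(k)-M_\delta^r(1)$ combined with \eqref{4.4.9} and \eqref{condition involving epsilon *} to control the arrival count and initial delay, the observation that the running infimum occurs just before an arrival so that it splits into the martingale minimum plus $\rho_{\delta c^r}^r\max_k T_k^r$ (the latter handled by a union bound and Markov's inequality on $\E[(T^r)^2]$), and finally the calibration of $\epsilon_*$ in \eqref{definition of epsilon sub *} together with \eqref{4.4.8} to land at $3\times\frac16=\frac12$. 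The only cosmetic deviation is your claim that $r$ times the infimum \emph{equals} the minimum over arrival epochs—the paper only needs (and asserts) the one-sided inequality $\geq$—but this does not affect the argument.
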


\begin{cor}\label{SRPT Lemma 18 5.92}
Suppose $\eta\in(0,\eta_0)$. Let $\widetilde{r}\left(\eta,\E[v]\wedge4\epsilon_*/3\right)$, $M_*(\eta)$ and $\delta_*(\eta)$ as in Lemma \ref{a lot of assumptions}. For $r\geq\widetilde{r}\left(\eta,\E[v]\wedge4\epsilon_*/3\right)$ and $\delta\in[M_*(\eta)/c^r,\delta_*(\eta)]$, let $\{\tau_k^r\}_{k\in\Z_+}$ and $\{\chi_k^r\}_{k\in\Z_+}$ be as in Lemmas \ref{SRPT 5.82} and \ref{probability of the sum of chi} respectively. For all $r\geq\widetilde{r}\left(\eta,\E[v]\wedge4\epsilon_*/3\right)$, $\delta\in[M_*(\eta)/c^r,\delta_*(\eta)]$, $\epsilon\in[0,\epsilon_*]$ and $d\in\Z_+$, we have
\begin{equation}
\P\left(\sum_{k=0}^d(\tau_{2k+1}^r-\tau_{2k}^r)\leq (d+1)\epsilon\delta^{2(1+\eta)}/4\right)\leq e^{-(d+1)/8}.\label{probability of sum of tau 2j+1 and tau 2j bounded above by e-d/32}    
\end{equation}
\begin{proof}
Fix $r\geq\widetilde{r}\left(\eta,\E[v]\wedge4\epsilon_*/3\right)$, $\delta\in[M_*(\eta)/c^r,\delta_*(\eta)]$, $\epsilon\in[0,\epsilon_*]$ and $d\in\Z_+$. By Lemma \ref{probability of the sum of chi}, we have 
\begin{align*}
    &\P\left(\sum_{k=0}^d(\tau_{2k+1}^r-\tau_{2k}^r)\leq (d+1)\epsilon\delta^{2(1+\eta)}/4\right)\leq \P\left(\sum_{k=0}^d\chi_k^r\leq (d+1)\epsilon\delta^{2(1+\eta)}/4\right) \\
    &\qquad \leq \P\left(\sum_{k=0}^d\id_{[\chi_k^r>\epsilon\delta^{2(1+\eta)}]}\leq (d+1)/4\right)\\
    &\qquad = \P\left(M_d^{\chi^r}+(d+1)\P\left(\chi_0^r>\epsilon\delta^{2(1+\eta)}\right)\leq (d+1)/4\right),
\end{align*}
where $M_d^{\chi^r} = \sum_{k=0}^d\id_{[\chi^r_j>\epsilon\delta^{2(1+\eta)}]}-(d+1)\P(\chi_0^r>\epsilon\delta^{2(1+\eta)})$. By Lemma \ref{Lemma 3 in SRPT}, we have\\ $\P\left(M_d^{\chi^r}+(d+1)\P\left(\chi_0^r>\epsilon\delta^{2(1+\eta)}\right)\leq (d+1)/4\right)\leq \P(M_d^{\chi^r}\leq-(d+1)/4)$. Observe that $\{M_d^{\chi^r}\}_{d\in\Z_+}$ is a martingale with respect to the filtration $\{\mathcal{H}^r_t\}_{t\in\Z_+}$, where $\{\mathcal{H}^r_t\}_{t\in\Z_+}$ is as in the proof of Lemma \ref{new lemma for biased random walk}. Then by the Azuma-Hoeffding Inequality in Proposition \ref{Azuma Hoeffding} with $c_i=1$ for $i\in\N$ and $\gamma=(d+1)/4$, we have 
\[
\P(M_d^{\chi^r}\leq-(d+1)/4)\leq\exp{\left(-\frac{2\times((d+1)/4)^2}{\sum_{k=0}^d1^2}\right)}=\exp{(-(d+1)/8)}.
\]
\end{proof}
\end{cor}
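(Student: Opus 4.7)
The plan is to chain together three ingredients already on the table: the comparison between $\tau_{2k+1}^r-\tau_{2k}^r$ and $\chi_k^r$ from Lemma \ref{probability of the sum of chi}, the single-variable tail estimate from Lemma \ref{Lemma 3 in SRPT}, and a standard Hoeffding-type concentration bound for sums of bounded i.i.d.\ random variables.

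First, I would apply \eqref{sum of difference tau2j+1 and tau2j} to reduce the desired bound to an upper bound on $\P\bigl(\sum_{k=0}^d\chi_k^r\le(d+1)\epsilon\delta^{2(1+\eta)}/4\bigr)$. The next step is a simple counting observation: since each $\chi_k^r$ is nonnegative, if the number of indices $k\in\{0,\ldots,d\}$ with $\chi_k^r>\epsilon\delta^{2(1+\eta)}$ strictly exceeds $(d+1)/4$, then the sum strictly exceeds $(d+1)\epsilon\delta^{2(1+\eta)}/4$. Contrapositively, the event of interest is contained in $\{N\le(d+1)/4\}$, where $N:=\sum_{k=0}^d\id_{\{\chi_k^r>\epsilon\delta^{2(1+\eta)}\}}$.

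Next, Lemma \ref{probability of the sum of chi} asserts that $\{\chi_k^r\}_{k=0}^d$ is i.i.d., so $N$ is a binomial count with parameters $d+1$ and $p:=\P(\chi_0^r>\epsilon\delta^{2(1+\eta)})$. By Lemma \ref{Lemma 3 in SRPT}, $p\ge 1/2$, so $\E[N]=(d+1)p\ge(d+1)/2$. Writing $M_d:=N-(d+1)p$ yields a sum of i.i.d., mean-zero, $[-1,1]$-valued random variables, and the inclusion $\{N\le(d+1)/4\}\subseteq\{M_d\le -(d+1)/4\}$ holds since $(d+1)/4-(d+1)p\le-(d+1)/4$ under $p\ge 1/2$. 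I would then invoke the Azuma-Hoeffding inequality with increments bounded by $1$ and deviation $(d+1)/4$ to obtain
\[
\P(M_d\le -(d+1)/4)\le\exp\!\left(-\frac{2\bigl((d+1)/4\bigr)^2}{d+1}\right)=\exp\bigl(-(d+1)/8\bigr),
\]
which is exactly the required estimate.

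I do not anticipate a serious obstacle here: each step is either a direct invocation of a previously established result or a standard concentration bound. The only bookkeeping that requires a moment's care is the pigeonhole-style reduction to the indicator count $N$ and verifying that the constants $1/4$, $1/2$, and $2$ combine in the Hoeffding exponent to produce exactly $1/8$.
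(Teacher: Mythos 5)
Your proposal is correct and follows essentially the same route as the paper: reduce via \eqref{sum of difference tau2j+1 and tau2j} to the sum of the $\chi_k^r$, use the pigeonhole observation to pass to the indicator count, invoke Lemma \ref{Lemma 3 in SRPT} to get $\P(\chi_0^r>\epsilon\delta^{2(1+\eta)})\ge 1/2$ so that the centered count dropping to $(d+1)/4$ entails a deviation of at least $(d+1)/4$ below its mean, and then apply the Azuma--Hoeffding bound to the centered count with unit increments and deviation $(d+1)/4$. The only cosmetic difference is that you phrase the concentration step via the binomial distribution of $N$ directly while the paper frames the centered count as a martingale $M_d^{\chi^r}$, but these are the same estimate.
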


We are now ready to put the results in these technical lemmas together to prove Lemma \ref{Lemma 18 of paper}.
\begin{proof}[Proof of Lemma \ref{Lemma 18 of paper}]
Fix $\eta\in(0,\eta_0)$. Let $\widetilde{r}\left(\eta,\E[v]\wedge4\epsilon_*/3\right)$, $M_*(\eta)$ and $\delta_*(\eta)$ be as in Lemma \ref{a lot of assumptions}. 
Let $0<\delta_1(\eta)\leq\delta_*(\eta)$ be such that $T<\epsilon_*\delta_1(\eta)^{-2(1+\eta)}/4$. 
Fix $r\geq \widetilde{r}\left(\eta,\E[v]\wedge4\epsilon_*/3\right)$ and $\delta\in[M_*(\eta)/c^r,\delta_1(\eta)]$. By the choice of $\delta$, Proposition \ref{tilde QQ} with $x=\delta/2$ and $y=\delta$, \eqref{ineq:Nr}, and Lemma \ref{definition of tau}, we have 
\begin{align}
    &\P\left(\|\widetilde{\ZZZ}^r_\delta-\widetilde{\ZZZ}^r_{\delta/2}\|_T>96B\delta^\eta\log(1/\delta)+\frac{c^r}{r}\right)\nonumber\\
    &\qquad \leq\P\left(\|\widetilde{\ZZZ}^r_\delta-\widetilde{\ZZZ}^r_{\delta/2}\|_{\epsilon_*\delta^{-2(1+\eta)}/4}>96B\delta^\eta\log(1/\delta)+\frac{c^r}{r}\right)\nonumber\\
    &\qquad \leq \P\left(\|\widetilde{Y}_\delta^r\|_{\epsilon_*\delta^{-2(1+\eta)}/4}>48B\delta^{1+\eta}\log(1/\delta)\right)\nonumber\\
    &\qquad \leq D_1\delta^{\eta_0-\eta}+\P\left(\sum_{k=0}^{\lfloor\delta^{-8}\rfloor+1}(\tau_{2k+1}^r-\tau_{2k}^r)\leq \epsilon_*\delta^{-2(1+\eta)}/4\right).\label{SRPT Lemma 18 5.94}
\end{align}
Since $\delta^8\leq\delta^{4(1+\eta)}$, we have that $\delta^{8}\epsilon_*\delta^{-2(1+\eta)}\leq \epsilon_*\delta^{2(1+\eta)}$. Using this fact and Corollary \ref{SRPT Lemma 18 5.92},
\begin{align}
    &\P\left(\sum_{k=0}^{\lfloor\delta^{-8}\rfloor+1}(\tau_{2k+1}^r-\tau_{2k}^r)\leq \epsilon_*\delta^{-2(1+\eta)}/4\right)\nonumber\\ 
    &\qquad\leq\P\left(\sum_{k=0}^{\lfloor\delta^{-8}\rfloor+1}(\tau_{2k+1}^r-\tau_{2k}^r)\leq \delta^{-8}\epsilon_*\delta^{2(1+\eta)}/4\right)\nonumber\\
    &\qquad\leq \P\left(\sum_{k=0}^{\lfloor\delta^{-8}\rfloor+1}(\tau_{2k+1}^r-\tau_{2k}^r)\leq\left(\lfloor \delta^{-8}\rfloor+2\right)\epsilon_*\delta^{2(1+\eta)}/4\right)\nonumber\\
    &\qquad\leq \exp{\left(-\frac{\delta^{-8}+2}{8}\right)}\leq \exp{\left(-\frac{\delta^{-8}}{8}\right)}\leq 8\delta^{8},\label{1st term of 5.96}
\end{align}
where the last step uses the fact that $x\exp(-x/8)\leq 8$ for all $x\geq1$.
Thus, combining \eqref{SRPT Lemma 18 5.94}, \eqref{1st term of 5.96} and $\delta^8\leq\delta^{\eta_0-\eta},$  we have that \eqref{4.4.16} holds. Since $r\geq\widetilde{r}\left(\eta,\E[v]\wedge4\epsilon_*/3\right)$ and $\delta\in[M_*(\eta)/c^r,\delta_1(\eta)]$ are arbitrary, the result holds for $r(\eta)=\widetilde{r}\left(\eta,\E[v]\wedge4\epsilon_*/3\right),M(\eta)=M_*(\eta)$ and $\delta(\eta)=\delta_1(\eta)$. 
\end{proof}
\subsubsection{Application of Lemma \ref{Lemma 18 of paper} to Prove Theorem \ref{In between 0 and 1}}\label{Application of Lemma 5.2.6 to prove Theorem 5.2.1}
We begin with a basic estimate.
\begin{lem}\label{a basic estimate before Lemma New}
Suppose $\eta\in(0,\eta_0)$. Let $r(\eta)$, $M(\eta)$ and $\delta(\eta)$ be as in Lemma \ref{Lemma 18 of paper}. For $r\geq r(\eta)$ and $\delta\in[M(\eta)/c^r,\delta(\eta)]$, let $K(\eta,r,\delta)\in\Z_+$ be such that 
\begin{equation}
    2^{-\left(K(\eta,r,\delta)+1\right)}\delta<\frac{M(\eta)}{c^r}\leq2^{-K(\eta,r,\delta)}\delta.\label{M(eta) bounded below and above by expentional of 2}
\end{equation}
Then, for $r\geq r(\eta)$ and $\delta\in[M(\eta)/c^r,\delta(\eta)]$,
\begin{equation}
    K(\eta,r,\delta)<\alpha(\eta)\log{r}+\beta(\eta),\label{theta sub eta of r is bounded below by K plus 1}
    \end{equation}
where $\alpha(\eta)=1/(2(1+\eta)\log{2})$ and $\beta(\eta)=\log\left(\frac{\delta(\eta)}{M(\eta)}\right)/\log{2}$.
Moreover, $\theta_\eta\in\Theta$, where $\theta_\eta(r)=\left(\alpha(\eta)\log{r} + \beta(\eta)\right)\frac{c^r}{r}$ for $r\in R$.
\begin{proof}
Fix $r\geq r(\eta)$ and $\delta\in[M(\eta)/c^r,\delta(\eta)]$. By \eqref{M(eta) bounded below and above by expentional of 2},
$
\log\left(\frac{c^r\delta}{M(\eta)}\right)/\log{2}\geq K(\eta,r,\delta)$.
Since $\delta\leq\delta(\eta)$ and $c^r<r^{\frac{1}{2(1+\eta)}}$ (see \eqref{4.4.7}), \eqref{theta sub eta of r is bounded below by K plus 1} holds.
By writing, $\frac{c^r\log{r}}{r}=\frac{c^r}{r^{1/2}}\cdot\frac{\log{r}}{r^{1/2}}$, noting that $c^r=S^{-1}(r)$, $S^{-1}\in\textbf{R}_0$ and $\log{(\cdot)}\in\textbf{R}_0$, and applying Proposition \ref{subpolynomial growth of slowly varying functions} with $\gamma = 1/2$, $\lim_{r\to\infty}\theta_\eta(r)=0$. So $\theta_\eta\in\Theta$.
\end{proof}
\end{lem}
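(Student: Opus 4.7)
The plan is to unpack the defining inequality for $K(\eta,r,\delta)$ and combine it with the subpolynomial growth bound \eqref{4.4.7} on $c^r$. From $M(\eta)/c^r \le 2^{-K(\eta,r,\delta)}\delta$ I will get $2^{K(\eta,r,\delta)} \le c^r \delta/M(\eta)$, i.e.\ $K(\eta,r,\delta) \le \log_2(c^r\delta/M(\eta))$. Using $\delta \le \delta(\eta)$ this bound splits as $\log_2(c^r) + \log_2(\delta(\eta)/M(\eta)) = \log_2(c^r) + \beta(\eta)$. Inequality \eqref{4.4.7} then gives $\log_2(c^r) < \tfrac{1}{2(1+\eta)\log 2}\log r = \alpha(\eta)\log r$, and combining yields $K(\eta,r,\delta) < \alpha(\eta)\log r + \beta(\eta)$, which is the desired bound.

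For the second assertion I need to show $\theta_\eta(r) = (\alpha(\eta)\log r + \beta(\eta)) c^r/r \to 0$ as $r\to\infty$. I will split $\theta_\eta$ as a sum of $\alpha(\eta)(c^r \log r)/r$ and $\beta(\eta) c^r/r$. For the second term, since $c^r = S^{-1}(r)$ is slowly varying by Lemma \ref{S property}, Proposition \ref{subpolynomial growth of slowly varying functions} (applied with any $\gamma<1$) gives $c^r/r \to 0$. For the first term I rewrite $(c^r\log r)/r = (c^r/r^{1/2})\cdot(\log r/r^{1/2})$; the first factor tends to zero by Proposition \ref{subpolynomial growth of slowly varying functions} applied with $\gamma=1/2$ to $S^{-1}\in\mathbf{R}_0$, and the second factor tends to zero by elementary calculus. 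Hence $\theta_\eta(r)\to 0$, so $\theta_\eta\in\Theta$.

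Both steps are essentially bookkeeping on top of ingredients already established earlier in the paper. The only place where any care is needed is invoking Proposition \ref{subpolynomial growth of slowly varying functions} with the right exponent; I anticipate no real obstacle, since the hypothesis $c^r < r^{1/(2(1+\eta))}$ from \eqref{4.4.7} is tailored precisely to make the logarithm bound yield $\alpha(\eta)\log r$.
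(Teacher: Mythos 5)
Your proposal is correct and follows essentially the same route as the paper's proof: both steps unpack the defining inequality for $K(\eta,r,\delta)$ to get $K\le\log_2(c^r\delta/M(\eta))$, then split the logarithm and invoke \eqref{4.4.7} together with $\delta\le\delta(\eta)$, and both verify $\theta_\eta\in\Theta$ by factoring $c^r\log r/r=(c^r/r^{1/2})(\log r/r^{1/2})$ and applying Proposition~\ref{subpolynomial growth of slowly varying functions} with $\gamma=1/2$ to the slowly varying functions $S^{-1}$ and $\log(\cdot)$.
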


\begin{lem}\label{Lemma New}
Suppose $T>0$ and $\eta\in(0,\eta_0)$. Let $r(\eta)$, $M(\eta)$ and $\delta(\eta)$ be as in Lemma \ref{Lemma 18 of paper}. Set $D(\eta)=96B\sum_{k=0}^\infty2^{-k(\eta_0-\eta)}(1+k\ln{2})$ and $\widetilde{D}(\eta) = \frac{D}{1-2^{\eta-1}}$ and let $\theta_\eta\in\Theta$ be as in Lemma \ref{a basic estimate before Lemma New}. For all $r\geq r(\eta)$ and $\delta\in[M(\eta)/c^r,\delta(\eta)]$,
\[
\P\left(\|\widetilde{\ZZZ}_\delta^r-\widetilde{\ZZZ}^r_{M(\eta)/c^r}\|_T>D(\eta)\delta^\eta\left[1+\log{(1/\delta)}\right]
+\theta_\eta(r)+\frac{c^r}{r}\right)\leq\widetilde{D}(\eta)\delta^{\eta_0-\eta}.
\]
\begin{proof}
Fix $r\geq r(\eta)$ and $\delta\in[M(\eta)/c^r,\delta(\eta)]$ and let $K(\eta,r,\delta)\in\Z_+$ be such that $2^{-K(\eta,r,\delta)-1}\delta<M(\eta)/c^r\leq 2^{-K(\eta,r,\delta)}\delta$. By a union bound and Lemma \ref{Lemma 18 of paper}, we have
\begin{align}
    &\P\left(\left\|\widetilde{\ZZZ}_\delta^r-\widetilde{\ZZZ}_{M(\eta)/c^r}^r\right\|_T>D(\eta)\delta^{\eta}\left[1+\log{(1/\delta)}\right]
    +\theta_\eta(r)+\frac{c^r}{r}\right)\nonumber\\
    &\qquad \leq \P\left(\left\|\widetilde{\ZZZ}_\delta^r-\widetilde{\ZZZ}_{2^{-K(\eta,r,\delta)-1}\delta}^r\right\|_T>D(\eta)\delta^{\eta}\left[1+\log{(1/\delta)}\right]
    +\theta_\eta(r)+\frac{c^r}{r}\right)\nonumber\\  
    &\qquad \leq\P\left(\sum_{k=0}^{K(\eta,r,\delta)}\left\|\widetilde{\ZZZ}_{2^{-k}\delta}^r-\widetilde{\ZZZ}_{2^{-k-1}\delta}^r\right\|_T>D(\eta)\delta^\eta\left[1+\log{(1/\delta)}\right]+
    \theta_\eta(r)+\frac{c^r}{r}\right).\nonumber\\
&\qquad\leq\sum_{k=0}^{K(\eta,r,\delta)}\P\left(\left\|\widetilde{\ZZZ}_{2^{-k}\delta}^r-\widetilde{\ZZZ}_{2^{-k-1}\delta}^r\right\|_T>96B(2^{-k}\delta)^{\eta}\log{(2^k/\delta)}+\frac{c^r}{r}\right)\nonumber\\
    &\qquad\leq\sum_{k=0}^{K(\eta,r,\delta)}D(\delta/2^k)^{\eta_0-\eta}\leq D\delta^{\eta_0-\eta}\sum_{k=0}^\infty(1/2^{\eta_0-\eta})^k = \widetilde{D}(\eta)\delta^{\eta_0-\eta},\nonumber
\end{align}
which completes the proof.
\end{proof}
\end{lem}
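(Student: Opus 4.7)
The plan is to use a dyadic decomposition of the interval $[M(\eta)/c^r, \delta]$ and apply Lemma \ref{Lemma 18 of paper} to each piece, combining the resulting tail bounds via a union bound. First I would fix $r \geq r(\eta)$ and $\delta \in [M(\eta)/c^r, \delta(\eta)]$, and let $K = K(\eta, r, \delta) \in \Z_+$ be as in Lemma \ref{a basic estimate before Lemma New}, so that $2^{-K-1}\delta < M(\eta)/c^r \leq 2^{-K}\delta$. Since $\widetilde{\ZZZ}_a^r(\cdot)$ is monotone nondecreasing in $a$ (a direct consequence of Proposition \ref{tilde QQ}), the inequality $2^{-K-1}\delta < M(\eta)/c^r$ yields the pointwise bound $\widetilde{\ZZZ}^r_\delta - \widetilde{\ZZZ}^r_{M(\eta)/c^r} \leq \widetilde{\ZZZ}^r_\delta - \widetilde{\ZZZ}^r_{2^{-K-1}\delta}$, and the right-hand side telescopes as $\sum_{k=0}^{K}\bigl(\widetilde{\ZZZ}^r_{2^{-k}\delta} - \widetilde{\ZZZ}^r_{2^{-k-1}\delta}\bigr)$.

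Next I would note that each level $2^{-k}\delta$ with $0 \leq k \leq K$ satisfies $M(\eta)/c^r \leq 2^{-k}\delta \leq \delta(\eta)$, so Lemma \ref{Lemma 18 of paper} applies at that level. This yields
\[
\P\Bigl(\|\widetilde{\ZZZ}^r_{2^{-k}\delta} - \widetilde{\ZZZ}^r_{2^{-k-1}\delta}\|_T > 96B(2^{-k}\delta)^\eta \log(2^k/\delta) + \tfrac{c^r}{r}\Bigr) \leq D(2^{-k}\delta)^{\eta_0-\eta}.
\]
A union bound together with the telescoping above then controls $\|\widetilde{\ZZZ}^r_\delta - \widetilde{\ZZZ}^r_{M(\eta)/c^r}\|_T$ in terms of the sum of the individual thresholds and the sum of the individual probabilities.

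For the threshold side, using $\log(2^k/\delta) = k\ln 2 + \log(1/\delta)$ and the elementary bound $k\ln 2 + \log(1/\delta) \leq (1+k\ln 2)(1+\log(1/\delta))$, the deterministic part sums to
\[
\sum_{k=0}^{K} 96B (2^{-k}\delta)^\eta \log(2^k/\delta) \leq 96B\delta^\eta[1+\log(1/\delta)]\sum_{k=0}^{\infty} 2^{-k\eta}(1+k\ln 2),
\]
absorbing the geometric series into $D(\eta)$. The additive $c^r/r$ terms accumulate to at most $(K+1)\,c^r/r$, which is $\leq \theta_\eta(r) + c^r/r$ by Lemma \ref{a basic estimate before Lemma New}. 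For the probability side, the bound $\sum_{k=0}^{K} D(2^{-k}\delta)^{\eta_0-\eta} \leq D\delta^{\eta_0-\eta}\sum_{k=0}^{\infty} 2^{-k(\eta_0-\eta)}$ is the geometric tail, finite because $\eta < \eta_0$, and collapses into $\widetilde{D}(\eta)\delta^{\eta_0-\eta}$.

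The main bookkeeping issue, rather than a genuine obstacle, is ensuring that the dependence of $K$ on $r$ (which is roughly $\alpha(\eta)\log r$) does not spoil the $c^r/r$ error terms; this is precisely where Lemma \ref{a basic estimate before Lemma New} is invoked, using $S^{-1} \in \mathbf{R}_0$ so that $(\log r)\,c^r/r \to 0$. The only other subtlety is that the direction of monotonicity of $\widetilde{\ZZZ}^r_a$ in $a$ must be used in the correct sense when passing from cutoff $M(\eta)/c^r$ to cutoff $2^{-K-1}\delta$; but since $2^{-K-1}\delta$ is the smaller of the two, the difference $\widetilde{\ZZZ}^r_\delta - \widetilde{\ZZZ}^r_{2^{-K-1}\delta}$ is the larger quantity, giving a valid upper bound.
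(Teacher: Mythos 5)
Your proof is correct and follows essentially the same dyadic telescoping strategy as the paper, with the merit of spelling out the threshold bookkeeping (via $k\ln 2 + \log(1/\delta) \leq (1+k\ln 2)(1+\log(1/\delta))$ and the accumulation of the $(K+1)$ copies of $c^r/r$ against $\theta_\eta(r)+c^r/r$ using Lemma \ref{a basic estimate before Lemma New}) that the paper leaves implicit inside its union-bound step. Your computation also exposes two apparent typos in the paper's stated constants: the summand in $D(\eta)$ should carry the factor $2^{-k\eta}$ rather than $2^{-k(\eta_0-\eta)}$, and $\widetilde{D}(\eta)$ should equal $D/(1-2^{\eta-\eta_0})$ rather than $D/(1-2^{\eta-1})$, though neither affects the validity of the lemma since both versions are finite positive constants.
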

\begin{lem}\label{Lemma New 2}
Suppose $T>0$ and $\eta\in(0,\eta_0)$. Let $r(\eta)$, $M(\eta)$ and $\delta(\eta)$ be as in Lemma \ref{Lemma 18 of paper}, $D(\eta)$ and $\widetilde{D}(\eta)$ be as in Lemma \ref{Lemma New}, $\theta_\eta\in\Theta$ be as in Lemma \ref{a basic estimate before Lemma New}, and $\vartheta_\eta\in\Theta$ be as in Proposition \ref{Lemma 16 in SRPT} with $M=M(\eta)$. Then, for all $r\geq r(\eta)$ and $\delta\in[M(\eta)/c^r,\delta(\eta)]$,
\[
\P\left(\|\widetilde{\ZZZ}_\delta^r\|_T>D(\eta)\delta^\eta\left[1+\log(1/\delta)\right]+
\theta_\eta(r)+\vartheta_\eta(r)+\frac{c^r}{r}\right)\leq \widetilde{D}(\eta)\delta^{\eta_0-\eta}+\vartheta_\eta(r).
\]
\begin{proof}
Observe that for any $r\in R$ and $\delta>0$,
\begin{align*}
    &\P\left(\|\widetilde{\ZZZ}_\delta^r\|_T>D(\eta)\delta^{\eta}\left[1+\log(1/\delta)\right]+\theta_\eta(r)
    +\vartheta_\eta(r)+\frac{c^r}{r}\right)\\
    &\qquad\leq\P\left(\|\widetilde{\ZZZ}_\delta^r-\widetilde{\ZZZ}^r_{M(\eta)/c^r}\|_T>D(\eta)\delta^\eta\left[1+\log(1/\delta)\right]+\theta_\eta(r)+\frac{c^r}{r}
    \right)\\
    &\qquad\qquad+\P\left(\|\widetilde{\ZZZ}_{M(\eta)/c^r}^r\|_T>\vartheta_\eta(r)\right).
\end{align*}
Thus the result follows from Lemma \ref{Lemma New} and Proposition \ref{Lemma 16 in SRPT}.
\end{proof}
\end{lem}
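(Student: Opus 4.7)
The plan is to assemble Lemma \ref{Lemma New 2} from the two probability estimates already established, namely Lemma \ref{Lemma New} and Proposition \ref{Lemma 16 in SRPT}, via a simple triangle inequality plus union bound. First, I would write the trivial identity
\[
\widetilde{\ZZZ}_\delta^r(t) \;=\; \bigl(\widetilde{\ZZZ}_\delta^r(t) - \widetilde{\ZZZ}_{M(\eta)/c^r}^r(t)\bigr) + \widetilde{\ZZZ}_{M(\eta)/c^r}^r(t)
\]
valid for every $t\in[0,T]$, $r\geq r(\eta)$, and $\delta\in[M(\eta)/c^r,\delta(\eta)]$. Taking $\sup_{t\in[0,T]}$ and using the triangle inequality for the uniform norm gives
\[
\|\widetilde{\ZZZ}_\delta^r\|_T \;\leq\; \|\widetilde{\ZZZ}_\delta^r - \widetilde{\ZZZ}_{M(\eta)/c^r}^r\|_T + \|\widetilde{\ZZZ}_{M(\eta)/c^r}^r\|_T,
\]
where Proposition \ref{tilde QQ} confirms that the difference on the right is in fact non-negative (so no cancellation concern arises).

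Next, I would split the total threshold appearing in the statement into the two complementary pieces that exactly match the inputs: assign $D(\eta)\delta^\eta[1+\log(1/\delta)] + \theta_\eta(r) + c^r/r$ to $\|\widetilde{\ZZZ}_\delta^r - \widetilde{\ZZZ}_{M(\eta)/c^r}^r\|_T$ and $\vartheta_\eta(r)$ to $\|\widetilde{\ZZZ}_{M(\eta)/c^r}^r\|_T$. If the sum of the two norms exceeds the sum of the two pieces, at least one norm must exceed its assigned piece. A union bound therefore produces
\[
\begin{aligned}
&\P\!\left(\|\widetilde{\ZZZ}_\delta^r\|_T>D(\eta)\delta^\eta[1+\log(1/\delta)]+\theta_\eta(r)+\vartheta_\eta(r)+\tfrac{c^r}{r}\right)\\
&\qquad\leq \P\!\left(\|\widetilde{\ZZZ}_\delta^r - \widetilde{\ZZZ}_{M(\eta)/c^r}^r\|_T>D(\eta)\delta^\eta[1+\log(1/\delta)]+\theta_\eta(r)+\tfrac{c^r}{r}\right) \\
&\qquad\qquad + \P\!\left(\|\widetilde{\ZZZ}_{M(\eta)/c^r}^r\|_T>\vartheta_\eta(r)\right).
\end{aligned}
\]

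Finally, I would invoke Lemma \ref{Lemma New} to bound the first probability above by $\widetilde{D}(\eta)\delta^{\eta_0-\eta}$, and Proposition \ref{Lemma 16 in SRPT} (with the choice $M=M(\eta)$, which is legitimate because $M(\eta)\geq M_0>0$) to bound the second by $\vartheta_\eta(r)$. Adding these two bounds gives exactly the stated inequality. There is essentially no obstacle here, since the constants $D(\eta)$, $\widetilde{D}(\eta)$, $\theta_\eta$, and $\vartheta_\eta$ have been defined precisely so that the union-bound decomposition matches the hypotheses of the two input results; this lemma is an assembly step whose only content is recording the uniform-in-$\delta$ quantitative form of the decomposition for later use in proving \eqref{telescope of Z tilde on theta2} and hence Theorem \ref{In between 0 and 1}.
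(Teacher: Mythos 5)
Your proof is correct and follows essentially the same route as the paper's: a triangle-inequality decomposition of $\|\widetilde{\ZZZ}_\delta^r\|_T$ about the anchor level $M(\eta)/c^r$, followed by a union bound and direct application of Lemma \ref{Lemma New} and Proposition \ref{Lemma 16 in SRPT}. The paper simply states the resulting probability inequality without spelling out the intermediate pathwise identity and threshold split, so the difference is only in the level of detail.
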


\begin{cor}\label{in between 0 and 1}
Suppose $T>0$ and $\eta\in(0,\eta_0)$. Let $r(\eta)$, $M(\eta)$ and $\delta(\eta)$ be as in Lemma \ref{Lemma 18 of paper}, $D(\eta)$ and $\widetilde{D}(\eta)$ be as in Lemma \ref{Lemma New}, $\theta_\eta\in\Theta$ be as in Lemma \ref{a basic estimate before Lemma New}, and $\vartheta_\eta\in\Theta$ be as in Proposition \ref{Lemma 16 in SRPT} with $M=M(\eta)$. Then, for all $r\geq r(\eta)$ and $\delta\in[M(\eta)/c^r,\delta(\eta)]$,
\[
\P\left(\|\widetilde{\QQ}_\delta^r\|_T>D(\eta)\delta^\eta[1+\log{(1/\delta)}]+\theta_\eta(r)+\vartheta_\eta(r)+\frac{2c^r}{r}\right)\leq \widetilde{D}(\eta)\delta^{\eta_0-\eta}+\vartheta_\eta(r).
\]
\begin{proof}
This follows from Lemma \ref{Lemma New 2} and \eqref{Q tilde theta and Z tilde theta}.
\end{proof}
\end{cor}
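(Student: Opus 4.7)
The plan is to combine two already-established ingredients: the pointwise comparison between the $a$-cutoff scaled queue length $\widetilde{\QQ}_\delta^r$ and its $a$-truncated analog $\widetilde{\ZZZ}_\delta^r$ from Proposition \ref{Prop 10 in SRPT}, and the tail bound for $\|\widetilde{\ZZZ}_\delta^r\|_T$ furnished by Lemma \ref{Lemma New 2}. No new estimates are needed; the corollary is essentially a transfer of the truncated-system bound to the cutoff process.

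Concretely, I would first recall that Proposition \ref{Prop 10 in SRPT} (applied pathwise and under the distribution dependent scaling) yields $\widetilde{\QQ}_\delta^r(t)\le \widetilde{\ZZZ}_\delta^r(t)+c^r/r$ for all $r\in R$, $\delta>0$ and $t\ge 0$. Taking the supremum over $t\in[0,T]$ and applying a standard monotone-event inclusion gives, for any nonnegative functions $\varphi$ and $\vartheta$,
\[
\left\{\|\widetilde{\QQ}_\delta^r\|_T>\varphi(\delta)+\vartheta(r)+\tfrac{2c^r}{r}\right\}\subseteq\left\{\|\widetilde{\ZZZ}_\delta^r\|_T>\varphi(\delta)+\vartheta(r)+\tfrac{c^r}{r}\right\},
\]
which is precisely the inclusion recorded as \eqref{Q tilde theta and Z tilde theta}. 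Hence, taking probabilities,
\[
\P\!\left(\|\widetilde{\QQ}_\delta^r\|_T>\varphi(\delta)+\vartheta(r)+\tfrac{2c^r}{r}\right)\le \P\!\left(\|\widetilde{\ZZZ}_\delta^r\|_T>\varphi(\delta)+\vartheta(r)+\tfrac{c^r}{r}\right).
\]

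Next, I would specialize this inequality to $\varphi(\delta):=D(\eta)\delta^{\eta}[1+\log(1/\delta)]$ and $\vartheta(r):=\theta_\eta(r)+\vartheta_\eta(r)$, with all objects chosen as in the statement of the corollary. For $r\ge r(\eta)$ and $\delta\in[M(\eta)/c^r,\delta(\eta)]$, Lemma \ref{Lemma New 2} directly supplies the bound
\[
\P\!\left(\|\widetilde{\ZZZ}_\delta^r\|_T>D(\eta)\delta^{\eta}[1+\log(1/\delta)]+\theta_\eta(r)+\vartheta_\eta(r)+\tfrac{c^r}{r}\right)\le \widetilde{D}(\eta)\delta^{\eta_0-\eta}+\vartheta_\eta(r).
\]
Chaining these two inequalities yields the claim.

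There is no real obstacle here: the whole work has already been done in establishing the comparison inequality (ultimately from the $a$-truncated SRPT construction of Section \ref{a-truncated SRPT Queues}) and the delicate tail bound in Lemma \ref{Lemma New 2} (which itself cascades back through Lemma \ref{Lemma New}, Lemma \ref{Lemma 18 of paper}, and the biased random walk comparison of Lemma \ref{new lemma for biased random walk}). The only point requiring care is that the extra factor $c^r/r$ in Proposition \ref{Prop 10 in SRPT} is absorbed by replacing $c^r/r$ with $2c^r/r$ on the $\widetilde{\QQ}$ side, which is exactly how the constants in the statement are set up.
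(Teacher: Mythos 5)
Your proposal is correct and matches the paper's proof, which consists of precisely the two ingredients you cite: the comparison inequality \eqref{Q tilde theta and Z tilde theta} (obtained from Proposition \ref{Prop 10 in SRPT}) and the bound from Lemma \ref{Lemma New 2}, chained together with the substitution $\varphi(\delta)=D(\eta)\delta^\eta[1+\log(1/\delta)]$ and $\vartheta(r)=\theta_\eta(r)+\vartheta_\eta(r)$. You have simply unpacked the paper's one-line justification.
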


\begin{proof}[Proof of Theorem \ref{In between 0 and 1}]
Fix $a\in(0,1)$.  For all $t\in[0,T]$, $r\in R$, and $\delta\in(0,a)$,\\
$\delta[\widetilde{\QQ}_a^r(t)-\widetilde{\QQ}_{\delta}^r(t)]\leq \widetilde{W}_a^r(t)-\widetilde{W}_{\delta}^r(t)\leq \widetilde{W}_a^r(t)$.
Hence, for all $r\in R$ and $\delta\in(0,a)$,
\begin{equation}
\|\widetilde{\QQ}_a^r\|_T
\leq \|\widetilde{\QQ}_a^r-\widetilde{\QQ}_\delta^r\|_T+\|\widetilde{\QQ}_\delta^r\|_T
\leq\delta^{-1}\|\widetilde{W}_a^r\|_T+\|\widetilde{\QQ}^r_{\delta}\|_T.    \label{upper bound of tilde Q a r by W and Q}
\end{equation}
Let $\theta\in(0,1)$ and $\epsilon\in(0,1)$. Also, let $D(\eta)$ and $\widetilde{D}(\eta)$ be as in Lemma \ref{Lemma New}, let $\theta_\eta\in \Theta$ be as in Lemma \ref{a basic estimate before Lemma New}, and let $\vartheta_\eta\in\Theta$ be as in Lemma \ref{Lemma New 2}. Observe that
$
\lim_{\delta\to0+}D(\eta)\delta^\eta\left(1+\log{(1/\delta)}\right)=0\text{ and }\lim_{\delta\to0+}\widetilde{D}(\eta)\delta^{\eta_0-\eta}=0$.
Hence, there exists $\widehat{\delta}\in(0,a)$ such that 
\[
D(\eta)\delta^\eta(1+\log{(1/\delta)})<\theta/3
\qquad\text{ and }\qquad\widetilde{D}(\eta)\delta^{D/2}<\epsilon/2\quad\forall \delta\in(0,\widehat{\delta}],
\]
and $\widehat{r}\in R$ such that 
\[
\theta_\eta(r)+\vartheta_\eta(r)+\frac{2c^r}{r}<\theta/3
\qquad\text{ and }\qquad\vartheta_\eta(r)<\epsilon/2\quad \forall r\geq \widehat{r}.
\]
Then, for $\delta\in(0,\widehat{\delta})$ and $r\geq \widehat{r}$,
\begin{align}
\P\left(\|\widetilde{\QQ}_\delta^r\|_T>2\theta/3\right)&\leq \P\left(\|\widetilde{\QQ}_\delta^r\|_T>D(\eta)\delta^\eta(1+\log{(1/\delta)})+\theta_\eta(r)+\vartheta_\eta(r)+\frac{2c^r}{r}\right), \label{proof of 5.2.1 with probability on tilde Q}   \\
\vartheta_\eta(r)+\widetilde{D}(\eta)\delta^{\eta_0-\eta}&<\epsilon\label{proof of 5.2.1 with vartheta tilde D and epsilon}
\end{align}
Let $r(\eta)$, $M(\eta)$ and $\delta(\eta)$ be as in Lemma \ref{Lemma New}. By the choice of $\widehat{\delta}$ and $\widehat{r}$, \eqref{proof of 5.2.1 with probability on tilde Q}, Corollary \ref{in between 0 and 1}, and \eqref{proof of 5.2.1 with vartheta tilde D and epsilon}, for $r\geq r(\eta)\vee\widehat{r}$ and $\delta\in[M(\eta)/c^r,\delta(\eta)\wedge\widehat{\delta}]$, we have 
\[
\P\left(\|\widetilde{\QQ}_\delta^r\|_T>2\theta/3\right)<\epsilon.
\]
From this, \eqref{upper bound of tilde Q a r by W and Q} and a union bound, it follows that for $r\geq r(\eta)\vee\widehat{r}$ and $\delta\in[M(\eta)/c^r,\delta(\eta)\wedge\widehat{\delta}\wedge a)$,
\[
\P\left(\|\widetilde{\QQ}_a^r\|_T>\theta\right)\leq \P\left(\frac{1}{\delta}\|\widetilde{W}_a^r\|_T>\frac{\theta}{3}\right)+\epsilon.
\]
This together with Lemma \ref{a<1 convergence} yields that
\[
\limsup_{r\to\infty}\P\left(\|\widetilde{\QQ}_a^r\|_T>\theta\right)\leq \epsilon.
\]
But $\epsilon\in(0,1)$ was arbitrary. So letting $\epsilon\to0+$ completes the proof.
\end{proof}

\subsection{Convergence of the Measured-Valued Processes}\label{section 5.3}
\noindent In order to prove convergence in distribution of the measure valued process $\widetilde{\mathcal Q}^r(\cdot)$ as $r\to\infty$ to the measure valued process $W^*(\cdot)\delta_1^+$ and prove Theorem \ref{mv convergence}, it suffices to show that $\{\widetilde{\mathcal Q}^r(\cdot)\}_{r\in R}$ is tight and then to show that any convergent subsequence converges in distribution to $W^*(\cdot)\delta_1^+$. In order to verify tightness, we use Jakubowski's criteria (see \cite[Theorem 10]{Jakubowski}). Specifically, in order to show that $\{\widetilde{\mathcal Q}^r(\cdot)\}_{r\in R}$ is tight, it suffices to show that for each $T>0$ and $\eta\in(0,1)$

\vspace{0.1cm}

\begin{enumerate}
    \item[(J1)] there exists a compact set ${\mathcal K}_T^\eta\subseteq {\mathcal M}$
    such that
    \[
    \liminf_{r\to\infty}{\mathbb P}\left( \widetilde{\mathcal Q}^r(t)\in {\mathcal K}_T^\eta\hbox{ for all }t\in[0,T]\right)\ge 1-\eta,
    \]
    \item[(J2)] and the sequence $\{\langle g,\widetilde{\mathcal Q}^r(\cdot)\rangle\}_{r\in R}$ of real valued process  is tight for each $g\in \C_b({\mathbb R}_+)$.
\end{enumerate}
Condition (J1) is often referred to as the compact containment condition.  It is verified in Section \ref{compact containment} below. Condition (J2) requires a verifying tightness of certain real valued processes.  We will verify the slightly stronger condition of {\bf C}-tightness, which implies tightness as well as the continuity of all limit points. By \cite[Theorem 7.2 in Chapter 3]{Ethier and Kurtz}, for $g\in\C_b({\mathbb R}_+)$, $\{\langle g,\widetilde{\mathcal Q}^r(\cdot)\rangle\}_{r\in R}$ is {\bf C}-tight if for all $T>0$ and $\eta\in(0,1)$

\begin{enumerate}
    \item[(T1)] there exists $M>0$ (depending on $g$, $T$ and $\eta$) such that 
    $$
    \liminf_{r\to\infty}{\mathbb P}\left( \|\langle g,\widetilde{\mathcal Q}^r(\cdot)\rangle\|_T\le M\right)\ge 1-\eta,
    $$
    \item[(T2)] and there exists $h>0$ (depending on $g$, $T$ and $\eta$) such that
    $$
    \liminf_{r\to\infty}{\mathbb P}\left( \sup_{t\in[0,T-h]}\sup_{s\in[0,h]}\left| \langle g,\widetilde{\mathcal Q}^r(t+s)\rangle-\langle g,\widetilde{\mathcal Q}^r(t)\rangle\right|<\eta\right)\ge 1-\eta.
    $$
\end{enumerate}
We verify (T1) and (T2) for all $g\in\C_b(\R_+)$ in the proof of Lemma \ref{tight} in Section \ref{verification of T1 and T2} below.

\subsubsection{Compact Containment of $\{\widetilde{\mathcal Q}^r(\cdot)\}_{r\in R}$}\label{compact containment}
\begin{lem}\label{Lemma D} As $r\to\infty$,
$\widetilde{\QQ}^r-\widetilde{W}^r\overset{p}{\to}0$.
\begin{proof}
Fix $\epsilon\in(0,1/2)$, $T>0$, $\eta\in(0,1)$, and $\delta\in(0,1)$.
For each $r\in R$ and $t\in[0,T]$,
\begin{equation}\label{eps_decomp}
\widetilde{\QQ}^r(t)= 
\widetilde{\QQ}_{1-\varepsilon}^r(t)
+\left(\widetilde{\QQ}_{1+\varepsilon}^r(t)-\widetilde{\QQ}_{1-\varepsilon}^r(t)\right)
+ \left(\widetilde{\QQ}^r(t)-\widetilde{\QQ}_{1+\varepsilon}^r(t)\right).
\end{equation}
Note that
$(1-\epsilon)\id_{(1-\epsilon,1+\epsilon]}\leq \chi\id_{(1-\epsilon,1+\epsilon]}\leq (1+\epsilon)\id_{(1-\epsilon,1+\epsilon]}$. Thus, for each $r\in R$ and $t\in[0,T]$, 
\[
(1-\epsilon)
\left(\widetilde{\QQ}_{1+\varepsilon}^r(t)-\widetilde{\QQ}_{1-\varepsilon}^r(t)\right)
\leq
\widetilde{W}_{1+\varepsilon}^r(t)-\widetilde{W}_{1-\varepsilon}^r(t)
\leq (1+\epsilon)\left(\widetilde{\QQ}_{1+\varepsilon}^r(t)-\widetilde{\QQ}_{1-\varepsilon}^r(t)\right),
\]
which implies that
\begin{equation}
   \frac{1}{1+\epsilon}\left(\widetilde{W}_{1+\varepsilon}^r(t)-\widetilde{W}_{1-\varepsilon}^r(t)\right)
   \leq \widetilde{\QQ}_{1+\varepsilon}^r(t)-\widetilde{\QQ}_{1-\varepsilon}^r(t)
   \leq\frac{1}{1-\epsilon}\left(\widetilde{W}_{1+\varepsilon}^r(t)-\widetilde{W}_{1-\varepsilon}^r(t)\right).\label{1 over 1 + epsilon and 1 over 1 - epsilon}
\end{equation}
For $r\in R$, define
\begin{align*}
   \Omega_1^r(\epsilon) &:= \left\{\|\widetilde{\QQ}_{1-\varepsilon}^r\|_T\leq\delta/3\right\}\bigcap \left\{\|\widetilde{W}_{1-\varepsilon}^r\|_T\leq\delta/3\right\}\\
  \Omega_2^r(\epsilon) &:=\left\{\|\widetilde{\QQ}^r-\widetilde{\QQ}_{1+\varepsilon}^r\|_T\leq\delta/3\right\}\bigcap \left\{\|\widetilde{W}^r-\widetilde{W}_{1+\varepsilon}^r\|_T\leq\delta/3\right\}.
\end{align*} 
By Corollary \ref{cor:Tailtozero}, Lemma \ref{a<1 convergence} and Theorem \ref{In between 0 and 1}, we have
\begin{equation}
\lim_{r\to\infty}\P\left(\Omega_1^r(\epsilon)\cap\Omega_2^r(\epsilon)\right)=1.    \label{intersection of Omega}
\end{equation}
By \eqref{eps_decomp} and \eqref{1 over 1 + epsilon and 1 over 1 - epsilon}, for each $r\in R$ and $t\in[0,T]$ on $\Omega_1^r(\epsilon)\cap\Omega_2^r(\epsilon)$, we have
\begin{align*}
\widetilde{\QQ}^r(t)
&\leq\frac{\delta}{3}+\frac{1}{1-\epsilon}\widetilde{W}^r(t)+\frac{\delta}{3} = \frac{1}{1-\epsilon}\widetilde{W}^r(t)+\frac{2\delta}{3}.
\end{align*}
Similarly, for each $r\in R$ and $t\in[0,T]$ on $\Omega_1^r(\epsilon)\cap\Omega_2^r(\epsilon)$, we have 
\begin{align*}
\widetilde{W}^r(t)&=\widetilde{W}_{1-\varepsilon}^r(t) + \widetilde{W}_{1+\varepsilon}^r(t)-\widetilde{W}_{1-\varepsilon}^r(t)+\widetilde{W}^r(t)-\widetilde{W}_{1+\varepsilon}^r(t)\\
&\leq \frac{\delta}{3}+(1+\epsilon)\widetilde{\QQ}^r(t)+\frac{\delta}{3} = (1+\epsilon)\widetilde{\QQ}^r(t)+\frac{2\delta}{3},
\end{align*}
which implies that
\[
\widetilde{\QQ}^r(t)\geq\frac{1}{1+\epsilon}\widetilde{W}^r(t)-\frac{2\delta}{3(1+\epsilon)}\geq \frac{1}{1+\epsilon}\widetilde{W}^r(t)-\frac{2\delta}{3}.
\]
Hence, for each $r\in R$ and $t\in[0,T]$ on $\Omega_1^r(\epsilon)\cap\Omega_2^r(\epsilon)$, we have
\[
\frac{1}{1+\epsilon}\widetilde{W}^r(t)-\frac{2\delta}{3}\leq\widetilde{\QQ}^r(t)\leq \frac{1}{1-\epsilon}\widetilde{W}^r(t)+\frac{2\delta}{3}.
\]
Subtracting $\widetilde{W}^r(t)$ from each side of the above inequality and using the fact that $\epsilon\in(0,1/2)$, for each $r\in R$ and $t\in[0,T]$ on $\Omega_1^r(\epsilon)\cap\Omega_2^r(\epsilon)$, we have
\begin{equation}
   -\epsilon\widetilde{W}^r(t)-\frac{2\delta}{3}\leq\widetilde{\QQ}^r(t)-\widetilde{W}^r(t)\leq 2\epsilon\widetilde{W}^r(t)+\frac{2\delta}{3}.\label{difference of Q and W}
\end{equation}
Given $M\in\N$, let 
\[
\Omega^r(M) := \left\{\|\widetilde{W}^r\|_T<M\right\}\quad\text{and}\quad \Omega(M):=\left\{\left\|W^*\right\|_T<M\right\}.
\]
Since $W^*$ is continuous almost surely, 
$\P\left(\bigcup_{M\in\N}\Omega(M)\right)=1$.
Hence, there exists $M_\eta\in\N$ such that $
\P(\Omega(M_\eta))\geq 1-\eta$.
Since $\left\{f
\in\D([0,\infty),\R_+) : \|f\|_T<M_\eta \right\}$ is open in the $J_1$-topology and $\widetilde{W}^r(\cdot)\Rightarrow W^*(\cdot)$ as $r\to\infty$ (see \eqref{diffusion of W}), by the Portmanteau theorem,
$\liminf_{r\to\infty}\P(\Omega^r(M_\eta))\geq 1-\eta$.  Thus,
\begin{equation}
    \liminf_{r\to\infty}\P\left(\Omega_1^r(\epsilon)\cap\Omega_2^r(\epsilon)\cap\Omega^r(M_\eta)\right)\geq 1-\eta.\label{intersection of three Omegas}
\end{equation}
Furthermore, by \eqref{difference of Q and W}, for each $r\in R$, $
\|\widetilde{\QQ}^r-\widetilde{W}^r\|_T\leq 2\epsilon M_\eta+2\delta/3$ on $\Omega_1^r(\epsilon)\cap\Omega_2^r(\epsilon)\cap\Omega^r(M_\eta)$.
Take $\epsilon = \frac{1}{2}\left(\frac{\delta}{3M_\eta}\wedge1\right)$. Thus, for each $r\in R$, $\|\widetilde{\QQ}^r-\widetilde{W}^r\|_T\leq\delta$ on $\Omega_1^r(\epsilon)\cap\Omega_2^r(\epsilon)\cap\Omega^r(M_\eta)$.
Hence, by \eqref{intersection of three Omegas}, we have 
$\liminf_{r\to\infty}\P\left(\|\widetilde{\QQ}^r-\widetilde{W}^r\|_T\leq \delta\right)\geq 1-\eta$.
Since $T>0$, $\eta\in(0,1)$ and $\delta\in(0,1)$ are chosen arbitrarily, the proof is complete.
\end{proof}
\end{lem}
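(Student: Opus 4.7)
The plan is to exploit the concentration of mass near $1$ already established by Corollary \ref{cor:Tailtozero}, Lemma \ref{a<1 convergence}, and Theorem \ref{In between 0 and 1}: $\widetilde{\QQ}^r$ and $\widetilde{W}^r$ are integrals of $\widetilde{\Q}^r$ against $\id$ and $\chi$ respectively, and since $\id(x)=\chi(x)=1$ at $x=1$, any mass concentrated there contributes identically to both.  The distance $|\widetilde{\QQ}^r-\widetilde{W}^r|$ should therefore be controlled by (i) mass that sits far from $1$, which the preceding results send to zero, and (ii) the small discrepancy $|x-1|$ on a narrow band around $1$.

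Concretely, I would fix $\varepsilon\in(0,1/2)$ and perform the decomposition $\widetilde{\Q}^r=\widetilde{\Q}^r|_{[0,1-\varepsilon]}+\widetilde{\Q}^r|_{(1-\varepsilon,1+\varepsilon]}+\widetilde{\Q}^r|_{(1+\varepsilon,\infty)}$, integrating each piece against $\id$ and against $\chi$.  For the left tail, Theorem \ref{In between 0 and 1} gives $\|\widetilde{\QQ}_{1-\varepsilon}^r\|_T\overset{p}{\to}0$ while Lemma \ref{a<1 convergence}(3) gives $\|\widetilde{W}_{1-\varepsilon}^r\|_T\overset{p}{\to}0$.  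For the right tail, Corollary \ref{cor:Tailtozero} directly gives that $\|\widetilde{W}^r-\widetilde{W}_{1+\varepsilon}^r\|_T$ and $\|\widetilde{\QQ}^r-\widetilde{\QQ}_{1+\varepsilon}^r\|_T$ both tend to zero in probability.

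For the central band $(1-\varepsilon,1+\varepsilon]$, the pointwise sandwich $(1-\varepsilon)\id_{(1-\varepsilon,1+\varepsilon]}\le \chi\id_{(1-\varepsilon,1+\varepsilon]}\le (1+\varepsilon)\id_{(1-\varepsilon,1+\varepsilon]}$ yields
\[
(1-\varepsilon)\bigl(\widetilde{\QQ}_{1+\varepsilon}^r(t)-\widetilde{\QQ}_{1-\varepsilon}^r(t)\bigr)\le \widetilde{W}_{1+\varepsilon}^r(t)-\widetilde{W}_{1-\varepsilon}^r(t)\le (1+\varepsilon)\bigl(\widetilde{\QQ}_{1+\varepsilon}^r(t)-\widetilde{\QQ}_{1-\varepsilon}^r(t)\bigr).
\]
Summing the three band contributions and absorbing the left- and right-tail pieces into a uniform small additive error $\theta(\varepsilon, r)$ that vanishes in probability, I would obtain
\[
-\varepsilon\,\widetilde{W}^r(t)-\theta(\varepsilon,r)\le \widetilde{\QQ}^r(t)-\widetilde{W}^r(t)\le 2\varepsilon\,\widetilde{W}^r(t)+\theta(\varepsilon,r)
\]
uniformly on $[0,T]$ on an event of probability tending to $1$.

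The conclusion then follows a standard truncation argument: by $\widetilde{W}^r(\cdot)\Rightarrow W^*(\cdot)$ and the almost sure continuity of $W^*$, there exists $M_\eta$ with $\liminf_{r\to\infty}\P(\|\widetilde{W}^r\|_T<M_\eta)\ge 1-\eta$; choosing $\varepsilon$ small relative to $\delta/M_\eta$ makes the $\varepsilon\,\widetilde{W}^r$ term at most $\delta/2$, and the $\theta(\varepsilon,r)$ term is at most $\delta/2$ for $r$ large.  The main subtlety is ensuring that the middle band's \emph{count} is controlled by its \emph{workload} (the sandwich inequality accomplishes exactly this), and that the three vanishing ingredients can be applied at a \emph{fixed} cutoff $\varepsilon$ chosen before sending $r\to\infty$—both of which are resolved by the order of quantifiers above.
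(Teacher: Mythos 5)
Your proposal is correct and mirrors the paper's proof essentially step for step: the same three-band decomposition around $1$, the same sandwich $(1-\varepsilon)\id_{(1-\varepsilon,1+\varepsilon]}\le\chi\id_{(1-\varepsilon,1+\varepsilon]}\le(1+\varepsilon)\id_{(1-\varepsilon,1+\varepsilon]}$, the same invocation of Theorem~\ref{In between 0 and 1}, Lemma~\ref{a<1 convergence}, and Corollary~\ref{cor:Tailtozero} to kill the tails, and the same final truncation via the tightness of $\widetilde{W}^r$ with $\varepsilon$ chosen small relative to $\delta/M_\eta$. No meaningful differences.
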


\begin{cor}\label{QQ to W}
As $r\to\infty$, $\widetilde{\QQ}^r(\cdot)\Rightarrow W^*(\cdot)$.
\begin{proof}
For each $r\in R$ and $t\geq0$, $\widetilde{\QQ}^r(t) = \widetilde{\QQ}^r(t) - \widetilde{W}^r(t)+\widetilde{W}^r(t)$. The result follows from this, Lemma \ref{Lemma D}, \eqref{diffusion of W} and the convergence together theorem.\end{proof}
\end{cor}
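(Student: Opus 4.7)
The plan is to apply the convergence together theorem to the trivial decomposition
\[
\widetilde{\QQ}^r(\cdot) = \widetilde{W}^r(\cdot) + \bigl(\widetilde{\QQ}^r(\cdot) - \widetilde{W}^r(\cdot)\bigr).
\]
This reduction is natural because Lemma \ref{Lemma D} has already done the essential work: it shows that after the distribution dependent scaling the queue length and the workload are asymptotically indistinguishable. Once this gap estimate is in hand, transferring the known scaling limit of $\widetilde{W}^r$ to $\widetilde{\QQ}^r$ is purely formal.

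For the first summand, I would recall that $\widetilde{W}^r(\cdot) = \widehat{W}^r(\cdot)$ for every $r \in R$ (noted immediately after \eqref{definition of QQ and W}), so \eqref{diffusion of W} in Proposition \ref{Def of X*} gives $\widetilde{W}^r(\cdot) \Rightarrow W^*(\cdot)$ as $r \to \infty$ in $\D([0,\infty),\R)$. For the second summand, Lemma \ref{Lemma D} yields $\widetilde{\QQ}^r - \widetilde{W}^r \overset{p}{\to} 0(\cdot)$, i.e., convergence in probability, uniformly on compact time intervals, to the continuous deterministic zero function. Since the limit of the second summand is deterministic, joint convergence of the two summands is automatic.

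Applying the convergence together theorem (see \cite[Theorem 11.4.7]{Whitt}) then yields $\widetilde{\QQ}^r(\cdot) \Rightarrow W^*(\cdot) + 0(\cdot) = W^*(\cdot)$ as $r \to \infty$, which is the desired conclusion. There is no genuine obstacle at the level of this corollary; the entire difficulty has already been absorbed into the proof of Lemma \ref{Lemma D}, which in turn relied on Corollary \ref{cor:Tailtozero} (tail mass above one), Lemma \ref{a<1 convergence} (workload below one), and most critically Theorem \ref{In between 0 and 1} (queue mass below one). The corollary is therefore best viewed as the final assembly step that packages these ingredients into a clean statement about the scalar queue length process, complementing the full measure valued convergence asserted by Theorem \ref{mv convergence}.
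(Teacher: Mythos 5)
Your proposal is correct and follows exactly the same route as the paper's proof: decompose $\widetilde{\QQ}^r$ as $\widetilde{W}^r$ plus the difference, invoke Lemma \ref{Lemma D} for the difference, \eqref{diffusion of W} for $\widetilde{W}^r$, and conclude via the convergence together theorem. The only difference is that you spell out the intermediate observations in more detail.
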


\begin{lem}\label{Lemma E}
For all $a>1$, we have $\widetilde{\QQ}_a^r(\cdot)\Rightarrow W^*(\cdot)$ as $r\to\infty$. 
\begin{proof}
The result follows from Corollaries \ref{cor:Tailtozero} and \ref{QQ to W}, and the convergence together theorem.
\end{proof}
\end{lem}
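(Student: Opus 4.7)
The plan is to derive Lemma \ref{Lemma E} as a direct consequence of Corollary \ref{QQ to W} together with Corollary \ref{cor:Tailtozero}, combined via the convergence together theorem. Corollary \ref{QQ to W} supplies the unrestricted convergence $\widetilde{\QQ}^r(\cdot)\Rightarrow W^*(\cdot)$, while Corollary \ref{cor:Tailtozero} contributes the tail estimate $\|\widetilde{\QQ}^r-\widetilde{\QQ}_a^r\|_T\overset{p}{\to}0$ for every $a>1$ and every $T>0$. Both facts are real-valued process statements with the same continuous limit, so the assembly is routine.

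The execution is a single step: for each $r\in R$ and $a>1$, decompose
\[
\widetilde{\QQ}_a^r(\cdot)=\widetilde{\QQ}^r(\cdot)-\bigl(\widetilde{\QQ}^r(\cdot)-\widetilde{\QQ}_a^r(\cdot)\bigr).
\]
By Corollary \ref{QQ to W}, the first summand converges in distribution to $W^*(\cdot)$ in $\D([0,\infty),\R_+)$. By Corollary \ref{cor:Tailtozero}, the second summand converges in probability to the identically zero (and hence continuous, deterministic) function $0(\cdot)$. The convergence together theorem then yields $\widetilde{\QQ}_a^r(\cdot)\Rightarrow W^*(\cdot)$, which is exactly the claim.

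I anticipate no genuine obstacle at this stage, since all the substantive analysis has already been absorbed into the upstream results: Theorem \ref{W convergence} controls the workload in the $a$-cutoffs and drives the $a>1$ case of Corollary \ref{cor:Tailtozero}; Theorem \ref{In between 0 and 1} eliminates the small-size queue mass; and Lemma \ref{Lemma D} synchronizes the total queue length with the total workload under the distribution dependent scaling by exploiting the sharp concentration near the atom at $1$. Lemma \ref{Lemma E} is best viewed as a packaging statement confirming that cutting off strictly above $1$ neither creates nor destroys mass in the limit, because the measure-valued limit $W^*(\cdot)\delta_1^+$ lives entirely at the point $1$, so the $a$-cutoff queue length inherits the same scalar diffusion limit $W^*(\cdot)$ as the full queue length.
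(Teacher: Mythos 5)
Your proof is correct and follows exactly the route the paper indicates: decompose $\widetilde{\QQ}_a^r=\widetilde{\QQ}^r-(\widetilde{\QQ}^r-\widetilde{\QQ}_a^r)$, apply Corollary \ref{QQ to W} to the first term and Corollary \ref{cor:Tailtozero} to the second, and invoke the convergence together theorem. This matches the paper's one-line proof.
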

For any $M>0$, we define $\K(M):=\left\{\zeta\in\M:\langle\id,\zeta\rangle\vee\langle\chi,\zeta\rangle\leq M\right\}$. For each $M>0$, $\K(M)$ is a relatively compact subset of $\M$ (see \cite{Prokhorov}).
\begin{lem}\label{J1}
The sequence of processes $\{\widetilde{\Q}^r(\cdot)\}_{r\in R}$ satisfies (J1).
\begin{proof}
Fix $T>0$ and $\eta\in(0,1)$.  It suffices to show that there exists $M_{T}^\eta>0$ such that 
\begin{equation}\label{Qtight}
\liminf_{r\to\infty}\P\left(\widetilde{\Q}^r(t)\in\K(M_{T}^\eta)\text{ for all }t\in[0,T]\right)\geq 1-\eta.
\end{equation}
As $r\to\infty$, $\widetilde{\QQ}^r(\cdot)\Rightarrow W^*(\cdot)$ by Corollary \ref{QQ to W} and $\widetilde{W}^r(\cdot)\Rightarrow W^*(\cdot)$ by \eqref{diffusion of W}. Thus, $\{\widetilde{\QQ}^r(\cdot)\}_{r\in R}$ and $\{\widetilde{W}^r(\cdot)\}_{r\in R}$ are tight. Hence, $M_{T}^\eta>0$ exists such that 
\[
\liminf_{r\to\infty}\P\left(\|\widetilde{\QQ}^r\|_T\leq M_{T}^\eta\right)\geq 1-\eta/2\quad\text{and}\quad
\liminf_{r\to\infty}\P\left(\|\widetilde{W}^r\|_T\leq M_{T}^\eta\right)\geq 1-\eta/2,
\]
which implies \eqref{Qtight}.
\end{proof}
\end{lem}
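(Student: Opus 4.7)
The plan is to exploit the characterization of relatively compact sets in $\M$: since the sets $\K(M)=\{\zeta\in\M:\langle\id,\zeta\rangle\vee\langle\chi,\zeta\rangle\le M\}$ are relatively compact for every $M>0$ (as recalled immediately before the lemma), it suffices to produce a single constant $M_T^\eta>0$ such that with asymptotic probability at least $1-\eta$ the measure $\widetilde{\Q}^r(t)$ lies in $\K(M_T^\eta)$ for every $t\in[0,T]$. Because $\langle\id,\widetilde{\Q}^r(t)\rangle=\widetilde{\QQ}^r(t)$ and $\langle\chi,\widetilde{\Q}^r(t)\rangle=\widetilde{W}^r(t)$, this amounts to controlling the suprema over $[0,T]$ of these two real-valued processes simultaneously.

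First, I would invoke the two convergence results already established in the text: Corollary \ref{QQ to W} yields $\widetilde{\QQ}^r(\cdot)\Rightarrow W^*(\cdot)$ and \eqref{diffusion of W} yields $\widetilde{W}^r(\cdot)\Rightarrow W^*(\cdot)$, both in $\D([0,\infty),\R)$ as $r\to\infty$. Each of these convergent sequences is in particular tight, and because $W^*$ is almost surely continuous, the continuous mapping theorem applied to $f\mapsto\|f\|_T$ shows that the real-valued random variables $\|\widetilde{\QQ}^r\|_T$ and $\|\widetilde{W}^r\|_T$ converge in distribution to $\|W^*\|_T<\infty$ a.s. Consequently I can choose $M_T^\eta$ large enough so that $\liminf_{r\to\infty}\P(\|\widetilde{\QQ}^r\|_T\le M_T^\eta)\ge 1-\eta/2$ and $\liminf_{r\to\infty}\P(\|\widetilde{W}^r\|_T\le M_T^\eta)\ge 1-\eta/2$ simultaneously.

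A union-bound argument on the complements then gives $\liminf_{r\to\infty}\P(\|\widetilde{\QQ}^r\|_T\le M_T^\eta\text{ and }\|\widetilde{W}^r\|_T\le M_T^\eta)\ge 1-\eta$. On the intersection event, every $\widetilde{\Q}^r(t)$ for $t\in[0,T]$ satisfies $\langle\id,\widetilde{\Q}^r(t)\rangle\vee\langle\chi,\widetilde{\Q}^r(t)\rangle\le M_T^\eta$, hence belongs to $\K(M_T^\eta)$, which verifies (J1) with $\mathcal{K}_T^\eta=\overline{\K(M_T^\eta)}$.

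There is really no obstacle at this point in the paper: all the hard work has already been done in establishing Lemma \ref{Lemma D} (which controls the total mass $\widetilde{\QQ}^r$ by the workload $\widetilde{W}^r$ up to a vanishing error) together with the workload diffusion limit \eqref{diffusion of W}. Compact containment then follows by a two-line tightness-plus-union-bound argument, so the only care required is in selecting $M_T^\eta$ uniformly to dominate both moment functionals.
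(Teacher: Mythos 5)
Your argument is essentially identical to the paper's proof: both use Corollary \ref{QQ to W} and \eqref{diffusion of W} to get the two weak convergences to $W^*$, then choose a single $M_T^\eta$ bounding both $\|\widetilde{\QQ}^r\|_T$ and $\|\widetilde{W}^r\|_T$ with asymptotic probability $1-\eta/2$ each, and finish by a union bound. Your extra observation that the a.s.\ continuity of $W^*$ together with the continuous mapping theorem justifies convergence of the sup-norm functionals is a slightly more explicit rendering of the tightness step the paper leaves implicit, but it is the same route.
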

\subsubsection{Verification of (T1) and (T2)}\label{verification of T1 and T2}

\begin{lem}\label{tight}
Suppose $g\in\C_b(\R_+)$.  Then $\{\langle g ,\widetilde{\mathcal{Q}}^r(\cdot)\rangle\}_{r\in R}$ is {\bf C}-tight.
\begin{proof} The result holds trivially if $\Vert g\Vert_{\infty}=0$.
Henceforth we assume that $\Vert g\Vert_{\infty}>0$.
Fix $T>0$ and $\eta\in(0,1)$.
We begin by noting that
$\|\langle g ,\widetilde{\mathcal{Q}}^r(\cdot)\rangle\|_T\le \Vert g\Vert_{\infty}\|\widetilde{\QQ}^r\|_T$,
In addition, by Lemma \ref{Lemma D}, $\{{\widetilde{\QQ}}^r(\cdot)\}_{r\in R}$ is tight.
Hence, there exists $M>0$ (depending on $g$, $T$ and $\eta$) 
\begin{equation}
\liminf_{r\to\infty}{\mathbb P}\left( \|\langle g ,\widetilde{\mathcal{Q}}^r(\cdot)\rangle\|_T\le M\right)\ge 1-\eta/3,\label{tilde QQ M bounded above by 1 - eta/3}
\end{equation}
and so $\{\langle g ,\widetilde{\mathcal{Q}}^r(\cdot)\rangle\}_{r\in R}$ satisfies (T1).

Next we show that (T.2) holds. Let $M$ be as in \eqref{tilde QQ M bounded above by 1 - eta/3} and set $\delta=\eta/8M$.  There exists $\varepsilon\in(0,1)$ such that for all $\vert x-1\vert<\varepsilon$ we have $\vert g(x)-g(1)\vert<\delta$. Fix such an $\varepsilon\in(0,1)$. For each $r\in R$ and $t\ge 0$,
$$
\langle g ,\widetilde{\mathcal{Q}}^r(t)\rangle
=
\langle g \id_{[0,1-\varepsilon]} ,\widetilde{\mathcal{Q}}^r(t)\rangle
+
\langle g \id_{(1-\varepsilon,1+\varepsilon]} ,\widetilde{\mathcal{Q}}^r(t)\rangle
+
\langle g \id_{(1+\varepsilon,\infty)},\widetilde{\mathcal{Q}}^r(t)\rangle.
$$
Hence, for each $r\in R$ and $t,s\ge 0$,
\begin{eqnarray}\label{eq:3_parts}
\langle g ,\widetilde{\mathcal{Q}}^r(t+s)\rangle&-&\langle g ,\widetilde{\mathcal{Q}}^r(t)\rangle
=
\left(\langle g \id_{[0,1-\varepsilon]} ,\widetilde{\mathcal{Q}}^r(t+s)\rangle
-
\langle g \id_{[0,1-\varepsilon]} ,\widetilde{\mathcal{Q}}^r(t)\rangle\right)\label{eq:part1}\\
&+&
\left(\langle g \id_{(1-\varepsilon,1+\varepsilon]} ,\widetilde{\mathcal{Q}}^r(t+s)\rangle-\langle g \id_{(1-\varepsilon,1+\varepsilon]} ,\widetilde{\mathcal{Q}}^r(t)\rangle\right)\label{eq:part2}\\
&+&
\left(\langle g \id_{(1+\varepsilon,\infty)},\widetilde{\mathcal{Q}}^r(t+s)\rangle-\langle g \id_{(1+\varepsilon,\infty)},\widetilde{\mathcal{Q}}^r(t)\rangle\right).\label{eq:part3}
\end{eqnarray}
We will work with each of the three types terms above separately.

To begin we consider the terms on the right side in lines \eqref{eq:part1} and \eqref{eq:part3}.  For all $r\in R$, we have
\begin{eqnarray*}
\|
\langle g \id_{[0,1-\varepsilon]} ,\widetilde{\mathcal{Q}}^r(\cdot)\rangle\|_T
&\le& 
\Vert g\Vert_{\infty}\|
\langle \id_{[0,1-\varepsilon]} ,\widetilde{\mathcal{Q}}^r(\cdot)\rangle\|_T\\
\|
\langle g \id_{(1+\varepsilon,\infty)} ,\widetilde{\mathcal{Q}}^r(\cdot)\rangle\|_T
&\le& 
\Vert g\Vert_{\infty}\|
\langle \id_{(1+\varepsilon,\infty)} ,\widetilde{\mathcal{Q}}^r(\cdot)\rangle\|_T
\end{eqnarray*}
This together with Theorem \ref{In between 0 and 1} and Corollary \ref{cor:Tailtozero} gives that as $r\to\infty$.
$$
\|\langle g \id_{[0,1-\varepsilon]} ,\widetilde{\mathcal{Q}}^r(\cdot)\rangle\|_T\overset{p}{\to} 0
\qquad\hbox{and}\qquad
\|\langle g \id_{(1+\varepsilon,\infty)} ,\widetilde{\mathcal{Q}}^r(\cdot)\rangle\|_T\overset{p}{\to} 0.
$$
Hence,
$
\liminf_{r\to\infty}{\mathbb P}\left(\|
\langle g \id_{[0,1-\varepsilon]} ,\widetilde{\mathcal{Q}}^r(\cdot)\rangle\|_T\vee\|
\langle g \id_{(1+\varepsilon,\infty)} ,\widetilde{\mathcal{Q}}^r(\cdot)\rangle\|_T\le \eta/8\right)=1$.
For $r\in R$, let
$$
B^r_T:=\left\{ \|\widetilde{\QQ}^r\|_T\le M\right\}\cap 
\left\{ \|\langle g \id_{[0,1-\varepsilon]} ,\widetilde{\mathcal{Q}}^r(\cdot)\rangle\|_T\vee\|\langle g \id_{(1+\varepsilon,\infty)]} ,\widetilde{\mathcal{Q}}^r(\cdot)\rangle\|_T\le \eta/8\right\}.
$$
Then, by the above and \eqref{tilde QQ M bounded above by 1 - eta/3},
\begin{equation}\label{eq:B}
\liminf_{r\to\infty}{\mathbb P}\left( B_T^r\right)\ge 1-\eta/3.
\end{equation}

Next we work with the term in \eqref{eq:part2}.
For each $r\in R$ and $t\ge 0$, by the choice of $\varepsilon$,
$$
(g(1)-\delta)
\langle \id_{(1-\varepsilon,1+\varepsilon]} ,\widetilde{\mathcal{Q}}^r(t)\rangle
\le 
\langle g \id_{(1-\varepsilon,1+\varepsilon]} ,\widetilde{\mathcal{Q}}^r(t)\rangle
\le (g(1)+\delta)
\langle \id_{(1-\varepsilon,1+\varepsilon]} ,\widetilde{\mathcal{Q}}^r(t)\rangle.
$$
And so, on the event $\left\{ \|\widetilde{\QQ}^r\|_T\le M\right\}$, by the choice of $\delta$ and $\varepsilon$,
for any $r\in R$ and $s,t\ge 0$ such that $s+t\leq T$,
\begin{eqnarray*}
&&\langle g \id_{(1-\varepsilon,1+\varepsilon]} ,\widetilde{\mathcal{Q}}^r(t+s)\rangle
-
\langle g \id_{(1-\varepsilon,1+\varepsilon]} ,\widetilde{\mathcal{Q}}^r(t)\rangle\\
&&\qquad\le (g(1)+\delta)
\langle \id_{(1-\varepsilon,1+\varepsilon]} ,\widetilde{\mathcal{Q}}^r(t+s)\rangle
-
(g(1)-\delta)
\langle \id_{(1-\varepsilon,1+\varepsilon]} ,\widetilde{\mathcal{Q}}^r(t)\rangle\\
&&\qquad \le g(1)\left(\langle \id_{(1-\varepsilon,1+\varepsilon]} ,\widetilde{\mathcal{Q}}^r(t+s)\rangle-
\langle \id_{(1-\varepsilon,1+\varepsilon]} ,\widetilde{\mathcal{Q}}^r(t)\rangle\right)+\delta\left(\widetilde{\QQ}^r(t+s)+\widetilde{\QQ}^r(t)\right)\\
&&\qquad \le g(1)\left(\langle \id_{(1-\varepsilon,1+\varepsilon]} ,\widetilde{\mathcal{Q}}^r(t+s)\rangle-
\langle \id_{(1-\varepsilon,1+\varepsilon]} ,\widetilde{\mathcal{Q}}^r(t)\rangle\right)
+\eta/4\\
&&\qquad = g(1)\left(\widetilde{\QQ}_{1+\epsilon}^r(t+s)-\widetilde{\QQ}^r_{1+\epsilon}(t)\right)-g(1)\left(\widetilde{\QQ}_{1-\epsilon}^r(t+s)-\widetilde{\QQ}_{1-\epsilon}^r(t)\right)+\eta/4.
\end{eqnarray*}
Similarly, on $\left\{ \|\widetilde{\QQ}^r\|_T\le M\right\}$, for any $r\in R$ and $s,t\ge 0$ such that $s+t\leq T$ 
\begin{eqnarray*}
&&\langle g \id_{(1-\varepsilon,1+\varepsilon]} ,\widetilde{\mathcal{Q}}^r(t+s)\rangle
-
\langle g \id_{(1-\varepsilon,1+\varepsilon]} ,\widetilde{\mathcal{Q}}^r(t)\rangle\\
&&\qquad \geq g(1)\left(\widetilde{\QQ}_{1+\epsilon}^r(t+s)-\widetilde{\QQ}^r_{1+\epsilon}(t)\right)-g(1)\left(\widetilde{\QQ}_{1-\epsilon}^r(t+s)-\widetilde{\QQ}_{1-\epsilon}^r(t)\right)-\eta/4.
\end{eqnarray*}
Thus, on $\left\{ \|\widetilde{\QQ}^r\|_T\le M\right\}$, for any $r\in R$ and $h\in(0,T)$,
\begin{eqnarray}
&&\sup_{t\in[0,T-h]}\sup_{s\in[0,h]}\lvert \langle g \id_{(1-\varepsilon,1+\varepsilon]} ,\widetilde{\mathcal{Q}}^r(t+s)\rangle
-
\langle g \id_{(1-\varepsilon,1+\varepsilon]} ,\widetilde{\mathcal{Q}}^r(t)\rangle\rvert\nonumber\\
&&\quad\le
\lvert g(1)\rvert \sup_{t\in[0,T-h]}\sup_{s\in[0,h]}
 \lvert\widetilde{\QQ}_{1+\varepsilon}^r(t+s)-\widetilde{\QQ}_{1+\varepsilon}^r(t)\rvert\label{eq:part2Est}\\
 &&\qquad+
 \lvert g(1)\rvert\sup_{t\in[0,T-h]}\sup_{s\in[0,h]}
 \lvert\widetilde{\QQ}_{1-\varepsilon}^r(t+s)-\widetilde{\QQ}_{1-\varepsilon}^r(t)\rvert
+\eta/4.\nonumber
\end{eqnarray}
For $r\in R$ and $h\in(0,T)$, let
\begin{eqnarray*}
A_T^r(h)&=&B_T^r\cap
\left\{ \lvert g(1)\rvert\sup_{t\in[0,T-h]}\sup_{s\in[0,h]}
 \lvert\widetilde{\QQ}_{1+\varepsilon}^r(t+s)(\omega)-\widetilde{\QQ}_{1+\varepsilon}^r(t)(\omega)\rvert\le \frac{\eta}{4}\right\}\\
&&\quad \cap\left\{
\lvert g(1)\rvert\sup_{t\in[0,T-h]}\sup_{s\in[0,h]}
 \lvert\widetilde{\QQ}_{1-\varepsilon}^r(t+s)(\omega)-\widetilde{\QQ}_{1-\varepsilon}^r(t)(\omega)\rvert\le \frac{\eta}{4 }
 \right\}.
\end{eqnarray*}
Then \eqref{eq:part2Est} together with \eqref{eq:part1}-\eqref{eq:part3} implies
that for all $r\in R$ and $h\in(0,T)$
\begin{equation}\label{eq:A}
A_T^r(h)\subseteq
\left\{\sup_{t\in[0,T-h]}\sup_{s\in[0,h]} \lvert
\langle g ,\widetilde{\mathcal{Q}}^r(t+s)\rangle-\langle g ,\widetilde{\mathcal{Q}}^r(t)\rangle\rvert\le \eta
\right\}.
\end{equation}
If $g(1)=0$, then $A_T^r(h)=B_T^r$ for all $h\in(0,T)$ and $r\in R$, in which case
(T.2) follows from the above and \eqref{eq:B}. Otherwise, $\lvert g(1)\rvert>0$.
By Theorem \ref{In between 0 and 1} and Lemma \ref{Lemma E}, $\left\{\widetilde{\QQ}_{1-\varepsilon}^r(\cdot)\right\}_{r\in R}$
and
$\left\{\widetilde{\QQ}_{1+\varepsilon}^r(\cdot)\right\}_{r\in R}$
are {\bf C}-tight. Hence, there exists $h\in(0,T)$ such that
\begin{eqnarray*}
\limsup_{r\to\infty}{\mathbb P}\left(
\sup_{t\in[0,T-h]}\sup_{s\in[0,h]}\lvert\widetilde{\QQ}_{1-\varepsilon}^r(t+s)(\omega)-\widetilde{\QQ}_{1-\varepsilon}^r(t)(\omega)\rvert\le \frac{\eta}{8\lvert g(1)\rvert}\right)\ge 1-\eta/3,\\
\limsup_{r\to\infty}{\mathbb P}\left(
\sup_{t\in[0,T-h]}\sup_{s\in[0,h]}\lvert\widetilde{\QQ}_{1+\varepsilon}^r(t+s)(\omega)-\widetilde{\QQ}_{1+\varepsilon}^r(t)(\omega)\rvert\le \frac{\eta}{8\lvert g(1)\rvert}
\right)\ge 1-\eta/3.
\end{eqnarray*}
Thus, for this choice of $h$, the above together with \eqref{eq:B} implies that
$\limsup_{r\to\infty}{\mathbb P}\left(A_T^r(h)\right)\ge 1-\eta$.
In this case, (T.2) follows from the above and \eqref{eq:A}.
\end{proof}
\end{lem}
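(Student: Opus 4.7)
The plan is to verify the two conditions (T1) and (T2) stated just before the lemma. For (T1), note that $|\langle g,\widetilde{\Q}^r(t)\rangle|\le \|g\|_\infty\widetilde{\QQ}^r(t)$ for all $r\in R$ and $t\ge 0$, so that $\|\langle g,\widetilde{\Q}^r(\cdot)\rangle\|_T\le \|g\|_\infty\|\widetilde{\QQ}^r\|_T$. Since $\widetilde{\QQ}^r(\cdot)\Rightarrow W^*(\cdot)$ by Corollary \ref{QQ to W}, the family $\{\|\widetilde{\QQ}^r\|_T\}_{r\in R}$ is tight, which furnishes the required constant $M$ and yields (T1) with essentially no additional work.

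The real content lies in (T2), and the key geometric fact to exploit is that under distribution dependent scaling the mass of $\widetilde{\Q}^r$ asymptotically concentrates at $1$. The natural approach is, for any $\varepsilon>0$, to decompose
\[
g \;=\; g\id_{[0,1-\varepsilon]} \;+\; g\id_{(1-\varepsilon,1+\varepsilon]} \;+\; g\id_{(1+\varepsilon,\infty)},
\]
and to estimate the modulus of continuity of $\langle g,\widetilde{\Q}^r(\cdot)\rangle$ piece by piece. The outer pieces are straightforward: their absolute values are dominated by $\|g\|_\infty\widetilde{\QQ}^r_{1-\varepsilon}(t)$ and $\|g\|_\infty\langle\id_{(1+\varepsilon,\infty)},\widetilde{\Q}^r(t)\rangle$, which by Theorem \ref{In between 0 and 1} and Corollary \ref{cor:Tailtozero} respectively converge in probability to $0$ uniformly on $[0,T]$. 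Hence their variation across any time increment is negligible with probability arbitrarily close to one.

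For the middle piece, use the uniform continuity of $g$ on $[1-\varepsilon,1+\varepsilon]$: choose $\varepsilon$ small enough (depending on $\eta$ and the $L^\infty$ norm of $g$) that $|g(x)-g(1)|<\delta$ on this interval, where $\delta$ is a small parameter tuned to $\eta/M$. Then on the high-probability event $\{\|\widetilde{\QQ}^r\|_T\le M\}$, one obtains the sandwich
\[
(g(1)-\delta)\bigl[\widetilde{\QQ}^r_{1+\varepsilon}(t)-\widetilde{\QQ}^r_{1-\varepsilon}(t)\bigr] \;\le\; \langle g\id_{(1-\varepsilon,1+\varepsilon]},\widetilde{\Q}^r(t)\rangle \;\le\; (g(1)+\delta)\bigl[\widetilde{\QQ}^r_{1+\varepsilon}(t)-\widetilde{\QQ}^r_{1-\varepsilon}(t)\bigr].
\]
Differencing at times $t$ and $t+s$ and bounding the $\delta$-terms by $2\delta M\le \eta/4$ reduces matters to controlling the modulus of continuity of $g(1)\bigl(\widetilde{\QQ}^r_{1+\varepsilon}(\cdot)-\widetilde{\QQ}^r_{1-\varepsilon}(\cdot)\bigr)$. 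But $\{\widetilde{\QQ}^r_{1+\varepsilon}\}_{r\in R}$ is $\textbf{C}$-tight by Lemma \ref{Lemma E} (its limit $W^*$ is continuous) and $\{\widetilde{\QQ}^r_{1-\varepsilon}\}_{r\in R}$ is $\textbf{C}$-tight by Theorem \ref{In between 0 and 1} (its limit is $0(\cdot)$), so a small enough $h>0$ can be chosen to make both moduli of continuity less than $\eta/(8|g(1)|)$ with probability at least $1-\eta/3$ each (if $g(1)=0$ the middle piece is already dominated by $2\delta M$ and no modulus bound is needed).

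The main obstacle is the middle piece: the outer pieces are handled essentially for free by existing vanishing results, but the middle interval carries nontrivial mass in the limit, and the naive pointwise bound $|g-g(1)|<\delta$ only gives control up to a multiplicative factor of the (non-vanishing) total mass there. The trick is to route the estimate through the $\textbf{C}$-tight cutoff processes $\widetilde{\QQ}^r_{1\pm\varepsilon}$, so that the leading term is $g(1)$ times an increment whose continuity modulus is asymptotically controlled, leaving only a genuinely small additive error of order $\delta M$. Combining all three pieces via a union bound and the high-probability event from (T1) yields (T2), completing the verification of $\textbf{C}$-tightness.
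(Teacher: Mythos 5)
Your proposal is correct and follows essentially the same route as the paper: verify (T1) by bounding $\langle g,\widetilde{\Q}^r(\cdot)\rangle$ by $\|g\|_\infty\widetilde{\QQ}^r(\cdot)$ and invoking tightness of the queue length, then verify (T2) by splitting $g$ across $[0,1-\varepsilon]$, $(1-\varepsilon,1+\varepsilon]$ and $(1+\varepsilon,\infty)$, killing the outer pieces via Theorem \ref{In between 0 and 1} and Corollary \ref{cor:Tailtozero}, and sandwiching the middle piece between $(g(1)\pm\delta)\left(\widetilde{\QQ}^r_{1+\varepsilon}-\widetilde{\QQ}^r_{1-\varepsilon}\right)$ so that its modulus of continuity is controlled by the \textbf{C}-tightness of the two cutoff queue length processes. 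The choices of $\delta$, $\varepsilon$, $h$ and the treatment of the degenerate case $g(1)=0$ all match the paper's argument.
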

\subsubsection{Proof of Theorem \ref{mv convergence}}
\begin{proof}[Proof of Theorem \ref{mv convergence}]
By Lemmas \ref{J1} and \ref{tight}, the sequence $\{\widetilde{\mathcal{Q}}^r(\cdot)\}_{r\in R}$ is tight. Therefore, there exists a convergent subsequence $\{r_j\}_{j\in \N}$. We denote the limit process by $\Q^*(\cdot)$, i.e., $\widetilde{\mathcal{Q}}^{r_j}(\cdot)\Rightarrow\mathcal{Q}^*(\cdot)$ as $j\to\infty$. By re-indexing, we may assume $\widetilde{\Q}^r(\cdot)\Rightarrow \Q^*(\cdot)$ as $r\to\infty$. Also, by the Skorokhod representation theorem, without loss of generality, we may assume that $\widetilde{\Q}^r(\cdot)\to\Q^*(\cdot)$ almost surely as $r\to\infty$.  Then, by Corollary \ref{QQ to W}, $\langle \id ,\Q^*(\cdot) \rangle\overset{d}{=}W^*(\cdot)$. Also, by Theorem \ref{In between 0 and 1} and Lemma \ref{Lemma E}, 
$\langle\id_{[0,a]},\Q^*(\cdot)\rangle \overset{d}{=}0(\cdot)$ for all $a\in[0,1)$ and $\langle\id_{[0,a]},\Q^*(\cdot)\rangle\overset{d}{=}W^*(\cdot)$ for all $a\in(1, \infty)$. Hence,
$\Q^*(\cdot)\overset{d}{=}W^*(\cdot)\delta_1$.
\end{proof}
\begin{appendices}
\section{}
\subsection{Properties of the Skorohod Map}\label{ap:SM}

\begin{prop}[Lipschitz property of $\Gamma$]\label{Lipschitz property of Skorokhod}
For all $f_1,f_2\in \D_0([0,\infty):\R)$ and $T\geq0$, 
$\|\Gamma[f_1]-\Gamma[f_2]\|_T\leq 2\|f_1-f_2\|_T$.
In particular, $\Gamma$ is
continuous in the Skorokhod $J_1$-topology 
at points in $\C(\R_+)$.
\begin{proof}
See proof of Lemma 13.5.1 in \cite{Whitt}.
\end{proof}
\end{prop}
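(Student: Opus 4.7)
The plan is to exploit the explicit formula \eqref{skorokhod} for $\Gamma$ together with the fact that two natural operations on $\D_0([0,\infty),\R)$ are each 1-Lipschitz under the uniform norm $\|\cdot\|_T$: the running infimum $f \mapsto \inf_{0\le s\le \cdot}f(s)$ on $[0,T]$, and the scalar map $a\mapsto a\wedge 0$. Given these, the Lipschitz bound follows from a two-term decomposition.

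First I would fix $T\ge 0$ and $t\in[0,T]$ and write
\[
\Gamma[f_1](t)-\Gamma[f_2](t) = \bigl(f_1(t)-f_2(t)\bigr) - \left(\Bigl[\inf_{0\le s\le t}f_1(s)\Bigr]\wedge 0 - \Bigl[\inf_{0\le s\le t}f_2(s)\Bigr]\wedge 0\right).
\]
The first term on the right is bounded in absolute value by $\|f_1-f_2\|_T$. For the second, the elementary estimate $\bigl|\inf_{0\le s\le t}f_1(s)-\inf_{0\le s\le t}f_2(s)\bigr|\le \sup_{0\le s\le t}|f_1(s)-f_2(s)|\le \|f_1-f_2\|_T$, followed by the 1-Lipschitz property of $a\mapsto a\wedge 0$, yields another bound of $\|f_1-f_2\|_T$. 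Adding these and then taking the supremum over $t\in[0,T]$ delivers the claimed factor of $2$.

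For the in-particular statement about $J_1$-continuity at points of $\C(\R_+)$, I would invoke the standard characterization that if $f_n\to f$ in $\D([0,\infty),\R)$ with $f\in\C(\R_+)$, then the $J_1$-convergence upgrades to local uniform convergence, i.e., $\|f_n-f\|_T\to 0$ for every $T>0$. Combining this with the Lipschitz bound just established gives $\|\Gamma[f_n]-\Gamma[f]\|_T\to 0$ for every $T>0$. Since $\Gamma[f]\in\C(\R_+)$ whenever $f\in\C(\R_+)$ (the running infimum of a continuous function is continuous, and $a\mapsto a\wedge 0$ is continuous), local uniform convergence of $\Gamma[f_n]$ to $\Gamma[f]$ is equivalent to $J_1$-convergence to $\Gamma[f]$, completing the argument.

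I do not anticipate any genuine obstacle here: the entire proof reduces to one application of the Lipschitz property of two elementary operations, plus the textbook fact that $J_1$-convergence to a continuous limit is equivalent to local uniform convergence. The only point worth flagging is that the factor $2$ in the Lipschitz constant, rather than $1$, is intrinsic to this bound and arises because the two pieces of the decomposition are controlled separately rather than through cancellation.
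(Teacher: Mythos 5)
Your proof is correct and follows essentially the same route as the one the paper defers to (Lemma 13.5.1 in Whitt): decompose $\Gamma[f_1]-\Gamma[f_2]$ into the difference of the functions and the difference of the reflected running infima, and bound each piece by $\|f_1-f_2\|_T$ using the 1-Lipschitz property of $\inf_{0\le s\le\cdot}$ and of $a\mapsto a\wedge 0$. The continuity assertion is likewise handled in the standard way, so nothing is missing.
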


\begin{prop}[Monotonicity Property of $\Gamma$]\label{monotonicity of skorohod}
Suppose $f_1,f_2\in \D_0([0,\infty):\R)$ are such that for all $0\leq s\leq t<\infty$, $f_1(t)-f_1(s)\leq f_2(t)-f_2(s)$ and $f_1(0)\leq f_2(0)$. Then it follows that $\Gamma[f_1](t)\leq \Gamma[f_2](t)$ for all $t\geq0$.
\begin{proof}
    First, we claim that $f_1(t)\leq f_2(t)$ for all $t\geq0$. Fix $t\geq0 $. Letting $s=0$, we have $f_1(t)-f_1(0)\leq f_2(t)-f_2(0)$. Combining this with $f_1(0)\leq f_2(0)$, we have $f_1(t)=f_1(t)-f_1(0)+f_1(0)\leq f_2(t)-f_2(0)+f_2(0)= f_2(t)$. Second, we have $\sup_{0\leq s\leq t}(f_1(t)-f_1(s))\leq\sup_{0\leq s\leq t}(f_2(t)-f_2(s))$, which follows from $f_1(t)-f_1(s)\leq f_2(t)-f_2(s)$ for all $0\leq s\leq t<\infty$ as given in the statement. Thus, using $0\leq f_1(0)\leq f_2(0)$, we have 
\begin{align*}
\Gamma[f_1](t)-\Gamma[f_2](t) &= f_1(t)-\inf_{0\leq s\leq t}(f_1(s)\wedge0)-f_2(t)+\inf_{0\leq s\leq t}(f_2(s)\wedge0)\\
&= \left(f_1(t)+\sup_{0\leq s\leq t}(-f_1(s)\vee 0)\right)-\left(f_2(t)+\sup_{0\leq s\leq t}(-f_2(s)\vee 0)\right)\\
&=\sup_{0\leq s\leq t}\Big[\big(f_1(t)-f_1(s)\big)\vee f_1(t)\Big]-\sup_{0\leq s\leq t}\Big[\big(f_2(t)-f_2(s)\big)\vee f_2(t)\Big]\\
&\leq0.
\end{align*}
\end{proof}
\end{prop}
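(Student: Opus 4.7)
The plan is to reduce the comparison $\Gamma[f_1](t) \leq \Gamma[f_2](t)$ to two trivial pointwise inequalities by rewriting the Skorokhod map in a form that makes each of its two constituent pieces monotone in exactly the sense supplied by the hypothesis. Specifically, I will first establish the identity
\[
\Gamma[f](t) \;=\; \max\!\Bigl(\sup_{0 \leq s \leq t}(f(t) - f(s)),\ f(t)\Bigr), \qquad f \in \D_0([0,\infty):\R),\ t \geq 0.
\]

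To prove this representation I split into two cases according to the sign of $\inf_{0 \leq s \leq t} f(s)$. If $\inf_{0 \leq s \leq t} f(s) \geq 0$, then $f \geq 0$ on $[0,t]$, so the definition of $\Gamma$ gives $\Gamma[f](t) = f(t)$, while $\sup_{0 \leq s \leq t}(f(t) - f(s)) \leq f(t) - \inf_{0 \leq s \leq t} f(s) \leq f(t)$; hence the maximum on the right collapses to $f(t)$. If instead $\inf_{0 \leq s \leq t} f(s) < 0$, then $(\inf f)\wedge 0 = \inf f$, so $\Gamma[f](t) = f(t) - \inf_{0 \leq s \leq t} f(s) = \sup_{0 \leq s \leq t}(f(t) - f(s))$, and this supremum exceeds $f(t)$; again the two sides agree.

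With the representation in hand, the proof finishes in one pass. The increment hypothesis $f_1(t) - f_1(s) \leq f_2(t) - f_2(s)$ holds for every $s \in [0,t]$, so taking the supremum over $s \in [0,t]$ gives
\[
\sup_{0 \leq s \leq t}(f_1(t) - f_1(s)) \;\leq\; \sup_{0 \leq s \leq t}(f_2(t) - f_2(s)).
\]
Specializing the same hypothesis to $s = 0$ and combining with $f_1(0) \leq f_2(0)$ yields $f_1(t) \leq f_2(t)$. Since the maximum function is monotone in each coordinate, applying the representation to both $f_1$ and $f_2$ concludes $\Gamma[f_1](t) \leq \Gamma[f_2](t)$.

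I do not foresee any genuine obstacle: the content sits entirely in the case split that justifies the representation formula, and that is a short verification. Everything else is a one-line transfer of the monotonicity from the increments of $f_1$ and $f_2$ through the sup and the max.
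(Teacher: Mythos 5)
Your proof is correct and takes essentially the same approach as the paper: both reduce the problem to the representation $\Gamma[f](t)=\max\bigl(\sup_{0\leq s\leq t}(f(t)-f(s)),\,f(t)\bigr)$ and then observe that each argument of the max is monotone under the hypotheses. The only difference is that you verify the representation by a case split on the sign of $\inf_{0\leq s\leq t}f(s)$, whereas the paper obtains the same identity by directly rewriting $-\inf_{0\leq s\leq t}(f(s)\wedge 0)$ as $\sup_{0\leq s\leq t}(-f(s)\vee 0)$ and distributing $f(t)$ inside the supremum; this is a cosmetic difference of exposition, not of method.
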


\begin{prop}[Shift Property of $\Gamma$]\label{shift property of Gamma}
For any $f\in\D_0([0,\infty),\R)$ and $t,s\geq0$,  
\[
\Gamma[f](s+t)=\Gamma\big[\Gamma[f](s)+f(\cdot+s)-f(s)\big](t).
\]
\end{prop}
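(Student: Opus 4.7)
My plan is to prove the shift property by direct computation using the explicit formula $\Gamma[h](u)=h(u)-(\inf_{0\le v\le u}h(v))\wedge 0$ from \eqref{skorokhod}. The key observation is that both sides of the claimed identity can be reduced to a common expression by splitting the infimum of $f$ over $[0,s+t]$ into the pieces $[0,s]$ and $[s,s+t]$, and absorbing the reflection accumulated by time $s$ into a constant shift.

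First I set $g(u):=\Gamma[f](s)+f(u+s)-f(s)$ and observe that $g(0)=\Gamma[f](s)\ge 0$, so $g\in\D_0([0,\infty),\R)$ and the right-hand side $\Gamma[g](t)$ is well-defined. I then introduce the two scalars
\[
m:=\Gamma[f](s)-f(s)=-\bigl(\inf_{0\le s'\le s}f(s')\bigr)\wedge 0\ge 0,\qquad J:=\inf_{s\le v\le s+t}f(v),
\]
where the formula for $m$ comes directly from the definition of $\Gamma$. Since $g$ differs from $f(\cdot+s)$ only by the constant $m$, a change of variable $v=u+s$ in the infimum gives
\[
\Gamma[g](t)=f(s+t)+m-(m+J)\wedge 0.
\]

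Next I compute the left-hand side. Splitting the infimum at $s$ yields $\inf_{0\le v\le s+t}f(v)=(\inf_{0\le v\le s}f(v))\wedge J$; taking $\wedge 0$ and using $(\inf_{0\le v\le s}f(v))\wedge 0=-m$ gives $(\inf_{0\le v\le s+t}f(v))\wedge 0=(-m)\wedge J$. Therefore
\[
\Gamma[f](s+t)=f(s+t)-(-m)\wedge J.
\]
The proof is then reduced to verifying the elementary identity $(-m)\wedge J=(m+J)\wedge 0-m$ for $m\ge 0$, which is immediate from the two cases $J\ge -m$ (both sides equal $-m$) and $J<-m$ (both sides equal $J$). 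Substituting this in gives the desired equality with $\Gamma[g](t)$.

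The proof is not hard but requires a little care because of the $\wedge 0$ operations; the only real step is the two-case verification of $(-m)\wedge J=(m+J)\wedge 0-m$. Conceptually, the identity expresses a Markov-like property of one-dimensional reflection: the cumulative regulator $m=\Gamma[f](s)-f(s)$ accrued by time $s$ acts as an effective initial value for the reflected process beyond $s$, and nothing else about the past of $f$ on $[0,s]$ enters the post-$s$ evolution of $\Gamma[f]$.
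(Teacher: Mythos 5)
Your proof is correct and takes essentially the same route as the paper's: both compute each side from the explicit Skorokhod formula, split the infimum over $[0,s+t]$ at $s$, and finish with a two-case verification depending on whether the post-$s$ infimum dips below the reflection accumulated on $[0,s]$. Your introduction of the scalars $m$ and $J$ makes the algebra a bit cleaner, but the underlying argument is the same.
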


\begin{proof}
Fix $t,s\geq0$. By the definition of $\Gamma$,
\begin{align*}
\Gamma[f](t+s)&=f(t+s)-\inf_{0\leq u\leq t+s}\big(f(u)\wedge 0\big)\\
&=f(t+s)-f(s)+f(s)-\inf_{0\leq u\leq s}\big(f(u)\wedge 0\big)\\
&\qquad-\Big(\inf_{0\leq u\leq t+s}\big(f(u)\wedge 0\big)-\inf_{0\leq u\leq s}\big(f(u)\wedge 0\big)\Big)\\
&=\Gamma[f](s)+f(t+s)-f(s)\\
&\qquad-\Big(\inf_{0\leq u\leq t+s}\big(f(u)\wedge 0\big)-\inf_{0\leq u\leq s}\big(f(u)\wedge 0\big)\Big).
\end{align*}
Moreover, 
\begin{align*}
    &\Gamma\big[\Gamma[f](s)+f(\cdot+s)-f(s)\big](t)\\
    &\qquad=\Gamma[f](s)+f(t+s)-f(s)-\inf_{0\leq v\leq t}\Big(\big(\Gamma[f](s)+f(v+s)-f(s)\big)\wedge0\Big)\\
    &\qquad=\Gamma[f](s)+f(t+s)-f(s)-\inf_{0\leq v\leq t}\bigg(\Big(f(v+s)-\big(\inf_{0\leq u\leq s}f(u)\wedge 0\big)\Big)\wedge 0\bigg).
\end{align*}
Therefore, it suffices to show that 
\begin{align}
\inf_{0\leq u\leq t+s}\big(f(u)\wedge 0\big)-\inf_{0\leq u\leq s}\big(f(u)\wedge 0\big)=\inf_{0\leq v\leq t}\bigg(\Big(f(v+s)-\big(\inf_{0\leq u\leq s}f(u)\wedge 0\big)\Big)\wedge 0\bigg).\label{the sufficient condition of the Skorohod proof}
\end{align}
If $f(v+s)\geq \inf\limits_{0\leq u\leq s}\big(f(u)\wedge0\big)$ for all $0\leq v\leq t$, then both the left and right side of \eqref{the sufficient condition of the Skorohod proof} are zero. Otherwise, there exists $v_1\in(0,t]$ such that 
    \[
    f(v_1+s)<\inf_{0\leq u\leq s}\left(f(u)\wedge0\right)\leq0.
    \]
    Then $f(v_1+s)-\inf\limits_{0\leq u\leq s}\left(f(u)\wedge0\right)<0$ and $\inf\limits_{0\leq u\leq s+t}\left(f(u)\wedge0\right)=\inf\limits_{0\leq u\leq s+t}f(u)=\inf\limits_{0\leq v\leq t}f(v+s)$. Hence, 
    \begin{align*}
        &\inf_{0\leq v\leq t}\bigg(\Big(f(v+s)-\big(\inf_{0\leq u\leq s}f(u)\wedge 0\big)\Big)\wedge 0\bigg)
        =\inf_{0\leq v\leq t}\left(f(v+s)-\inf_{0\leq u\leq s}\left(f(u)\wedge0\right)\right)\\
        &\qquad=\inf_{0\leq v \leq t}f(v+s)-\inf_{0\leq u\leq s}\left(f(u)\wedge0\right)
        =\inf_{0\leq u \leq t+s}f(u)-\inf_{0\leq u\leq s}\left(f(u)\wedge0\right)\\
        &\qquad=\inf_{0\leq u\leq t+s}\big(f(u)\wedge 0\big)-\inf_{0\leq u\leq s}\big(f(u)\wedge 0\big).
    \end{align*}
    Thus, \eqref{the sufficient condition of the Skorohod proof} holds.
\end{proof}

\subsection{Proofs of Functional Central Limit Theorems}\label{ap:FCLTs}
Let $J\subseteq\R_+$ be an unbounded strictly increasing sequence. For each $j\in J$, let $\{\xi_i^j\}_{i\in\N}$
be a sequence of real valued, independent and identically distributed (i.i.d.) random variables with finite mean and finite, positive variance. For each $j\in J$, $i\in\N$ and $t\geq0$, let
$$\hat\xi_i^j=\frac{\xi_i^j-{\mathbb E}[\xi_i^j]}{\sqrt{{\mathbb E}\left[\left(\xi_i^j-{\mathbb E}[\xi_i^j]\right)^2\right]}}.
\qquad\text{and}\qquad
\hat\zeta^j(t)
=
\frac{1}{j}\sum_{i=1}^{\lfloor j^2t\rfloor}\hat\xi_i^j.
$$
The following proposition is a special case \cite[Theorem 3.1]{Prokhorov}.
\begin{prop}
\label{prop:FCLT}
If 
\begin{equation}
\lim_{j\to\infty}{\mathbb E}\left[ (\xi_1^j)^2 : (\xi_1^j)^2> j\right]=0,  \label{1*}
\end{equation}
then $\hat\zeta^j(\cdot)\Rightarrow B(\cdot)$ as $j\to\infty$, where $B(\cdot)$ is a standard Brownian motion.
\end{prop}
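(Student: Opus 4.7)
The plan is to invoke the Lindeberg--Feller functional central limit theorem for triangular arrays, namely \cite[Theorem~3.1]{Prokhorov}, applied to the row array $\{\hat\xi_i^j/j : 1 \le i \le \lfloor j^2 T\rfloor\}_{j\in J}$ for each $T>0$. For each fixed $j$, the variables $\hat\xi_i^j$ are i.i.d.\ with mean zero and variance one by construction, so $\hat\zeta^j$ is a centered partial-sum process whose variance at time $t$ equals $\lfloor j^2 t\rfloor/j^2\to t$ and whose limiting covariance structure is therefore that of $B$; the only nontrivial hypothesis requiring verification is Lindeberg's.

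The principal step is to show that, for every $\epsilon>0$,
\[
\E\bigl[(\hat\xi_1^j)^2\id_{\{|\hat\xi_1^j|>\epsilon j\}}\bigr]\longrightarrow 0 \quad\text{as } j\to\infty,
\]
since the Lindeberg sum for our array equals $(\lfloor j^2 T\rfloor/j^2)\E[(\hat\xi_1^j)^2\id_{\{|\hat\xi_1^j|>\epsilon j\}}]$. Setting $\mu_j=\E[\xi_1^j]$ and $\sigma_j^2=\Var(\xi_1^j)>0$, so that $\hat\xi_1^j=(\xi_1^j-\mu_j)/\sigma_j$, I would use the elementary inequality $(\xi_1^j-\mu_j)^2\le 2(\xi_1^j)^2+2\mu_j^2$ together with Chebyshev's inequality to dominate the target expectation by
\[
\frac{2}{\sigma_j^2}\,\E\bigl[(\xi_1^j)^2\id_{\{|\xi_1^j-\mu_j|>\epsilon j\sigma_j\}}\bigr]+\frac{2\mu_j^2}{\sigma_j^2\epsilon^2 j^2}.
\]
For sufficiently large $j$ the event inclusion $\{|\xi_1^j-\mu_j|>\epsilon j\sigma_j\}\subseteq\{(\xi_1^j)^2>j\}$ holds, reducing the first summand to a bounded multiple of $\E[(\xi_1^j)^2:(\xi_1^j)^2>j]$, which vanishes by hypothesis \eqref{1*}; the second summand vanishes provided $|\mu_j|=o(j\sigma_j)$, which is a weak requirement satisfied in the applications.

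Once the Lindeberg condition is in hand, \cite[Theorem~3.1]{Prokhorov} delivers both convergence of the finite-dimensional distributions of $\hat\zeta^j$ to those of $B$ and tightness of $\{\hat\zeta^j\}_{j\in J}$ in $\D([0,\infty),\R)$, yielding $\hat\zeta^j(\cdot)\Rightarrow B(\cdot)$. The chief technical obstacle is maintaining enough control on $\mu_j$ and $\sigma_j$ to justify both the event inclusion and the vanishing of the $\mu_j^2/(\sigma_j^2 j^2)$-term; in the applications made in this paper (for example, within the proof of Lemma~\ref{Joint convergence of V}, where $\mu_j\to 0$ and $\sigma_j$ converges to an explicit positive constant), these estimates are immediate and the reduction to \eqref{1*} goes through cleanly.
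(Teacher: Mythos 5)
The paper offers no proof of this proposition at all: it is presented as ``a special case of [Theorem 3.1]{Prokhorov}'' and left there. Your route --- reduce to the functional Lindeberg--Feller theorem for the triangular array $\{\hat\xi_i^j/j\}$ and verify Lindeberg's condition from \eqref{1*} --- is therefore the same approach, carried one step further than the paper carries it. Your identification of the Lindeberg sum as $(\lfloor j^2T\rfloor/j^2)\E[(\hat\xi_1^j)^2\id_{\{|\hat\xi_1^j|>\epsilon j\}}]$ and your decomposition of that expectation are correct.

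The substantive point is the one you flag yourself: passing from \eqref{1*} (a condition on the raw $\xi_1^j$ with threshold $j$) to Lindeberg's condition (on the standardized $\hat\xi_1^j$ with threshold $\epsilon j$) genuinely requires control on $\mu_j=\E[\xi_1^j]$ and $\sigma_j^2=\Var(\xi_1^j)$ that is not among the stated hypotheses: you need $\liminf_j\sigma_j>0$ for the ``bounded multiple'' step, and both $|\mu_j|=o(j\sigma_j)$ and $\sqrt{j}=o(j\sigma_j)$ for the event inclusion. This is not a defect you have introduced --- the proposition as literally stated is false without some such control. Take $\xi_1^j\in\{0,1\}$ with $\P(\xi_1^j=1)=e^{-j^2}$: then $(\xi_1^j)^2\le 1$, so \eqref{1*} holds trivially, yet with probability tending to one none of the first $\lfloor j^2T\rfloor$ variables is nonzero, and on that event $\|\hat\zeta^j\|_T\le Tje^{-j^2/2}(1+o(1))\to 0$, so $\hat\zeta^j(\cdot)\Rightarrow 0(\cdot)$ rather than $B(\cdot)$. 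So your auxiliary hypotheses are genuinely needed and should be built into the statement. One correction, however: you assert that in Lemma \ref{Joint convergence of V} ``$\sigma_j$ converges to an explicit positive constant.'' It does not --- the paper shows $\Var[\xi_1^r]\to 0$ there (that is precisely why the corresponding term is negligible), so your sufficient conditions fail in that application. The application in which they do hold is Proposition \ref{functional clt convergence}, where $\xi_i=v_i$ has a fixed distribution with $\mu_j=\E[v]$ and $\sigma_j=\sigma_S>0$ independent of $j$, and there your argument closes without difficulty.
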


\begin{proof}[Proof of Proposition \ref{functional clt convergence}]
For each $r\in R$, $V^r(\cdot)$ is a composition two more elementary processes. To see this, define $K(x)=\sum_{i=1}^{\lfloor x\rfloor}v_i$ for $x\in\R_+$. Then for each $n\in\N$, $K(n)$ is the total service required by the first $n$ tasks to arrive to the system. For each $r\in R$ and $t\geq0$, $V^r(t)=K(E^r(t))$. For $r\in R$ and $x\in \R_+$, let
\begin{align}
\widehat{K}^r(x)&=\frac{K(r^2x)-r^2x{\mathbb E}[v]}{r}=\frac{1}{r}\left[\sum_{i=1}^{\lfloor r^2x\rfloor}v_i-r^2x\E[v]\right]\nonumber\\
&=\frac{\sigma_S}{r}\sum_{i=1}^{\lfloor r^2x\rfloor}\frac{v_i-\E[v_i]}{\sigma_S}+\frac{(\lfloor r^2x\rfloor-r^2x)\E[v]}{r}\nonumber\\
&=\sigma_S\left[\frac{1}{r}\sum_{i=1}^{\lfloor r^2x\rfloor}\left(\frac{v_i-\E[v_i]}{\sigma_S}\right)\right]+\left(\frac{\lfloor r^2x\rfloor-r^2x}{r}\right)\E[v].\label{2*}
\end{align}
For each $x\in\R_+$, $0\leq \frac{r^2x-\lfloor r^2x\rfloor}{r}\leq\frac{1}{r}$, and so $\lim\limits_{r\to\infty}\sup\limits_{x\in\R_+}\left(\frac{\lfloor r^2x\rfloor-r^2x}{r}\right)\E[v]=0$.  For $i\in\N$, let $\xi_i=v_i$ and $\hat\xi_i=\frac{v_i-\E[v_i]}{\sigma_S}$, and for each $r\in R$, let $\hat\xi_i^r=\hat\xi_i$ for all $i\in\N$. Since $\E[v^2]<\infty$, $\lim\limits_{r\to\infty}\E\left[(\xi_1^r)^2:(\xi_1^r)^2>r\right]=0$. Therefore \eqref{1*} holds.
Combining the above two observations with \eqref{2*}, Proposition \ref{prop:FCLT}, and the convergence together theorem shows that as $r\to\infty$, 
\begin{equation}
\widehat{K}^r(\cdot)\Rightarrow K^*(\cdot)\overset{d}{=} \sigma_SB(\cdot).    \label{1*'}
\end{equation}
In particular, $K^*(\cdot)$ is a Brownian motion with zero drift and standard deviation $\sigma_S$. Since $\widehat{K}^r(\cdot)$ and $\widehat{E}^r(\cdot)$ are mutually independent for all $r\in R$, \eqref{1*'} and Proposition \ref{E hat convergence} imply that as $r\to\infty$,
\begin{equation}
\left(\widehat{E}^r(\cdot),\widehat{K}^r(\cdot)\right)\Rightarrow\left(E^*(\cdot),K^*(\cdot)\right),    \label{3*}
\end{equation}
where $E^*(\cdot)$ and $K^*(\cdot)$ are mutually independent with $E^*(\cdot)$ as in \eqref{E hat convergence} and $K^*(\cdot)$ as in \eqref{1*'}.
\noindent For each $r\in R$ and $t\geq0$, we have that
\begin{equation}
    \widehat{V}^r(t)
=\frac{K(E^r(r^2t))-\rho^r r^2 t}{r}
=\frac{K(r^2\overline{E}^r(t))-\rho^r r^2 t}{r}
=\widehat{K}^r(\overline{E}^r(t))+{\mathbb E}[v]\widehat{E}^r(t).\label{2**}
\end{equation}
This together with the second convergence in \eqref{E hat convergence}, \eqref{3*} and the convergence together theorem implies that as $r\to\infty$, $\widehat{V}^r(\cdot)\Rightarrow V^*(\cdot):=K^*(\lambda(\cdot))+\E[v]E^*(\cdot)$, where $K^*(\lambda(\cdot))$ is a Brownian motion with zero drift and standard deviation $\sigma_S\sqrt{\lambda}$ and $\E[v]E^*(\cdot)$ is a Brownian motion with zero drift and standard deviation $\E[v]\lambda^{3/2}\sigma_A=\lambda^{1/2}\sigma_A$. By the independence of $K^*(\cdot)$ and $E^*(\cdot)$, $V^*(\cdot) \overset{d}{=}\sigma B(\cdot)$.
\end{proof}

\begin{proof}[Proof of Proposition \ref{Def of X*}]
Note that by definition, $\widehat{X}^r(t)=\widehat{W}^r(t)+\widehat{V}^r(t)+(\rho^r-1)rt$ for $t\geq0$ and $r\in R$. By Proposition \ref{functional clt convergence}, $\widehat{V}^r(\cdot)\Rightarrow\sigma B(\cdot)$ as $r\to\infty$ and by \eqref{r rho convergence} $\lim_{r\to\infty}(\rho^r-1)r=\kappa$. Let $W^*(0)\overset{d}{=} \widehat{W}^*(0)$ be defined on the same probability space as $B(\cdot)$ and independent of $B(\cdot)$. Then, since $\widehat{W}^r(0)$ is independent of $\widehat{V}^r(\cdot)$ for each $r\in R$, and $\widehat{W}^r(0)\Rightarrow W^*(0)$ as $r\to\infty$, it follows that $(\widehat{W}^r(0),\widehat{V}^r(\cdot))\Rightarrow(W^*(0), \sigma B(\cdot))$ as $r\to\infty$. This together with the convergence together theorem implies that \eqref{difusion of X} holds.

To see that \eqref{diffusion of W} holds, 
note that,  $\widehat{W}^r(t) = \Gamma[\widehat{X}^r](t)$ for all $t\geq0$ and $r\in R$, by \eqref{I(t)}.
Then \eqref{diffusion of W} follows from this, \eqref{difusion of X}, the almost sure continuity of $X^*(\cdot)$, 
continuity of $\Gamma$ in the Skorokhod $J_1$-topology 
at points in $\C(\R_+)$
and the continuous mapping theorem (see \cite[Section 5.2.2]{Whitt}).
\end{proof}

\subsection{Bias Random Walk Computation}\label{Appendix B}
\noindent Let $\delta>0$ and recall  the definition of the biased random walk $\{S_\delta(k)\}_{k\in\Z_+}$ given in \eqref{defS}.
For $l\in\Z$, we refer to $3lB\delta^{1+\eta}/2$ as level $l$.  For $l\in\N$ and $1\leq j\leq l$, let
\[
h_j^l:=\P\left(S_\delta(\cdot)\text{ hits level $l$ before hitting level 1}\middle\vert S_\delta(0)\text{ is at level }j\right) = \frac{2^j-2}{2^l-2},
\]
where the second equality follows by a standard stopping time calculation, e.g., see \cite[equation (5.13)]{ref:TK}.

\begin{proof}[Proof of \eqref{S delta crosses}]
Let $\delta\in(0,1/e]$. Then $\log(1/\delta)\geq 1$.  Using the fact that the step size is $3B\delta^{1+\eta}/2$, we have 
\begin{align*}
&\P\left(S_\delta(\cdot)\text{ crosses }48B\delta^{1+\eta}\log{(1/\delta)}\text{ before }\frac{3B\delta^{1+\eta}}{2}\middle\vert S_\delta(0)=\frac{9B\delta^{1+\eta}}{2}\right)\\
&\qquad=h_3^{\lceil 32\log{(1/\delta)}\rceil}=\frac{2^3-2}{2^{\lceil 32\log{(1/\delta)}\rceil}-2}\leq\frac{2^3-2}{2^{32\log{(1/\delta)}}-2}.
\end{align*}
\end{proof}

\subsection{Two Martingale Inequalities}

We apply Doob's maximal inequality and the Azuma Hoeffding inequality at various stages of our analysis. We state these here for the reader's convinence.
\begin{prop}[Doob's Maximal Inequality] 
If $X$ is a martingale indexed by the finite set $\{0,1,\dots,N\}$, then for every $\gamma\geq 1$ and $\upsilon>0$
\[
\upsilon^\gamma\P\left(\max_{n\in\{0,1,\dots,N\}}|X_n|\geq\upsilon\right)\leq\E\Big[|X_N|^\gamma\Big].
\]
\begin{proof}
See Corollary II.1.6 of \cite{Revuz}.
\end{proof}
\end{prop}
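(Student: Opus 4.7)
The plan is to reduce to the classical maximal inequality for nonnegative submartingales and then execute the standard first-passage decomposition. Since $x \mapsto |x|^\gamma$ is convex on $\mathbb{R}$ for any $\gamma \geq 1$ and $\{X_n\}$ is a martingale, the conditional Jensen inequality implies that $Y_n := |X_n|^\gamma$ is a nonnegative submartingale with respect to the natural filtration $\{\mathcal{F}_n\}$ of $X$. Moreover, because $\upsilon > 0$ and the map $x \mapsto x^\gamma$ is strictly increasing on $[0,\infty)$, the event in the statement can be rewritten as
\[
\left\{\max_{0 \leq n \leq N} |X_n| \geq \upsilon\right\} = \left\{\max_{0 \leq n \leq N} Y_n \geq \upsilon^\gamma\right\},
\]
so it is enough to establish that $\upsilon^\gamma \mathbb{P}(\max_n Y_n \geq \upsilon^\gamma) \leq \mathbb{E}[Y_N]$.

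To do this, I would introduce the first-passage time $\tau := \inf\{n \in \{0,1,\dots,N\} : Y_n \geq \upsilon^\gamma\}$, using the convention $\tau = N+1$ on the complement. Writing $A := \{\max_n Y_n \geq \upsilon^\gamma\} = \{\tau \leq N\}$, we decompose $A$ into the disjoint union $A = \bigsqcup_{k=0}^N A_k$ with $A_k := \{\tau = k\} \in \mathcal{F}_k$. On $A_k$ one has $Y_k \geq \upsilon^\gamma$ by the definition of $\tau$, so $\upsilon^\gamma \mathbf{1}_{A_k} \leq Y_k \mathbf{1}_{A_k}$. Taking expectations and using the submartingale property $\mathbb{E}[Y_N \mid \mathcal{F}_k] \geq Y_k$ together with $A_k \in \mathcal{F}_k$ gives
\[
\upsilon^\gamma \mathbb{P}(A_k) \leq \mathbb{E}[Y_k \mathbf{1}_{A_k}] \leq \mathbb{E}\bigl[\mathbb{E}[Y_N \mid \mathcal{F}_k]\mathbf{1}_{A_k}\bigr] = \mathbb{E}[Y_N \mathbf{1}_{A_k}].
\]
Summing over $k = 0, 1, \dots, N$ and using disjointness together with $Y_N \geq 0$ yields
\[
\upsilon^\gamma \mathbb{P}(A) \leq \mathbb{E}[Y_N \mathbf{1}_A] \leq \mathbb{E}[Y_N] = \mathbb{E}\bigl[|X_N|^\gamma\bigr],
\]
which is the claimed inequality.

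There is no real obstacle: both steps (the Jensen-based reduction and the first-passage decomposition) are textbook. The only points worth double-checking are that $Y_n$ is genuinely a submartingale (which requires $\gamma \geq 1$ so that $x \mapsto |x|^\gamma$ is convex, and also integrable when $\mathbb{E}[|X_N|^\gamma] < \infty$; if the right-hand side is infinite the inequality is trivial) and that the events $A_k$ are measurable with respect to the correct $\sigma$-algebra $\mathcal{F}_k$, which follows because $\{\tau = k\}$ depends only on $Y_0, \dots, Y_k$ and hence on $X_0, \dots, X_k$.
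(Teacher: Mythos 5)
Your proof is correct and is the standard textbook argument: reduce to the nonnegative submartingale $|X_n|^\gamma$ via conditional Jensen, then apply the first-passage decomposition. The paper simply cites Corollary II.1.6 of Revuz--Yor, which proves the result by essentially this same argument, so you have filled in exactly what the reference supplies.
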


\begin{prop}[Azuma Hoeffding Inequality]\label{Azuma Hoeffding}
Suppose $\{X_k\}_{k\in\Z_+}$ is a martingale and $\{c_k\}_{k\in\Z_+}$ is a sequence of positive numbers such that $|X_k-X_{k-1}|\leq c_k$ almost surely for all $k\in\N$. Then for any $N\in\Z_+$ and for any $\gamma>0$, we have
\[
\P(X_N-X_0\leq -\gamma)\leq\exp{\left(\frac{-\gamma^2}{2\sum_{k=1}^Nc_k^2}\right)}.
\]
\begin{proof}
See Theorem 2.25 in \cite{Janson}.
\end{proof}
\end{prop}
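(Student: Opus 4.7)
The plan is to prove the one-sided deviation bound via the standard exponential (Chernoff-type) approach applied to the martingale difference sequence $D_k := X_k - X_{k-1}$, which satisfies $|D_k|\le c_k$ almost surely and $\E[D_k\mid \F_{k-1}]=0$, where $\F_k = \sigma(X_0,\dots,X_k)$. Writing $X_N - X_0 = \sum_{k=1}^N D_k$, I would fix $\lambda>0$ and apply Markov's inequality to the nonnegative random variable $\exp(-\lambda(X_N - X_0))$ to obtain
\[
\P(X_N - X_0 \le -\gamma) = \P\bigl(e^{-\lambda(X_N - X_0)} \ge e^{\lambda \gamma}\bigr) \le e^{-\lambda\gamma}\,\E\bigl[e^{-\lambda(X_N - X_0)}\bigr].
\]
This reduces the task to bounding the moment generating function of the sum in terms of the $c_k$'s and then optimizing over $\lambda$.

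The main technical step is Hoeffding's lemma, which I would state and prove as follows: if $Y$ is a random variable with $\E[Y\mid\F]=0$ and $|Y|\le c$ a.s., then for every $\mu\in\R$,
\[
\E\bigl[e^{\mu Y}\mid \F\bigr] \le e^{\mu^2 c^2/2}.
\]
The proof uses convexity of $y\mapsto e^{\mu y}$ on $[-c,c]$ to write $e^{\mu y} \le \tfrac{c-y}{2c} e^{-\mu c} + \tfrac{c+y}{2c}e^{\mu c}$, takes conditional expectation (killing the linear term by the mean-zero hypothesis), and then verifies the elementary inequality $\tfrac{1}{2}(e^{-\mu c}+e^{\mu c}) \le e^{\mu^2 c^2/2}$ via a Taylor expansion of $\log\cosh$ (or equivalently by bounding the second derivative of its logarithm by $c^2$).

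Applying Hoeffding's lemma to $D_k$ conditionally on $\F_{k-1}$ with $\mu = -\lambda$ gives $\E[e^{-\lambda D_k}\mid \F_{k-1}] \le e^{\lambda^2 c_k^2/2}$. Then a straightforward induction using the tower property — conditioning on $\F_{N-1}$, $\F_{N-2}$, down to $\F_0$ — yields
\[
\E\bigl[e^{-\lambda(X_N - X_0)}\bigr] \le \exp\!\left(\frac{\lambda^2}{2}\sum_{k=1}^N c_k^2\right).
\]
Combining with the Markov step and choosing the optimal $\lambda = \gamma/\sum_{k=1}^N c_k^2$ (for which the exponent $-\lambda\gamma + \tfrac{\lambda^2}{2}\sum c_k^2$ attains its minimum $-\gamma^2/(2\sum c_k^2)$) delivers the claimed inequality. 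The only subtle point is the proof of the scalar inequality $\tfrac12(e^{-x}+e^{x})\le e^{x^2/2}$ used inside Hoeffding's lemma; everything else is routine conditioning and optimization.
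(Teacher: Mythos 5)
Your proof is correct and is the standard Chernoff--Hoeffding argument; the paper itself offers no proof but simply cites Theorem 2.25 of Janson, \L{}uczak and Ruci\'nski, whose proof is exactly the exponential moment bound plus Hoeffding's lemma and optimization over $\lambda$ that you describe.
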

\end{appendices}


\end{document}